\tikzset{
  symbol/.style={
    draw=none,
    every to/.append style={
      edge node={node [sloped, allow upside down, auto=false]{$#1$}}}
  }
}
\newcommand{\CC}{\mathbb{C}}
\newcommand{\NN}{\mathbb{N}}
\newcommand{\QQ}{\mathbb{Q}}
\newcommand{\RR}{\mathbb{R}}
\newcommand{\ZZ}{\mathbb{Z}}
\newcommand{\Sym}{\textrm{Sym}}
\newcommand{\fin}{\textrm{fin}}
\newcommand{\Ab}{{\mathscr{A}\textrm{b}}}
\newcommand{\Mon}{{\mathscr{M}\textrm{on}}}
\newcommand{\Set}{{\mathscr{S}\textrm{et}}}
\newcommand{\Qdl}{{\mathscr{Q}\textrm{dl}}}
\newcommand{\Grp}{{\mathscr{G}\textrm{rp}}}
\newcommand{\Hom}{\textrm{Hom}}
\newcommand{\Aut}{\textrm{Aut}}
\newcommand{\Id}{\textrm{Id}}
\newcommand{\iso}{\xrightarrow{\sim}}
\newcommand{\Conj}{\textrm{Conj}}
\newcommand{\Inn}{{\textrm{Inn}}}
\newcommand{\ord}{{\textrm{ord}}}
\newcommand{\qq}[2]{{}^{#1}{#2}}
\newcommand{\bclose}[1]{\overline{#1}}
\newcommand{\Triv}{\textrm{Triv}}
\newcommand{\Mongr}{{\Mon_{gr}}}
\newcommand{\Setgr}{{\Set_{gr}}}
\newcommand{\Modgr}[1]{{\mathscr{M}\textrm{od}^{gr}_{#1}}}
\newcommand{\dom}{\textrm{dom}}
\newcommand{\Col}{\textrm{Col}}
\newcommand{\col}{\textrm{col}}
\newcommand{\Cov}{\textrm{Cov}}
\newcommand{\genfunc}{\eta}
\newcommand{\A}{\mathscr{A}}
\newcommand{\m}{\mathfrak{m}}
\DeclareMathOperator{\im}{Im}
\newcommand{\Dhd}[1]{{D_{#1}}}
\newcommand{\T}{\textrm{T}}
\newcommand{\Zlx}{{\left(\ZZ/\ell\ZZ\right)^\times}}
\newtheorem{theorem}{Theorem}[section]
\newtheorem{proposition}[theorem]{Proposition}
\newtheorem{corollary}[theorem]{Corollary}
\newtheorem{lemma}[theorem]{Lemma}
\theoremstyle{definition}
\newtheorem{definition}[theorem]{Definition}
\newtheorem{example}[theorem]{Example}
\newtheorem{remark}[theorem]{Remark}
\title{The Hilbert Polynomial of Quandles and Colorings of Random Links}
\author{Ariel Davis, Tomer M. Schlank}
\begin{document}
\maketitle
\abstract{Given a finite quandle $Q$,  we study the average number of $Q$-colorings of the closure of a random braid in $B_n$ as  $n$ varies.  In particular we show that this number coincides with some polynomial $P_Q\in \QQ[x]$ for  $n\gg 0$. The degree of this polynomial is readily computed in terms of $Q$ as a quandle and these invariants are computed for all quandles with $|Q|\le 4$. Additionally we show that the methods in this paper allow to improve on the stability results of \cite{WEBSITE:ElVenWest2015} from ``periodic stability'' to ``stability''.}

\section{Introduction}
A quandle is an algebraic structure particularly well-suited for producing numerical invariants of oriented links: For a finite quandle $Q$ and oriented link $L$ in $S^3$, one counts $\Col_Q(L)$ - the set of ``$Q$-colorings" of $L$. 
Our main goal in this paper is to study the average size $\Col_Q(L)$ where $L$ is chosen randomly due to a certain family of distributions. Informally we will say that we are interested in the average size of  $\Col_Q(L)$ where $L$ is taken to be the closure $\bclose\sigma$ of a random braid $\sigma \in B_n$ of $n$-strands.  The formal meaning is not immediately clear as $B_n$ is a countable discreet group. However it is not hard to show (See \Cref{Prp: c hat}) that the function $c_Q\colon B_n \to \NN$ sending $\sigma \in B_n$ to  $c_Q(\sigma) \coloneqq |\Col_Q(\bclose\sigma)|\in \NN$ extends uniquely to a continues function  $c_Q\colon \widehat{B_n} \to \NN$ from the profinite completion of $B_n$. Since $\widehat{B_n}$ is compact and Hausdorff we get a well defined average $\mathbb{E}_{\widehat{B_n}}c_Q = \int\limits_{\widehat{B_n}}c_Q d\mu$. The main result of this paper is:
\begin{theorem}[\Cref{Thm: Qdl HPoly}]\label{thm:Intro}
	Let $Q\in\Qdl^\fin$. Then there exist integer-valued polynomials $P_Q(x)\in\QQ[x]$ s.t. 
	$$\int\limits_{\widehat{B_n}}c_Q d\mu=P_Q(n)$$
	for all $n\gg 0$.
\end{theorem}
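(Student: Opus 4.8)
The plan is to first reinterpret the integral combinatorially, reducing the analytic statement to a purely algebraic count of orbits, and then to exhibit that count as the Hilbert function of a finitely generated graded module over a polynomial ring.

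First I would make precise the identification of colorings with fixed points. Reading a braid word $\sigma\in B_n$ from top to bottom, a $Q$-coloring of the closure $\bclose\sigma$ is exactly an assignment of colors to the $n$ incoming strands that is returned to itself at the bottom; that is, $c_Q(\sigma)=|\mathrm{Fix}(\rho(\sigma))|$, where $\rho\colon B_n\to\Sym(Q^n)$ is the (Hurwitz) action of $B_n$ on $Q^n$ determined by the quandle operation. Since $\Sym(Q^n)$ is finite, $\rho$ factors through a finite quotient $G_n\coloneqq\rho(B_n)$, which is the content behind the continuous extension $c_Q\colon\widehat{B_n}\to\NN$ of \Cref{Prp: c hat}. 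The pushforward of normalized Haar measure along $\widehat{B_n}\twoheadrightarrow G_n$ is the uniform measure on $G_n$, so by Burnside's orbit-counting lemma
\[
  \int_{\widehat{B_n}} c_Q\, d\mu \;=\; \frac{1}{|G_n|}\sum_{g\in G_n}|\mathrm{Fix}(g)| \;=\; \bigl|\,Q^n/B_n\,\bigr|.
\]
Thus it suffices to prove that the number of $B_n$-orbits on $Q^n$ agrees with an integer-valued polynomial for $n\gg 0$.

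Next I would organize these orbit sets into a single graded object and equip it with module operators. Set $M\coloneqq\bigoplus_{n\ge 0}\QQ[\,Q^n/B_n\,]$, a graded $\QQ$-vector space with $\dim_\QQ M_n=|Q^n/B_n|$. For a connected component $c\in\pi_0(Q)$ (an orbit of $\Inn(Q)$ on $Q$) define the degree-one operator $u_c\colon M_n\to M_{n+1}$ by $[x]\mapsto\sum_{a\in c}[(x,a)]$; this is well defined on orbits because appending is equivariant for the inclusion $B_n\hookrightarrow B_{n+1}$. The key structural point I would verify is that the $u_c$ commute: applying the crossing $\sigma_{n+1}$ gives $[(x,a,b)]=[(x,\,a\triangleright b,\,a)]$, and since each inner automorphism $S_a$ permutes the component $c'$ containing $b$, summing over $a\in c$ and $b\in c'$ re-sorts the two appended letters and exchanges the order of $u_c$ and $u_{c'}$. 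Hence $M$ is a graded module over the standard-graded polynomial ring $\mathcal U\coloneqq\QQ[\,u_c:c\in\pi_0(Q)\,]$.

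The hard part will be to prove that $M$ is finitely generated as a $\mathcal U$-module. This is a Conway--Parker / Ellenberg--Venkatesh--Westerland style stabilization statement in homological degree zero: it asserts that for $n$ large every orbit in $Q^n$ is reached from lower degree by appending the stabilizing classes $u_c$, up to finitely many generators. I expect to establish it from a Noetherianity theorem for the ambient category of graded sets and modules developed earlier in the paper (the role played by $\Setgr$ and $\Modgr{\bullet}$), rather than by a bare-hands Hurwitz-move argument. Granting finite generation, $M$ is a finitely generated graded module over a polynomial ring generated in degree one, so by Hilbert's theorem its Hilbert function $n\mapsto\dim_\QQ M_n=|Q^n/B_n|$ agrees with a genuine polynomial $P_Q(x)\in\QQ[x]$ for $n\gg 0$ (no quasi-polynomial correction appears precisely because all generators of $\mathcal U$ sit in degree one), of degree at most $|\pi_0(Q)|-1$. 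Finally, $P_Q$ is integer-valued because it equals the non-negative integer $|Q^n/B_n|$ at every sufficiently large integer, which forces it to be a $\ZZ$-linear combination of binomial coefficients $\binom{x}{k}$; this is also why $\deg P_Q$ can be read off directly from $Q$, as promised in the introduction.
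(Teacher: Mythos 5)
Your opening reduction --- colorings as fixed points of the Hurwitz action, Burnside on the profinite completion, $\int_{\widehat{B_n}}c_Q\,d\mu=|Q^n/B_n|$ --- is exactly the paper's route (\cref{Prp: c hat}, \cref{Prp: EQn orbit counting}), and your operators $u_c$ are indeed well defined and commute: inner automorphisms preserve connected components, so your re-sorting argument is sound. The proposal nevertheless fails at the step you defer. The module $M=\bigoplus_n\QQ[Q^n/B_n]$ is in general \emph{not} finitely generated over $\mathcal U=\QQ[u_c : c\in\pi_0(Q)]$, and no Noetherianity theorem can supply this, because the consequence it would force --- that $|Q^n/B_n|$ eventually agrees with a polynomial of degree at most $|\pi_0(Q)|-1$ --- is false. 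Concretely, take $Q=C_3$ from Section 7 (the four-element quandle $T_3\sqcup_\psi *$ with $\psi$ a $3$-cycle). Then $|\pi_0(C_3)|=2$, yet $C_3$ contains the trivial quandle $T_3=\{a,a',a''\}$ as a sub-quandle, and by \cref{Lma: sub R gives sub A_R} the inclusion $\A_{T_3}\hookrightarrow\A_{C_3}$ gives $|C_3^n/B_n|\ge|\A_{T_3,n}|=\binom{n+2}{2}$, which grows quadratically, whereas a finitely generated graded module over the two-variable standard-graded ring $\mathcal U$ has Hilbert function bounded by a linear function of $n$. (Consistently, the paper proves $\deg P_Q=\dim_Q-1$ with $\dim_Q=\max_{R\le Q}|\pi_0(R)|$, which equals $3$ for $C_3$, so $\deg P_{C_3}=2$; your bound would give $1$.) The mechanism of failure is that the orbit count is inflated by tuples lying in proper sub-quandles $R<Q$, which can have many more components than $Q$ itself; your degree-one operators only see $\pi_0(Q)$ and cannot generate those strata. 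Nor can you simply adjoin component-sums for every sub-quandle: if $c'$ is a component of $R<Q$ and $a\notin R$, then $\varphi_a$ need not preserve $c'$, so the commutativity argument breaks.

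This gap is precisely why the paper does something different: it takes \emph{all} of $Q$ (not the $|\pi_0(Q)|$ component sums) as degree-one generators of the non-commutative algebra $\CC[\A_Q]$, observes the normality $x\CC[\A_Q]=\CC[\A_Q]x$ making it a ``good'' algebra (\cref{Prp: auto gen to good alg}), gets Noetherianity from the central commutative subalgebra generated by $\{x^{\exp_Q}\}_{x\in Q}$ --- where finite generation is an elementary pigeonhole argument (\cref{Lma: A/B fin gen}), no EVW-style stabilization input is needed --- and then proves eventual polynomiality by induction on the number of degree-one generators via the exact sequence $0\to K\to M[-1]\xrightarrow{s_0\cdot}M\to M/s_0M\to 0$ (\cref{Prp: good HPoly}). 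This yields degree at most $|Q|-1$, which is compatible with $\dim_Q\le|Q|$. Your commutative idea does reappear in the paper, but only on the \emph{dominant} stratum $\A_Q^\dom$: there, by \cref{Lma: acrobatics}, multiplication by $x^{\exp_Q}$ depends only on the component of $x$, making $\A_Q^\dom$ finitely generated over a free commutative monoid on $|\pi_0(Q)|$ generators of degree $\exp_Q$ --- and that is used to compute the degree (\cref{Prp: dim dom}), not to establish existence. To salvage your plan you would have to run it stratum by stratum over the decomposition $\A_Q=\coprod_{R\le Q}\A_R^\dom$, at which point you have essentially reconstructed the paper's architecture.
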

We then give a give a simple formula for $\deg P_Q$  in term of the number of connected components of sub-quandles of $Q$ (\cref{Prp: dim dom}).

\Cref{thm:Intro} is reminiscent  of the existence of Hilbert polynomial for graded modules in commutative algebra. Indeed, the main technical  step   in the proof is \Cref{Prp: good HPoly} that gives a  generalization of  Hilbert polynomial to graded modules over certain class of well behaved non-commutative graded rings.    \Cref{Prp: good HPoly} can be use also to give a slight improvement on the stability result in \cite{WEBSITE:ElVenWest2015}. Specifically \cite[Thm 6.1]{WEBSITE:ElVenWest2015}  gives that dimension of the  homology groups $\dim_\CC H_i(\textrm{Hur}_{G,n}^c,\CC)$ of  certain sequence of Hurwitz spaces is eventually periodic  in $n$,  \Cref{Prp: good HPoly} gives that this sequence is in fact eventually \textbf{constant} (see \Cref{remEVW}).

\subsection{The Contents of this Paper}
In section $2$ we begin with preliminary facts about quandles. We show that $Q$-coloring links produces a random variable $c_{Q,n}$ on $\widehat{B_n}$, the profinite braid group on $n$ strands. The expected value is described explicitly in terms of $Q$ and $n$.

In section $3$ the expected values of $c_{Q,n}$, ranging over all $n$, are recovered as the graded cardinalities of $\A_Q$, a graded monoid associated to $Q$. We then prove the main result that there is an integer-valued polynomial $P_Q\in\QQ[x]$ s.t. the expected valued of $c_{Q,n}$ equals $P_Q(n)$ for all large enough $n$. We call this the \emph{Hilbert polynomial} of $Q$. 

In section $4$ we compute the degree of $P_Q$: $\deg P_Q=\dim_Q-1$, where $\dim_Q$ is an invariant directly obtained from the structure of $Q$.

In section $5$ we show that $\dim_{Q\times R}=\dim_Q\cdot \dim_R$. While the previous sections focused on the expected values of $Q$-colorings, the observation that $c_{Q\times R}=c_Q\cdot c_R$ allows one to recover more refined statistical information such as higher moments and covariants in terms of the very same machinery - applied to larger quandles.

In section $6$ we discuss disjoint unions of quandles. The generating function $\eta_Q(t)=\sum_n |\A_{Q,n}|t^n\in\ZZ[[t]]$ of $Q$ is a more refined invariant than $P_Q(x)$. We show that $\eta_{Q\sqcup R}(t)=\eta_Q(t)\cdot \eta_R(t)$. This offers a simple computation of $P_{Q\sqcup R}(x)$, which is generally not determined by $P_Q(x)$ and $P_R(x)$.

In section $7$ we compute examples of Hilbert polynomials of several families of quandles, including all finite quandles up to size $4$.
\subsection{Acknowledgements}
The authors would like to express their gratitude to Ruth Lawrence, Ohad Feldheim and the entire Seminarak group for useful discussions. We would also like to thank Aaron Landesman for suggesting the relevance of our methods to \cite{WEBSITE:ElVenWest2015}. The second author is supported by ISF1588/18 and BSF 2018389.
\section{From Links and Quandles to Monoids}

\subsection{Established facts about quandles}
This section contains statements about quandles readily found in the literature. The definition and algebraic properties of quandles are out of {\cite{ARTICLE:Joyce1982}}, as is the definition of the fundamental quandle of an oriented link. The $B_n$-action on $n$-tuples is taken from \cite{ARTICLE:Brieskorn1986}.

\subsubsection{The Fundamentals of Quandles - Algebraic Notions}

Recall in {\cite{ARTICLE:Joyce1982}} the definition of a quandle. We find the notation of \cite[\textsection 1]{ARTICLE:FennRourke1992} preferable.
We also choose to have quandles act on the left as in \cite{ARTICLE:Brieskorn1986}. 
\begin{definition}
	\label{Dfn: quandle}
	A \emph{quandle} is a set $Q$ with a binary operator
	$$Q\times Q\to Q\;,\;\;\;\;(x,y)\mapsto \qq xy$$
	satisying the following:
	\begin{enumerate}
		\item[(Q1)] for every $x\in Q$, $\qq xx=x$.
		\item[(Q2)] for every $x\in Q$, $\qq x{(-)}\colon Q\to Q$ is a bijection.
		\item[(Q3)] for every $x,y,z\in Q$, $\qq x{(\qq yz)}=\qq{(\qq xy)}{(\qq xz)}$.
	\end{enumerate}
	A \emph{morphism} of quandles is a function $f\colon Q\to R$ s.t. for all $x,y\in Q$,
	$$f(\qq xy)=\qq{f(x)}{f(y)}.$$
\end{definition}
All quandles and quandle morphisms naturally
form a category, denoted here by $\Qdl$. A quandle morphism is an isomorphism iff it is a bijection.
Let $Q\in\Qdl$ and let $x\in Q$. 
By $\varphi_x\colon Q\to Q$ we denote the function 
$\varphi_x(y)=\qq xy$. By axioms (Q2) and (Q3) of \cref{Dfn: quandle}, for all $x\in Q$
$$\varphi_x\in\Aut_\Qdl(Q).$$

We denote by $\Qdl^\fin\subseteq\Qdl$ the full subcategory of finite quandles, that is, those $Q\in\Qdl$ with finite underlying set.

\begin{example}[{\cite[\textsection1]{ARTICLE:Joyce1982}}]
	Let $G\in\Grp$ be a group. 
	Then there is a canonical quandle structure on $G$ 
	given by conjugation: 
	$\qq hg:=hgh^{-1}$ satisfies all quandle axioms. 
	There is a functor
	$$\Conj\colon\Grp\to\Qdl\;\;,\;\;\;\Conj(G)=(G,\qq gh=ghg^{-1})\;,\;\;\Conj(f\colon G\to H)=f.$$
	In fact, let $G\in\Grp$ and let $S\subseteq G$ be a subset closed under conjugation. Then $S$ is a quandle with structure induced from conjugation in $G$. For example, any conjugacy class or union of such, or any singleton. We will often write $G\in\Qdl$ in place of $\Conj(G)$ when confusion is unlikely.
\end{example}

The following notion of a trivial quandle can be found in many sources, for example \cite{BOOK:Nosaka}.
\begin{example}
	\label{Ex: Triv Qdl}
	For every set $X\in\Set$, we denote by $\Triv(X)\in\Qdl$ the \emph{trivial quandle} with underlying set $X$ and
	$\varphi_x=\Id_X$
	for all $x\in X$:
	$$\forall\;x,y\in X\;,\;\;\qq xy=y.$$
	Let $X,Y\in\Set$. Every function $f\colon X\xrightarrow{f} Y$ in $\Set$ is a quandle morphism between trivial quandles:
	$$f(\qq xy)=f(y)=\qq{f(x)}{f(y)},$$
	making $\Triv\colon\Set\to\Qdl$ into a functor. For $\alpha\in\NN$ we denote by $T_\alpha\in\Qdl$ the \emph{trivial quandle with $\alpha$ elements}, which is well-defined up to isomorphism.
\end{example}

\begin{definition}
	Let $Q\in\Qdl$. A \emph{sub-quandle} $R$ of $Q$ is a subset $R\subseteq Q$ s.t.
	$$\forall \;x,y\in R\;,\;\;\qq xy\in R.$$
	For such $R$ we use the notation $R\le Q$. 
	For $R\le Q$ and $R\neq Q$, we write $R<Q$.
\end{definition}

\begin{proposition}
	\label{Prp: im sub}
	Let $f\colon R\to Q$ be a quandle morphism. Then
	$$f(R)\le Q.$$
\end{proposition}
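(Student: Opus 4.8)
The plan is to verify directly that $f(R)$ satisfies the single defining condition of a sub-quandle, namely closure under the quandle operation. Since $f(R)\subseteq Q$ is automatic as a set inclusion, the only content is to show that for any two elements of $f(R)$, their product $\qq{\cdot}{\cdot}$ again lies in $f(R)$.

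First I would fix arbitrary elements $a,b\in f(R)$ and use the definition of the image to choose preimages: there exist $x,y\in R$ with $f(x)=a$ and $f(y)=b$. The key step is then to transport the operation through $f$ using the morphism property from \Cref{Dfn: quandle}: we have $\qq ab=\qq{f(x)}{f(y)}=f(\qq xy)$. Because $R$ is itself a quandle, its binary operation takes values in $R$, so $\qq xy\in R$ and hence $f(\qq xy)\in f(R)$. Combining these gives $\qq ab\in f(R)$, which is exactly the closure condition; therefore $f(R)\le Q$.

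I do not expect any genuine obstacle here, as the argument is a direct unwinding of the definitions of \emph{quandle morphism} and \emph{sub-quandle}. The only point requiring a moment's care is the implicit use of the fact that the domain $R$, being a quandle, is closed under its own operation, so that $\qq xy$ is a legitimate element of $R$ to which $f$ may be applied; once this is noted, the morphism identity does all the work and the conclusion is immediate.
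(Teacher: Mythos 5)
Your proof is correct and is essentially identical to the paper's: both take two arbitrary elements $f(x),f(y)\in f(R)$ and apply the morphism identity $\qq{f(x)}{f(y)}=f(\qq xy)$ together with closure of $R$ under its own operation to conclude $\qq{f(x)}{f(y)}\in f(R)$. No differences worth noting.
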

\begin{proof}
Let $f(x),f(y)\in f(R)$ for some $x,y\in R$. Then
$$\qq{f(x)}{f(y)}=f(\qq xy)\in f(R).$$
\end{proof}

\begin{definition}
	Let $Q\in\Qdl$ and let $S\subseteq Q$ be a subset of $Q$. 
	Then $\langle S\rangle\le Q$ is the \emph{sub-quandle generated by $S$}, the minimal $R\le Q$ containing $S$.
	In the case $Q=\Conj(G)$ for some $G\in\Grp$,
	we distinguish between the sub-quandle and subgroup generated by $S$ with respective notation $\langle S\rangle_\Qdl$ and $\langle S\rangle_\Grp$.
\end{definition}

\begin{definition}[{\cite[\textsection 5]{ARTICLE:Joyce1982}}]
	\label{Dfn: Inn}
	Let $Q\in\Qdl$. Recall that 
	$\varphi_x\in\Aut_\Qdl(Q)$ for all $x\in Q$. The \emph{inner automorphism group} of $Q$ is defined as the subgroup of $\Aut_\Qdl(Q)$ generated by these $\varphi_x$:
	$$\Inn(Q):=\langle\{\varphi_x\}_{x\in Q}\rangle_\Grp\le\Aut_\Qdl(Q)$$
\end{definition}
Note that $\Inn(Q)$ from quandles to groups is \emph{not} functorial.

\begin{proposition}
	\label{Prp: Q to Inn(Q) is quandle morphism}
	Let $Q\in\Qdl$. Then
	$$Q\to\Conj(\Inn(Q))\;\;,\;\;\;\;x\mapsto \varphi_x$$
	is a morphism in $\Qdl$.
\end{proposition}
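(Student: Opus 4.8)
The statement to verify is that the assignment $x\mapsto\varphi_x$ respects the quandle operations, i.e.\ that for all $x,y\in Q$ we have $\varphi_{\qq xy}=\qq{\varphi_x}{\varphi_y}$, where the right-hand side is the quandle operation in $\Conj(\Inn(Q))$. Unwinding the definition of the conjugation quandle, this amounts to the identity
$$\varphi_{\qq xy}=\varphi_x\circ\varphi_y\circ\varphi_x^{-1}$$
as elements of $\Inn(Q)\le\Aut_\Qdl(Q)$. Since two automorphisms agree iff they agree pointwise, the plan is to check this equality on an arbitrary $z\in Q$.

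The key step is to exploit that $\varphi_x$ is a bijection (axiom (Q2)), so I may write any $z\in Q$ as $z=\varphi_x(w)=\qq xw$ for a unique $w\in Q$. With this substitution the right-hand side becomes
$$\bigl(\varphi_x\circ\varphi_y\circ\varphi_x^{-1}\bigr)(z)=\varphi_x\bigl(\varphi_y(w)\bigr)=\qq x{(\qq yw)},$$
while the left-hand side is $\varphi_{\qq xy}(z)=\qq{(\qq xy)}{z}=\qq{(\qq xy)}{(\qq xw)}$. The self-distributivity axiom (Q3) states precisely that $\qq x{(\qq yw)}=\qq{(\qq xy)}{(\qq xw)}$, so the two sides coincide. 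As $z$ was arbitrary, the two automorphisms are equal, which establishes the morphism condition.

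I do not anticipate a genuine obstacle here: the proposition is essentially a repackaging of axiom (Q3), with (Q2) supplying the bijectivity needed to run the substitution $z=\qq xw$. The only points warranting care are (i) correctly translating the target's quandle structure $\qq gh=ghg^{-1}$ into the composite $\varphi_x\varphi_y\varphi_x^{-1}$, and (ii) noting that each $\varphi_x$ genuinely lies in $\Aut_\Qdl(Q)$ (already recorded in the excerpt following \Cref{Dfn: quandle}), so that the composite makes sense inside the group $\Inn(Q)$ and the expression $\qq{\varphi_x}{\varphi_y}$ is a legitimate element of $\Conj(\Inn(Q))$.
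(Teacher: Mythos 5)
Your proof is correct and is essentially the paper's own argument: both reduce the claim to the identity $\varphi_{\qq xy}=\varphi_x\circ\varphi_y\circ\varphi_x^{-1}$ and verify it pointwise via axiom (Q3), the only cosmetic difference being that the paper checks $\varphi_x\circ\varphi_y=\varphi_{\qq xy}\circ\varphi_x$ directly and then cancels $\varphi_x$, while you absorb the inverse by substituting $z=\qq xw$ using bijectivity (Q2).
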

This proposition appears in {\cite[\textsection 2]{ARTICLE:Brieskorn1986}}. We include a proof for completeness.
\begin{proof}
	Let $x,y\in Q$. Then for all $z\in Q$,
	$$(\varphi_x\circ\varphi_y)(z)=\qq{x}{(\qq yz)}=\qq{(\qq xy)}{(\qq xz)}=(\varphi_{\qq xy}\circ\varphi_x)(z).$$
	Therefore
	$$\varphi_{\qq xy}=\varphi_x\circ\varphi_y\circ\varphi_x^{-1}=\qq{\varphi_x}{\varphi_y}\in\Inn(Q).$$
\end{proof}

In \cite{ARTICLE:Joyce1982}, a quandle $Q$ is called \emph{connected} if $\Inn(Q)$ acts transitively on $Q$. With this in mind we borrow the following notation from topology:

\begin{definition}
	\label{Dfn: pi0}
	Let $Q\in\Qdl$. We define the following set quotient
 $$\pi_0(Q):=Q/\Inn(Q).$$
 The elements of $\pi_0(Q)$ are the \emph{connected components} of $Q$.
% The image of $x\in Q$ in $\pi_0(Q)$ is denoted by $[x]_Q$.
\end{definition}

Since $\Inn(Q)$ is generated by $\{\varphi_x\}_{x\in Q}$, then $\pi_0(Q)$ is the quotient of the set $Q$ by the equivalence relation generated by $\qq xy\sim y$ for all $x,y\in Q$. This observation will be useful in proving the following lemma:

\begin{lemma}
	The map $Q\mapsto\pi_0(Q)$ is a functor
	$$\pi_0\colon \Qdl\to\Set.$$
\end{lemma}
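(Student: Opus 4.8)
The plan is to define the action of $\pi_0$ on morphisms in the only sensible way and then to verify that this assignment is well-defined and respects identities and composition.

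First I would define, for a quandle morphism $f\colon Q\to R$, a map of sets
$$\pi_0(f)\colon \pi_0(Q)\to\pi_0(R)\;,\;\;\;\;[x]\mapsto[f(x)],$$
where $[\,\cdot\,]$ denotes the class of an element in the respective $\pi_0$. The candidate $\pi_0(f)$ is forced: it is exactly the map fitting into the commutative square with $f$ and the two quotient projections $Q\twoheadrightarrow\pi_0(Q)$, $R\twoheadrightarrow\pi_0(R)$.

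The key step, and the only real obstacle, is well-definedness: I must show that $[x]=[x']$ in $\pi_0(Q)$ implies $[f(x)]=[f(x')]$ in $\pi_0(R)$. Here I would invoke the observation recorded just before the lemma, namely that the equivalence relation defining $\pi_0(Q)$ is generated by the relations $\qq xy\sim y$ for all $x,y\in Q$. It therefore suffices to check that $f$ carries each such generating relation into the equivalence relation on $R$. For any $x,y\in Q$, the morphism property of $f$ gives
$$f(\qq xy)=\qq{f(x)}{f(y)},$$
and applying the generating relation in $R$ to the pair $f(x),f(y)\in R$ yields $\qq{f(x)}{f(y)}\sim f(y)$. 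Hence $f(\qq xy)\sim f(y)$ in $R$. Since the full equivalence relations are generated by these, $f$ descends to the quotients and $\pi_0(f)$ is well-defined; this is precisely the place where the hypothesis that $f$ is a quandle morphism (rather than an arbitrary set map) is used.

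Finally, functoriality is routine. Because $\pi_0(f)$ is by construction induced from $f$ through the quotient projections, the identities $\pi_0(\Id_Q)=\Id_{\pi_0(Q)}$ and $\pi_0(g\circ f)=\pi_0(g)\circ\pi_0(f)$ follow immediately from the corresponding identities $\Id_Q$ and $g\circ f$ for the underlying set maps, simply by passing to classes: $\pi_0(g\circ f)([x])=[(g\circ f)(x)]=[g(f(x))]=\pi_0(g)([f(x)])=\bigl(\pi_0(g)\circ\pi_0(f)\bigr)([x])$. This completes the verification that $\pi_0$ is a functor.
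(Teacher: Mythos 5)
Your proposal is correct and follows essentially the same route as the paper: both use the observation that the equivalence relation defining $\pi_0$ is generated by $\qq xy\sim y$, and both verify that a quandle morphism $f$ carries these generating relations to relations of the same form via $f(\qq xy)=\qq{f(x)}{f(y)}\sim f(y)$, so that $f$ descends to the quotients. The only difference is that you spell out the identity and composition checks which the paper dismisses as straightforward.
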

\begin{proof}
	Let $f\colon R\to Q$ be a morphism of quandles. Then $f$ carries relations on $R$ of the form $\qq xy\sim y$ ($x,y\in R$) to relations of the same form on $Q$:
	$$\qq{f(x)}{f(y)}= f(\qq xy)\sim f(y).$$
	Therefore $f$ respects the equivalence relation generated by these, hence
	$$f\colon \pi_0(R)\to\pi_0(Q)$$
is well-defined. Functoriality is straightforward.
\end{proof}

\subsubsection{Oriented Links and Quandles}
The foundations for the modern study of knots and links were laid in the works of Alexander and Briggs, and Reidemeister - see \cite{ARTICLE:AlexBriggs1926} and \cite{BOOK:Reidemeister}. What follows here is a brief overview of the essential notions.

A \emph{link} $L$ in $S^3$ is an embedding $L\colon\coprod S^1\hookrightarrow S^3$ of a finite disjoint union of circles into the $3$-sphere. By orienting $S^3$ and $S^1$, one obtains the notion of an \emph{oriented} link. Two oriented links $L_1,L_2$ are \emph{equivalent} if there is an orientation-preserving homeomorphism $f\colon S^3\iso S^3$ s.t. $f\circ L_1=L_2$. An oriented link is \emph{tame} if it is equivalent to a polygonal link. For the entirety of this paper all links are assumed to be oriented and tame.

By suitably projecting a link $L$ to $\RR^2$, one obtains a \emph{link diagram} - a combinatorial presentation of the link comprised of oriented arcs and crossings. Each crossing, representing a $2$-to-$1$ fiber in $L$ of the projection, consists of one strand crossing over and one strand crossing under, split into two arcs. If the link is a knot, one typically sees the arcs numbered cyclically	- $x_1,\dots,x_n$.
For general link diagram $D$, the data of such a diagram might be organized in terms of a finite set $arcs(D)$ of oriented arcs in $\RR^2$, with additional data describing the crossings:
\begin{itemize}
	\item A permutation $n\colon arcs(D)\to arcs(D)$ assigning the \emph{next} arc.
	\item A function $c\colon arcs(D)\to arcs(D)$ - the arc crossing above $x$ and $n_x$.
	\item A function $\varepsilon\colon arcs(D)\to\{\pm1\}$ - the orientation at the crossing of $c_x$ over $x$ and $n_x$.
\end{itemize}

$$\begin{tikzpicture}
			\label{eqn: crossings}
			\draw[->] (2,2)--(0,0);
			\draw[-,line width=10pt, draw=white] (0,2)--(2,0);
			\draw[->] (0,2)--(2,0);
			\draw (1.5,0.5)node [anchor=west]{$\;c_x$};
			\draw (1.5,1.5)node[anchor = south]{$x\;\;$};
			\draw (0.5,0.5)node[anchor=east]{$n_x\;$};
			\draw (1,-0.5)node{$\varepsilon_x=1$};
			\draw[->] (4,2)--(6,0);
			\draw[-,line width=10=pt, draw=white] (6,2)--(4,0);
			\draw[->] (6,2)--(4,0);
			
			\draw (4.5,0.5)node[anchor = east]{$c_x\;$};
			\draw (5.5,0.5)node[anchor = west]{$\;n_x$};
			\draw (4.5,1.5)node[anchor = south]{$\;\;x$};
			\draw (5,-0.5)node{$\varepsilon_x=-1$};
	\end{tikzpicture}$$

As shown in \cite{BOOK:Reidemeister}, two link diagrams depict equivalent links iff they differ by a sequence of local moves on diagrams, called \emph{Reidemeister moves}.\\

The fundamental quandle of an oriented link $L$ is described in \cite[\textsection15]{ARTICLE:Joyce1982} in terms of generators and relations that are determined by a diagram $D$ depicting $L$:

\begin{proposition}
	\label{Prp: Q_D isotopy invar}
	Let $L$ be an oriented link in $S^3$. For a link diagram $D$ of $L$, let $Q_D\in\Qdl$ be the following quandle defined by generators and relations:
	$$Q_D=\left\langle\;x\in arcs(D)\;|\;\forall x\in arcs(D)\;,\;\;\varphi_{c_x}^{\varepsilon_x}(x)=n_x\;\right\rangle=$$
	$$= \left\langle\;x\in arcs(D)\;|\;\forall x\in arcs(D)
	\;,\;\;\begin{cases}
		\qq{c_x}x=n_x &,\;\varepsilon_x=1\\
		\qq{c_x}{n_x}=x &,\;\varepsilon_x=-1
	\end{cases}\;\right\rangle.$$
	Then up to isomorphism, $Q_D$ is a well-defined invariant of $L$ up to equivalence.
\end{proposition}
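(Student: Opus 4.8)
The plan is to invoke the theorem of Reidemeister cited above: two diagrams $D$, $D'$ present equivalent oriented links if and only if they are connected by a finite sequence of (oriented) Reidemeister moves. Hence it suffices to show that the isomorphism class of the presented quandle $Q_D$ is unchanged under each single Reidemeister move. Since any such move alters a diagram only inside a small disk, leaving all other arcs and crossings intact, I would reduce the problem to comparing the two local presentations and exhibiting an isomorphism $Q_D\iso Q_{D'}$ that is the identity on the generators coming from arcs outside the disk.

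The mechanism for producing these isomorphisms is a Tietze-style calculus for quandle presentations: one may freely (i) add or delete a generator together with a relation expressing it as a word in the remaining generators, and (ii) add or delete a relation that is a consequence of the others together with the quandle axioms (Q1)--(Q3). Each such operation preserves the presented quandle up to canonical isomorphism. The work is then to check, move by move, that the arcs created inside the disk are expressible in terms of the surviving arcs and that the relations created are redundant modulo (Q1)--(Q3).

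I would handle the three moves in increasing order of difficulty. For R1 the kink introduces an arc crossing over itself; after identifying the pieces that the loop forces to coincide, the new relation reads $\qq aa=a$, which is exactly axiom (Q1) and hence vacuous, while the short new arc equals an old one --- a single Tietze elimination. For R2 the two crossings created carry opposite signs, so the two relations they impose are $\varphi_c^{\pm1}$-images of one another; by the bijectivity of $\varphi_c$ from axiom (Q2) each is the inverse of the other, and the intermediate arc can be solved for and eliminated, returning the original presentation. For R3 one genuinely needs self-distributivity: pushing a strand across a crossing changes which arc is recorded as the over-arc $c_x$ at one of the crossings, and the identity required to match the two families of relations is precisely axiom (Q3), namely $\qq{x}{(\qq yz)}=\qq{(\qq xy)}{(\qq xz)}$, equivalently $\varphi_{\qq xy}=\varphi_x\circ\varphi_y\circ\varphi_x^{-1}$ from \Cref{Prp: Q to Inn(Q) is quandle morphism}.

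The main obstacle will be the bookkeeping for R3: because each of the three participating crossings carries an independent sign $\varepsilon=\pm1$, the move splits into several oriented variants, and in each variant one must track the labels $c_x$, $n_x$ correctly and verify that the before- and after-relations generate the same congruence on the free quandle. I would control this by appealing to the standard fact that a small generating set of oriented Reidemeister moves suffices, so that only a bounded number of sign patterns need be verified explicitly, the remaining cases following formally.
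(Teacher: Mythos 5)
Your proposal is correct and follows essentially the same route as the paper, which simply defers to Joyce's argument (\cite[\textsection15]{ARTICLE:Joyce1982}): invariance of the presented quandle under each Reidemeister move, with R1, R2, R3 absorbed respectively by axioms (Q1), (Q2), (Q3) via Tietze-type manipulations of the presentation. The correspondence of moves to axioms and the reduction to a generating set of oriented moves is exactly the standard proof the paper is citing.
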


This is proved in \cite[\textsection15]{ARTICLE:Joyce1982} in the case where $L$ is a knot, by showing that if $D'$ and $D$ are link diagrams that differ by a Reidemeister move, then $Q_D\simeq Q_{D'}$. The proof extends verbatim for general oriented link $L$.

\begin{definition}
	Let $L$ be an oriented link in $S^3$. The \emph{fundamental quandle} $Q_L$ of $L$ is defined to be $Q_D\in\Qdl$  for any link diagram $D$ of $L$. By \cref{Prp: Q_D isotopy invar}, $Q_L$ is well-defined up to isomorphism.
\end{definition}

\begin{definition}
	Let $Q\in\Qdl$ and let $L$ be an oriented link in $S^3$. A \emph{$Q$-coloring of $L$} is a morphism $f\colon Q_L\to Q$ in $\Qdl$. A \emph{dominant} $Q$-coloring of $L$ is a surjective morphism. We denote by
	$$\Col_Q(L):=\Hom_\Qdl(Q_L,Q),$$
	$$\Col_Q^\dom(L)=\{f\in \Hom_\Qdl(Q_L,Q)\;|\;\im(f)=Q\}$$
\end{definition}

%Although a ``cleaner" definition of $Q_L$ is given in \cite[\textsection14]{ARTICLE:Joyce1982},
This presentation of $Q_L$ in terms of generators and relations enables the interpretation of a prior notion of \emph{knot coloring} - the ``coloring" of arcs of a diagram $D$, subject to certain conditions at each crossing - in terms of morphisms of quandles $Q_L\simeq Q_D\to Q$. More precisely:

\begin{proposition}
	Let $L$ be an oriented link with link diagram $D$. Then
	$$\Col_Q(L)\simeq\{f\colon arcs(D)\to Q\;|\;\forall x\in arcs(D)\;,\;\;\varphi_{f(c_x)}^{\varepsilon_x}(f(x))=f(n_x)\}:$$
	$$\begin{tikzpicture}
			\label{eqn: crossings}
			\draw[->] (2,2)--(0,0);
			\draw[-,line width=10pt, draw=white] (0,2)--(2,0);
			\draw[->] (0,2)--(2,0);
			\draw (1.5,0.5)node [anchor=west]{$\;f(c_x)$};
			\draw (1.5,1.5)node[anchor = south]{$f(x)\;\;\;\;$};
			\draw (0.5,0.5)node[anchor=east]{$f(n_x)\;$};
			\draw (1,-0.5)node{$\varepsilon_x=1$};
			\draw[->] (4,2)--(6,0);
			\draw[-,line width=10=pt, draw=white] (6,2)--(4,0);
			\draw[->] (6,2)--(4,0);
			
			\draw (4.5,0.5)node[anchor = east]{$f(c_x)\;$};
			\draw (5.5,0.5)node[anchor = west]{$\;f(n_x)$};
			\draw (4.5,1.5)node[anchor = south]{$\;\;\;\;\;f(x)$};
			\draw (5,-0.5)node{$\varepsilon_x=-1$};
	\end{tikzpicture}$$
\end{proposition}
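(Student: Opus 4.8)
The plan is to reduce the statement to the universal property of the generators-and-relations presentation of $Q_L\simeq Q_D$ recorded in \Cref{Prp: Q_D isotopy invar}. Recall that in the two-operation formulation of \cite{ARTICLE:FennRourke1992}, where one keeps both $\qq{}{}$ and its inverse operation as primitives satisfying equational axioms, quandles form a variety of algebras in the sense of universal algebra. Consequently the forgetful functor $\Qdl\to\Set$ has a left adjoint $F$ (the free quandle), giving $\Hom_\Qdl(F(S),Q)\simeq\Hom_\Set(S,Q)$ naturally in $Q$, and quandles presented by generators and relations — such as $Q_D$, the quotient of $F(arcs(D))$ by the smallest congruence identifying the two sides of each crossing relation — enjoy the usual universal property. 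I would first state this explicitly: a morphism $f\colon Q_D\to Q$ is the same datum as a function $g\colon arcs(D)\to Q$ whose induced morphism $F(arcs(D))\to Q$ sends the two sides of every defining relation to the same element of $Q$.

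Next I would translate the defining relations under such a $g$. A quandle morphism preserves the operation, hence preserves each $\varphi_a$; it also preserves inverses, since from $\varphi_{g(a)}\bigl(g(\varphi_a^{-1}(b))\bigr)=g\bigl(\varphi_a(\varphi_a^{-1}(b))\bigr)=g(b)$ one reads off $g(\varphi_a^{-1}(b))=\varphi_{g(a)}^{-1}(g(b))$. Therefore the element $\varphi_{c_x}^{\varepsilon_x}(x)$ of $Q_D$ is carried to $\varphi_{g(c_x)}^{\varepsilon_x}(g(x))$, while $n_x$ is carried to $g(n_x)$, so collapsing the relation $\varphi_{c_x}^{\varepsilon_x}(x)=n_x$ is \emph{exactly} the crossing condition $\varphi_{g(c_x)}^{\varepsilon_x}(g(x))=g(n_x)$. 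Unwinding the two cases $\varepsilon_x=\pm1$ recovers the diagram-local equalities $\qq{g(c_x)}{g(x)}=g(n_x)$ and $\qq{g(c_x)}{g(n_x)}=g(x)$ depicted above, confirming that the set on the right-hand side is precisely the set of relation-respecting functions.

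Combining the two steps yields the bijection. The forward map sends a coloring $f\colon Q_L\simeq Q_D\to Q$ to $g=f\circ\iota$, where $\iota\colon arcs(D)\to Q_D$ is the canonical map on generators; $g$ satisfies the crossing conditions because the generators satisfy the defining relations inside $Q_D$ and $f$ is a morphism. The backward map sends a crossing-compatible $g$ to the unique morphism provided by the universal property, and the uniqueness clause makes the two assignments mutually inverse. The only genuinely delicate point is the first step — setting up free quandles and quandle congruences carefully enough to assert the universal property of a presentation — together with the verification that morphisms intertwine $\varphi_a^{\pm1}$ with $\varphi_{g(a)}^{\pm1}$; once these standard facts (already implicit in \Cref{Prp: Q_D isotopy invar}) are in hand, the remainder is pure bookkeeping and I expect no real obstruction.
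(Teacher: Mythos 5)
Your proof is correct and is essentially the route the paper intends: in fact the paper states this proposition with no proof at all, treating it as an immediate unwinding of the generators-and-relations presentation of $Q_L\simeq Q_D$ given in \Cref{Prp: Q_D isotopy invar}. Your writeup simply makes explicit the two standard facts the paper leaves implicit — the universal property of a presentation (available because quandles, in the two-operation signature, form a variety with free objects) and the verification that quandle morphisms intertwine $\varphi_a^{\pm1}$ with $\varphi_{g(a)}^{\pm1}$ — so there is no gap and no divergence to report.
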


\begin{proposition}
	Let $Q\in\Qdl$ and let $L$ be an oriented link in $S^3$. Then
	$$\Col_Q^\dom(L)=\Col_Q(L)\setminus\bigcup_{R<Q}\Col_R(L).$$
\end{proposition}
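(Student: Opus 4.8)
The plan is to prove the two set-theoretic inclusions establishing
$$\Col_Q^\dom(L)=\Col_Q(L)\setminus\bigcup_{R<Q}\Col_R(L).$$
The key observation is that every $Q$-coloring $f\colon Q_L\to Q$ factors through its image $\im(f)$, and by \Cref{Prp: im sub} this image is a sub-quandle $\im(f)\le Q$. Thus a coloring lands inside some proper sub-quandle $R<Q$ precisely when it fails to be surjective. I would make this precise by identifying each $\Col_R(L)=\Hom_\Qdl(Q_L,R)$ with the subset of $\Col_Q(L)$ consisting of those $f$ whose image is contained in $R$, via post-composition with the inclusion $R\hookrightarrow Q$; this identification is what makes the union on the right-hand side a union of genuine subsets of $\Col_Q(L)$.

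First I would show the inclusion ``$\subseteq$'' by contrapositive: suppose $f\in\Col_Q(L)$ lies in $\bigcup_{R<Q}\Col_R(L)$, so that $\im(f)\subseteq R$ for some proper sub-quandle $R<Q$. Then $\im(f)\neq Q$, so $f$ is not surjective and hence $f\notin\Col_Q^\dom(L)$. Conversely, for ``$\supseteq$'', suppose $f\in\Col_Q(L)$ is \emph{not} dominant, i.e.\ $\im(f)\neq Q$. By \Cref{Prp: im sub}, $R:=\im(f)$ is a sub-quandle of $Q$, and since $\im(f)\neq Q$ we have $R<Q$. Then $f$ corestricts to a morphism $Q_L\to R$, exhibiting $f\in\Col_R(L)\subseteq\bigcup_{R<Q}\Col_R(L)$. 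Combining the two directions gives the claimed equality.

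The only genuinely substantive point is the identification of $\Col_R(L)$ with a subset of $\Col_Q(L)$, which relies on $R\hookrightarrow Q$ being a quandle morphism (immediate, since the operation on $R$ is induced from $Q$) and on post-composition being injective on Hom-sets (immediate, since the inclusion is injective as a function). I do not expect any real obstacle here: the result is essentially a formal consequence of \Cref{Prp: im sub} together with the universal property that a map into $Q$ with image in $R$ is the same data as a map into $R$. The one subtlety worth flagging explicitly is that a single non-dominant coloring may factor through several proper sub-quandles, but this causes no difficulty because the right-hand side is a union rather than a disjoint union, so membership in any one $\Col_R(L)$ suffices.
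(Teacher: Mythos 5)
Your proposal is correct and takes essentially the same approach as the paper: both arguments rest on \cref{Prp: im sub} to conclude that a coloring $f\in\Col_Q(L)$ fails to be surjective precisely when $\im(f)$ is a proper sub-quandle $R<Q$, which immediately yields the set equality. Your explicit handling of the identification of $\Col_R(L)$ with a subset of $\Col_Q(L)$ via post-composition with $R\hookrightarrow Q$ is a detail the paper leaves implicit, but the underlying argument is identical.
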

\begin{proof}
	By \cref{Prp: im sub}, for all $f\in\Col_Q(L)$, $\im(f)\le Q$. Therefore a $Q$-coloring $f$ is \emph{not} surjective iff its image is a proper sub-quandle of $Q$.
	Thus
	$$\Col_Q^\dom(L)=\Col_Q(L)\setminus\bigcup_{R<Q}\Col_R(L).$$
\end{proof}

\begin{proposition}
	\label{Prp: fin Q fin Col}
	Let $L$ be an oriented link in $S^3$ and let $Q\in\Qdl^\fin$ be a finite quandle. Then $\Col_Q(L),\Col_Q^\dom(L)$ are finite sets.
\end{proposition}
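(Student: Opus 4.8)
The plan is to reduce the count of colorings to a count of functions on a finite set. The one subtlety worth flagging at the outset is that the fundamental quandle $Q_L$ is generally \emph{infinite} (for instance the fundamental quandle of the trefoil is infinite), so one cannot simply invoke the finiteness of $\Hom$ between two finite sets. The finiteness must instead come from the fact that $Q_L$ admits a \emph{finite presentation}.

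First I would fix a link diagram $D$ of $L$. Since $L$ is tame, it is equivalent to a polygonal link and hence admits a diagram with only finitely many crossings; in particular $arcs(D)$ is a finite set. By \cref{Prp: Q_D isotopy invar} we have $Q_L\simeq Q_D$, and therefore $\Col_Q(L)=\Hom_\Qdl(Q_L,Q)\simeq\Hom_\Qdl(Q_D,Q)$. Next I would invoke the preceding proposition, which identifies colorings with admissible arc-labelings:
$$\Col_Q(L)\simeq\left\{f\colon arcs(D)\to Q\;\middle|\;\forall x\in arcs(D)\;,\;\;\varphi_{f(c_x)}^{\varepsilon_x}(f(x))=f(n_x)\right\}.$$
This exhibits $\Col_Q(L)$ as a \emph{subset} of the set of all functions $arcs(D)\to Q$. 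Since both $arcs(D)$ and $Q$ are finite, that ambient function set has cardinality $|Q|^{|arcs(D)|}<\infty$, so $\Col_Q(L)$ is finite. Conceptually this is just the statement that a quandle morphism out of the finitely generated quandle $Q_D$ is determined by the images of its finitely many generators, of which there are only finitely many choices.

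Finally, for the dominant colorings: by definition $\Col_Q^\dom(L)\subseteq\Col_Q(L)$, since a dominant coloring is in particular a coloring. Hence $\Col_Q^\dom(L)$ is a subset of a finite set and is therefore finite as well. The only place requiring any care is the passage to a finite diagram so that the relevant finiteness is derived from the finite generating set of $Q_D$ rather than from $Q_L$ itself; once this reduction is in place, the remaining steps are immediate.
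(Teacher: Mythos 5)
Your proof is correct and takes essentially the same route as the paper: both fix a finite link diagram $D$ and observe that $\Col_Q(L)$ injects into the finite set $\Hom_\Set(arcs(D),Q)$, with $\Col_Q^\dom(L)$ finite as a subset of $\Col_Q(L)$. Your added remark that $Q_L$ is typically infinite (so the finiteness must come from the finite presentation via $D$) is a correct clarification of why this reduction is the essential step, but it does not change the argument.
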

\begin{proof}
	Let $D$ be a link diagram for $L$, consisting of finitely many arcs. Then $\Col_Q(L)$ injects into $\Hom_\Set(arcs(D),Q)$, which is a finite set because $Q$ is finite. Thus $\Col_Q(L)$ is finite and so is $\Col_Q^\dom(L)\subseteq\Col_Q(L)$.
\end{proof}

\begin{definition}
Let $L$ be an oriented link in $S^3$ and let $Q\in\Qdl^\fin$. By \cref{Prp: fin Q fin Col}, $\Col_Q(L),\Col_Q^\dom(L)$ are finite. By $\col_Q(L),\col_Q^\dom(L)$ we denote their rective cardinalities:
	$$\col_Q(L):=\left|\Col_Q(L)\right|\in\NN,$$
	$$\col_Q^\dom(L):=\left|\Col_Q^\dom(L)\right|\in\NN.$$
\end{definition}

\subsubsection{Braids and the Artin Braid Group $B_n$}

Much like knots and links, braids admit both a topological definition and a combinatorial description. Recall in \cite[\textsection1.2.2]{BOOK:KassTur2008} the definition of a \emph{braid diagram} on $n$ strands - $n$ copies of the interval $I=[0,1]$ embedded into $\RR\times I$ s.t. each strand is a section of the projection $\RR\times I\to I$, the set of endpoints is fixed - $\{1,\dots n\}\times\{0,1\}$, no more than two strands may intersect at any point, where they intersect transversely. A braid diagram includes the data of which strand crosses over and under at any intersection. A braid diagram is depicted in terms of arcs - similarly to link diagrams, except for the fixed endpoints above and below. We orient braids downwards.

Concatenation of braids defines a group structure on $B_n$, the set of braids on $n$ strands. This group admits the following presentation in terms of generators and relations - see \cite[\textsection 1.1]{BOOK:KassTur2008}:
\begin{definition}
The Artin braid group $B_n$ on $n$ strands is given by generators and relations
$$B_n=\langle\sigma_1\dots\sigma_{n-1}\;|\; \sigma_i\sigma_{i+1}\sigma_i=\sigma_{i+1}\sigma_i\sigma_{i+1},\;\;[\sigma_i,\sigma_j]=1\textrm{ for }|i-j|\ge 2\rangle.$$
	$$\begin{tikzpicture}[scale=0.8]
			\label{eqn: Brieskorn step}
			\draw (-4,0.5)node{$\sigma_i\;\;:$};
			\draw[->] (-2,1)node[anchor=south]{$\scriptstyle{1} $}--(-2,0)node[anchor=north]{$\scriptstyle{1}$};
			\draw (-1,0.5)node{$\cdots$};
			\draw[->] (1,1)node[anchor=south]{$\scriptstyle{i+1}$}--(0,0)node[anchor=north]{$\scriptstyle{i}$};
			\draw[-,line width=10pt, draw=white] (0,1)--(1,0);
			\draw[->] (0,1)node[anchor=south]{$\scriptstyle{i}$}--(1,0)node[anchor=north]{$\scriptstyle{i+1} $};
			
			\draw (2,0.5)node{$\cdots$};
			\draw[->] (3,1)node[anchor=south]{$\scriptstyle{n} $}--(3,0)node[anchor=north]{$\scriptstyle{n} $};
	\end{tikzpicture}$$
\end{definition}

One obtains an oriented link from braids via \emph{braid closure}. In terms of braid diagrams this is done by closing up the braid strands from bottom back up to the top such that they don't cross on the way. A precise definition may be found in  \cite[\textsection 2.2.3]{BOOK:KassTur2008}. Below depicted is an example:

	%\begin{wrapfigure}{r}{3cm}
$$\begin{tikzpicture}

\draw[->,rounded corners] (0.5,4)--(0.5,3.5)--(0,3)--(0,1.5)--(1,0.5)--(1,0);
\draw [-,line width=6 pt, draw=white] (0,4)--(0,3.5)--(1,2.5)--(0.5,2)--(0.5,1.5)--(0,1)--(0,0);
\draw[->, rounded corners] (0,4)--(0,3.5)--(1,2.5)--(0.5,2)--(0.5,1.5)--(0,1)--(0,0);
\draw [-,line width=6 pt, draw=white] (1,4)--(1,3)--(0.5,2.5)--(1.5,1.5)--(1.5,0);
\draw[->, rounded corners] (1,4)--(1,3)--(0.5,2.5)--(1.5,1.5)--(1.5,0);
\draw [-,line width=6 pt, draw=white] (1.5,4)--(1.5,2)--(1,1.5)--(1,1)--(0.5,0.5)--(0.5,0);
\draw[->, rounded corners] (1.5,4)--(1.5,2)--(1,1.5)--(1,1)--(0.5,0.5)--(0.5,0);

\draw (-0.5,2)node{$\sigma$};

%%BRAID CLOSURE DRAWING. PARAMETRIZE AND SHIFT
\begin{scope}[xshift=5cm]
\draw[->,rounded corners] (0.5,4)--(0.5,3.5)--(0,3)--(0,1.5)--(1,0.5)--(1,0);
\draw [-,line width=6 pt, draw=white] (0,4)--(0,3.5)--(1,2.5)--(0.5,2)--(0.5,1.5)--(0,1)--(0,0);
\draw[->, rounded corners] (0,4)--(0,3.5)--(1,2.5)--(0.5,2)--(0.5,1.5)--(0,1)--(0,0);
\draw [-,line width=6 pt, draw=white] (1,4)--(1,3)--(0.5,2.5)--(1.5,1.5)--(1.5,0);
\draw[->, rounded corners] (1,4)--(1,3)--(0.5,2.5)--(1.5,1.5)--(1.5,0);
\draw [-,line width=6 pt, draw=white] (1.5,4)--(1.5,2)--(1,1.5)--(1,1)--(0.5,0.5)--(0.5,0);
\draw[->, rounded corners] (1.5,4)--(1.5,2)--(1,1.5)--(1,1)--(0.5,0.5)--(0.5,0);
\draw (0,0) arc (0:-180:0.5);
\draw (0.5,0) arc (0:-180:0.8);
\draw (1,0) arc (0:-180:1.1);
\draw (1.5,0) arc (0:-180:1.4);
\draw (-1,0)--(-1,1.5);
\draw[-] (-1,1.5)--(-1,2.5);
\draw (-1,2.5)--(-1,4);
\draw (-1.1,0)--(-1.1,1.5);
\draw[-] (-1.1,1.5)--(-1.1,2.5);
\draw (-1.1,2.5)--(-1.1,4);
\draw (-1.2,0)--(-1.2,1.5);
\draw[-] (-1.2,1.5)--(-1.2,2.5);
\draw (-1.2,2.5)--(-1.2,4);
\draw (-1.3,0)--(-1.3,1.5);
\draw[-] (-1.3,1.5)--(-1.3,2.5);
\draw (-1.3,2.5)--(-1.3,4);
\draw[<-] (0,4) arc (0:180:0.5);
\draw[<-] (0.5,4) arc (0:180:0.8);
\draw[<-] (1,4) arc (0:180:1.1);
\draw[<-] (1.5,4) arc (0:180:1.4);

\draw (-2,2)node{$\bclose\sigma$};
\end{scope}
\end{tikzpicture}$$

\begin{proposition}
	\label{Dfn: Brieskorn rep}
	Let $Q\in\Qdl$ and $n\in\NN$. Then the action of $\sigma_i\in B_n$ on the set $Q^n$ via
	$$\sigma_i(x_1\;\dots\;x_i\;,\;x_{i+1}\;\dots\;x_n)= (x_1\;\dots\;\qq{x_i}{x_{i+1}}\;,\;x_i\;\dots\;x_n)$$
        extends to a well-defined $B_n$-action on $Q^n\in\Set$.
\end{proposition}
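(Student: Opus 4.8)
The plan is to invoke the universal property of the presentation of $B_n$: producing a $B_n$-action on the set $Q^n$ is the same as producing a group homomorphism $B_n\to\Sym(Q^n)$, and since $B_n$ is presented by the generators $\sigma_1,\dots,\sigma_{n-1}$ subject to the braid and far-commutation relations, it suffices to (i) check that each assigned map $\sigma_i\colon Q^n\to Q^n$ is a bijection, and (ii) verify that these bijections satisfy the two families of defining relations. The decisive structural feature is that the formula alters only the coordinates in positions $i$ and $i+1$, replacing $(x_i,x_{i+1})$ by $(\qq{x_i}{x_{i+1}},x_i)=(\varphi_{x_i}(x_{i+1}),x_i)$, so all the bookkeeping localizes to a small number of coordinates.

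First I would establish bijectivity. Given an output $(\dots,y_i,y_{i+1},\dots)$ of $\sigma_i$, one recovers the input by $x_i=y_{i+1}$ and $x_{i+1}=\varphi_{y_{i+1}}^{-1}(y_i)$, which is well-defined precisely because axiom (Q2) guarantees that $\varphi_{y_{i+1}}$ is a bijection of $Q$. This yields an explicit two-sided inverse
$$\sigma_i^{-1}(\dots,y_i,y_{i+1},\dots)=(\dots,y_{i+1},\varphi_{y_{i+1}}^{-1}(y_i),\dots),$$
so each $\sigma_i$ lies in $\Sym(Q^n)$.

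Next come the relations. The far-commutation relation $[\sigma_i,\sigma_j]=1$ for $|i-j|\ge 2$ is immediate: the maps $\sigma_i$ and $\sigma_j$ modify the disjoint coordinate blocks $\{i,i+1\}$ and $\{j,j+1\}$, and transformations acting on disjoint coordinates of a product commute. The braid relation $\sigma_i\sigma_{i+1}\sigma_i=\sigma_{i+1}\sigma_i\sigma_{i+1}$ involves only the three coordinates $i,i+1,i+2$, so I would reduce to tracking a triple $(a,b,c)$ and computing both composites directly. Both send $(a,b,c)$ to the triple $(\qq a{(\qq bc)},\;\qq ab,\;a)$, where in the course of evaluating the word $\sigma_i\sigma_{i+1}\sigma_i$ one invokes the self-distributivity axiom (Q3) in the form $\qq{(\qq ab)}{(\qq ac)}=\qq a{(\qq bc)}$.

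This single application of (Q3) is the crux of the argument and the only genuine obstacle; everything else is formal, and I note that axiom (Q1) plays no role here. Once both relations are checked on generators, the universal property of the presentation promotes the assignment $\sigma_i\mapsto\sigma_i\in\Sym(Q^n)$ to a homomorphism $B_n\to\Sym(Q^n)$, i.e.\ a well-defined $B_n$-action on $Q^n$, completing the proof.
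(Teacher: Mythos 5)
Your proposal is correct and follows essentially the same route as the paper's proof: exhibit the explicit inverse $(\dots,y_i,y_{i+1},\dots)\mapsto(\dots,y_{i+1},\varphi_{y_{i+1}}^{-1}(y_i),\dots)$ via axiom (Q2), note that far-commutation is automatic since $\sigma_i,\sigma_j$ act on disjoint coordinate pairs, and verify the braid relation on a triple of coordinates where axiom (Q3) supplies the key identity $\qq{(\qq ab)}{(\qq ac)}=\qq a{(\qq bc)}$. Your explicit appeal to the universal property of the presentation (and the observation that (Q1) is never used, so the action exists for racks as well) only makes precise what the paper leaves implicit in its closing sentence.
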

This statement appears as part of proposition $3.1$ in {\cite[\textsection 3]{ARTICLE:Brieskorn1986}}. We include a proof for completeness.
\begin{proof}
	inverse to the $\sigma_i$-action is
	$$(x_1\;\dots\;x_i\;,\;x_{i+1}\;\dots\;x_n)\mapsto (x_1\;\dots\; x_{i+1}\;,\; \varphi_{x_{i+1}}^{-1}({x_i}) \;\dots\;x_n).$$
	Let $(x_1,\dots,x_n)\in Q^n$. For all $1\le i<j\le n-1$ s.t. $|i-j|\ge 2$,
	$$(\sigma_i\circ\sigma_j)(x_1,\dots,x_n)=
	(\dots\;\qq{x_i}{x_{i+1}}\;,\;x_i\; \dots\;\qq{x_j}{x_{j+1}}\;,\;x_j
	\;\dots)=$$$$=(\sigma_j\circ\sigma_i)(x_1,\dots,x_n).$$
	 Now let $1\le i\le n-2$. Taking $x:=x_i$, $y:=x_{i+1}$ and $z:=x_{i+2}$, then
	$$(\dots x,y,z\dots)\xmapsto{\sigma_{i}}
	(\dots\qq xy,x,z\dots)\xmapsto{\sigma_{i+1}}
	(\dots\qq xy,\qq xz,x\dots)\xmapsto{\sigma_{i}}
	(\dots\qq{(\qq xy)}{(\qq xz)},\qq xy,x\dots)$$
	$$(\dots x,y,z\dots)\xmapsto{\sigma_{i+1}}
	(\dots x,\qq yz,y\dots)\xmapsto{\sigma_{i}}
	(\dots\qq{x}{(\qq yz)},x,y\dots)\xmapsto{\sigma_{i+1}}
	(\dots \qq{x}{(\qq yz)},\qq xy,x\dots)$$
	These two coincide due to quandle axiom $(Q3)$:
	$\qq{x}{(\qq yz)}=\qq{(\qq xy)}{(\qq xz)}.$
	%$$\qq{x}{(\qq yz)}=\qq{(\qq xy)}{(\qq xz)}.$$
	Therefore
	$$(\sigma_i\circ\sigma_{i+1}\circ\sigma_i)(x_1,\dots,x_n)= (\sigma_{i+1}\circ\sigma_i\circ\sigma_{i+1}) (x_1,\dots,x_n).$$
	The actions of the $\sigma_i$ respect the relations in $B_n$, thus extend to a $B_n$-action.
\end{proof}

\begin{definition}%[{\cite[\textsection 3]{ARTICLE:Brieskorn1986}}]
	%\label{Dfn: Brieskorn rep}
	Let $n\in\NN$ and let $Q\in\Qdl$. We denote by 
 $$\beta_{Q,n}\colon B_n\to\Aut_\Set(Q^n)$$
 The homomorphism described in \cref{Dfn: Brieskorn rep}.	
\end{definition}

\begin{lemma}
\label{Lma: Q^n is B_n equivar}
	Let $f\colon R\to Q$ be a morphism in $\Qdl$ and let $n\in\NN$. Then the diagonal map $f_n\colon R^n\to Q^n$ commutes with the $B_n$ action.
\end{lemma}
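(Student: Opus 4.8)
The plan is to use that $B_n$ is generated by the elementary braids $\sigma_1,\dots,\sigma_{n-1}$, to verify the intertwining property on these generators, and then to propagate it to all of $B_n$ by observing that the braids satisfying it form a subgroup. First I would spell out the assertion. Writing $f_n(x_1,\dots,x_n)=(f(x_1),\dots,f(x_n))$ for the coordinatewise application of $f$, the statement that $f_n$ commutes with the $B_n$-action means
$$f_n\circ\beta_{R,n}(\sigma)=\beta_{Q,n}(\sigma)\circ f_n\qquad\text{for all }\sigma\in B_n.$$
It is convenient to introduce the set $S\subseteq B_n$ of those $\sigma$ for which this identity holds, so that the goal becomes $S=B_n$.

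Next comes the only computation, which takes place on the generators. Fixing $\sigma_i$ and $(x_1,\dots,x_n)\in R^n$, I would note that $\beta_{R,n}(\sigma_i)$ alters only the coordinates in positions $i$ and $i+1$, so that applying $f$ coordinatewise yields the tuple whose $i$-th entry is $f(\qq{x_i}{x_{i+1}})$, whose $(i+1)$-th entry is $f(x_i)$, and whose remaining entries are the $f(x_j)$. The morphism axiom $f(\qq{x_i}{x_{i+1}})=\qq{f(x_i)}{f(x_{i+1})}$ from \cref{Dfn: quandle} identifies this tuple with $\beta_{Q,n}(\sigma_i)(f_n(x_1,\dots,x_n))$. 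Hence every $\sigma_i\in S$.

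Finally I would upgrade from generators to the whole group. The relation cutting out $S$ is multiplicative and stable under inverses: if $\sigma,\tau\in S$, then using that $\beta_{R,n}$ and $\beta_{Q,n}$ are homomorphisms one gets $f_n\circ\beta_{R,n}(\sigma\tau)=\beta_{Q,n}(\sigma)\circ f_n\circ\beta_{R,n}(\tau)=\beta_{Q,n}(\sigma\tau)\circ f_n$, so $\sigma\tau\in S$; and conjugating the identity for $\sigma$ by $\beta_{Q,n}(\sigma^{-1})$ and $\beta_{R,n}(\sigma^{-1})$ shows $\sigma^{-1}\in S$. Thus $S$ is a subgroup of $B_n$ containing all the generators $\sigma_i$, so $S=B_n$, which is exactly the claim.

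There is no serious obstacle here: the entire content is captured by the single line $f(\qq{x_i}{x_{i+1}})=\qq{f(x_i)}{f(x_{i+1})}$. The only point meriting a little care — and the closest thing to a ``hard part'' — is that checking the identity on generators does not by itself give it on arbitrary words, so one must know the intertwining condition defines a subgroup (in particular is closed under inverses). Phrasing the verification through the homomorphisms $\beta_{R,n}$ and $\beta_{Q,n}$, rather than through explicit formulas for each $\sigma_i$, is therefore the cleanest route and makes the subgroup argument immediate.
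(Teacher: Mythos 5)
Your proof is correct and follows essentially the same route as the paper: verify the intertwining identity on the generators $\sigma_i$ via the single quandle-morphism computation $f(\qq{x_i}{x_{i+1}})=\qq{f(x_i)}{f(x_{i+1})}$, then extend to all of $B_n$. The only difference is that you spell out the subgroup argument justifying the reduction to generators, which the paper compresses into the sentence ``It suffices to show this for generators.''
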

\begin{proof}
We prove that for all $\sigma\in B_n$,
	$$\beta_Q(\sigma)\circ f_n= f_n\circ\beta_R(\sigma).$$
	It suffices to show this for generators: For all $1\le i\le n-1$,% $\sigma_i\in B_n$ ($i=1\dots n-1$),
	$$\beta_Q(\sigma_i)\circ f_n=f_n\circ\beta_R(\sigma_i)\colon R^n\to Q^n.$$
	Indeed for any $x,y\in R$ and 
	the appropriate $\sigma_i\in B_n$, we have 
	$$(\beta_Q(\sigma_i)\circ f_n)(\dots,x,y,\dots)=
	(\dots,\qq{f(x)}{f(y)},f(x),\dots)= $$
	$$(\dots,f(\qq xy),f(x),\dots)=
	(f_n\circ\beta_R(\sigma_i)) (\dots,x,y,\dots).$$
\end{proof}

\begin{definition}
	Let $Q\in\Qdl^\fin$ and let $n\in\NN$. We define the $B_n$-set $$Q^{n,\dom}:=Q^n\setminus\bigcup_{R<Q}R^n\in\Set_{B_n}.$$
\end{definition}

Using the $B_n$-action on $Q^n$, one interprets $Q$-colorings of the closure of a braid $\sigma\in B_n$ in terms of the points in $Q^n$ fixed by $\sigma$. A precise statement is given in the following lemma. A form of this statement appears as proposition $7.6$ in  \cite{ARTICLE:FennRourke1992}, with emphasis on coloring \emph{framed} links using \emph{racks} - a type of algebraic structure that generalizes quandles. For completeness we include a proof here, as well as an analogous statement for dominant colorings.
\begin{lemma}
	\label{Lma: fixed colorings}
	Let $Q\in\Qdl$ and let $n\in\NN$. Then for every $\sigma\in B_n$ there are natural bijections
	$$\Col_Q(\bclose\sigma)\simeq (Q^n)^{\beta(\sigma)}.$$
	$$\Col_Q^\dom(\bclose\sigma)\simeq (Q^{n,\dom})^{\beta(\sigma)}.$$
\end{lemma}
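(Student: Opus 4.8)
The plan is to connect three descriptions of a $Q$-coloring of $\bclose\sigma$: as a quandle morphism $Q_{\bclose\sigma}\to Q$, as a coloring of the arcs of a braid-closure diagram satisfying the crossing relations, and as a fixed point of $\beta(\sigma)$ on $Q^n$. The middle description is the bridge, so first I would fix a braid diagram for $\sigma\in B_n$ and take the standard closure diagram $D$ for $\bclose\sigma$. The key observation is that the arcs of $D$ are organized by the $n$ ``horizontal levels'' of the braid: reading the braid from top to bottom, each of the $n$ strands carries a color, and passing through a crossing $\sigma_i$ transforms the colors of the two involved strands exactly by the rule defining $\beta_{Q,n}(\sigma_i)$ in \cref{Dfn: Brieskorn rep}. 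I would make precise that a $Q$-coloring of the diagram is determined by the tuple of colors $(x_1,\dots,x_n)\in Q^n$ at the top of the braid, and that the crossing relations propagate this tuple downward according to $\beta(\sigma)$.

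The central step is then the \emph{closure condition}. Because $\bclose\sigma$ is obtained by joining the bottom endpoints back to the top endpoints without new crossings, a coloring of the open braid diagram extends to a coloring of the closed diagram if and only if the color tuple at the bottom agrees with the color tuple at the top. Reading the braid from top to bottom realizes the bottom tuple as $\beta(\sigma)$ applied to the top tuple $(x_1,\dots,x_n)$; hence the closure constraint is precisely
$$\beta(\sigma)(x_1,\dots,x_n)=(x_1,\dots,x_n),$$
i.e.\ the tuple is a fixed point of $\beta(\sigma)$. Combining this with the arc-coloring description of $\Col_Q(\bclose\sigma)$ and the presentation of $Q_{\bclose\sigma}=Q_D$ from \cref{Prp: Q_D isotopy invar}, I would conclude the bijection $\Col_Q(\bclose\sigma)\simeq (Q^n)^{\beta(\sigma)}$. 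I would also check naturality in $Q$: a quandle morphism $f\colon R\to Q$ sends a coloring to its post-composite, which on tuples is the diagonal map $f_n\colon R^n\to Q^n$, and by \cref{Lma: Q^n is B_n equivar} this commutes with $\beta(\sigma)$, so the bijection is natural.

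For the dominant statement, I would use the already established identity $\Col_Q^\dom(\bclose\sigma)=\Col_Q(\bclose\sigma)\setminus\bigcup_{R<Q}\Col_R(\bclose\sigma)$ together with the definition $Q^{n,\dom}=Q^n\setminus\bigcup_{R<Q}R^n$. Under the natural bijection just constructed, a coloring factors through a proper sub-quandle $R<Q$ exactly when its color tuple lies in $R^n\subseteq Q^n$; since the tuple is $\beta(\sigma)$-fixed, this matches $(R^n)^{\beta(\sigma)}$. Intersecting the complement with the fixed-point set and using that $R^n$ is a $\beta(\sigma)$-stable subset (again by \cref{Lma: Q^n is B_n equivar} applied to the inclusion $R\hookrightarrow Q$) yields $\Col_Q^\dom(\bclose\sigma)\simeq (Q^{n,\dom})^{\beta(\sigma)}$.

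The main obstacle I anticipate is the careful bookkeeping in the first step: one must verify that the crossing relations of the closure diagram $D$, organized arc-by-arc, really do compose to $\beta(\sigma)$ rather than its inverse or a conjugate, and that both crossing signs $\varepsilon_x=\pm1$ in \cref{Prp: Q_D isotopy invar} are handled by the invertibility of the $\beta(\sigma_i)$-action (the inverse formula recorded in the proof of \cref{Dfn: Brieskorn rep}). This amounts to matching the orientation conventions of the braid (oriented downward) with those of the link diagram and confirming that the generator $\sigma_i$ acts on the level tuples exactly as stated; once the diagrammatic dictionary is pinned down, the fixed-point reformulation and the dominant-coloring refinement follow formally.
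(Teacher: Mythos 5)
Your proposal is correct and takes essentially the same approach as the paper: color the open braid diagram, observe that a coloring is determined by the top tuple and propagates to the bottom tuple via $\beta(\sigma)$, identify the closure condition with the fixed-point condition, and deduce the dominant case by intersecting with the complement of the $\beta(\sigma)$-stable subsets $R^n$ for $R<Q$. The paper formalizes your ``propagation'' step with the two restriction maps $p_0,p_1\colon\Col_Q(\sigma)\to Q^n$ and the identity $p_1\circ p_0^{-1}=\beta(\sigma)$, but the content is identical.
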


\begin{proof}
	Let $\sigma\in B_n$. A braid diagram for $\sigma$ consists of oriented arcs and crossings, much like the diagram of an oriented link - except that the overall strands have endpoints. We define $\Col_Q(\sigma)$, the set of $Q$-colorings of a braid diagram,  in much the same way as oriented link diagrams - with no conditions imposed at strand endpoints. We can describe $Q$-colorings of $\bclose\sigma$ in terms of $Q$-colorings of $\sigma$: Pulling back $c\in\Col_Q(\bclose\sigma)$ to $c_{|\sigma}\in\Col_Q(\sigma)$ gives an injective map
	$$\Col_Q(\bclose\sigma)\to\Col_Q(\sigma).$$
	Since the closure $\bclose\sigma$ is obtained by wrapping up the loose ends in a diagram of $\sigma$, crucially without introducing new crossings, those $c'\in\Col_Q(\sigma)$ residing in the image are precisely those with compatible colorings at endpoints: Let $p_0,p_1\colon \Col_Q(\sigma)\to Q^n$ denote the restrictions of $c'\in\Col_Q(\sigma)$ to the $n$ top (resp. bottom) endpoints of the braid. Then
	$$\Col_Q(\bclose\sigma)\iso\{c'\in\Col_Q(\sigma)\;|\;p_0(c')=p_1(c')\}.$$
	Any $c'\in\Col_Q(\sigma)$ is determined completely by the colors at the top, so 
	$$p_0\colon\Col_Q(\sigma)\iso Q^n$$
	is a bijection.
	Likewise colorings at the bottom, so
	$$p_1\colon\Col_Q(\sigma)\iso Q^n$$
	is also a bijection.
	Finally, note that colors at the top of $\sigma$ determine colors at the bottom of $\sigma$ by $p_1\circ p_0^{-1}=\beta(\sigma)$. This is because the $B_n$-action on $Q^n$ precisely emulates the progression from start to finish by legal $Q$-coloring:
	$$\begin{tikzpicture}
			\label{eqn: Brieskorn step}
			\draw (-4,0.5)node{$\sigma_i\;\;:$};
			\draw[->] (-2,1)node[anchor=south]{$x_{1}$}--(-2,0)node[anchor=north]{$x_{1}$};
			\draw (-1,0.5)node{$\cdots$};
			
			\draw[->] (1,1)node[anchor=south]{$x_{i+1}$}--(0,0)node[anchor=north]{$\qq{x_i}{x_{i+1}}$};
			\draw[-,line width=10pt, draw=white] (0,1)--(1,0);
			\draw[->] (0,1)node[anchor=south]{$x_{i}$}--(1,0)node[anchor=north]{$x_{i}$};
			
			\draw (2,0.5)node{$\cdots$};
			\draw[->] (3,1)node[anchor=south]{$x_{n}$}--(3,0)node[anchor=north]{$x_{n}$};
	\end{tikzpicture}$$
	Therefore
	$$\Col_Q(\bclose\sigma)\simeq \{c\in\Col_Q(\sigma)\;|\;p_0(c)=p_1(c)\} \xrightarrow[\sim]{p_0}\{\overrightarrow x\in Q^n\;|\;\overrightarrow x=\beta(\sigma)(\overrightarrow x)\}=\left(Q^n\right)^{\beta(\sigma)}.$$
	This also holds for all $R\le Q$. Therefore
	$$\Col_Q^\dom(\bclose\sigma)=\Col_Q(\bclose\sigma)\setminus\bigcup_{R<Q}\Col_R(\bclose\sigma)=(Q^{n})^{\beta(\sigma)}\setminus\bigcup_{R<Q}(R^{n})^{\beta(\sigma)}=$$
	$$=(Q^n\setminus\bigcup_{R<Q}R^n)^{\beta(\sigma)}=\left(Q^{n,\dom}\right)^{\beta(\sigma)}.$$
\end{proof}

\subsection{Statistics of quandle-colorings}

\begin{proposition}
	\label{Prp: c hat}
	Let $Q\in\Qdl^\fin$ and $n\in\NN$. Then the functions
	$$\col_{Q}(\bclose\sigma)\;,\; \col_{Q}^\dom(\bclose\sigma) \colon B_n\to\NN$$
	extend to locally-constant functions on $\widehat{B_n}$, the profinite completion of $B_n$:
	$$\begin{tikzcd}[column sep=large]
		B_n\ar[r,"\col_{Q}(\bclose\sigma)"]\ar[d]	&\NN & B_n\ar[r,"\col_{Q}^\dom(\bclose\sigma)"]\ar[d]	&\NN\\
		\widehat{B_n}\ar[ur,dashed, "c_{Q,n}"'] && \widehat{B_n}\ar[ur,dashed, "c^\dom_{Q,n}"']
	\end{tikzcd}$$
\end{proposition}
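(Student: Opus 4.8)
The plan is to reduce the statement to the elementary fact that a function on $B_n$ valued in a discrete set extends, uniquely, to a continuous function on $\widehat{B_n}$ precisely when it factors through a finite quotient of $B_n$, and then to exhibit such a factorization using the fixed-point description of colorings from \Cref{Lma: fixed colorings}.

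First I would recall from \Cref{Lma: fixed colorings} that for every $\sigma\in B_n$ one has $\col_Q(\bclose\sigma)=\bigl|(Q^n)^{\beta_{Q,n}(\sigma)}\bigr|$, so that $\col_Q(\bclose\sigma)$ is the number of points of the finite set $Q^n$ fixed by the permutation $\beta_{Q,n}(\sigma)$. Since $Q$ is finite, $Q^n$ is finite and $\Aut_\Set(Q^n)=\Sym(Q^n)$ is a finite group; hence the image of the homomorphism $\beta_{Q,n}\colon B_n\to\Sym(Q^n)$ of \Cref{Dfn: Brieskorn rep} is finite, and its kernel $N:=\ker\beta_{Q,n}$ is a normal subgroup of finite index. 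The key observation is that the number of fixed points of a permutation depends only on that permutation, so $\col_Q(\bclose\sigma)$ depends only on $\beta_{Q,n}(\sigma)$, and therefore only on the coset $\sigma N\in B_n/N$. Thus $\col_Q(\bclose{-})$ factors as $B_n\twoheadrightarrow B_n/N\to\NN$. Because $B_n/N$ is finite, the universal property of the profinite completion makes the quotient $B_n\to B_n/N$ extend to a continuous, indeed locally constant, map $\widehat{B_n}\to B_n/N$; composing with the finite map $B_n/N\to\NN$ then yields the desired locally constant extension $c_{Q,n}$, which is unique because $B_n$ is dense in $\widehat{B_n}$ and $\NN$ is Hausdorff.

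For the dominant count the same $N$ works: by construction $Q^{n,\dom}=Q^n\setminus\bigcup_{R<Q}R^n$ is a $B_n$-stable subset of $Q^n$, since each $R^n$ is preserved by the action as $R\le Q$ is a sub-quandle, and $N$ acts trivially on all of $Q^n$, hence on $Q^{n,\dom}$. Therefore $\col_Q^\dom(\bclose\sigma)=\bigl|(Q^{n,\dom})^{\beta_{Q,n}(\sigma)}\bigr|$ again depends only on $\sigma N$ and factors through $B_n/N$, producing $c^\dom_{Q,n}$ by the identical argument.

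The only genuinely conceptual point, and thus the step I would be most careful to state cleanly, is the equivalence between ``extends continuously to $\widehat{B_n}$'' and ``factors through a finite quotient''; once the finite-index kernel $N$ has been produced, everything else is a formal consequence of the universal property of the profinite completion. There is essentially no computational obstacle.
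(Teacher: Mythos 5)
Your proposal is correct and follows essentially the same route as the paper: both use \Cref{Lma: fixed colorings} to rewrite $\col_Q(\bclose\sigma)$ and $\col_Q^\dom(\bclose\sigma)$ as fixed-point counts for $\beta_{Q,n}(\sigma)$ acting on the finite sets $Q^n$ and $Q^{n,\dom}$, deduce a factorization through a finite quotient of $B_n$, and conclude by the universal property of the profinite completion. Your version is merely slightly more explicit in naming the finite-index kernel $N=\ker\beta_{Q,n}$ and in noting that the same $N$ handles the dominant case because $Q^{n,\dom}$ is $B_n$-stable.
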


\begin{proof}
	By \cref{Lma: fixed colorings}, for every $\sigma\in B_n$,
	$$\Col_Q(\bclose\sigma)\simeq(Q^n)^{\beta(\sigma)}\;,\;\;\Col_Q^\dom(\bclose\sigma)\simeq(Q^{n,\dom})^{\beta(\sigma)}.$$
	%$$=\left|(Q^n)^\dom\cap (Q^n)^\sigma\right|.$$
	 The functions $\col_{Q}(\bclose\sigma) $ and $\col_{Q}^\dom(\bclose\sigma)$ therefore factor through functions on $\Aut_\Set(Q^n)\;\;$ (resp. $\Aut_\Set(Q^{n,\dom})$ ) counting fixed points in $Q^n$ (resp. $Q^{n,\dom})$:
	 $$\col_{Q}(\bclose\sigma)\colon B_n\to \Aut_\Set(Q^n)\to\NN,$$
	 $$\col_{Q}^\dom(\bclose\sigma)\colon B_n\to \Aut_\Set(Q^{n,\dom})\to\NN.$$
	 Since $Q$ is finite, then $\Aut_\Set(Q^n),\Aut_\Set(Q^{n,\dom})\in\Grp^\fin$, thus
	 $$\col_Q(\bclose\sigma),\col_Q^\dom(\bclose\sigma)\colon B_n\to\NN$$
	 factor through finite quotients of $B_n$. Hence both $\col_{Q}(\bclose\sigma)$ and $\col_{Q}^\dom(\bclose\sigma)$ may be lifted to locally-constant functions on $\widehat{B_n}$.
\end{proof}

\begin{definition}
	\label{Dfn: dim Q}
	We denote the lifts described in \cref{Prp: c hat} of $\col_{Q}(\bclose\sigma)$ and $\col_{Q}^\dom(\bclose\sigma)$ to $\widehat{B_n}\to\NN$ by $c_{Q,n}$ and $c_{Q,n}^\dom$ respectively. When $n$ is clear from context, we abbreviate: 
	$$c_Q:=c_{Q,n}\;\;,\;\;\;\;c_Q^\dom:=c_{Q,n}^\dom.$$ As a compact Hausdorff group, we consider $\widehat{B_n}$ with its Haar measure $\mu$, and $c_Q,c_Q^\dom$ as random variables on $\widehat{B_n}$.
\end{definition}

We are ready to state the main results of this text:

\begin{theorem}
\label{Thm: Qdl HPoly}
	Let $Q\in\Qdl^\fin$. Then there exist integer-valued polynomials $P_Q,\;P_Q^\dom\in\QQ[x]$ s.t. 
	$$\int\limits_{\widehat{B_n}}c_Q d\mu=P_Q(n)\;\;\;\textrm{  and  }\;\;\; \int\limits_{\widehat{B_n}}c_Q^\dom d\mu=P_Q^\dom(n) $$
	for all $n\gg 0$.
\end{theorem}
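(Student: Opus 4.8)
The plan is to reduce the integral to a combinatorial orbit count, to organize these counts into a single finitely generated graded monoid, and then to extract a Hilbert polynomial from that monoid.

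First I would evaluate the integral by an orbit–counting argument. By \cref{Lma: fixed colorings} we have $\col_Q(\bclose\sigma)=|(Q^n)^{\beta(\sigma)}|$, and the proof of \cref{Prp: c hat} shows that $\beta_{Q,n}$ factors through the finite quotient $G:=\im(\beta_{Q,n})\le\Aut_\Set(Q^n)$. The projection $\widehat{B_n}\twoheadrightarrow G$ pushes the Haar measure forward to the uniform measure on $G$, so
$$\int_{\widehat{B_n}}c_Q\,d\mu=\frac1{|G|}\sum_{g\in G}\bigl|(Q^n)^{g}\bigr|=\bigl|Q^n/B_n\bigr|,$$
the last equality being Burnside's lemma. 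The identical argument applied to $Q^{n,\dom}$ gives $\int_{\widehat{B_n}}c_Q^\dom\,d\mu=|Q^{n,\dom}/B_n|$. It thus suffices to show that $n\mapsto|Q^n/B_n|$ and $n\mapsto|Q^{n,\dom}/B_n|$ eventually agree with integer-valued polynomials.

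Next I would assemble these orbit sets into a graded monoid. Juxtaposition of braids yields an inclusion $B_n\times B_m\hookrightarrow B_{n+m}$ under which the identification $Q^n\times Q^m=Q^{n+m}$ is equivariant, so sending the orbits of $\vec x$ and $\vec y$ to the orbit of $(\vec x,\vec y)$ is well defined and associative. This makes $\A_Q:=\bigsqcup_n Q^n/B_n$ a graded monoid with $|\A_{Q,n}|=|Q^n/B_n|$, generated in degree $1$ by the finite set $Q=\A_{Q,1}$. Moreover the relation imposed by $\sigma_1$ reads $[x][y]=[\qq xy][x]$ for all $x,y\in Q$, since $\sigma_1(x,y)=(\qq xy,x)$; thus $\A_Q$ is exactly the structure monoid of the non-degenerate set-theoretic Yang--Baxter solution $(x,y)\mapsto(\qq xy,x)$ whose braid relation is the content of \cref{Dfn: Brieskorn rep}. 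This ``straightening'' relation lets one push any generator to the right past another, and it is the source of polynomial rather than exponential growth.

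I would then deduce eventual polynomiality from a Hilbert-polynomial principle for such monoids. Linearizing, the graded algebra $\QQ[\A_Q]$ has Hilbert function $n\mapsto|\A_{Q,n}|$ and is generated in degree $1$ by $|Q|$ elements subject to the straightening relations. The key point is that these relations form a \emph{bijective} substitution on $Q\times Q$ (non-degeneracy), which should force $\QQ[\A_Q]$ to be left Noetherian and of polynomial growth: for each sub-quandle $R\le Q$ the composite $R^n\to\pi_0(R)^n$ is $B_n$-invariant and realizes multiset counts $\Sym^n\pi_0(R)$ as commutative polynomial invariants inside $\A_Q$, suggesting the growth is governed by the most disconnected sub-quandle. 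This is precisely the hypothesis of \Cref{Prp: good HPoly}, whose conclusion produces an integer-valued $P_Q\in\QQ[x]$ with $|\A_{Q,n}|=P_Q(n)$ for $n\gg0$. The step I expect to be the genuine obstacle is verifying this ``good behaviour'': a free monoid on $|Q|\ge2$ generators grows exponentially, so one must show the quandle relations cut the growth to polynomial by establishing the requisite Noetherianity and module-finiteness of $\QQ[\A_Q]$ over the commutative subalgebra assembled from the sub-quandle multiset invariants. Finally, the dominant case reduces to the plain one: since $Q^{n,\dom}=Q^n\setminus\bigcup_{R<Q}R^n$ with each $R^n$ a $B_n$-invariant subset and $R_i^n\cap R_j^n=(R_i\cap R_j)^n$, an orbit lies in $\bigcup_{R<Q}R^n$ iff it lies in some $R^n$, so inclusion–exclusion on the finite orbit set expresses $|Q^{n,\dom}/B_n|$ as a finite $\ZZ$-linear combination of counts $|{R'}^n/B_n|$ over sub-quandles $R'\le Q$. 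Each summand is eventually polynomial by the previous step, so $P_Q^\dom$ exists as well.
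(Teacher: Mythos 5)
Your first two steps are sound and essentially reproduce the paper's route: the Burnside/pushforward argument is exactly \cref{Prp: EQn orbit counting}, the identification of $\coprod_n Q^n/B_n$ with the monoid generated by $Q$ subject to $x\cdot y=\qq xy\cdot x$ is \cref{Prp: A_Q Braided Form}, and your inclusion--exclusion reduction of the dominant count to the plain counts over sub-quandles is a legitimate (and in fact slightly cleaner) alternative to the paper's treatment of $\A_Q^\dom$ as a finitely generated graded $\A_Q$-set. The problem is that you stop exactly where the real work begins: to invoke \cref{Prp: good HPoly} you must produce a \emph{good} algebra, i.e.\ you must prove that $\CC[\A_Q]$ is left Noetherian and that the whole structure is module-finite over something commutative, and you explicitly leave this unproven (``the step I expect to be the genuine obstacle''). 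Worse, the route you hint at cannot work as stated: the multiset invariants $\Sym^n\pi_0(R)$ arise from the surjection $\A_R\twoheadrightarrow\NN[\pi_0(R)]$, so they live in a \emph{quotient} of $\A_R$, not in a commutative \emph{subalgebra} of $\A_Q$, and hence cannot serve as the base ring for a module-finiteness argument. Likewise, bijectivity (non-degeneracy) of the map $(x,y)\mapsto(\qq xy,x)$ does not by itself force polynomial growth or Noetherianity; what does the work is finiteness of $\Inn(Q)$, which is where finiteness of $Q$ genuinely enters.

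The missing mechanism, as the paper implements it, is the following chain. Since $\Inn(Q)$ is finite, there is an exponent $\exp_Q$ with $\varphi_x^{\exp_Q}=\Id_{\A_Q}$ for all $x\in Q$ (\cref{Prop: exp_A exists for auto-gen A}); consequently the submonoid $J_Q=\langle x^{\exp_Q}\;|\;x\in Q\rangle_\Mon$ is \emph{central} in $\A_Q$ (\cref{Lma: JA Comm}). A pigeonhole argument using the two-sided divisibility available in a monoid with conjugation (\cref{Lma: technical lemma new}) shows that every element of degree $>|Q|\cdot\exp_Q$ is divisible by some $x^{\exp_Q}$, whence $\A_Q=J_Q\cdot S$ for a finite set $S$ (\cref{Lma: A/B fin gen}). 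Therefore $\CC[\A_Q]$ is a finitely generated module over the finitely generated commutative (hence Noetherian) algebra $\CC[J_Q]$, so it is left Noetherian (\cref{Prp: CA Noetherian}), and together with $x\CC[\A_Q]=\CC[\A_Q]x$ this makes $(\CC[\A_Q],Q)$ good (\cref{Prp: auto gen to good alg}); only then does \cref{Prp: good HPoly} apply. Without this construction (or a substitute for it), your argument establishes eventual polynomiality only conditionally, so the proof as written has a genuine gap at its central step.
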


The degrees of $P_Q(x)$ and $P_Q^\dom(x)$ are directly computed in terms of the quandle structure of $Q$. To that end we define:

\begin{definition}
	\label{Dfn: qdl dim}
	Let $Q\in\Qdl^\fin$. The \emph{dimension} of $Q$ is defined to be
	$$\dim_Q=\max_{R\le Q}\left|\pi_0(R)\right|.$$
\end{definition}

\begin{theorem}
	Let $Q\in\Qdl^\fin$. Then
	$$\deg P_Q^\dom=\left|\pi_0(Q)\right|-1,$$
	$$\deg P_Q=\dim_Q-1.$$
\end{theorem}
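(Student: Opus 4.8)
The plan is to first reduce both equalities to the single statement $\deg P_Q^\dom=|\pi_0(Q)|-1$. Every tuple $\overrightarrow{x}\in Q^n$ generates a unique sub-quandle $\langle\overrightarrow x\rangle\le Q$, and by \Cref{Lma: Q^n is B_n equivar} the $B_n$-action preserves the generated sub-quandle; hence $Q^n=\bigsqcup_{R\le Q}R^{n,\dom}$ as $B_n$-sets. Next, since the $B_n$-action on $Q^n$ (resp.\ on each $Q^{n,\dom}$) factors through a finite quotient (\Cref{Prp: c hat}) and the Haar measure pushes forward to the uniform measure on that quotient, Burnside's lemma gives
$$\int_{\widehat{B_n}}c_Q\,d\mu=\#\!\left(Q^n/B_n\right),\qquad \int_{\widehat{B_n}}c_Q^\dom\,d\mu=\#\!\left(Q^{n,\dom}/B_n\right).$$
Combining these, $P_Q(n)=\sum_{R\le Q}P_R^\dom(n)$ for $n\gg0$. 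As each $P_R^\dom$ is eventually a nonnegative integer (an orbit count), there is no cancellation of leading terms, so $\deg P_Q=\max_{R\le Q}\deg P_R^\dom$. Granting $\deg P_R^\dom=|\pi_0(R)|-1$ for every $R$, this yields $\deg P_Q=\max_{R\le Q}(|\pi_0(R)|-1)=\dim_Q-1$, the second formula. It therefore suffices to prove $\deg P_Q^\dom=|\pi_0(Q)|-1$.

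\textbf{The map to connected components.} Write $k=|\pi_0(Q)|$ and equip $\pi_0(Q)$ with the trivial quandle structure $T_k$ (\Cref{Ex: Triv Qdl}); the quotient $q\colon Q\to T_k$ is a quandle morphism. By \Cref{Lma: Q^n is B_n equivar} the diagonal map $q_n\colon Q^n\to T_k^n$ is $B_n$-equivariant, and since $T_k$ is trivial the $B_n$-action on $T_k^n$ is the permutation action through $B_n\twoheadrightarrow S_n$. A dominant tuple of $Q$ maps to a tuple hitting every component (using $q(\langle\overrightarrow x\rangle)=\langle q(\overrightarrow x)\rangle$), so $q_n$ induces
$$Q^{n,\dom}/B_n\;\longrightarrow\;\{\text{surjective multisets of size }n\text{ in }\pi_0(Q)\},$$
whose target has cardinality $\binom{n-1}{k-1}$, a polynomial in $n$ of degree $k-1$.

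\textbf{Lower bound.} Fix one dominant tuple $g$ of $Q$, with component-multiplicities $(b_c)_{c\in\pi_0(Q)}$. For any $(a_c)$ with $a_c\ge b_c$ and $\sum_c a_c=n$, append to $g$ extra entries chosen from the respective components; the result is still dominant and has multiplicities $(a_c)$. Distinct $(a_c)$ land in distinct multisets downstairs, hence in distinct $B_n$-orbits. The number of such $(a_c)$ is $\binom{n-\sum_c b_c+k-1}{k-1}$, so $P_Q^\dom(n)\ge\binom{n-\sum_c b_c+k-1}{k-1}$ for $n\gg0$ and $\deg P_Q^\dom\ge k-1$.

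\textbf{Upper bound and the main obstacle.} For the reverse inequality, writing $F(a_c)$ for the number of $B_n$-orbits of dominant tuples with fixed multiplicities $(a_c)$, one has $P_Q^\dom(n)=\sum_{\sum a_c=n,\,a_c\ge1}F(a_c)$, so it suffices to bound $F(a_c)$ by a constant independent of $(a_c)$; the surjective-multiset count then gives $\deg P_Q^\dom\le k-1$. The hard part is exactly this uniform bound: after sorting a tuple by component into consecutive blocks, $F(a_c)$ counts orbits under the preimage in $B_n$ of the Young subgroup $\prod_c S_{a_c}$ acting on $\prod_c Q_c^{a_c}$, where $Q_c\subseteq Q$ is the component $c$, and one must control the cross-block braiding. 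I expect to handle it by fixing, for each component $c$, an element $y_c\in Q_c$ and introducing stabilization operators $s_c$ that insert a strand colored $y_c$, raising $a_c$ by one; these make $\bigoplus_{(a_c)}\QQ\{\text{orbits}\}$ into a graded module over $\QQ[s_1,\dots,s_k]$. The crux is to show this module is \emph{finitely generated}---a Conway--Parker/Ellenberg--Venkatesh--Westerland-type stabilization---which is precisely the non-commutative Hilbert-polynomial machinery underlying \Cref{Thm: Qdl HPoly}. Finite generation forces the pole order of the associated series at $t=1$ to be at most $k$, hence $F(a_c)$ bounded and $\deg P_Q^\dom\le k-1$. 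Combined with the lower bound this gives $\deg P_Q^\dom=k-1$, and with the first paragraph completes both formulas.
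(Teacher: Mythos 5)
Your reduction of both formulas to the single statement $\deg P_Q^\dom=|\pi_0(Q)|-1$ is correct and coincides with the paper's own route (\cref{Prp: dom sum} together with the positive-leading-coefficient argument in \cref{Thm: dim deg 1}), and your lower bound is sound: appending entries to a fixed dominant tuple plays the same role as the paper's injection $J_\m\xrightarrow{\cdot c}\A_Q^\dom$ fed into \cref{Lma: inflation in deg calc}. The genuine gap is the upper bound, which you yourself flag as ``the crux'' and do not prove; that uniform bound on $F(a_c)$ is the entire mathematical content of \cref{Prp: dim dom}, and neither of the two devices you gesture at supplies it as stated. First, your stabilization operators $s_c$ (insertion of a single strand colored $y_c$) do not define a $\QQ[s_1,\dots,s_k]$-module structure: in $\A_Q$ one has $y_c\cdot y_{c'}=\qq{y_c}{y_{c'}}\cdot y_c$, which differs from $y_{c'}\cdot y_c$ in general, so single-strand insertions at different components need not commute, and your orbit space is a priori only a module over a free associative algebra, for which the ``one monomial per multidegree'' counting that bounds $F(a_c)$ breaks down. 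Second, the finite generation you invoke cannot be borrowed from ``the machinery underlying \cref{Thm: Qdl HPoly}'': that machinery (\cref{Prp: good HPoly}) produces a one-variable Hilbert polynomial for $\NN$-graded modules and says nothing about the $\pi_0(Q)$-multigraded pieces, and it is precisely the multigraded statement --- one \emph{commuting} operator per connected component, finitely many module generators --- that forces $F(a_c)$ to be bounded.

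The paper closes exactly this gap with three ingredients you would need to reprove: (i) the elements $x^{\exp_Q}$ are central in $\A_Q$ (\cref{Lma: JA Comm}), which is why the inserted object must be $\exp_Q$ parallel strands of one color rather than a single strand --- this is what makes a commutative action exist at all; (ii) for dominant $a$, the automorphisms $\varphi_b$ with $b\mid a$ generate all of $\Inn(Q)$ (\cref{Lma: dominant action new}), whence $x^{\exp_Q}\cdot a=\psi(x)^{\exp_Q}\cdot a$ for every $\psi\in\Inn(Q)$ (\cref{Lma: acrobatics}) --- this is what lets one replace each inserted color by a fixed representative of its component; and (iii) $\A_Q^\dom$ is a finitely generated $J_Q$-set (\cref{Lma: A/B fin gen}, a pigeonhole argument using divisibility in monoids with conjugation). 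Together these show $\A_Q^\dom=J_\m\cdot S$ with $S$ finite and $J_\m$ free commutative on one generator $\m(c)^{\exp_Q}$ per component, and since a free commutative monoid contains at most one element of each multidegree, this gives your bound $F(a_c)\le|S|$. Until you prove (i)--(iii) or an equivalent stabilization statement, your argument establishes only the inequality $\deg P_Q^\dom\ge|\pi_0(Q)|-1$.
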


In order to prove this, we need the following first step:

\begin{proposition}
	\label{Prp: EQn orbit counting}
	Let $Q\in\Qdl^\fin$ and let $n\in\NN$. Then the expected values of $c_{Q,n}$ and $c_{Q,n}^\dom$ are equal to
	$$\int\limits_{\widehat{B_n}}c_Q d\mu =\left|Q^n/B_n\right|\;\;,\;\;\;\; \int\limits_{\widehat{B_n}}c_Q^\dom d\mu =\left|Q^{n,\dom}/B_n\right|.$$
\end{proposition}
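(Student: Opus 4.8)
The plan is to reduce the integral over the profinite group $\widehat{B_n}$ to a finite average over a finite quotient, and then to recognize that average as the orbit count via Burnside's lemma (the orbit-counting theorem).

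First I would exploit the observation already made in the proof of \Cref{Prp: c hat}: since $Q^n$ is a finite set, the representation $\beta_{Q,n}\colon B_n\to\Aut_\Set(Q^n)$ has finite image. Write $G\le\Aut_\Set(Q^n)$ for this image; it is a finite quotient of $B_n$, and the fixed-point-counting function $g\mapsto\bigl|(Q^n)^g\bigr|$ on $G$ pulls back along $B_n\to G$ to $\col_Q(\bclose\sigma)$. As $G$ is finite and discrete, the composite $B_n\to G$ extends uniquely to a continuous surjective homomorphism $q\colon\widehat{B_n}\to G$, and by the characterization of $c_{Q,n}$ as the continuous extension of $\col_Q(\bclose\sigma)$ we obtain $c_{Q,n}=\bigl(g\mapsto\bigl|(Q^n)^g\bigr|\bigr)\circ q$.

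Next I would push the Haar measure forward along $q$. Since $q$ is a continuous surjective homomorphism of compact groups, the pushforward $q_*\mu$ is a translation-invariant probability measure on the finite group $G$, hence the normalized counting measure $\frac{1}{|G|}\sum_{g\in G}\delta_g$. Therefore
$$\int_{\widehat{B_n}}c_Q\,d\mu=\int_G\bigl|(Q^n)^g\bigr|\,d(q_*\mu)=\frac{1}{|G|}\sum_{g\in G}\bigl|(Q^n)^g\bigr|.$$
Applying Burnside's lemma to the action of the finite group $G$ on the finite set $Q^n$ identifies the right-hand side with the number of orbits $\bigl|Q^n/G\bigr|$; and since $B_n$ acts on $Q^n$ precisely through $G$, these orbits coincide with the $B_n$-orbits, so $\bigl|Q^n/G\bigr|=\bigl|Q^n/B_n\bigr|$. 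This gives the first identity.

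For dominant colorings the argument is identical once I note that $Q^{n,\dom}=Q^n\setminus\bigcup_{R<Q}R^n$ is a $B_n$-invariant subset of $Q^n$: each $R^n$ with $R\le Q$ is preserved by every $\beta(\sigma_i)$, because $\qq{x_i}{x_{i+1}}\in R$ whenever $x_i,x_{i+1}\in R$, so $G$ restricts to an action on $Q^{n,\dom}$. Running the same reduction with the function $g\mapsto\bigl|(Q^{n,\dom})^g\bigr|$ and applying Burnside to the action of $G$ on $Q^{n,\dom}$ yields $\int_{\widehat{B_n}}c_Q^\dom\,d\mu=\bigl|Q^{n,\dom}/B_n\bigr|$. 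The one step requiring genuine care is the identification $q_*\mu=\frac{1}{|G|}\sum_{g\in G}\delta_g$, which is where the hypothesis that $\mu$ is the Haar probability measure (rather than an arbitrary probability measure on $\widehat{B_n}$) is essential; this is the conceptual crux of the proof, though it is a standard feature of Haar measures on profinite groups.
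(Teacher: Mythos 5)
Your proposal is correct and takes essentially the same approach as the paper: both reduce the Haar integral to a finite average and conclude by Burnside's lemma, the paper simply asserting the profinite-to-finite reduction (that $\int_{\widehat G}\left|X^g\right|d\mu=\left|X/G\right|$ for a group acting on a finite set through its profinite completion) while you prove it by pushing the Haar measure forward along the finite quotient $q\colon\widehat{B_n}\to G$. The difference is one of detail, not of method.
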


\begin{proof}
Let $G\in\Grp^\fin$ be a finite group, acting on finite set $X$. By Burnside's lemma on orbit counting,
$$\frac1{|G|}\sum_{g\in G}\left|X^g\right|=\left|X/G\right|.$$
For infinite group $G$ acting on a finite set $X$, the following holds:
$$\int\limits_{\widehat G}\left|X^g\right|d\mu=|X/G|,$$
where $\widehat G$ is the profinite completion of $G$ with Haar measure $\mu$, and $X^g$ is defined for $g\in \widehat G$ via extension of $G\to\Aut_\Set(X)$ to $\widehat G$.
In particular for $G=B_n$ acting on $Q^n$ and $Q^{n,\dom}$ we have
$$\int\limits_{\widehat{B_n}}c_Q d\mu= \left|Q^n/B_n\right|\;\;\;\;\;\;\textrm{and}\;\;\;\;\;\;\int\limits_{\widehat{B_n}}c_Q^\dom d\mu= \left|Q^{n,\dom}/B_n\right|.$$
\end{proof}

\section{Quandles, Monoids and Algebras}
\subsection{Graded monoids and graded modules}

\begin{definition}
\label{Dfn: Setgr}
${}$\\
By a \emph{graded set} we mean a set $S$ graded over $\NN$:
	$$S=\coprod_{n=0}^\infty S_n.$$
	If $S$ is a graded set and $s\in S_n$, we denote by $|s|:=n$ the degree of $s$.
	By $\Setgr$ we denote the category of graded sets, where  morphisms are functions $f\colon S\to S'$ that respect the grading:
	$$\forall s\in S\;\;,\;\;\;\;|f(s)|=|s|.$$
	for all $n\in \NN$. A graded set $S$ is \emph{level-wise finite} if $S_n$ is finite for all $n\in\NN$.
\end{definition}

\begin{definition}
\label{Dfn: Mongr}
${}$\\
 A \emph{graded monoid} is an associative monoid $A$ with graded underlying set s.t.
	$$\forall a,b\in A\;\;,\;\;\;\;|a\cdot b|=|a|+|b|.$$
	By $\Mongr$ we denote the category of graded monoids, where morphisms are graded morphisms of monoids. There is an equivalence of categories
	$$\Mongr\cong\Mon_{/\NN},$$
	where $\Mon_{/\NN}$ is the category of monoids over $\NN:=(\NN,+)$.
\end{definition}

\begin{definition}
\label{Dfn: graded A-set}
${}$\\
Let $A\in\Mongr$. A \emph{graded $A$-set} is an $A$-set $S$ with grading s.t.
	$$\forall a\in A,\;s\in S\;\;,\;\;\;\;|a\cdot s|=|a|+|s|.$$
	By $\Modgr A$ we denote the category of graded $A$-sets, where morphisms are graded morphisms of $A$-sets.
	An $A$-set $S$ is \emph{finitely generated} if there exists a finite subset $T\subseteq S$ s.t.
	$$S=A\cdot T.$$
\end{definition}

\begin{definition}
	\label{Dfn: Enveloping Monoid}
	${}$\\
	Let $Q\in\Qdl$ be a quandle. We define the \emph{enveloping monoid}\footnote{In \cite{ARTICLE:KaMats2005} this is defined as the enveloping monoidal quandle.}
	of $Q$ to be the associative monoid given by the following generators and relations:
	$$\A_Q:=\NN\langle\; Q\;|\;\forall\; x,y\in Q\;,\;\;x\cdot y=\qq xy \cdot x\;\rangle.$$
	Note that $Q$ embeds canonically in $\A_Q$. This monoid was introduced in \cite[\textsection 3]{ARTICLE:KaMats2005} as the \emph{enveloping monoidal quandle} of $Q$ - but in the form of \cref{Prp: A_Q Braided Form}.
\end{definition}

\begin{proposition}
	Let $f\colon R\to Q$ be a morphism in $\Qdl$. Then there exists a unique well-defined multiplicative morphism
	$$\A_f\colon \A_R\to \A_Q$$
	extending $f\colon R\to Q$.
	The map $Q\mapsto \A_Q$ from $\Qdl$ to $\Mon$ is functorial.
\end{proposition}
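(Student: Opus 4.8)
The plan is to invoke the universal property of a monoid presented by generators and relations. By \cref{Dfn: Enveloping Monoid}, $\A_R$ is the free monoid on the underlying set of $R$, modulo the relations $x\cdot y=\qq xy\cdot x$ for all $x,y\in R$. To specify a monoid morphism out of such a presented monoid it suffices to give a set-function from the generating set $R$ into the target monoid $\A_Q$ under which every defining relation becomes an identity in $\A_Q$; this then descends uniquely to a monoid morphism on all of $\A_R$.

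First I would define the candidate on generators by $x\mapsto f(x)$, using the canonical embedding $Q\hookrightarrow\A_Q$ noted in \cref{Dfn: Enveloping Monoid}. The one thing that must be checked is that this assignment respects the defining relations of $\A_R$, and this is exactly where the hypothesis that $f$ is a quandle morphism enters. For any $x,y\in R$ the relation $x\cdot y=\qq xy\cdot x$ is sent to $f(x)\cdot f(y)=f(\qq xy)\cdot f(x)$; since $f$ preserves the quandle operation, $f(\qq xy)=\qq{f(x)}{f(y)}$, so the image reads $f(x)\cdot f(y)=\qq{f(x)}{f(y)}\cdot f(x)$. As $f(x),f(y)\in Q$, this is itself one of the defining relations of $\A_Q$ and therefore holds automatically. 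By the universal property this produces a unique multiplicative morphism $\A_f\colon\A_R\to\A_Q$ whose restriction to $R$ is $f$.

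Uniqueness is immediate: the set $R$ generates $\A_R$ as a monoid, so any multiplicative extension of $f$ is determined by its values on $R$ and hence coincides with $\A_f$. Finally, functoriality follows formally from this uniqueness. The identity $\Id_{\A_R}$ is a multiplicative morphism extending $\Id_R$, so $\A_{\Id_R}=\Id_{\A_R}$; and for composable quandle morphisms $R\xrightarrow{f}Q\xrightarrow{g}S$ the composite $\A_g\circ\A_f$ is a multiplicative morphism extending $g\circ f$, whence $\A_{g\circ f}=\A_g\circ\A_f$ by uniqueness. Thus $Q\mapsto\A_Q$ is a functor $\Qdl\to\Mon$. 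I expect no genuine obstacle: the only substantive point is the compatibility check in the second step, which is a one-line consequence of $f$ preserving the quandle operation, and everything else is formal.
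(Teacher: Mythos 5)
Your proposal is correct and follows essentially the same route as the paper: both verify that the assignment $x\mapsto f(x)$ sends each defining relation $x\cdot y=\qq xy\cdot x$ of $\A_R$ to a defining relation of $\A_Q$ (using that $f$ preserves the quandle operation), invoke the universal property of the presentation, and derive uniqueness from the fact that $R$ generates $\A_R$ as a monoid. The only difference is cosmetic: you spell out the identity and composition checks for functoriality, where the paper simply states that functoriality is easily asserted.
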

\begin{proof}
	Such $\A_f$ is unique because $Q$ generates $\A_Q$. The composition of functions
	$$R\xrightarrow{f} Q\to \A_Q$$
	respects the relations defining $\A_R$:
	for every $x,y\in R$ we have
	$$f(x)\cdot f(y)=\qq{f(x)}{f(y)}\cdot f(x)=f(\qq xy)\cdot f(x)\in\A_Q.$$
	Therefore said function $R\to \A_Q$ extends to a multiplicative morphism
	$$\A_f\colon \A_Q\to \A_R.$$
	Functoriality is easily asserted.
\end{proof}

\begin{proposition}
	\label{Prp: A_Q Mongr Func}
	For every $Q\in\Qdl$, the monoid $\A_Q$ is naturally graded, making $Q\mapsto \A_Q$ into a functor
	$$\A\colon \Qdl\to\Mongr.$$
\end{proposition}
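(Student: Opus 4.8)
The plan is to exhibit the grading through the equivalence $\Mongr\cong\Mon_{/\NN}$ recorded in \cref{Dfn: Mongr}: to grade $\A_Q$ it suffices to produce a monoid homomorphism $\deg_Q\colon\A_Q\to(\NN,+)$, after which the piece of degree $n$ is simply $\deg_Q^{-1}(n)$ and the grading condition $|a\cdot b|=|a|+|b|$ is automatic from $\deg_Q$ being a homomorphism. I would define $\deg_Q$ on the generating set $Q\subseteq\A_Q$ by sending every $x\in Q$ to $1\in\NN$.

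The only thing requiring verification is that this assignment descends from the free monoid on the underlying set of $Q$ to the quotient $\A_Q$, i.e. that it respects the defining relations $x\cdot y=\qq xy\cdot x$. This is really the entire content of the statement, and I expect this homogeneity of the defining relation to be the crux while everything else is formal: both sides of each relation are products of exactly two generators, so under $\deg_Q$ they both map to $1+1=2$. Equivalently, writing $\ell$ for the word-length homomorphism on the free monoid on the underlying set of $Q$, the relations pair words of equal length, so the congruence they generate is contained in $\{(u,v)\mid\ell(u)=\ell(v)\}$; hence $\ell$ factors through $\A_Q$ to give the desired $\deg_Q$.

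Granting this, $\deg_Q$ is a well-defined monoid homomorphism, so $\A_Q\in\Mongr$ with $|a\cdot b|=\deg_Q(ab)=\deg_Q(a)+\deg_Q(b)=|a|+|b|$ as required. For the functorial claim I would take the multiplicative morphism $\A_f\colon\A_R\to\A_Q$ already produced in the preceding proposition and check only that it is a morphism in $\Mon_{/\NN}$, i.e. that $\deg_Q\circ\A_f=\deg_R$. On generators $\A_f(x)=f(x)\in Q\subseteq\A_Q$, so $\deg_Q(\A_f(x))=1=\deg_R(x)$; since $\deg_Q\circ\A_f$ and $\deg_R$ are monoid homomorphisms agreeing on the generating set $R$, they agree on all of $\A_R$. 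Thus $\A_f$ preserves degree, and $\A\colon\Qdl\to\Mongr$ is a functor lifting $\A\colon\Qdl\to\Mon$ along the forgetful functor $\Mongr\to\Mon$; preservation of identities and composites is inherited from the functoriality of $\A\colon\Qdl\to\Mon$.
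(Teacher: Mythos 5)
Your proof is correct, and it rests on the same underlying reduction as the paper's---via the equivalence $\Mongr\cong\Mon_{/\NN}$ of \cref{Dfn: Mongr}, a grading on $\A_Q$ amounts to a monoid homomorphism $\A_Q\to(\NN,+)$---but you implement that reduction by a more hands-on route. The paper performs no computation at all: it observes that $\Qdl\cong\Qdl_{/*}$ (every quandle maps uniquely to the terminal quandle $*$), that $\A_*\cong\NN$ because the defining relations of $\A_*$ collapse, and then simply applies the functor $\A\colon\Qdl\to\Mon$ of the preceding proposition to get
$$\Qdl\iso\Qdl_{/*}\xrightarrow{\A}\Mon_{/\A_*}\simeq\Mon_{/\NN}\simeq\Mongr.$$
There the degree homomorphism on $\A_Q$ is $\A$ applied to $Q\to *$, and compatibility of $\A_f$ with degrees is automatic because $\A_f$ is by construction a morphism over $\A_*$. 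Your $\deg_Q$---sending generators to $1$ and checking that the relations $x\cdot y=\qq xy\cdot x$ pair words of equal length---is exactly the same homomorphism, and your verification that $\deg_Q\circ\A_f=\deg_R$ on generators is the unpacked form of the statement that $\A_f$ lives in the over-category. What the paper's packaging buys is economy: the well-definedness check you do by hand was already done once and for all in the preceding proposition (for an arbitrary quandle morphism, here specialized to $Q\to *$), and naturality comes for free. What your version buys is transparency: it makes explicit that the entire content of the statement is the homogeneity of the defining relations, a fact the categorical phrasing hides inside the preceding proposition together with the identification $\A_*\cong\NN$.
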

\begin{proof}
	We denote by $*\in \Qdl$ the terminal object - the quandle with one element. There is an equivalence of categories
	$$\Qdl\cong\Qdl_{/*}.$$
	The relations in defining $\A_*$ are redundant, therefore $\A_*$ is the free monoid on one element:
	$$\A_*=\NN\langle\;*\;\rangle\simeq\NN.$$
	Finally $\A$ defines a functor of over-categories
	$$\Qdl\iso\Qdl_{/*}\xrightarrow{\A}\Mon_{/\A_*}\simeq\Mon_{/\NN}\simeq\Mongr.$$
\end{proof}

\begin{proposition}
	\label{Prp: A_Q Braided Form}
	Let $Q\in\Qdl$. Then the graded monoidal morphism
	$$\coprod_nQ^n\simeq\NN\langle Q\rangle\to \A_Q$$
	defines an isomorphism of graded monoids
	$$\coprod_nQ^n/B_n\simeq \A_Q.$$
\end{proposition}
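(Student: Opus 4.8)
The plan is to recognize the defining congruence of $\A_Q$, restricted to each graded piece, as the orbit equivalence relation of the Brieskorn action $\beta_{Q,n}$, so that the graded set underlying $\A_Q$ becomes visibly $\coprod_n Q^n/B_n$. Recall that $\NN\langle Q\rangle$ is the free monoid on the underlying set of $Q$: its elements are finite words $x_1\cdots x_n$ in the alphabet $Q$, the product is concatenation, and the degree-$n$ summand consists of the length-$n$ words, which is canonically $Q^n$. This already gives the graded-monoid isomorphism $\coprod_n Q^n\simeq\NN\langle Q\rangle$. By \cref{Dfn: Enveloping Monoid}, $\A_Q=\NN\langle Q\rangle/{\approx}$, where $\approx$ is the smallest monoid congruence containing every pair $(x\cdot y,\ \qq xy\cdot x)$ with $x,y\in Q$. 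Each such relation replaces a two-letter word by a two-letter word, so it preserves length; hence $\approx$ preserves degree and restricts to an equivalence relation $\approx_n$ on each $Q^n$, and $\A_Q=\coprod_n\left(Q^n/{\approx_n}\right)$ as a graded set.

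The heart of the argument is to show $\approx_n$ is exactly the $B_n$-orbit equivalence relation on $Q^n$. An elementary congruence step alters a word $w$ by rewriting a subword $x\cdot y$ occupying positions $i,i+1$ into $\qq xy\cdot x$ (or the reverse). If $w_i=x$ and $w_{i+1}=y$, the forward rewrite produces precisely $\beta_{Q,n}(\sigma_i)(w)$ by the formula of \cref{Dfn: Brieskorn rep}, and the reverse rewrite produces $\beta_{Q,n}(\sigma_i^{-1})(w)$. Thus $\approx_n$ is the equivalence relation generated by the moves $w\mapsto\beta_{Q,n}(\sigma_i^{\pm1})(w)$ for $1\le i\le n-1$. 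Since the $\sigma_i$ generate $B_n$, every $\beta_{Q,n}(\sigma)(w)$ with $\sigma\in B_n$ is reached from $w$ by a finite chain of such moves, and conversely each move is an application of some $\beta_{Q,n}(\sigma)$; hence the $\approx_n$-class of $w$ equals its $B_n$-orbit. (That these rewrites assemble into a genuine $B_n$-action is exactly \cref{Dfn: Brieskorn rep}, so no further compatibility need be checked.)

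Combining the two steps, the quotient map $\coprod_n Q^n=\NN\langle Q\rangle\to\A_Q$ has as its fibers exactly the $B_n$-orbits: words of distinct lengths are neither congruent, since degree is preserved, nor in a common orbit, since each $Q^n$ is $B_n$-stable. It therefore descends to a bijection $\coprod_n Q^n/B_n\iso\A_Q$. This bijection respects the monoid structures, because that on $\A_Q$ is induced from concatenation and $\approx$ is by construction a monoid congruence; equivalently, concatenation $Q^m\times Q^n\to Q^{m+n}$ is equivariant for the inclusion $B_m\times B_n\hookrightarrow B_{m+n}$, so it descends compatibly to both sides. I expect the only genuine content to be the line-by-line identification of the elementary rewrite with $\sigma_i$ in the middle paragraph; the rest is bookkeeping about free monoids, congruences, and gradings.
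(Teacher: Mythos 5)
Your proof is correct and follows essentially the same route as the paper's: both identify the elementary rewrite $x\cdot y \leftrightarrow \qq xy\cdot x$ at positions $i,i+1$ with the action of $\sigma_i^{\pm 1}$ from \cref{Dfn: Brieskorn rep}, use that the $\sigma_i$ generate $B_n$ to match congruence classes with $B_n$-orbits, and then descend the concatenation product via the embedding $B_m\times B_n\hookrightarrow B_{m+n}$. Your phrasing in terms of the degree-preserving monoid congruence $\approx_n$ is in fact a slightly more careful write-up of the step the paper glosses as ``this looks an awful lot like $\coprod Q^n/B_n$.''
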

\begin{proof}
$$\A_Q=\NN\langle Q\;|\;\qq xy\cdot x= x\cdot y\;\;\;\;\forall x,y\in Q\rangle=$$
%In a seemingly pointless move, this is equal to 
$$= \NN\langle Q\;|\; x_1\cdots{\qq{x_i}{x_{i+1}}\cdot x_i}\cdots x_n=
x_1\cdots {x_i\cdot x_{i+1}}\cdots x_n
\;\;,\;\forall x_1,\dots x_n\in Q,\;i=1\dots n-1\rangle= $$
$$=\NN\langle Q\;|\;\sigma_i(x)=x\;\;,\;\;\forall n,\;\forall x\in Q^n,\;\sigma_i\in B_n\rangle.$$
Since $B_n$ is generated by $\sigma_1\dots\sigma_{n-1}\in B_n$, we get for free that
$$\A_Q=\NN\langle Q\;|\;\sigma(x)=x\;\;\; \forall n\;,\;x\in Q^n,\;\sigma\in B_n\rangle.$$
This looks an awful lot like $\coprod Q^n/B_n$. We take care because the latter is obtained from $\NN\langle Q\rangle\simeq\coprod Q^n$ by taking various quotients as a \emph{set}. By showing that $\coprod Q^n/B_n$ carries the monoidal structure of $\coprod Q^n$, it will follow that
$$\coprod_n Q^n/B_n\iso \NN\langle Q\;|\;\sigma(x)=x\;\;\; \forall n\;,\;x\in Q^n,\;\sigma\in B_n\rangle = \A_Q .$$
Indeed, the product in $\NN\langle Q\rangle$ - which is given in graded components by $$Q^m\times Q^n\to Q^{m+n}$$ for all $m,n$ - is well-defined modulo actions of respective braid groups:
$$Q^m/B_m\times Q^n/B_n\to (Q^m\times Q^n)/(B_m\times B_n)\to Q^{m+n}/B_{m+n}$$
via the embedding
$$B_m\times B_n\hookrightarrow B_{m+n}.$$
As stated before, it follows that
$$\A_Q \simeq \coprod_n Q^n/B_n\in\Mongr.$$
\end{proof}

\begin{proposition}
	\label{Lma: sub R gives sub A_R}
	Let $Q,R\in\Qdl$ s.t. $R\le Q$. Then the induced morphism
	$$\A_R\to \A_Q$$
	in $\Mongr$ is an injection.
\end{proposition}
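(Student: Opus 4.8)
The plan is to reduce the claim to a statement about $B_n$-orbits and to prove injectivity one graded degree at a time. By \Cref{Prp: A_Q Braided Form} there are canonical identifications $\A_R \simeq \coprod_n R^n/B_n$ and $\A_Q \simeq \coprod_n Q^n/B_n$, under which the induced morphism $\A_R \to \A_Q$ is, in degree $n$, the map on orbit sets $R^n/B_n \to Q^n/B_n$ coming from the diagonal inclusion $R^n \hookrightarrow Q^n$. This inclusion is $B_n$-equivariant by \Cref{Lma: Q^n is B_n equivar} (applied to $R \hookrightarrow Q$), so the orbit map is well-defined. Since a morphism of graded sets is injective precisely when it is injective in each degree, it suffices to prove that every $R^n/B_n \to Q^n/B_n$ is injective.

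The heart of the argument is that $R^n$ is a $B_n$-invariant subset of $Q^n$ for the Brieskorn action of \Cref{Dfn: Brieskorn rep}. Granting this, suppose $x,y \in R^n$ become identified in $Q^n/B_n$, say $y = \beta_Q(\sigma)(x)$ for some $\sigma \in B_n$. Equivariance of the inclusion then gives $y = \beta_R(\sigma)(x)$, so $x$ and $y$ already lie in a single $B_n$-orbit of $R^n$ and represent the same class in $R^n/B_n$. Thus injectivity in degree $n$ is an immediate consequence of the invariance of $R^n$.

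The step that needs genuine care — and which I expect to be the only real obstacle — is verifying this invariance. The positive generators cause no trouble, since $\sigma_i$ sends $(\dots,x_i,x_{i+1},\dots)$ to $(\dots,\qq{x_i}{x_{i+1}},x_i,\dots)$ and $\qq{x_i}{x_{i+1}} \in R$ by closure of $R$ under the quandle operation. The inverse generators, however, act through $\varphi_{x_{i+1}}^{-1}$, so one must also check that $R$ is closed under each $\varphi_x^{-1}$. This follows from axiom (Q2) applied to $R$ as a quandle in its own right, which forces $\varphi_x|_R$ to be a bijection of $R$ and hence $\varphi_x^{-1}(R)=R$; for finite sub-quandles it is in any case automatic, as $\varphi_x|_R$ is an injective self-map of a finite set. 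With the invariance secured, the orbit argument above closes the proof.
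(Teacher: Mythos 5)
Your proof is correct and takes essentially the same route as the paper's: identify $\A_R\to\A_Q$ degree by degree with the orbit maps $R^n/B_n\to Q^n/B_n$ via \cref{Prp: A_Q Braided Form}, and deduce injectivity from the $B_n$-equivariance of the inclusion $R^n\hookrightarrow Q^n$ given by \cref{Lma: Q^n is B_n equivar}. The care you take with the inverse generators is exactly what that equivariance lemma already packages: since $R\in\Qdl$ is a quandle in its own right, $\beta_R$ is a genuine $B_n$-action on $R^n$, so equivariance checked on the generators $\sigma_i$ automatically extends to all of $B_n$ (the set of $\sigma$ where it holds is a subgroup), which yields invariance of $R^n$ under inverses as well.
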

\begin{proof}
	For all $n\in \NN$, By \cref{Lma: Q^n is B_n equivar}, $R^n\subseteq Q^n$ is a sub-$B_n$-set. Passing to quotients, the map
	$$R^n/B_n\to Q^n/B_n$$
	is also an injection. Finally, from \cref{Prp: A_Q Braided Form} we have
	$$\A_R\simeq\coprod_nR^n/B_n\hookrightarrow \coprod_n Q^n/B_n\simeq \A_Q.$$
\end{proof}
\begin{remark}
	In light of \cref{Lma: sub R gives sub A_R}, as of now for any $Q\in\Qdl$ and sub-quandle $R$, we will identify $\A_R$ with a submonoid of $\A_Q$:
	$$R\le Q\;\Longrightarrow \;\A_R\le \A_Q.$$
\end{remark}

\begin{definition}
	\label{Dfn: A_Q dom}
	Let $Q\in\Qdl$. We define the graded set $\A_Q^\dom$ via
	$$\A_Q^\dom=\A_Q\setminus\bigcup_{R<Q}\A_R\in\Setgr.$$
\end{definition}

A simple inclusion-exclusion argument proves the following lemma:
\begin{lemma}
	\label{Lma: dom decomp A_Q}
	Let $Q\in\Qdl$. Then
	$$\A_Q=\coprod_{R\le Q}\A_R^\dom.$$
\end{lemma}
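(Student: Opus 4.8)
The plan is to show that every element of $\A_Q$ lies in $\A_R^\dom$ for exactly one sub-quandle $R\le Q$; summing over $R$ then yields the claimed decomposition. The organizing idea is that each $a\in\A_Q$ possesses a canonical \emph{minimal support}: a smallest sub-quandle $R_0(a)\le Q$ with $a\in\A_{R_0(a)}$.

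First I would reinterpret membership in $\A_R$ orbit-theoretically. By \cref{Prp: A_Q Braided Form} we identify $\A_Q$ with $\coprod_n Q^n/B_n$, and by \cref{Lma: sub R gives sub A_R} the submonoid $\A_R$ is the set of orbits admitting a representative in $R^n$. The key refinement is that, applying \cref{Lma: Q^n is B_n equivar} to the inclusion $R\hookrightarrow Q$, the subset $R^n\subseteq Q^n$ is \emph{$B_n$-invariant} (not merely forward-invariant). Hence an orbit $[x]$ lies in $\A_R$ if and only if the \emph{entire} orbit is contained in $R^n$.

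From this I would deduce the intersection formula $\bigcap_{i}\A_{R_i}=\A_{\bigcap_i R_i}$ for any family of sub-quandles $R_i\le Q$. Indeed $\bigcap_i R_i$ is again a sub-quandle, and $(\bigcap_i R_i)^n=\bigcap_i R_i^n$; an orbit lies in every $\A_{R_i}$ iff it is contained in every $R_i^n$, iff it is contained in $(\bigcap_i R_i)^n$, iff it lies in $\A_{\bigcap_i R_i}$. Consequently, for fixed $a\in\A_Q$ the family $\mathcal S_a=\{R\le Q: a\in\A_R\}$ is nonempty (it contains $Q$) and closed under arbitrary intersection, so $R_0(a):=\bigcap_{R\in\mathcal S_a}R$ is its least element and satisfies $a\in\A_{R_0(a)}$.

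It remains to match $R_0$ with the definition of $\A_R^\dom$ (\cref{Dfn: A_Q dom}). I claim $a\in\A_R^\dom$ iff $R=R_0(a)$. If $a\in\A_R^\dom$ then $R\in\mathcal S_a$, so $R_0(a)\le R$; were the inclusion proper we would have $R_0(a)<R$ with $a\in\A_{R_0(a)}\subseteq\bigcup_{S<R}\A_S$, contradicting $a\notin\bigcup_{S<R}\A_S$, so $R=R_0(a)$. Conversely $a\in\A_{R_0(a)}$, while minimality of $R_0(a)$ forces $a\notin\A_S$ for every $S<R_0(a)$, giving $a\in\A_{R_0(a)}^\dom$. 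Thus each $a$ belongs to $\A_R^\dom$ for the unique value $R=R_0(a)$, which is precisely the assertion $\A_Q=\coprod_{R\le Q}\A_R^\dom$. The one genuinely substantive point — and the step I would highlight as the crux — is the full $B_n$-invariance of $R^n$: it is what upgrades the naive observation that the ``support'' of a word can only shrink under the relations into a well-defined minimal support, and hence what makes the intersection formula (and the entire bookkeeping) valid.
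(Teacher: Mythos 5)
Your proof is correct, and it is worth noting that the paper never actually writes out a proof of this lemma: it is dispatched with the phrase ``a simple inclusion-exclusion argument,'' so your argument supplies details the paper omits rather than shadowing an existing one. Your accounting of where the real content lies is also accurate: the covering half (every $a$ lies in some $\A_R^\dom$) only needs the existence of a minimal element of $\mathcal{S}_a=\{R\le Q\,:\,a\in\A_R\}$, but the disjointness half genuinely requires the intersection formula $\A_{R_1}\cap\A_{R_2}=\A_{R_1\cap R_2}$ (equivalently, uniqueness of the minimal support), and that in turn rests on $R^n$ being a fully $B_n$-stable subset of $Q^n$, so that membership in $\A_R$ means the \emph{whole} orbit lies in $R^n$ --- which you correctly derive from \cref{Lma: Q^n is B_n equivar} together with \cref{Prp: A_Q Braided Form} and \cref{Lma: sub R gives sub A_R}. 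An equivalent route would be to invoke the saturation property of \cref{Prp: A_R saturated in A_Q} (any degree-one divisor of an element of $\A_R$ lies in $R$, so the sub-quandle generated by the letters of any one factorization of $a$ is the minimal support); since that proposition appears only later in the paper, your self-contained orbit-theoretic argument has the advantage of avoiding a forward reference. One caveat, inherited from the paper rather than created by you: the lemma is stated for arbitrary $Q\in\Qdl$, and for an infinite quandle a sub-quandle in the paper's sense (a subset closed under the operation) need not be closed under the inverse maps $\varphi_x^{-1}$, in which case $B_n$ need not act on $R^n$ and full invariance --- hence your intersection formula, and indeed the paper's own \cref{Lma: sub R gives sub A_R} --- can break down; everything is airtight for $Q\in\Qdl^\fin$, which is the only case used downstream (e.g.\ in \cref{Prp: dom sum}).
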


\begin{definition}
	Let $S$ be a graded, levelwise finite set. 
	If there exists an integer-valued 
	polynomial $P_S\in\QQ[x]$ s.t.
	$$|S_n|=P_S(n)\textrm{ for }n\gg 0,$$ 
	this (necessarily unique) polynomial is called the \emph{Hilbert polynomial of $S$}.
\end{definition}

\begin{example}
	Let $\alpha\in \NN$ and let $T_\alpha\in\Qdl$ be the trivial quandle with $\alpha$ elements, as in \cref{Ex: Triv Qdl}.
	Then $\A_{T_\alpha}$ is naturally isomorphic to the free commutative monoid generated by the set $T_\alpha $:
	$$\A_{T_\alpha}\simeq\NN\langle \;T_\alpha\;|\;\forall\;x,y\in T_\alpha\;,\;\;x\cdot y=\qq xy\cdot x=y\cdot x\;\rangle\simeq\NN[T_\alpha].$$
	In particular,
	$$\int\limits_{\widehat{B_n}} c_{T_\alpha}d\mu=|\A_{T_\alpha,n}|=|\Sym_n(T_\alpha)|={n+ \alpha-1\choose \alpha-1}$$ 
	for all $n\in\NN$, where ${x+ \alpha-1\choose \alpha-1}\in\QQ[x]$ 
	is a polynomial of degree $\alpha-1$.
\end{example}

\subsection{Monoids with Conjugation}

\begin{definition}
	A \emph{monoid with conjugation} is an associative monoid $A$ with a multiplicative homomorphism
	$$\varphi\colon A\to\Aut_\Mon(A)$$
	s.t. for all $a,b\in A$,
	$$a\cdot b=\varphi_a(b)\cdot a$$
	 - where $\varphi_a:=\varphi(a)\in\Aut_\Mon(A)$.
\end{definition}

\begin{lemma}
	\label{Lma: MonConj ambi div}
	Let $A$ be a monoid with conjugation. Then left and right divisibility in $A$ are equivalent.
\end{lemma}
\begin{proof}
	Let $c\in A$ and suppose $a,b\in A$ satisfy $a\cdot b=c$. Then
	$$\varphi_a(b)\cdot a =a\cdot b =b\cdot\varphi_b^{-1}(a)=c.$$
	Thus if $a$ divides $c$ from the left, then $a$ divides $c$ from the right, and if $b$ divides $c$ from the right then $b$ divides $c$ from the left.
\end{proof}

\begin{definition}
	Let $A$ be a monoid with conjugation. We write $a|b$ to denote that $a$ divides $b$, either from the left or the right - these are equivalent.
\end{definition}

\begin{lemma}
	\label{Lma: MonConj multidiv}
	Let $A$ be a monoid with conjugation. Let $a,b,c,d\in A$. % s.t. $a|b$ and $c|d$. Then
	Then
	$$a|b\;,\;c|d\;\;\Longrightarrow \;\;a\cdot c|b\cdot d.$$
\end{lemma}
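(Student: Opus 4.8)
The plan is to deduce the statement from two elementary facts about the relation $\mid$: that it is transitive, and that it is preserved under multiplying a divisibility relation on either side by a fixed element. Granting these, the conclusion drops out of the chain
$$ac \mid bc \mid bd,$$
where the first link multiplies $a\mid b$ on the right by $c$ and the second multiplies $c\mid d$ on the left by $b$. The point worth keeping in mind throughout is that, by \cref{Lma: MonConj ambi div}, the notation $a\mid b$ already supplies \emph{both} a left factorization $b=au$ and a right factorization $b=\beta a$; I will use whichever side is convenient at each step.

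First I would record transitivity: if $a\mid b$ and $b\mid e$, pick left factorizations $b=au$ and $e=bw$, whence $e=a(uw)$ and so $a\mid e$. Next the two one-sided multiplication facts. Left multiplication is immediate and uses nothing about the conjugation structure: from a left factorization $d=cv$ of $c\mid d$ one gets $bd=(bc)v$, so $bc\mid bd$. Right multiplication is the delicate one: to pass from $a\mid b$ to $ac\mid bc$ I take the \emph{right} factorization $b=\beta a$, so that $bc=\beta(ac)$ and hence $ac\mid bc$. Combining, $a\mid b$ yields $ac\mid bc$, $c\mid d$ yields $bc\mid bd$, and transitivity gives $ac\mid bd$, as desired.

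The hard part — and the only place the monoid-with-conjugation hypothesis is genuinely used — is the right-multiplication step: in a general monoid, $a$ dividing $b$ from the left does \emph{not} give $ac\mid bc$, because one cannot extract $a$ as a right factor of $b$. This is exactly what \cref{Lma: MonConj ambi div} repairs, by converting the left factorization of $b$ by $a$ into a right one. If one prefers to avoid the transitivity bookkeeping, the same obstruction can be cleared by a single computation: writing $b=\beta a$ and $d=c\delta$, one has $bd=\beta(ac)\delta$, and then pushing $\delta$ past $ac$ via the conjugation relation $(ac)\delta=\varphi_{ac}(\delta)\,(ac)$ gives $bd=\bigl[\beta\,\varphi_{ac}(\delta)\bigr](ac)$, exhibiting $ac$ as a right divisor of $bd$ and hence $ac\mid bd$.
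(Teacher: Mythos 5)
Your proof is correct, but it is organized somewhat differently from the paper's. The paper proves the lemma in a single computation: choosing right factorizations $b=x\cdot a$ and $d=y\cdot c$, it applies the conjugation relation once to commute $y$ past $a$, giving $b\cdot d=x\cdot\varphi_a(y)\cdot a\cdot c$, which exhibits $a\cdot c$ as a right divisor of $b\cdot d$. Your main route instead factors the statement through three smaller facts --- transitivity of $\mid$, stability of $\mid$ under left multiplication by a fixed element, and stability under right multiplication --- and invokes the conjugation structure only through \cref{Lma: MonConj ambi div}, never writing $\varphi$ explicitly. This is slightly longer but has two merits: it isolates exactly where the hypothesis is needed (the right-multiplication step, as you correctly emphasize), and it shows the lemma follows formally from ambidexterity of divisibility alone, without any further computation in the monoid. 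Your closing alternative computation, by contrast, is essentially the paper's proof: it differs only in taking a left factorization $d=c\delta$ rather than a right one, and consequently conjugating $\delta$ past $a\cdot c$ via $\varphi_{ac}$ rather than conjugating $y$ past $a$ via $\varphi_a$. Both of your routes are valid.
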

\begin{proof}
	Suppose $a|b$ and $c|d$. Let $x,y\in A$ s.t. $x\cdot a=b$ and $y\cdot c=d$. Then
	$$b\cdot d=x\cdot a\cdot y\cdot c= x\cdot \varphi_{a}(y)\cdot a\cdot c\;\;\;\;\Longrightarrow\;\;\;\; a\cdot c|b\cdot d\;.$$
\end{proof}

\begin{proposition}
	\label{Prop: graded action on monconj}
	Let $A$ be a monoid with conjugation. If $A$ is graded, then for all $a\in A$, $\varphi_a$ is a graded automorphism of $A$.
\end{proposition}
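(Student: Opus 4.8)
The plan is to observe that by the definition of a monoid with conjugation, $\varphi_a$ is already an automorphism of $A$ as a monoid, so the only thing left to verify is that $\varphi_a$ respects the grading, i.e. that $|\varphi_a(b)|=|b|$ for every $b\in A$. Since every element of a graded monoid is homogeneous (each $b$ lies in a single $A_n$ and has a well-defined degree $|b|\in\NN$), this is a statement about a single degree for each $b$, and establishing it for all $b$ is precisely what is needed to conclude that $\varphi_a$ is a graded automorphism.

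The key step is to apply the degree function to the defining relation $a\cdot b=\varphi_a(b)\cdot a$. Because $A$ is graded, we have $|xy|=|x|+|y|$ for all $x,y\in A$, so taking degrees of both sides of the relation gives
$$|a|+|b|=|\varphi_a(b)|+|a|.$$
Cancelling $|a|$ in $\NN$ yields $|\varphi_a(b)|=|b|$. Hence $\varphi_a$ is degree-preserving, and combined with the fact that it is a monoid automorphism, it is a graded automorphism of $A$.

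There is essentially no obstacle here: the entire content lies in recognizing that the conjugation relation $a\cdot b=\varphi_a(b)\cdot a$ directly forces degree preservation. The only point worth stating with care is that this relation holds for the fixed $a$ under consideration and \emph{arbitrary} $b$, which is exactly what allows us to control $\varphi_a$ on all of $A$ at once rather than on a generating set only.
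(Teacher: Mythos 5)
Your proof is correct and is essentially identical to the paper's: both apply the degree function to the defining relation $a\cdot b=\varphi_a(b)\cdot a$, use $|xy|=|x|+|y|$, and cancel $|a|$ to conclude $|\varphi_a(b)|=|b|$ for all $b\in A$.
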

\begin{proof}
	Let $A$ be a graded monoid with conjugation and let $a\in A$. Then for all $b\in A$,
	$$|a|+|b|=|a\cdot b|=|\varphi_a(b)\cdot a| = |\varphi_a(b)|+|a|.$$
	Thus $\varphi_a\in\Aut_\Mongr(A)$ because $|\varphi_a(b)|=|b|$ for all $b\in A$.
\end{proof}

The following proposition is a rephrasing of \cite[Prop. 4]{ARTICLE:KaMats2005}:
\begin{proposition}
	\label{Prp: AQ is auto gen}
	Let $Q\in\Qdl^\fin$. Then $\A_Q$ is monoid with conjugation, with $\varphi\colon\A_Q\to\Aut_\Mongr(\A_Q)$ extending the quandle action $\varphi\colon Q\to\Aut_\Set(Q)$.
\end{proposition}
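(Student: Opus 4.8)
The plan is to construct $\varphi$ directly from the functoriality of $\A$ recorded in \Cref{Prp: A_Q Mongr Func}. For each $x\in Q$ the map $\varphi_x\colon Q\to Q$, $y\mapsto\qq xy$, is a quandle automorphism, so applying the functor $\A$ produces a graded monoid automorphism $\A_{\varphi_x}\in\Aut_\Mongr(\A_Q)$ whose restriction to the canonical copy of $Q$ inside $\A_Q$ is again $\varphi_x$. I therefore define $\varphi$ on the generating set $Q$ of $\A_Q$ by $x\mapsto\A_{\varphi_x}$; by construction this assignment extends the quandle action $\varphi\colon Q\to\Aut_\Set(Q)$, as demanded. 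Because each $\A_{\varphi_x}$ is graded and products of graded automorphisms remain graded, the extension we build below will land in $\Aut_\Mongr(\A_Q)\subseteq\Aut_\Mon(\A_Q)$, giving the refined target stated in the proposition.

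Next I would show that $x\mapsto\A_{\varphi_x}$ extends to a monoid homomorphism $\varphi\colon\A_Q\to\Aut_\Mongr(\A_Q)$. Since $\A_Q=\NN\langle Q\mid x\cdot y=\qq xy\cdot x\rangle$ is generated by $Q$, such an extension is unique if it exists, and it exists exactly when the images of the generators satisfy the defining relations, i.e.\ when $\A_{\varphi_x}\circ\A_{\varphi_y}=\A_{\varphi_{\qq xy}}\circ\A_{\varphi_x}$ for all $x,y\in Q$. This is the main conceptual step, and it is precisely where \Cref{Prp: Q to Inn(Q) is quandle morphism} does the work: that proposition yields $\varphi_{\qq xy}=\varphi_x\circ\varphi_y\circ\varphi_x^{-1}$ in $\Inn(Q)$, hence $\varphi_{\qq xy}\circ\varphi_x=\varphi_x\circ\varphi_y$ as quandle automorphisms of $Q$. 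Applying the functor $\A$, which preserves composition, converts this identity into the required relation $\A_{\varphi_{\qq xy}}\circ\A_{\varphi_x}=\A_{\varphi_x}\circ\A_{\varphi_y}$ in $\Aut_\Mongr(\A_Q)$. In other words, the quandle axioms are exactly what turn $\varphi$ into a multiplicative homomorphism.

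It remains to verify the conjugation identity $a\cdot b=\varphi_a(b)\cdot a$ for all $a,b\in\A_Q$, which I would handle by a double induction on word length. First, for a generator $x\in Q$ and arbitrary $b$, I induct on the length of $b$: the cases $b=1$ and $b\in Q$ are immediate (the latter being the defining relation $x\cdot y=\qq xy\cdot x=\varphi_x(y)\cdot x$), and for $b=y\cdot b'$ one computes
$$x\cdot b=x\cdot y\cdot b'=\varphi_x(y)\cdot x\cdot b'=\varphi_x(y)\cdot\varphi_x(b')\cdot x=\varphi_x(y\cdot b')\cdot x=\varphi_x(b)\cdot x,$$
using the defining relation, the inductive hypothesis, and the fact that $\varphi_x$ is a monoid homomorphism. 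Then, for general $a=x\cdot a'$ with $x\in Q$, I induct on the length of $a$:
$$a\cdot b=x\cdot(a'\cdot b)=x\cdot\bigl(\varphi_{a'}(b)\cdot a'\bigr)=\varphi_x\bigl(\varphi_{a'}(b)\bigr)\cdot x\cdot a'=\varphi_{x\cdot a'}(b)\cdot a=\varphi_a(b)\cdot a,$$
using the generator case, the inductive hypothesis, and the multiplicativity of $\varphi$ established in the previous paragraph. This establishes the conjugation axiom and completes the proof; the gradedness of every $\varphi_a$ is then automatic, either by \Cref{Prop: graded action on monconj} or directly from the observation in the first paragraph. The only genuinely delicate point is the well-definedness check, and it is entirely absorbed by \Cref{Prp: Q to Inn(Q) is quandle morphism} together with functoriality of $\A$; the inductions are routine bookkeeping.
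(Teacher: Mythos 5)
Your proposal is correct and follows essentially the same route as the paper: both use the functoriality of $\A$ (\cref{Prp: A_Q Mongr Func}) to produce $\varphi_x\in\Aut_\Mongr(\A_Q)$ for $x\in Q$, and both invoke \cref{Prp: Q to Inn(Q) is quandle morphism} to check that the assignment $x\mapsto\varphi_x$ respects the defining relations of $\A_Q$ and hence extends multiplicatively. The only difference is one of completeness rather than method: the paper stops once $\varphi\colon\A_Q\to\Aut_\Mongr(\A_Q)$ is constructed, leaving the conjugation identity $a\cdot b=\varphi_a(b)\cdot a$ implicit, whereas you verify it explicitly by a double induction on word length --- a welcome addition, since that identity is part of the definition of a monoid with conjugation.
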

\begin{proof}
	By \cref{Prp: A_Q Mongr Func}, $\A\colon \Qdl\to\Mongr$ is a functor. Then for all $x\in Q$,
	$$Q\xrightarrow{\varphi}\Aut_\Qdl(Q)\xrightarrow{\A}\Aut_\Mongr(\A_Q)$$
	yields $\varphi_x\in\Aut_\Mongr(\A_Q)$ for all $x\in $. To show that this extends to a multiplicative map from all $\A_Q$, it suffices to show that for all $a\in\A_Q$,
	$$(\varphi_x\circ\varphi_y)(a)=(\varphi_{\qq xy}\circ\varphi_x)(a).$$
	This holds for all $a\in Q\simeq\A_{Q,1}$ (see \cref{Prp: Q to Inn(Q) is quandle morphism}),
	and since $\A_Q$ is generated by $Q$ as a monoid, the equality holds for all $a\in\A_Q$.
	
\end{proof}

\begin{proposition}
\label{Prp: Inn(A_Q)=Inn(Q)}
	Let $Q\in\Qdl^\fin$. Then the subgroup of $\Aut_\Mon(\A_Q)$ generated by $\{\varphi_x\;|\;x\in Q\}$ is naturally isomorphic to $\Inn(Q)$.
\end{proposition}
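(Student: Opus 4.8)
The plan is to produce a restriction-to-degree-one homomorphism and show it is an isomorphism onto $\Inn(Q)$. Write $G := \langle\{\varphi_x\}_{x\in Q}\rangle \le \Aut_\Mon(\A_Q)$ for the subgroup in question, where here $\varphi_x$ denotes the monoid automorphism of $\A_Q$ supplied by \Cref{Prp: AQ is auto gen}. By that proposition together with \Cref{Prop: graded action on monconj}, each generator $\varphi_x$ is a \emph{graded} automorphism, so every element of $G$ preserves the grading and in particular preserves the degree-one piece. Since $\A_{Q,1}=Q$ (by \Cref{Prp: A_Q Braided Form}), restriction to degree one yields a well-defined map
$$\rho\colon G \to \Aut_\Set(Q), \qquad g \mapsto g|_Q,$$
which is visibly a group homomorphism.

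First I would identify the image of $\rho$. By construction the monoid automorphism $\varphi_x$ restricts on $\A_{Q,1}=Q$ to the quandle map $y\mapsto\qq xy$, i.e. to the automorphism $\varphi_x\in\Aut_\Qdl(Q)$ of \Cref{Dfn: Inn}. Hence $\rho(\varphi_x)=\varphi_x$, so $\rho$ carries the generating set of $G$ onto the generating set of $\Inn(Q)$, and therefore $\im(\rho)=\langle\{\varphi_x\}_{x\in Q}\rangle_\Grp=\Inn(Q)$. In particular $\rho$ is surjective onto $\Inn(Q)$.

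The heart of the matter is injectivity, and this is where the defining property of $\A_Q$ does the work. Suppose $g\in\ker(\rho)$, so that $g$ fixes every element of $Q$. Since $Q$ generates $\A_Q$ as a monoid (\Cref{Dfn: Enveloping Monoid}) and $g$ is a monoid homomorphism, $g$ fixes every product of elements of $Q$, hence all of $\A_Q$; thus $g=\Id_{\A_Q}$ and $\ker(\rho)$ is trivial. Therefore $\rho\colon G\iso\Inn(Q)$ is the asserted isomorphism, natural in the evident sense that it matches the two families of generators, sending the monoid automorphism $\varphi_x$ to the quandle automorphism $\varphi_x$.

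I do not expect a genuine obstacle here: the substantive content is the single observation that a monoid automorphism of $\A_Q$ fixing the generating set $Q$ is the identity. The only points requiring care are checking that $G$ really acts by graded automorphisms, so that restriction to $Q$ makes sense, and that the restriction of $\varphi_x$ to degree one is the original quandle map rather than something larger; both follow directly from the functorial construction of $\varphi$ in \Cref{Prp: AQ is auto gen}.
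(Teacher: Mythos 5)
Your proof is correct and follows essentially the same route as the paper: restriction to the degree-one piece $\A_{Q,1}\simeq Q$, surjectivity because the generators $\varphi_x$ restrict to the generators of $\Inn(Q)$, and injectivity because a monoid automorphism of $\A_Q$ fixing the generating set $Q$ must be the identity. Your treatment is slightly more careful than the paper's (which simply asserts the restriction homomorphism exists), since you justify via \Cref{Prop: graded action on monconj} that every element of the subgroup preserves the grading and hence restricts to $Q$.
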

\begin{proof}
For all $x\in Q$, $\varphi_x\in\Aut(\A_Q)$ extends $\varphi_x\in\Aut(Q)$, so there is a restriction homomorphism
$$r\colon\langle\{\varphi_x\;|\;x\in\A_{Q,1}\simeq Q\}\rangle\to\Inn(Q).$$
The map $r$ is surjective because $\Inn(Q)$ is generated by $\{\varphi_x\;|\;x\in Q\}$. Since $\A_Q$ is generated as a monoid by $Q$, any $\psi\in\Aut_\Mon(\A_Q)$ is determined by its action on $Q$ - therefore $r$ is injective.
\end{proof}

\begin{proposition}
	\label{Prop: exp_A exists for auto-gen A}
	Let $Q\in\Qdl^\fin$. Then there exists $0<N\in\NN$ s.t. 
	$$\forall x\in Q\;,\;\;\varphi_x^N=\Id_{\A_Q}.$$
\end{proposition}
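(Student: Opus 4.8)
The plan is to reduce the whole statement to the finiteness of the inner automorphism group $\Inn(Q)$. First I would observe that, since $Q$ is a finite quandle, $\Aut_\Qdl(Q)$ is a subgroup of the finite symmetric group $\Sym(Q)$ and is therefore itself finite. Consequently its subgroup $\Inn(Q) = \langle \{\varphi_x\}_{x\in Q}\rangle_\Grp$ is a finite group as well, so in particular each $\varphi_x$ has finite order as an element of $\Inn(Q)$.

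Next I would invoke \Cref{Prp: Inn(A_Q)=Inn(Q)}, which identifies the subgroup $\langle \varphi_x \mid x\in Q\rangle \le \Aut_\Mon(\A_Q)$ with $\Inn(Q)$ via the restriction-to-$Q$ isomorphism. The crucial consequence of this identification is that the order of each $\varphi_x$ computed as an automorphism of the \emph{infinite} monoid $\A_Q$ coincides with its order as an element of the \emph{finite} group $\Inn(Q)$. Thus, even though $\A_Q$ is infinite, every generator $\varphi_x$ still acts with finite order on it.

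Finally I would take $N := |\Inn(Q)|$ and apply Lagrange's theorem: every element of the finite group $\Inn(Q)$ is annihilated by $N$, so in particular $\varphi_x^N = \Id$ holds in $\langle \varphi_x \mid x\in Q\rangle$ for each $x$. Transporting this equality across the isomorphism of \Cref{Prp: Inn(A_Q)=Inn(Q)} gives precisely $\varphi_x^N = \Id_{\A_Q}$ for all $x \in Q$, as desired. (One could equally well take $N$ to be the least common multiple of the orders of the individual $\varphi_x$, but $N = |\Inn(Q)|$ is the most economical choice.)

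I do not anticipate any serious obstacle in this argument. The only nontrivial input is \Cref{Prp: Inn(A_Q)=Inn(Q)}, whose role is exactly to ensure that passing from the finite quandle $Q$ to the much larger monoid $\A_Q$ does not change the orders of the maps $\varphi_x$; without it, one could not a priori rule out that some $\varphi_x$ acquires infinite order when extended to all of $\A_Q$. Everything else is elementary: the finiteness of a subgroup of $\Sym(Q)$ together with Lagrange's theorem.
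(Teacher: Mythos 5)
Your proposal is correct and takes essentially the same route as the paper: both arguments use the finiteness of $\Inn(Q)$, choose $N=|\Inn(Q)|$ via Lagrange's theorem, and then transfer the identity $\varphi_x^N=\Id_Q$ to $\varphi_x^N=\Id_{\A_Q}$ through the isomorphism of \Cref{Prp: Inn(A_Q)=Inn(Q)}. Your added remark about why that isomorphism (in particular, the injectivity of restriction to $Q$) is the essential input is a faithful elaboration of what the paper leaves implicit.
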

\begin{proof}
	Since $Q\in\Qdl^\fin$, $\Inn(Q)\in\Grp^\fin$.
	Therefore
	%Take $N=|\Inn(Q)|$. Then 
	$$\varphi_x^{|\Inn(Q)|}=\Id_Q\in\Aut_\Qdl(Q).$$
	By \cref{Prp: Inn(A_Q)=Inn(Q)}, and taking $N=|\Inn(Q)|$,
	$$\varphi_x^N=\Id_{\A_Q}\in\Aut_\Mon(\A_Q)$$
	for all $x\in Q$.
\end{proof}

\begin{definition}
	\label{Dfn: auto gen exp}
	Let $Q\in\Qdl^\fin$.
	By $\exp_Q\in\NN$ we denote the smallest $N>0$ s.t. 
	$\varphi_x^N=\Id$ for all $x\in Q$.
	We denote by $J_Q\le \A_Q$ the sub-monoid of $\A_Q$ generated by 
	$x^{\exp_Q}$ for all ${x\in Q}$:
	$$J_Q=\langle \{x^{\exp_Q}\;|\;x\in Q\}\rangle_\Mon\le \A_Q.$$
\end{definition}

\begin{lemma}
	\label{Lma: JA Comm}
	Let $Q\in\Qdl^\fin$. Then $J_Q$ lies in the center of $\A_Q$: for all $x\in Q$, $x^{\exp_Q}$ commutes with all $\A_Q$.
\end{lemma}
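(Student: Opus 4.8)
The plan is to exploit the structure of $\A_Q$ as a monoid with conjugation, established in \cref{Prp: AQ is auto gen}, together with the defining property of $\exp_Q$. Recall that this structure equips $\A_Q$ with a \emph{multiplicative} homomorphism $\varphi\colon\A_Q\to\Aut_\Mongr(\A_Q)$ satisfying $a\cdot b=\varphi_a(b)\cdot a$ for all $a,b\in\A_Q$. The single observation driving the whole proof is that, because $\varphi$ is multiplicative, the automorphism attached to the power $x^{\exp_Q}$ is the corresponding compositional power of $\varphi_x$:
$$\varphi_{x^{\exp_Q}}=\varphi_x^{\exp_Q}=\Id_{\A_Q},$$
where the last equality is exactly the defining property of $\exp_Q$ recorded in \cref{Dfn: auto gen exp}.

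From there I would deduce the commutativity directly. For any $b\in\A_Q$ and any $x\in Q$, applying the conjugation relation with $a=x^{\exp_Q}$ gives
$$x^{\exp_Q}\cdot b=\varphi_{x^{\exp_Q}}(b)\cdot x^{\exp_Q}=b\cdot x^{\exp_Q},$$
so $x^{\exp_Q}$ commutes with every element of $\A_Q$. Since the centralizing elements of a monoid form a submonoid, and $J_Q$ is by definition generated by the elements $x^{\exp_Q}$ for $x\in Q$, it then follows that all of $J_Q$ lies in the center of $\A_Q$, which is the assertion of the lemma.

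The argument is essentially a one-line computation, so there is no real obstacle beyond correctly unwinding the definitions; the only point requiring care is the first displayed identity, namely that $\varphi$ carrying products to compositions forces $\varphi_{x^{\exp_Q}}$ to be the $\exp_Q$-th compositional power of $\varphi_x$. This is where \cref{Prp: AQ is auto gen} is genuinely used: it guarantees not merely that each $\varphi_x$ is a graded automorphism, but that these assemble into a multiplicative map defined on \emph{all} of $\A_Q$, so that evaluating $\varphi$ on the monoid element $x^{\exp_Q}$ is legitimate. Once that is granted, the defining relation of a monoid with conjugation does all the work.
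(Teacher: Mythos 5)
Your proof is correct and is essentially the same as the paper's: both apply the conjugation relation $x^{\exp_Q}\cdot a=\varphi_{x^{\exp_Q}}(a)\cdot x^{\exp_Q}$, use multiplicativity of $\varphi$ to identify $\varphi_{x^{\exp_Q}}$ with $\varphi_x^{\exp_Q}=\Id_{\A_Q}$, and then pass to all of $J_Q$ via its generators. Your explicit remark that centralizing elements form a submonoid just makes precise the paper's closing sentence.
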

\begin{proof}
	From \cref{Dfn: auto gen exp}, for all $x\in Q,a\in\A_Q$,
	$$\varphi_x^{\exp_Q}=\Id_{\A_Q}.$$
	Therefore
	$$x^{\exp_Q}\cdot a= \varphi_{x^{\exp_Q}}(a)\cdot x^{\exp_Q} =\varphi_x^{\exp_Q}(a)\cdot x^{\exp_Q}=a\cdot x^{\exp_Q}.$$
	Since $J_Q\le \A_Q$ is generated by these $x^{\exp_Q}$, $J_Q$ lies in the center of $\A_Q$.
\end{proof}

\begin{proposition}
\label{Prp: A_R saturated in A_Q}
	Let $Q\in\Qdl^\fin$ and let $R\le Q$. Let $a,b\in\A_Q$. Then
	$$ab\in\A_R\;\Longrightarrow\; a,b\in\A_R.$$
\end{proposition}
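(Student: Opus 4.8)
The plan is to work throughout in the combinatorial model $\A_Q\simeq\coprod_n Q^n/B_n$ of \cref{Prp: A_Q Braided Form}, in which an element of $\A_Q$ is a $B_n$-orbit $[x_1,\dots,x_n]$ of a tuple, the product is concatenation of tuples, and, via \cref{Lma: sub R gives sub A_R}, the submonoid $\A_R$ is identified with the collection of orbits admitting a representative lying in some $R^n$. The central idea is to attach to each orbit the sub-quandle of $Q$ generated by the entries of a representative, to show this is a genuine invariant of the orbit, and to use that it is monotone under the product.

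\textbf{Step 1 (a $B_n$-invariant sub-quandle).} For $(x_1,\dots,x_n)\in Q^n$ set $\langle x_1,\dots,x_n\rangle\le Q$. I claim $\langle\beta_{Q,n}(\sigma)(x_1,\dots,x_n)\rangle=\langle x_1,\dots,x_n\rangle$ for every $\sigma\in B_n$. It suffices to prove the containment $\langle\beta_{Q,n}(\tau)(x_1,\dots,x_n)\rangle\le\langle x_1,\dots,x_n\rangle$ for $\tau$ ranging over the generators $\sigma_i$ and their inverses, since then applying this to both $\sigma$ and $\sigma^{-1}$ gives $\langle x_1,\dots,x_n\rangle=\langle\beta_{Q,n}(\sigma^{-1}\sigma)(x_1,\dots,x_n)\rangle\le\langle\beta_{Q,n}(\sigma)(x_1,\dots,x_n)\rangle\le\langle x_1,\dots,x_n\rangle$, forcing equality. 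For $\sigma_i$ the only new entry is $\qq{x_i}{x_{i+1}}$, which already lies in $\langle x_1,\dots,x_n\rangle$; for $\sigma_i^{-1}$, whose action is computed in the proof of \cref{Dfn: Brieskorn rep}, the only new entry is $\varphi_{x_{i+1}}^{-1}(x_i)$, and this also lies in $\langle x_1,\dots,x_n\rangle$ because $x_{i+1}$ does and $\varphi_{x_{i+1}}$ restricts to a bijection of the finite sub-quandle $\langle x_1,\dots,x_n\rangle$ onto itself.

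\textbf{Step 2 (reading off $\A_R$).} By Step 1 the sub-quandle $\langle c\rangle\le Q$ is well defined for every $c\in\A_Q$, independent of the chosen representative. If $c\in\A_R$ it has a representative in $R^n$, whence $\langle c\rangle\le R$; conversely if $\langle c\rangle\le R$ then any representative $(x_1,\dots,x_n)$ of $c$ satisfies $x_i\in\langle x_1,\dots,x_n\rangle=\langle c\rangle\le R$, so $c\in R^n/B_n=\A_R$. Thus $c\in\A_R$ if and only if $\langle c\rangle\le R$.

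\textbf{Step 3 (conclusion).} Under the concatenation product a representative of $ab$ is the juxtaposition of representatives of $a$ and $b$, so $\langle ab\rangle=\langle\langle a\rangle\cup\langle b\rangle\rangle$ and in particular $\langle a\rangle\le\langle ab\rangle$ and $\langle b\rangle\le\langle ab\rangle$. Hence if $ab\in\A_R$ then $\langle ab\rangle\le R$ by Step 2, so $\langle a\rangle,\langle b\rangle\le R$, and a second application of Step 2 yields $a,b\in\A_R$. The one step demanding care is the inverse-generator case of Step 1 — establishing that $\varphi_{x_{i+1}}^{-1}$ preserves the generated sub-quandle — which is precisely where the hypothesis $Q\in\Qdl^\fin$ is used.
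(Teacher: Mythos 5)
Your proof is correct and follows essentially the same route as the paper: both work in the model $\A_Q\simeq\coprod_n Q^n/B_n$ of \cref{Prp: A_Q Braided Form} and reduce the claim to the fact that the braid action cannot move a tuple's entries outside a sub-quandle containing them, so that every letter of a word representing an element of $\A_R$ must lie in $R$. The only real difference is bookkeeping: the paper cites \cref{Lma: Q^n is B_n equivar} for the $B_n$-stability of $R^n\subseteq Q^n$ and then invokes divisibility of letters, whereas you re-derive that stability by hand through the orbit-invariant $\langle c\rangle$ --- which has the merit of making explicit exactly where $Q\in\Qdl^\fin$ is needed, namely that finiteness forces sub-quandles to be closed under $\varphi_x^{-1}$, i.e.\ under the inverse braid generators.
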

\begin{proof}
	 By \cref{Lma: Q^n is B_n equivar}, $R^n\subseteq Q^n$ is $B_n$-equivariant and by \cref{Prp: A_Q Braided Form},
	$$\A_{R,n}\simeq R^n/B_n.$$
	It follows that if $x\in Q$ and $c\in\A_R$ s.t. $x|c$, then $x\in R$. Now, suppose $a,b\in\A_Q$ are such that $ab\in\A_R$. write $a=a_1\cdots a_k$ and $b=b_1\cdots b_m$
	for $a_i,b_j\in Q$. Since $ab\in R$, by the above reasoning, $a_i,b_j$ must reside in $R$. Thus $a,b\in\A_R$.
\end{proof}

\begin{remark}
	For $Q\in\Qdl^\fin$, the submonoids $\A_R\le\A_Q$ are precisely the submonoids $B\le \A_Q$ satisfying the properties of \cref{Prp: A_R saturated in A_Q}. The notion of dominance therefore admits description in terms of monoids with conjugation. It is not clear whether \cref{Lma: dominant action new} holds in that generality, therefore the approach is of limited utility.
\end{remark}

\begin{lemma}
	\label{Lma: technical lemma new}
	Let $Q\in\Qdl^\fin$, let $N\in\NN$ and let $a\in \A_Q$ (resp. $a\in \A_Q^\dom$) s.t. $|a|>|Q|\cdot N$. Then there exist $x\in Q$ and $a'\in \A_Q$ (resp. $a'\in \A_Q^\dom$) s.t.
	$$a=x^{N}\cdot a'.$$
\end{lemma}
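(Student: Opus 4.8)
The plan is to represent $a$ by a word in the generators and to peel off a power of a single element by repeatedly moving letters to the front. Since $Q$ generates $\A_Q$ (\cref{Dfn: Enveloping Monoid}), I would write $a=u_1\cdots u_n$ with $u_i\in Q$ and $n=|a|>|Q|\cdot N$. The only move available is the defining relation $u\cdot v=\qq uv\cdot u=\varphi_u(v)\cdot u$, and since $\varphi\colon\A_Q\to\Aut_\Mongr(\A_Q)$ is a homomorphism (\cref{Prp: AQ is auto gen}), transporting the $j$-th letter leftwards past its entire prefix gives
$$a=\varphi_{u_1\cdots u_{j-1}}(u_j)\cdot u_1\cdots u_{j-1}u_{j+1}\cdots u_n.$$
Writing $e_j:=\varphi_{u_1\cdots u_{j-1}}(u_j)$, which lies in $Q$ because the automorphism restricts to an inner automorphism of $Q$ (\cref{Prp: Inn(A_Q)=Inn(Q)}), one sees that extracting the $j$-th letter pulls out $e_j$ and simply deletes $u_j$, leaving every other letter in place.

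The key point is that these extracted values are stable under right-to-left processing: for positions $j_1<\cdots<j_N$, the prefix $u_1\cdots u_{j_i-1}$ of $j_i$ contains none of the later positions $j_{i+1},\dots,j_N$, so deleting those letters first leaves the prefix of $j_i$ — and hence the element extracted there — equal to $e_{j_i}$. By downward induction on $i$, if $e_{j_1}=\cdots=e_{j_N}=x$ for a common $x\in Q$, then $a=x^N\cdot a'$, where $a'$ is $u_1\cdots u_n$ with the letters $u_{j_1},\dots,u_{j_N}$ removed. It therefore suffices to find one value occurring at least $N$ times in the list $e_1,\dots,e_n$; as this is a list of $n>|Q|\cdot N$ elements of the $|Q|$-element set $Q$, the pigeonhole principle supplies such an $x$, proving the statement for $a\in\A_Q$.

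For $a\in\A_Q^\dom$ I must additionally guarantee $a'\in\A_Q^\dom$. By \cref{Prp: A_R saturated in A_Q} a word lies in $\A_R$ exactly when all of its letters lie in $R$, so membership in $\A_Q^\dom=\A_Q\setminus\bigcup_{R<Q}\A_R$ depends only on the set of values that occur, and it is enough to delete the $N$ letters without erasing the last copy of any value. To arrange this I would first reserve one position carrying each distinct value — at most $|Q|$ positions — and apply the pigeonhole only to the $e_j$ of the remaining $>|Q|\cdot N-|Q|=|Q|(N-1)$ positions, which still forces some $x$ to occur at least $N$ times there. Extracting those $N$ unreserved positions from right to left removes no reserved letter, so $a'$ retains every value of $a$ and is again dominant.

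The step I expect to be the main obstacle is precisely the interaction between pulling letters to the front and the choice of which letters to extract: a single front-pull alters the prefixes, and thus the conjugated values $e_j$, of the letters one still means to use. Handling the extractions from right to left is what freezes the relevant prefixes and keeps the extracted elements equal to the fixed list $(e_1,\dots,e_n)$, so that a single pigeonhole over $Q$ governs the entire process; the extra reservation of one representative per value is the bookkeeping that both preserves dominance and accounts for the factor $|Q|$ in the hypothesis $|a|>|Q|\cdot N$.
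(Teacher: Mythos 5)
Your argument is correct, and its skeleton---write $a$ as a word of length $>|Q|\cdot N$, pigeonhole over $Q$, pull a power of a single element to the front via the relation $u\cdot v=\varphi_u(v)\cdot u$, and control dominance through \cref{Prp: A_R saturated in A_Q}---is the same as the paper's. Where you genuinely diverge is in the bookkeeping, and the paper's version eliminates both of your delicate points at once. First, the paper pigeonholes on the letters $u_j$ themselves rather than on the prefix-conjugates $e_j$: some $x\in Q$ occurs at least $N+1$ times among the letters, and \cref{Lma: MonConj ambi div} together with \cref{Lma: MonConj multidiv} (which are exactly your move-to-the-front computation, packaged once and for all so that every conjugation is absorbed into the cofactor rather than into the extracted element) give $x^{N+1}\,|\,a$, i.e. $a=x^{N+1}\cdot b$. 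This makes your right-to-left stability analysis unnecessary. Second, instead of your reservation scheme, the paper extracts one copy fewer than it has: it sets $a':=x\cdot b$, so that $a=x^{N}\cdot a'$ and $a'$ still carries a letter $x$. Dominance then falls out of saturation in two lines: if $a'=xb\in\A_R$ then $x,b\in\A_R$ by \cref{Prp: A_R saturated in A_Q}, hence $a=x^{N+1}b\in\A_R$, forcing $R=Q$. Your reservation-plus-recount argument (protecting one position per letter value and pigeonholing the remaining $>|Q|(N-1)$ positions) is valid, but it is precisely the complication that the extract-$(N{+}1)$-and-give-one-back trick is designed to avoid; it is worth remembering that when a bound has slack (here $|Q|\cdot N$ versus the $N$ copies you need), spending the slack on one extra extraction can be cheaper than protecting what must survive.
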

\begin{proof}
	Let $n=|a|$, and write $a=x_1\cdots x_n$ with $x_i\in Q$ for all $i$. Because $n>|Q|\cdot N$, there exists $x\in Q$ s.t. $x_i=x$ for at least $N+1$ values of $i$. In light of \cref{Lma: MonConj ambi div} and \cref{Lma: MonConj multidiv}, it follows that $x^{N+1}|a$. Write therefore $a=x^{N+1}\cdot b$ for some $b\in A$, and take $a':= x\cdot b$, s.t.
	$$a=x^N\cdot a'.$$
	We now show that if $a\in\A_Q^\dom$, then $a'\in\A_Q^\dom$. Let $R\le Q$ be a sub-quandle s.t. $a'=xb\in\A_R$. We show that $R=Q$. By \cref{Prp: A_R saturated in A_Q}, $x,b\in \A_R$. Therefore
	$$a=x^{N+1}b\in\A_R.$$
	But $a$ was assumed to be dominant, so $R=Q$ necessarily. Thus $a'\in\A_Q^\dom$.
\end{proof}

\begin{proposition}
	\label{Lma: A/B fin gen}
	Let $Q\in\Qdl^\fin$. Then $\A_Q$ and $\A_Q^\dom$ are finitely generated $J_Q$-sets.
\end{proposition}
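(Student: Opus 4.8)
The plan is to produce an explicit finite generating set and reduce an arbitrary element to it by repeatedly peeling off a generator of $J_Q$, using the just-proved \Cref{Lma: technical lemma new}. Set $N:=\exp_Q$ and let $T:=\coprod_{n\le |Q|\cdot\exp_Q}\A_{Q,n}$, with $T^\dom$ defined analogously from $\A_Q^\dom$. I claim $\A_Q=J_Q\cdot T$ and $\A_Q^\dom=J_Q\cdot T^\dom$, which is exactly finite generation once we check $T,T^\dom$ are finite. Finiteness is immediate: by \Cref{Prp: A_Q Braided Form} each $\A_{Q,n}\simeq Q^n/B_n$ is finite because $Q$ is finite, and $T$ is a finite union over the degrees $n\le |Q|\cdot\exp_Q$ of these finite sets. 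Since $J_Q$ is central in $\A_Q$ by \Cref{Lma: JA Comm}, the $J_Q$-set structure via multiplication is unambiguous (left and right coincide), so there is no left/right bookkeeping to do.

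The core is an induction on degree. If $|a|\le |Q|\cdot\exp_Q$ then $a\in T$, hence $a=1\cdot a\in J_Q\cdot T$ using $1\in J_Q$. If $|a|>|Q|\cdot\exp_Q$, apply \Cref{Lma: technical lemma new} with $N=\exp_Q$ to get $x\in Q$ and $a'\in\A_Q$ with $a=x^{\exp_Q}\cdot a'$; since $x^{\exp_Q}\in J_Q$ and $|a'|=|a|-\exp_Q<|a|$, the inductive hypothesis gives $a'=j\cdot b$ with $j\in J_Q$, $b\in T$, so $a=(x^{\exp_Q}\cdot j)\cdot b\in J_Q\cdot T$. This settles $\A_Q=J_Q\cdot T$.

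For the dominant case I would run the identical induction but invoke the ``resp.'' clause of \Cref{Lma: technical lemma new}, which guarantees the reduced element $a'$ remains in $\A_Q^\dom$; this keeps the recursion inside $\A_Q^\dom$ and yields $\A_Q^\dom=J_Q\cdot T^\dom$. The one extra point worth recording is that $\A_Q^\dom$ is genuinely a $J_Q$-subset, so that the statement is meaningful: by \Cref{Prp: A_R saturated in A_Q} an element lies in $\A_R$ iff every letter of it lies in $R$, so left-multiplying a dominant element by any element of $\A_Q$ — in particular by a generator of $J_Q$ — can only enlarge the sub-quandle generated by its letters and hence preserves dominance. I do not expect a genuine obstacle here: the technical lemma does the heavy lifting, and what remains is the routine degree induction together with the finiteness and well-definedness checks. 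The only place calling for slight care is the dominant case, namely ensuring the reduction does not escape $\A_Q^\dom$, and that is precisely what the parenthetical clause of \Cref{Lma: technical lemma new} supplies.
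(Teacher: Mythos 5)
Your proof is correct and follows essentially the same route as the paper: the same generating set $\{a\in\A_Q\;:\;|a|\le|Q|\cdot\exp_Q\}$ (and its dominant intersection), the same appeal to \cref{Lma: technical lemma new} with $N=\exp_Q$, and the same use of \cref{Prp: A_R saturated in A_Q} to see that $\A_Q^\dom$ is a $J_Q$-set; your strong induction on degree is just the contrapositive phrasing of the paper's minimal-counterexample argument.
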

\begin{proof}
	Firstly, $\A_Q$ is obviously a $J_Q$-set, for 
	$$J_Q\cdot \A_Q\subseteq \A_Q\cdot \A_Q=\A_Q.$$
	Furthermore $\A_Q^\dom$ is a $J_Q$-set: let $a\in J_Q$ and let $b\in \A_Q^\dom$. Assume $R\le Q$ is such that $ab\in\A_R$. Then $b\in\A_R$ from \cref{Prp: A_R saturated in A_Q}, implying that $R=Q$ because $b\in\A_Q^\dom$. It follows that $ab\in\A_Q^\dom$. Thus $\A_Q^\dom$ is a $J_Q$-set.
	Next, let
	%$S\subseteq \A_Q$ denote the following subset of $\A_Q$:
	$$S=\{a\in \A_Q\;|\;|a|\le |Q|\cdot \exp_Q\}\subseteq \A_Q.$$
	This set is finite as a union of finitely many $\A_{Q,n}$, which are themselves finite.
	We now show that $\A_Q=J_Q\cdot S$. Assuming the contrary, let  $a\in \A_Q\setminus J_Q\cdot S$ be of minimal degree $|a|=n$. Because $S\subseteq J_Q\cdot S$, we must have $a\notin S$, whereby
	$$n>|Q|\cdot \exp_Q.$$
	By \cref{Lma: technical lemma new} with $N=\exp_Q$,
	$$a=x^{\exp_Q}\cdot a'$$
	for some $x\in Q,\;a'\in \A_Q$. 
	Since $|a'|<|a|$ and $a\notin J_Q\cdot S$ is of minimal degree, 
	%and $a$ is of minimal degree in $A\setminus S\cdot J_A$, it follows that
	$$a'\in J_Q\cdot S.$$
	Now since $x^{\exp_Q}\in J_Q$, we have
	$$a=x^{\exp_Q}\cdot a'\in J_Q\cdot J_Q\cdot S=J_Q\cdot S,$$
	contrary to the assumption. Therefore
	$\A_Q=J_Q\cdot S$, with $S\subseteq \A_Q$ finite.
	
	Next, let $S'=S\cap \A_Q^\dom$. Then
	$$J_Q\cdot S'\subseteq J_Q\cdot \A_Q^\dom\subseteq \A_Q^\dom.$$
	Proving equality here is nearly identical to before: under the assumption that $\A_Q^\dom\setminus J_Q\cdot S'\neq\emptyset$, take $a\in \A_Q\setminus J_Q\cdot S'$ of minimal degree. By \cref{Lma: technical lemma new} we write 
	$$a=x^{\exp_Q}\cdot a'$$
	for some $x\in Q$ and $a'\in \A_Q^\dom$. As before we will again find that
	$$a=x^{\exp_Q}\cdot a'\in J_Q\cdot S',$$
	contrary to the assumption. % that $a\notin J_A\cdot S'$. 
	Therefore $\A_Q^\dom=J_Q\cdot S'$, with $S'$ finite.
\end{proof}

\begin{theorem}
	\label{Thm: auto-gen Hilbert Poly}
	Let $Q\in\Qdl^\fin$ and let $S\in\Modgr {\A_Q}$ be finitely-generated. Then there exists an integer-valued polynomial $P_S(x)\in\QQ[x]$ s.t. for all $n\gg 0$,
	$$\left|S_n\right|=P_S(n).$$
\end{theorem}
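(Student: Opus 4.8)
The plan is to reduce the acting monoid to its central, commutative submonoid $J_Q$, linearize the problem into one about Hilbert functions of graded modules over a commutative Noetherian graded ring, and then use the single extra feature that distinguishes $\A_Q$ from its center — namely that $\A_Q$ is generated in degree $1$ — to upgrade the answer from a quasi-polynomial to an honest polynomial. Concretely, by \cref{Lma: A/B fin gen} we may write $\A_Q = J_Q\cdot S_0$ with $S_0$ finite, so if $S=\A_Q\cdot G$ with $G$ finite then $S = J_Q\cdot(S_0\cdot G)$ is a finitely generated $J_Q$-set. Recall (\cref{Lma: JA Comm}) that $J_Q$ is central in $\A_Q$ and is a finitely generated commutative graded monoid all of whose generators $x^{\exp_Q}$ sit in degree $d:=\exp_Q$. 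Setting $M:=\QQ[S]=\bigoplus_n\QQ[S_n]$, a graded left module over the monoid algebra $A:=\QQ[\A_Q]$, we have $\dim_\QQ M_n=|S_n|$. Here $A$ is generated in degree $1$ (by $Q$) and is module-finite over its central subalgebra $Z:=\QQ[J_Q]$, a commutative Noetherian $\NN$-graded $\QQ$-algebra generated in degree $d$; hence $M$ is a finitely generated graded $Z$-module.

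The commutative theory alone gives only a quasi-polynomial: choosing a homogeneous system of parameters $\theta_1,\dots,\theta_s\in Z_d$, the module $M$ is finite over $\QQ[\theta_1,\dots,\theta_s]$, so its Hilbert series has the form $h(t)/(1-t^d)^s$ and its only possible poles are at $d$-th roots of unity. The assertion of the theorem is that the only pole is at $t=1$ — equivalently, that the residue-wise polynomials for the classes $n\bmod d$ all coincide — and this is exactly where degree-$1$ generation enters. For $x\in Q$ let $L_x\colon M\to M$ be left multiplication by $x$: this is a degree-$1$ endomorphism, it is $Z$-linear because $J_Q$ is central, and it satisfies $L_x^{\,d}=(\text{multiplication by }x^{\exp_Q}\in Z_d)$. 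Arranging that $\theta_1=x^{\exp_Q}$ for a suitable $x$, rank--nullity in each graded degree yields the \emph{first-difference} identity
$$|S_n|-|S_{n-1}|=\dim(M/L_xM)_n-\dim(\ker L_x)_{n-1}.$$

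Because $\theta_1=L_x^{\,d}$ is a parameter, both $M/L_xM$ (a quotient of $M/\theta_1 M$) and $\ker L_x$ (a submodule of $(0:_M\theta_1)$) are finitely generated graded $Z$-modules supported in strictly smaller dimension. The key point is that using the degree-$1$ operator $L_x$, rather than the degree-$d$ element $\theta_1$, makes the left-hand side the genuine first difference of $|S_n|$: if one knows by induction on the dimension of the support that the Hilbert functions of these smaller modules are honest polynomials, then $|S_n|-|S_{n-1}|$ is an honest polynomial for $n\gg0$, and summing shows the same for $|S_n|$. A polynomial taking integer values on all large integers is integer-valued, which produces the desired $P_S\in\QQ[x]$.

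The hard part will be making this induction legitimate. The quotient $M/L_xM$ no longer carries $L_x$ (which acts as $0$ on it), and since the operators do not commute — in $\A_Q$ one has $xy=\qq xy\cdot x\neq yx$ — a different $L_{x'}$ need not preserve $L_xM$, so the subquotients are not visibly modules over some $\A_R$ retaining the degree-$1$ structure required to iterate. Overcoming this means tracking, at every stage, enough of the structure ``module-finite over a central ring generated in degree $d$, with degree-$1$ $d$-th roots of the parameters'': the most promising route is to induct on subquandles $R\le Q$ through the decomposition $\A_Q=\coprod_{R\le Q}\A_R^{\dom}$ of \cref{Lma: dom decomp A_Q}, so that the smaller pieces are genuinely of the same type for a proper subquandle. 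This bookkeeping is the technical heart of the statement, and is precisely what a general Hilbert-polynomial result for non-commutative graded rings that are module-finite over a central subring is meant to supply.
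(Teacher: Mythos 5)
There is a genuine gap, and you locate it yourself: everything through the first-difference identity is correct and is in fact the paper's own mechanism (the exact sequence $0\to K\to M[-1]\xrightarrow{\;s_0\cdot\;}M\to M/s_0M\to 0$ in the proof of \cref{Prp: good HPoly}), but the induction that is supposed to consume that identity is left unresolved, and the idea you are missing is not ``bookkeeping over subquandles.'' It is that every generator $x\in Q$ is a \emph{normal} element of $R:=\CC[\A_Q]$: from $x\cdot r=\varphi_x(r)\cdot x$ with $\varphi_x$ an algebra automorphism one gets $xR=Rx$ (\cref{Prp: auto gen to good alg}). This single fact dissolves exactly the obstruction you raise. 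First, $Rx$ is a two-sided homogeneous ideal, so $R':=R/Rx$ is again a graded algebra. Second, $\ker L_x$ and $M/L_xM$ are not merely $Z$-modules but left $R$-modules: if $xk=0$ then $x(rk)=r_1(xk)=0$ where $xr=r_1x$, and $r(xm)=x(r_2m)\in xM$ where $rx=xr_2$. They are annihilated by $x$, hence graded $R'$-modules, and finitely generated because $R$ is left Noetherian (\cref{Prp: CA Noetherian}). Third, $R'$ is generated in degree $1$ by the images of $Q\setminus\{x\}$, each still normal (\cref{Lma: good alg induction}). So the structure one must ``track through the recursion'' is precisely the paper's notion of a good algebra (\cref{Dfn: good algebra}), the induction is on the number of degree-$1$ normal generators, and it terminates at $R=\CC$; no system of parameters, no quasi-polynomial theory, and no support dimension are needed. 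Note also that $R/Rx$ is in general \emph{not} of the form $\CC[\A_{R'}]$ for any subquandle $R'\le Q$, so your instinct that the subquotients should be modules over ``some $\A_R$'' asks to preserve the wrong invariant; normality of the remaining generators is all that must survive, and it does.

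Your two fallback suggestions do not repair the argument. Inducting on subquandles via $\A_Q=\coprod_{R\le Q}\A_R^\dom$ (\cref{Lma: dom decomp A_Q}) fails because that is a decomposition of the single $\A_Q$-set $\A_Q$; it induces nothing on an arbitrary finitely generated graded $\A_Q$-set $S$, which is what the theorem quantifies over, and polynomiality of $|\A_{R,n}^\dom|$ is itself an instance of the theorem (the paper derives it from \cref{Thm: auto-gen Hilbert Poly} and only then uses the dominant decomposition, in \cref{Prp: dom sum} and \cref{Prp: dim dom}, to compute degrees) --- so this route is circular. And the closing appeal to ``a general Hilbert-polynomial result for non-commutative graded rings module-finite over a central subring'' is an appeal to the statement being proved: with only those hypotheses the conclusion one can extract is the quasi-polynomial behaviour your own analysis produces, and it is exactly the normality of the degree-$1$ generators --- the hypothesis $sR=Rs$ in \cref{Dfn: good algebra} --- that eliminates the non-trivial roots of unity. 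In the paper that general result is \cref{Prp: good HPoly}, and its proof is the induction described above, not a citation.
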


\begin{remark}
	The properties of $\A_Q$ that we use to proceed are that $\A_Q$ is a graded monoid with conjugation $A$, generated as a monoid by its finitely-many degree-$1$ elements. The insight gained by distilling these features is small, and we do not feel would justify the introduction of a new object and terminology.
\end{remark}

\subsection{From Monoids to Algebras}
Our choice of $\CC$ in this section is entirely arbitrary - any field will do.

\begin{definition}
	\label{Dfn: good algebra}
	A \emph{good} $\CC$-algebra consists of a graded, left Noetherian $\CC$-algebra $R$ and a finite set of generators $S\subseteq R_1$ of degree $1$, satisfying
	$$\forall s\in S\;,\;\;sR=Rs.$$
\end{definition}

\begin{proposition}
	\label{Prp: CA Noetherian}
	Let $Q\in\Qdl^\fin$. Then the algebra $\CC[\A_Q]$ is left Noetherian.
\end{proposition}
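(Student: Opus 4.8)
The plan is to show that $\CC[\A_Q]$ is left Noetherian by reducing the question to a known Noetherianity result via the central subalgebra coming from $J_Q$. The key structural facts already available are that $\A_Q$ is a graded monoid with conjugation (\cref{Prp: AQ is auto gen}), that the elements $x^{\exp_Q}$ generate a central submonoid $J_Q$ (\cref{Lma: JA Comm}), and crucially that $\A_Q$ is a \emph{finitely generated} $J_Q$-set (\cref{Lma: A/B fin gen}). The strategy is therefore: first understand $\CC[J_Q]$, then leverage finite generation of $\A_Q$ over $J_Q$ to transfer Noetherianity from $\CC[J_Q]$ to $\CC[\A_Q]$.

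First I would observe that $J_Q$ is a finitely generated commutative monoid: it is generated by the finitely many elements $x^{\exp_Q}$ for $x\in Q$, and these commute with everything in $\A_Q$ by \cref{Lma: JA Comm}, in particular with each other. Hence $J_Q$ is a quotient of a free commutative monoid $\NN^{|Q|}$, so $\CC[J_Q]$ is a quotient of a polynomial ring $\CC[t_1,\dots,t_{|Q|}]$ and is therefore a commutative Noetherian $\CC$-algebra by the Hilbert Basis Theorem. Next, since $J_Q$ lies in the center of $\A_Q$, the inclusion $J_Q\hookrightarrow\A_Q$ makes $\CC[\A_Q]$ into an algebra over the central Noetherian subring $\CC[J_Q]$.

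The heart of the argument is to show $\CC[\A_Q]$ is a finitely generated $\CC[J_Q]$-module. By \cref{Lma: A/B fin gen} there is a finite set $S\subseteq\A_Q$ with $\A_Q=J_Q\cdot S$; this means every monomial of $\CC[\A_Q]$ lies in $\sum_{s\in S}\CC[J_Q]\cdot s$, so $S$ generates $\CC[\A_Q]$ as a left $\CC[J_Q]$-module. Thus $\CC[\A_Q]$ is a finitely generated module over the commutative Noetherian ring $\CC[J_Q]$, which sits inside its center. I would then invoke the standard fact (an Eakin--Nagata / ``finite over central Noetherian'' type argument) that an algebra which is finitely generated as a module over a central Noetherian subring is itself left Noetherian: any left ideal of $\CC[\A_Q]$ is in particular a $\CC[J_Q]$-submodule of the finitely generated, hence Noetherian, $\CC[J_Q]$-module $\CC[\A_Q]$, so it is finitely generated as a $\CC[J_Q]$-module and a fortiori as a left ideal.

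The main obstacle, and the step requiring the most care, is the transfer of Noetherianity across the noncommutative extension $\CC[J_Q]\subseteq\CC[\A_Q]$. One must be precise that left ideals of the larger ring are $\CC[J_Q]$-submodules of a Noetherian module, which is exactly where centrality of $J_Q$ is essential: without $J_Q$ being central, $\CC[\A_Q]$ need not be a well-behaved $\CC[J_Q]$-module and the module-Noetherian property would not descend to the ideal structure. I would make sure the grading is respected throughout (all generators and the set $S$ can be taken homogeneous), so the whole discussion stays within graded modules, but the essential content is the commutative Hilbert Basis Theorem applied to $\CC[J_Q]$ combined with finite generation of $\A_Q$ over $J_Q$.
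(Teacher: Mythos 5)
Your proof is correct and follows essentially the same route as the paper's: $\CC[J_Q]$ is commutative (\cref{Lma: JA Comm}) and finitely generated, hence Noetherian, and $\CC[\A_Q]$ is a finitely generated $\CC[J_Q]$-module by \cref{Lma: A/B fin gen}, so it is left Noetherian. The only difference is that you spell out the final descent step (left ideals are submodules of a Noetherian $\CC[J_Q]$-module), which the paper leaves implicit in ``putting the two together.''
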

\begin{proof}
	The algebra $\CC[J_Q]$ is finitely-generated (by the set $\{x^{\exp_A}\;|\; x\in Q\}$) and by \cref{Lma: JA Comm}, it is commutative. Thus $\CC[J_Q]$ is Noetherian.
	Furthermore, it follows from \cref{Lma: A/B fin gen} that $\CC[\A_Q]$ is a finitely-generated left $\CC[J_Q]$-module. Putting the two together, $\CC[\A_Q]$ is itself left Noetherian.
\end{proof}

\begin{proposition}
	\label{Prp: auto gen to good alg}
	Let $Q\in\Qdl^\fin$. Then $(\CC[\A_Q],Q)$ is a good $\CC$-algebra.
\end{proposition}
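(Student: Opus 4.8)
The plan is to verify, one at a time, the three defining conditions of a good $\CC$-algebra from \cref{Dfn: good algebra}, taking $R=\CC[\A_Q]$ and $S=Q$. Two of the three are essentially already established in the preceding results, so the real content is the commutation condition $xR=Rx$ for each generator $x\in Q$.

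First I would record the structural conditions. By \cref{Prp: A_Q Mongr Func} the monoid $\A_Q$ is graded over $\NN$, and the monoid algebra inherits this grading via $(\CC[\A_Q])_n=\CC[\A_{Q,n}]$, so $\CC[\A_Q]$ is a graded $\CC$-algebra. Left Noetherianness is exactly the statement of \cref{Prp: CA Noetherian}. For the generating set, recall from \cref{Dfn: Enveloping Monoid} that $Q$ embeds canonically in $\A_Q$ as its degree-$1$ elements, so $Q\subseteq(\CC[\A_Q])_1$; it is finite because $Q\in\Qdl^\fin$, and since $Q$ generates $\A_Q$ as a monoid it generates $\CC[\A_Q]$ as a $\CC$-algebra.

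The main step is the condition $xR=Rx$ for $x\in Q$. Here I would invoke \cref{Prp: AQ is auto gen}: $\A_Q$ is a monoid with conjugation, so $x\cdot a=\varphi_x(a)\cdot x$ for all $a\in\A_Q$, with $\varphi_x\in\Aut_\Mongr(\A_Q)$. Extending $\CC$-linearly, a typical element of the left ideal $xR$ is a combination of terms $x\cdot a=\varphi_x(a)\cdot x\in Rx$, which gives $xR\subseteq Rx$. For the reverse inclusion the crucial point is that $\varphi_x$ is an \emph{automorphism}, hence invertible: for any $a\in\A_Q$ one has $a\cdot x=\varphi_x\bigl(\varphi_x^{-1}(a)\bigr)\cdot x=x\cdot\varphi_x^{-1}(a)\in xR$, so $Rx\subseteq xR$. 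Combining the two inclusions yields $xR=Rx$.

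The only place to be careful — and what I expect to be the single substantive point — is that the commutation identity works on both sides precisely because $\varphi_x$ is a genuine automorphism of $\A_Q$ rather than a mere endomorphism; invertibility is exactly what makes the reverse inclusion $Rx\subseteq xR$ go through, and it is supplied by \cref{Prp: AQ is auto gen}. With the grading, left Noetherianness, the finite degree-$1$ generating set $Q$, and the commutation condition all verified, $(\CC[\A_Q],Q)$ is a good $\CC$-algebra.
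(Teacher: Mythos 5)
Your proof is correct and follows essentially the same route as the paper: cite the grading on $\A_Q$, note that $Q$ is a finite set of degree-$1$ generators, use the conjugation structure from \cref{Prp: AQ is auto gen} to get $x\cdot r=\varphi_x(r)\cdot x$ and (via invertibility of $\varphi_x$) $r\cdot x=x\cdot\varphi_x^{-1}(r)$, extend $\CC$-linearly to conclude $x\CC[\A_Q]=\CC[\A_Q]x$, and invoke \cref{Prp: CA Noetherian} for left Noetherianness. The paper records the identity $r\cdot x=x\cdot\varphi_x^{-1}(r)$ directly, which is exactly your invertibility step in disguise, so there is no substantive difference.
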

\begin{proof}
	The grading on $\A_Q$ determines a grading on $\CC[\A_Q]$. $\CC[\A_Q]$ is generated as a $\CC$-algebra by the finite set of degree-$1$ monomials $Q\subseteq\CC[\A_Q]_1$.
	For all $x\in Q$, 
	the automorphisms $\varphi_x,\varphi_x^{-1}$ extend $\CC$-linearly to 
	automorphisms in $\Aut_{\textrm{Alg}_\CC}(\CC[\A_Q])$,
	also denoted by $\varphi_x,\varphi_x^{-1}$.
	The equalities
	\begin{equation}
		\label{Eqn: CAQ is good} x\cdot r=\varphi_x(r)\cdot x\;\;\;\;,\;\;\;\;\;\;
	r\cdot x=x\cdot \varphi_x^{-1}(r)
	\end{equation}
	are both $\CC$-linear in $r$ and hold for all $r\in \A_Q\subseteq\CC[\A_Q]$. Hence (\ref{Eqn: CAQ is good}) holds for all $x\in Q$ and $r\in\CC[\A_Q]$.
	Together these imply that
	$$x\CC[\A_Q]= \CC[\A_Q]x$$ 
	for all $x\in Q$. 
	Finally, from \cref{Prp: CA Noetherian} $\CC[\A_Q]$ is 
	left Noetherian. 
	Thus $(\CC[\A_Q],Q)$ is good. 
\end{proof}

\begin{proposition}
	Let $Q\in\Qdl^\fin$. Then $\CC[\A_Q^\dom]$ is a finitely-generated, graded left $\CC[\A_Q]$-module.
\end{proposition}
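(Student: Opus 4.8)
The plan is to recognize $\CC[\A_Q^\dom]$ as a \emph{left ideal} of $\CC[\A_Q]$ and then apply left-Noetherianity. The only substantive point is that the set $\A_Q^\dom$ of dominant monomials is closed under multiplication by arbitrary elements of $\A_Q$; everything else is formal.

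First I would verify that $\A_Q\cdot\A_Q^\dom\subseteq\A_Q^\dom$, i.e.\ that dominance is preserved on the left. Take $a\in\A_Q$ and $b\in\A_Q^\dom$, and suppose $ab\in\A_R$ for some $R\le Q$. By the saturation property \cref{Prp: A_R saturated in A_Q}, both $a\in\A_R$ and $b\in\A_R$. Since $b$ is dominant it lies in no $\A_R$ with $R<Q$ (by \cref{Dfn: A_Q dom}), so $R=Q$. Hence $ab\notin\A_R$ for every $R<Q$, that is, $ab\in\A_Q^\dom$. This exhibits $\A_Q^\dom$ as a graded sub-$\A_Q$-set of $\A_Q$, the grading and degree-additivity being inherited directly from $\A_Q$ since $\A_Q^\dom$ is a graded subset.

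Next I would pass to $\CC$-algebras. The subspace $\CC[\A_Q^\dom]\subseteq\CC[\A_Q]$ spanned by the monomials lying in $\A_Q^\dom$ is a left ideal: for $r\in\CC[\A_Q]$ and $s\in\CC[\A_Q^\dom]$, expanding both in the monomial basis and applying the previous step term by term shows $r\cdot s\in\CC[\A_Q^\dom]$. Because multiplication in $\A_Q$ is graded, this left ideal is a graded $\CC[\A_Q]$-submodule, settling the ``graded left module'' part of the claim. Finally, \cref{Prp: CA Noetherian} asserts that $\CC[\A_Q]$ is left Noetherian, so every left ideal is finitely generated as a left $\CC[\A_Q]$-module; in particular $\CC[\A_Q^\dom]$ is.

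There is no real obstacle here, only the verification in the first paragraph, and I would flag that this is exactly where the saturation property is indispensable—without it one could not rule out that multiplying a dominant word by a short word lands it inside some $\A_R$. As an alternative to invoking Noetherianity for finite generation, one could note that \cref{Lma: A/B fin gen} already gives that $\A_Q^\dom$ is a finitely generated $J_Q$-set, hence $\CC[\A_Q^\dom]$ is finitely generated over the central subalgebra $\CC[J_Q]\subseteq\CC[\A_Q]$, and therefore a fortiori finitely generated over $\CC[\A_Q]$; I would keep the Noetherian route as the main argument since it fits the ``good algebra'' framework already set up.
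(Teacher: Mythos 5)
Your proof is correct, but its main route differs from the paper's. The paper's own proof is a two-line reduction to \cref{Lma: A/B fin gen}: $\A_Q^\dom$ is a finitely generated $J_Q$-set, hence a finitely generated graded $\A_Q$-set, hence $\CC[\A_Q^\dom]$ is a finitely generated graded $\CC[\A_Q]$-module --- exactly the ``alternative'' you relegate to your last paragraph. Your primary argument instead exhibits $\CC[\A_Q^\dom]$ as a graded left ideal of $\CC[\A_Q]$ (via the saturation property \cref{Prp: A_R saturated in A_Q}, which you correctly identify as the crux) and then invokes left Noetherianity from \cref{Prp: CA Noetherian}. Both routes are legitimate at this point in the paper, since both cited results precede the proposition. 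The trade-off: your Noetherian argument is logically lighter --- it needs only the $\A_Q$-half of \cref{Lma: A/B fin gen} (which is what feeds \cref{Prp: CA Noetherian}) and entirely bypasses the dominant half, i.e.\ the part of \cref{Lma: technical lemma new} that tracks preservation of dominance when extracting powers $x^{\exp_Q}$. On the other hand it is non-effective: Noetherianity produces no explicit generating set, whereas the paper's route yields generators of degree at most $|Q|\cdot\exp_Q$, and the finite generation of $\A_Q^\dom$ over the \emph{commutative} submonoid $J_Q$ is reused later in an essential way (in \cref{Prp: dim dom}, to compute $\deg P_Q^\dom$), so the paper loses nothing by proving it. One small remark on your first paragraph: the same saturation argument shows $\A_Q^\dom$ is in fact a two-sided ideal of the monoid (by \cref{Lma: MonConj ambi div} left and right divisibility agree), but only the left action is needed here.
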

\begin{proof}
	From \cref{Lma: A/B fin gen}, $\A_Q^\dom$ is a finitely-generated $J_Q$-set, therefore a finitely-generated graded $\A_Q$-set. Thus $\CC[\A_Q^\dom]$ is a finitely-generated graded left $\CC[\A_Q]$-module.
\end{proof}

\begin{lemma}
	\label{Lma: good alg induction}
	Let $(R,S\subseteq R_1)$ be a good $\CC$-algebra
	s.t. $S\neq\emptyset$. 
	Fix some $s_0\in S$. 
	Let $R':=R/Rs_0$. 
	For every $r\in R$, let $\overline r$ denote $\overline r=r+Rs_0\in R'$. 
	Let $S'=\{\overline{s}\in R'\;|\;s_0\neq s\in S\}$. 
	Then 
	$(R',\;S')$ is a good $\CC$-algebra, with $|S'|<|S|$.
%	where $|S'|\le |S\setminus\{s_0\}|=r-1$.
\end{lemma}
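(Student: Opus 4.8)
The plan is to verify directly each of the four defining properties of a good $\CC$-algebra for the pair $(R',S')$, together with the cardinality bound. The crucial preliminary observation, on which everything rests, is that the left ideal $Rs_0$ is in fact a \emph{two-sided, homogeneous} ideal. Two-sidedness is exactly the hypothesis $s_0R = Rs_0$ built into the definition of a good algebra; homogeneity follows because $s_0$ is homogeneous of degree $1$ and $R$ is graded, so $Rs_0 = \bigoplus_n R_{n-1}s_0$ is a graded submodule. These two facts guarantee that $R' = R/Rs_0$ is again a \emph{graded} $\CC$-algebra (not merely a quotient module), with grading $(R')_n = R_n/(Rs_0)_n$, and that the quotient map $\pi\colon R\to R'$ is a surjective homomorphism of graded $\CC$-algebras.

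Next I would check left Noetherianity of $R'$. Since $R$ is left Noetherian and $Rs_0$ is a two-sided ideal, left ideals of $R'$ pull back bijectively to left ideals of $R$ containing $Rs_0$; the ascending chain condition is therefore inherited, so $R'$ is left Noetherian. For the generating set, observe that $R$ is generated as a $\CC$-algebra by $S\subseteq R_1$, so $R' = \pi(R)$ is generated by $\pi(S)$. But $\pi(s_0) = \overline{s_0} = 0$, so the nonzero images already suffice: $R'$ is generated by $S' = \{\overline{s}\mid s_0\neq s\in S\}$, a finite set lying inside $(R')_1$ because $\pi$ preserves degree.

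The compatibility condition $s'R' = R's'$ for each $s'\in S'$ is then immediate from functoriality of $\pi$: writing $s' = \overline{s}$ with $s\in S$, $s\neq s_0$, and applying the surjection $\pi$ to the equality $sR = Rs$ holding in $R$ gives
$$\overline{s}\,R' = \pi(s)\pi(R) = \pi(sR) = \pi(Rs) = \pi(R)\pi(s) = R'\,\overline{s}.$$
Finally, for the cardinality bound, $S'$ is by definition the image under $\pi$ of $S\setminus\{s_0\}$, so $|S'|\le |S|-1 < |S|$ (the image may be strictly smaller still if $\pi$ identifies two generators, but that only helps).

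I do not expect a genuine obstacle here: the content is entirely the verification that the good-algebra axioms are preserved under this quotient. If any step deserves care, it is the opening one — confirming that $Rs_0$ is two-sided and homogeneous — since it is precisely what ensures that $R'$ inherits the structure of a graded algebra rather than only that of a module, and what makes the relation $s'R' = R's'$ descend cleanly to the quotient.
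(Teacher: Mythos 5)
Your proposal is correct and follows essentially the same route as the paper's own proof: both hinge on observing that $Rs_0$ is a two-sided homogeneous ideal (via $s_0R=Rs_0$ and $s_0\in R_1$), so that $R'$ inherits the grading, then verify generation by $S'$ after discarding $\overline{s_0}=0$, descend the relation $sR=Rs$ through the quotient map, inherit left Noetherianity, and count $|S'|\le|S|-1$. The only difference is cosmetic: you spell out the Noetherian inheritance via pullback of left ideals, where the paper simply cites that a quotient of a left Noetherian ring is left Noetherian.
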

\begin{proof}
	Since $(R,S)$ is good and $s_0\in S\subseteq R_1$, the set $Rs_0$ is a two-sided, homogeneous ideal.
	Therefore $R':=R/s_0R$ is a graded $\CC$-algebra 
	with grading induced from $R$. 
	The set $\overline S=\{\overline s\;|\;s\in S\}\subseteq R'_1$ generates $R'$, 
	and also contains $\overline{s_0}=0_{R'}$. Therefore the set
	$$S'= \{\overline s\;|\;s_0\neq s\in S\}\subseteq R'_1$$
	also generates $R'$. Moreover, for any $s'=\overline s\in S'$, 
	$$s'R'=\overline s\overline R=\overline{sR}=\overline{Rs}=\overline R\overline s=R's'.$$ 
	Finally, $R'$ is left Noetherian as a quotient of $R$. 
	We conclude that $(R',S')$ is good. Note that $|S'|\le |S\setminus\{s_0\}|=|S|-1$.
\end{proof}
The following lemma is fairly standard, so we omit the proof.

\begin{lemma}
	\label{Lma: discrete diff}
	Let $f\colon\NN\to\ZZ$ be a function. Then $f$ eventually coincides with a polynomoial of degree $\le d$ in $\QQ[x]$ iff $\delta(f)(n)=f(n)-f(n-1)$ eventually coincides with a polynomial of degree $\le d-1$ in $\QQ[x]$.
\end{lemma}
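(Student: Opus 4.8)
The plan is to prove both implications separately, with the backward direction resting on a single linear-algebra fact: that the finite difference operator is surjective from polynomials of degree $\le d$ onto polynomials of degree $\le d-1$. Throughout I write $\Delta p := p(x)-p(x-1)$ for $p\in\QQ[x]$, so that $\delta(f)$ is the ``discrete $\Delta$'' applied to $f$.

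First I would dispatch the forward direction, which is a leading-term computation. Suppose $f(n)=p(n)$ for all $n\ge N$, with $p\in\QQ[x]$ of degree $\le d$. Then for $n>N$ we have $\delta(f)(n)=p(n)-p(n-1)=(\Delta p)(n)$, and since the top-degree term of $p$ cancels in the subtraction, $\deg(\Delta p)\le d-1$ (if $p=ax^d+\cdots$ then $\Delta p = d\,a\,x^{d-1}+\cdots$). Hence $\delta(f)$ eventually coincides with the polynomial $\Delta p$ of degree $\le d-1$.

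For the converse the crux is the claim that $\Delta\colon \QQ[x]_{\le d}\to\QQ[x]_{\le d-1}$ is surjective. I would prove this by a dimension count: $\Delta$ is $\QQ$-linear and lands in $\QQ[x]_{\le d-1}$ by the computation above; its kernel consists of the $p$ with $p(x)=p(x-1)$ identically, which are exactly the constants (a nonconstant polynomial cannot be periodic over an infinite field), so the kernel has dimension $1$. Thus the image has dimension $(d+1)-1=d=\dim\QQ[x]_{\le d-1}$, forcing surjectivity. (Alternatively one exhibits antiderivatives explicitly in the binomial basis, using $\Delta\binom{x}{k}=\binom{x-1}{k-1}$.) Granting this, suppose $\delta(f)(n)=g(n)$ for all $n\ge N$ with $g\in\QQ[x]$ of degree $\le d-1$, and choose $G\in\QQ[x]_{\le d}$ with $\Delta G=g$. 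Then for every $n\ge N$,
$$ f(n)-f(n-1)=g(n)=G(n)-G(n-1), $$
so $f(n)-G(n)=f(n-1)-G(n-1)$; hence $f-G$ is constant on integers $\ge N-1$, say equal to $c$. Therefore $f(n)=G(n)+c$ for all $n\ge N-1$, and $G+c\in\QQ[x]$ has degree $\le d$, as required.

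The only genuinely substantive step is the surjectivity of $\Delta$, and even that is an elementary rank–nullity argument; the remaining work is the telescoping identity and a leading-coefficient cancellation, both routine. Accordingly I expect no real obstacle here, which is consistent with the paper's remark that the lemma is standard.
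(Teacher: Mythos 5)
Your proof is correct and complete. Note that the paper itself gives no proof of this lemma --- it is dismissed as ``fairly standard'' and omitted --- so there is nothing to compare against; your argument is exactly the standard one that the authors had in mind. Both directions hold up: the forward direction is the routine leading-coefficient cancellation, and the backward direction correctly reduces everything to the surjectivity of $\Delta\colon\QQ[x]_{\le d}\to\QQ[x]_{\le d-1}$, which your rank--nullity count establishes (the kernel is exactly the constants, since $p(x)=p(x-1)$ identically forces $p$ to take the same value at all integers and hence, having $p-p(0)$ vanish infinitely often, to be constant); the telescoping step then pins $f-G$ to a constant for $n\ge N-1$. The edge case $d=0$ also works under the usual convention that ``degree $\le -1$'' means the zero polynomial.
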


\begin{proposition}
	\label{Prp: good HPoly}
	Let $(R,S\subseteq R_1)$ be a good $\CC$-algebra. 
	Let $M$ be a finitely-generated graded left $R$-module. 
	Then there exists an integer-valued polynomial $P_M(x)\in\QQ[x]$ of degree $\deg P_M <|S|$ s.t.
	$$\dim_\CC M_n=P_M(n)\;\textrm{ for }\;n\gg 0.$$ 
\end{proposition}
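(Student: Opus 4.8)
The plan is to prove this by induction on the number of generators $|S|$, using left multiplication by a chosen generator to pass from $R$ to the smaller good algebra $R'=R/Rs_0$ furnished by \cref{Lma: good alg induction}, and then bootstrapping through the finite-difference \cref{Lma: discrete diff}. For the base case $|S|=0$, the algebra $R$ is generated over $\CC$ by the empty set, hence $R=\CC$ concentrated in degree $0$; a finitely-generated graded $R$-module is then a finite-dimensional graded vector space, so $\dim_\CC M_n=0$ for $n\gg 0$ and we may take $P_M=0$, whose degree is $<0=|S|$.

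For the inductive step, fix $|S|=d\ge 1$, assume the statement for all good algebras with fewer than $d$ generators, pick $s_0\in S$, and form $(R',S')$ with $|S'|<d$. The engine of the argument is the degree-raising $\CC$-linear map given by left multiplication by $s_0$, which in each degree reads $s_0\cdot\colon M_{n-1}\to M_n$. Writing $K=\ker(s_0\cdot)$ and $C=M/s_0M$, these fit into the exact sequence $0\to K_{n-1}\to M_{n-1}\xrightarrow{s_0} M_n\to C_n\to 0$ of graded vector spaces, and taking alternating dimensions yields
$$\dim_\CC M_n-\dim_\CC M_{n-1}=\dim_\CC C_n-\dim_\CC K_{n-1}.$$

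The crux, and the step I expect to be the main obstacle, is to recognize $K$ and $C$ as finitely-generated graded modules over the smaller algebra $R'$. This is precisely where goodness is needed: the identity $s_0R=Rs_0$ shows that $s_0M=R(s_0M)$ is a left submodule of $M$ and that the two-sided ideal $Rs_0$ annihilates both $K$ and $C$, so their $R$-actions factor through $R'$. Finite generation of $C$ is immediate since it is a quotient of $M$; for $K$ I would invoke the left-Noetherianity built into the definition of a good algebra, which makes the finitely-generated module $M$ Noetherian and hence its submodule $K$ finitely generated over $R$, thus over $R'$.

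Finally, since $|S'|\le d-1$, the induction hypothesis makes $\dim_\CC C_n$ and $\dim_\CC K_n$ eventually agree with polynomials of degree $<|S'|\le d-1$, so the right-hand side of the displayed identity — namely the discrete difference $\delta(n\mapsto\dim_\CC M_n)$ — is eventually polynomial of degree $\le d-2$. A single application of \cref{Lma: discrete diff} then promotes this to the assertion that $\dim_\CC M_n$ eventually coincides with a polynomial $P_M$ of degree $\le d-1<|S|$; since $P_M$ takes integer values for all large $n$ it is integer-valued, which closes the induction.
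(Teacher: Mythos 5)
Your proposal is correct and follows essentially the same route as the paper's own proof: induction on $|S|$ via \cref{Lma: good alg induction}, the four-term exact sequence built from left multiplication by $s_0$, identification of the kernel and cokernel as finitely-generated graded $R/Rs_0$-modules using $s_0R=Rs_0$ and left-Noetherianity, and conclusion via \cref{Lma: discrete diff}. The only differences are cosmetic (you index the kernel via the degree-raising map rather than the shift $M[-1]$, and you compress the check that $K$ is an $R$-submodule into the annihilation statement), so there is nothing substantive to change.
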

\begin{proof}%[\ref{Prp: auto gen poly}]
	We denote by $H_M(n)=\dim_\CC M_n$. The proof is by induction on $|S|$: \\
	If $|S|=0$, then $R=\CC$ and $M$ is a finitely-generated graded $\CC$-vector space. Therefore $M_n=0$ for $n\gg 0$, and $H_M$ eventually coincides with the zero polynomial which has degree $<0$.\\
	If $|S|>0$, let $s_0\in S$ be any element. 
	Let $R':=R/s_0R$, and let 
	$S':=\{\overline s\in R'\;|\;s_0\neq s\in S\}$. 
	By \cref{Lma: good alg induction}, $(R',S')$ is good.
	%This is an algebra because $s_0R=Rs_0$. 
	Consider the exact sequence 
	\begin{equation}
	\label{eqn: s_0 ExSeq skew-comm}
	0\to K\to M[-1]\stackrel{s_0\cdot} {\longrightarrow} M\to M/s_0M\to 0.
	\end{equation}
	The middle map is a graded morphism of graded $\CC$-vector spaces, so both kernel $K$ and cokernel $M/s_0M$ are graded $\CC$-vector spaces. Moreover, they admit natural graded left $R$-module structures:
	Let $r\in R$ and let $k\in K$. Since $s_0R=Rs_0$, take $r_1\in R$ s.t. $s_0r=r_1s_0$. Then
	$$s_0\cdot k=0\;\Longrightarrow\;s_0\cdot rk=r_1\cdot s_0k=r_1\cdot 0=0,$$
	therefore $rk\in K$. Thus $K$ is a left $R$-module. Let $r\in R$ and let $s_0m\in s_0M$. Since $s_0R=Rs_0$, take $r_2\in R$ s.t. $rs_0=s_0r_2$. Then
	$$r\cdot s_0m=s_0\cdot r_2m\in s_0M.$$
	Thus $s_0M$ is a left $R$-module, and so is the quotient $M/s_0M$. These left $R$-module structures are compatible with the grading on $K$ and $M/s_0M$.
	The left $R$-module $M$ is Noetherian ($R$ is left-Noetherian and $M$ is finitely generated), thus all sub-quotients of $M$ are finitely-generated. In particular $K$ and $M/s_0M$ are finitely-generated. Furthermore, both $K$ and $M/s_0M$ are annihilated by $s_0\in R$, making them finitely-generated, graded $R'=R/Rs_0$-modules.
	
	Since $(R',S')$ is good and $|S'|<|S|$, 
	by induction for $K$ and $M/s_0M $,
	there exist integer-valued $P_K,P_{M/s_0M}\in\QQ[x]$ with $\deg P_K,\deg P_{M/s_0M}\le |S'|-1$ s.t.
	$$ \dim_\CC K_n=P_K(n)\;,\;\;\dim_\CC(M/s_0M)_n=P_{M/s_0M}(n)\;\;\;\;\forall\;n\gg0.$$
	Moreover, from levelwise exactness in 
	(\ref{eqn: s_0 ExSeq skew-comm}) we find for every $n$ 
	the equality 
	$$\dim_\CC M_n-\dim_\CC M_{n-1} = \dim_\CC (M/s_0M)_n - \dim_\CC K_n.$$
	Therefore for all $n\gg 0$,
	$$\dim_\CC M_n-\dim_\CC M_{n-1} = P_{M/s_0M}(n) - P_K(n)= (P_{M/s_0M} - P_K)(n),$$
	where
	$$\deg (P_{M/s_0M} - P_K) \le |S'|-1.$$
	By \cref{Lma: discrete diff}, There is some integer-valued polynomial $P_M\in\QQ[x]$ of degree $\deg P_M\le |S'|<|S|$ s.t. for all $n\gg 0$,
	$$\dim_\CC M_n=P_M(n).$$
	By induction therefore, for all good $(R,S)$ and all graded, finitely-generated left $R$-modules $M$ there exists $P_M\in\QQ[x]$, integer-valued with $\deg P_M<|S|$ s.t.
	$$\dim_\CC M_n=P_M(n)\;\textrm{ for }\;n\gg 0.$$
\end{proof}

\begin{proof}(\cref{Thm: auto-gen Hilbert Poly})\\
	Let $Q\in\Qdl^\fin$ and let $S$ be a finitely-generated graded $\A_Q$-set. By \cref{Prp: auto gen to good alg}, $(\CC[\A_Q],Q)$ is a good $\CC$-algebra. Since $S$ is a finitely-generated graded $\A_Q$-set, $\CC[S]$ is a finitely-generated graded $\CC[\A_Q]$-module. By \cref{Prp: good HPoly} there is an integer-valued polynomial $P_S\in \QQ[x]$ s.t.
	$$|S_n|=\dim_\CC\CC[S]_n=P_S(n)$$
	for all $n\gg 0$.
\end{proof}

Now we are ready to prove the first main result.
\begin{proof}(\cref{Thm: Qdl HPoly}) 
	Let $Q\in\Qdl^\fin$.
	By \cref{Prp: EQn orbit counting}
	and \cref{Prp: A_Q Braided Form}, 
	$$\int\limits_{\widehat{B_n}}c_{Q}d\mu=|Q^n/B_n|=|\A_{Q,n}|\;\;,\;\;\;\; \int\limits_{\widehat{B_n}}c^\dom_{Q}d\mu=|Q^{n,\dom}/B_n|=|\A_{Q,n}^\dom|.$$
	By \cref{Thm: auto-gen Hilbert Poly},
	there are integer-valued polynomials $P_{\A_Q},P_{\A_Q^\dom}\in\QQ[x]$ s.t.
	$$\int\limits_{\widehat{B_n}}c_{Q}d\mu=P_{\A_Q}(n)\;\;,\;\;\;\; \int\limits_{\widehat{B_n}}c^\dom_{Q}d\mu=P_{\A_Q^\dom}(n)$$
	for all $n\gg 0$.
\end{proof}

\begin{definition}
\label{Dfn: dom poly}
	Let $Q\in\Qdl^\fin$. We denote the polynomials of \cref{Thm: Qdl HPoly}
	$$P_Q:=P_{\A_Q}\in\QQ[x]\;\;,\;\;\;\; P_Q^\dom:=P_{\A_Q^\dom}\in\QQ[x].$$
	The polynomial $P_Q$ is the \emph{Hilbert polynomial} of $Q$.
\end{definition}

\begin{remark}\label{remEVW}
	The premise in \cite{WEBSITE:ElVenWest2015} is $(G,c)$, where $G$ is a finite group and $c\subseteq G$ is a conjugacy class satisfying a certain non-splitting principle (\cite[Def 3.1]{WEBSITE:ElVenWest2015}). This $c$ with conjugation is a quandle. The Hurwitz spaces are then given as homtopy quotients
	$$\textrm{Hur}_{G,n}\simeq G^n\sslash B_n\;,\;\;\textrm{Hur}_{G,n}^c\simeq c^n\sslash B_n.$$
	The ring $R$ in \cite[section 3]{WEBSITE:ElVenWest2015} for $k=\CC$ is therefore
	$$R=\bigoplus_n H_0(\textrm{Hur}_{G,n}^c,\CC)\simeq \bigoplus_n\CC[\pi_0(\textrm{Hur}_{G,n}^c)]\simeq\bigoplus_n\CC[c^n/B_n]\simeq\CC[\A_c],$$
	and \cite[Thm 6.1]{WEBSITE:ElVenWest2015} implies that for all $i\in\NN$, the graded, left $R$-module $\displaystyle{M_i:=\bigoplus_n H_i(\textrm{Hur}_{G,n}^c,\CC)}$ is finitely-generated. \Cref{Prp: good HPoly} then implies that for all $i\in\NN$, the stable range of $\dim_\CC H_i(\textrm{Hur}_{G,n}^c,\CC)$ of \cite[Thm 6.1]{WEBSITE:ElVenWest2015} is in fact constant in $n$, for $n$ sufficiently large. Also implied - albeit in a roundabout manner by \cref{Thm: dim deg 1} - is the fact that $\dim_c=1$. Our choice of field $k=\CC$ is  arbitrary - so this statement will hold for any $k$ for which $|\Inn(Q)|\in k^\times$.
\end{remark}

\section{Calculating $\deg P_Q$ and $\deg P_Q^\dom$.}

The degree of $P_Q$, the Hilbert polynomial of $Q\in\Qdl^\fin$ 
can be expressed in terms of all sub-quandles of $Q$.

\begin{proposition}
	\label{Prp: dom sum}
	Let $Q\in\Qdl^\fin$. Then
	$$P_Q=\sum_{R\le Q} P_{R}^\dom.$$
\end{proposition}
\begin{proof}
	By \cref{Lma: dom decomp A_Q} we have the following graded partition of $\A_Q$:
	$$\A_Q=\coprod_{R\le Q}\A_R^\dom.$$
	Thus for all $n\in\NN$,
	\begin{equation}\label{eqn: fin sum dom}|\A_{Q,n}|=\sum_{R\le Q}|\A_{R,n}^\dom|.\end{equation}
	By \cref{Dfn: dom poly}, for all $R\le Q$,
	$$|\A_{R,n}^\dom|=P_R^\dom(n)$$
	for $n\gg 0$, the threshold depending on $R$. The sum in (\ref{eqn: fin sum dom}) runs over finitely-many sub-quandles $R\le Q$, therefore there is a common threshold, and
	$$P_Q(n)=|\A_{Q,n}|=\sum_{R\le Q}|\A_{R,n}^\dom| =
	\sum_{R\le Q}P_R^\dom(n)$$
	for $n\gg 0$. Since $P_Q,\;\displaystyle{\sum_{R\le Q} P_{R}^\dom}$ coincide for input $n\gg 0$, they must be equal:
	
	$$ P_Q=\sum_{R\le Q}P_R^\dom\in\QQ[x].$$
\end{proof}

We proceed to calculate $\deg P_Q^\dom$.

\begin{lemma}
	\label{Lma: dominant action new}
	Let $Q\in\Qdl^\fin$ and let $a\in\A_Q^\dom$. Then
	$$\langle\{\varphi_b\;|\;b\in\A_Q,\;b|a\}\rangle_\Grp=\Inn(Q).$$
\end{lemma}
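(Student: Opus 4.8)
The plan is to establish the two inclusions separately, with the containment $\langle\{\varphi_b\}\rangle_\Grp\subseteq\Inn(Q)$ being essentially free and the reverse containment carrying all the content, since it is where the dominance hypothesis must enter. Write $H:=\langle\{\varphi_b\mid b\in\A_Q,\ b|a\}\rangle_\Grp$. Since $\varphi\colon\A_Q\to\Aut_\Mon(\A_Q)$ is multiplicative, every divisor $b=x_1\cdots x_m$ of $a$ satisfies $\varphi_b=\varphi_{x_1}\circ\cdots\circ\varphi_{x_m}$, so $\varphi_b$ lies in the subgroup generated by $\{\varphi_x\mid x\in Q\}$, which by \cref{Prp: Inn(A_Q)=Inn(Q)} is identified with $\Inn(Q)$. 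Hence $H\le\Inn(Q)$ at once.

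For the reverse inclusion the key device I would introduce is the set $Y:=\{y\in Q\mid \varphi_y\in H\}$. First I would verify that $Y$ is a sub-quandle of $Q$: if $y,z\in Y$, then by \cref{Prp: Q to Inn(Q) is quandle morphism} we have $\varphi_{\qq yz}=\varphi_y\circ\varphi_z\circ\varphi_y^{-1}$, which lies in $H$ because $H$ is a group containing both $\varphi_y$ and $\varphi_z$; therefore $\qq yz\in Y$ and $Y\le Q$.

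Next I would show that $a\in\A_Y$. Choosing any representation $a=x_1\cdots x_n$ with $x_i\in Q$, each letter $x_i$ divides $a$: this follows from \cref{Lma: MonConj ambi div} together with \cref{Lma: MonConj multidiv}, since $x_i$ divides the factor $x_i\cdots x_n$ and divisibility is preserved under taking products. As each $x_i$ is then a degree-$1$ divisor of $a$, the automorphism $\varphi_{x_i}$ is among the generators of $H$, so $x_i\in Y$. Because $Y$ is a sub-quandle, the submonoid $\A_Y\le\A_Q$ contains every $x_i$, whence $a=x_1\cdots x_n\in\A_Y$.

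Finally, dominance closes the argument: since $a\in\A_Q^\dom$, it lies in no $\A_R$ with $R<Q$, yet $a\in\A_Y$ with $Y\le Q$, forcing $Y=Q$. Thus $\varphi_y\in H$ for all $y\in Q$, so $\Inn(Q)=\langle\{\varphi_y\mid y\in Q\}\rangle_\Grp\le H$, and combined with the first inclusion we obtain $H=\Inn(Q)$. The one point I expect to require care — and the closest thing to an obstacle — is that the representation $a=x_1\cdots x_n$ is not unique, as braid moves such as $x\cdot y=\qq xy\cdot x$ change which letters appear; the argument is designed to sidestep this by using only that \emph{some} representation exists and that each of its letters divides $a$, so the conclusion $a\in\A_Y$ does not depend on the chosen representation.
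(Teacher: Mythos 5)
Your proof is correct and follows essentially the same route as the paper's: you define the sub-quandle $Y=\{y\in Q\mid\varphi_y\in H\}$ (the paper's $S_a$), show each letter of a representation $a=x_1\cdots x_n$ lies in it, and invoke dominance to force $Y=Q$. The only differences are cosmetic — you spell out the divisibility of each letter via \cref{Lma: MonConj ambi div} and \cref{Lma: MonConj multidiv} and the containment $H\le\Inn(Q)$, both of which the paper leaves implicit.
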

\begin{proof}
	Denote by $G_a:= \langle\{\varphi_b\;|\;b\in\A_Q,\;b|a\}\rangle\le \Inn(Q)$ and let $S\subseteq Q$ denote
	$$S_a=\{x\in Q\;|\;\varphi_x\in G_a\}.$$
	In fact $S\le Q$ is a sub-quandle: Let $x,y\in S_a$. Then by \cref{Prp: Q to Inn(Q) is quandle morphism}
	$$\varphi_x,\varphi_y\in G_a\;\Longrightarrow\; \varphi_{\qq xy}=\varphi_x\varphi_y\varphi_x^{-1} \in G_a \;\Longrightarrow\;\qq xy\in S_a.$$
	Therefore $S_a\le Q$ is a sub-quandle.
	Write $a=x_1\dots x_n$ for $x_i\in \A_{Q,1}\simeq Q$.
	For all $i$, $\varphi_{x_i}\in G_a$, therefore $x_i\in S_a$, hence
	$$a=x_1\cdots x_n\in \A_{S_a}.$$
	But $a\in\A_Q^\dom$, it follows that $S_a=Q$. Therefore $\varphi_x\in G_a$ for all $x\in Q$. Thus
	$$G_a\le \langle\{\varphi_x\;|\;x\in Q\}\rangle_\Grp=\Inn(Q),$$
	and
	$$\langle\{\varphi_b\;|\;b\in\A_Q,\;b|a\}\rangle =\Inn(Q).$$

\end{proof}

\begin{proposition}
	\label{Lma: acrobatics}
	Let $Q\in\Qdl^\fin$ and let $a\in\A_Q^\dom$. Then for all $x\in Q$ and all $\psi\in\Inn(Q)$,
	$$x^{\exp_Q}\cdot a=\psi(x)^{\exp_Q}\cdot a.$$
\end{proposition}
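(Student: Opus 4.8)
The plan is to fix $a\in\A_Q^\dom$ and reduce the statement to the generators of $\Inn(Q)$ supplied by \cref{Lma: dominant action new}. The heart of the argument is the special case $\psi=\varphi_b$ for a monoid element $b\in\A_Q$ dividing $a$. Writing $a=b\cdot a'$ (possible since $b\mid a$), I would compute, for any $x\in Q$,
$$\varphi_b(x)^{\exp_Q}\cdot a=\varphi_b\!\left(x^{\exp_Q}\right)\cdot b\cdot a'=b\cdot x^{\exp_Q}\cdot a'=x^{\exp_Q}\cdot b\cdot a'=x^{\exp_Q}\cdot a.$$
Here the first equality uses that $\varphi_b$ is a monoid automorphism, so it commutes with raising to the $\exp_Q$-th power; the second uses the defining relation $b\cdot c=\varphi_b(c)\cdot b$ of the monoid with conjugation $\A_Q$ (\cref{Prp: AQ is auto gen}) applied with $c=x^{\exp_Q}$; and the third uses that $x^{\exp_Q}$ is central by \cref{Lma: JA Comm}.

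To upgrade this from generators to all of $\Inn(Q)$, I would introduce the set
$$H=\left\{\psi\in\Inn(Q)\;\middle|\;\forall x\in Q,\ \psi(x)^{\exp_Q}\cdot a=x^{\exp_Q}\cdot a\right\}$$
and verify that it is a subgroup. Closure under composition follows by putting $y=\psi_2(x)$ and chaining the defining condition for $\psi_1$ (applied to $y$) with that for $\psi_2$ (applied to $x$); closure under inverses follows by replacing $x$ with $\psi^{-1}(x)$ in the condition for $\psi$ and using that $\psi$ permutes $Q$. The universal quantifier over $x$ is what makes the composition step legitimate, since $\psi_1$ must be evaluated at the shifted element $\psi_2(x)$ rather than at $x$ itself.

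The displayed computation shows $\varphi_b\in H$ for every $b\in\A_Q$ with $b\mid a$. Since $a\in\A_Q^\dom$, \cref{Lma: dominant action new} asserts that these $\varphi_b$ already generate $\Inn(Q)$, whence $H=\Inn(Q)$, which is exactly the claim. The only genuinely nontrivial point—and thus the main obstacle—is recognizing that one cannot reason with a fixed $x$ (the naive stabilizer-type set is not closed under composition), so the correct object is the subgroup $H$ cut out by the condition holding \emph{uniformly} in $x$; once this formulation is in place, the dominance hypothesis enters solely through \cref{Lma: dominant action new}, and the verification is routine.
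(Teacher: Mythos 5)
Your proposal is correct and follows essentially the same route as the paper: the same computation for the generators $\varphi_b$ with $b\mid a$ (using centrality from \cref{Lma: JA Comm} and the conjugation relation from \cref{Prp: AQ is auto gen}), followed by the same appeal to \cref{Lma: dominant action new} to pass to all of $\Inn(Q)$. The only difference is that you spell out the subgroup-closure verification (the set $H$ being closed under composition and inverses, with the quantifier over $x$ held uniformly) which the paper leaves implicit in the phrase ``$\Inn(Q)$ is generated by $\varphi_b$ \dots\ thus''; this is a welcome clarification, not a change of method.
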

\begin{proof}
	Let $b\in\A_Q$ s.t. $b|a$, and write $a=b\cdot c$ for some $c\in\A_Q$. By \cref{Lma: JA Comm}, for all $x\in Q$,
	$$x^ {\exp_Q}\cdot a=x^ {\exp_Q}\cdot b\cdot c=b\cdot x^ {\exp_Q}\cdot c=
	\varphi_b(x^ {\exp_Q})\cdot b\cdot c = \varphi_b(x)^ {\exp_Q}\cdot a.$$
	By \cref{Lma: dominant action new}, $\Inn(Q)$ is generated as a group by $\varphi_b$ for all $b|a$ in $\A_Q$. Thus
	$$x^{\exp_Q}\cdot a=\psi(x)^{\exp_Q}\cdot a$$
	for all $x\in Q$ and all $\psi\in\Inn(Q)$.
\end{proof}

\begin{lemma}
	\label{Lma: inflation in deg calc}
	Let $A$ be a commutative graded monoid, freely-generated by $k$ elements of degree $N$. Let $C$ be a finitely-generated graded $A$-set that admits a Hilbert polynomial $P_C\in\QQ[x]$. Assume that there exists $c\in C$ s.t. $|c|\in N\ZZ$ and
	$$A\xrightarrow{\cdot c} C$$ is injective. Then
	$$\deg P_C=k-1.$$
\end{lemma}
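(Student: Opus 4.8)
The plan is to pin down $\deg P_C$ by sandwiching the Hilbert function $|C_n|$ between an upper bound coming from finite generation and a lower bound coming from the injection $A \xrightarrow{\cdot c} C$, both of which grow like $n^{k-1}$. First I would make the structure of $A$ explicit. Since $A$ is the free commutative monoid on $k$ generators, each of degree $N$, every element is uniquely a monomial of degree $N(a_1+\cdots+a_k)$ with $(a_1,\dots,a_k)\in\NN^k$. Hence $A_n=\emptyset$ unless $N\mid n$, and
$$|A_{Nm}| = \binom{m+k-1}{k-1},$$
a polynomial in $m$ of degree $k-1$ with positive leading coefficient $1/(k-1)!$. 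Thus the Hilbert function of $A$ is a quasi-polynomial supported on the progression $N\ZZ$, and this concentration on one residue class is the feature that makes the argument slightly delicate.

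For the upper bound I would use that $C$ is finitely generated: write $C = A\cdot T$ for a finite $T\subseteq C$. The surjection $\coprod_{t\in T} A_{n-|t|}\twoheadrightarrow C_n$ sending $(a,t)\mapsto a\cdot t$ gives
$$|C_n| \le \sum_{t\in T}|A_{n-|t|}|.$$
Each summand is either $0$ or a binomial coefficient bounded by $\binom{\lfloor n/N\rfloor + k-1}{k-1}$, which is a fixed polynomial in $n$ of degree $k-1$; summing over the finite set $T$ keeps the total $O(n^{k-1})$. Since $|C_n|=P_C(n)$ for $n\gg 0$, this forces $\deg P_C \le k-1$.

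For the lower bound I would exploit the injectivity of $\cdot c$. Its image $A\cdot c\subseteq C$ sits in degrees shifted by $|c|$, so injectivity yields $|C_n|\ge |(A\cdot c)_n| = |A_{n-|c|}|$. Because $|c|\in N\ZZ$, evaluating along the progression $n=Nm$ (with $Nm>|c|$ and $n\gg 0$) gives
$$P_C(Nm) = |C_{Nm}| \ge |A_{Nm-|c|}| = \binom{m-|c|/N+k-1}{k-1}.$$
The right-hand side is a polynomial in $m$ of degree $k-1$ with positive leading coefficient, whereas the left-hand side $P_C(Nm)$ is a polynomial in $m$ of degree $\deg P_C$. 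Were $\deg P_C<k-1$, the left side would be eventually dominated by the right, a contradiction; hence $\deg P_C\ge k-1$. Combining the two estimates gives $\deg P_C = k-1$.

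The main obstacle, and the only point requiring care, is precisely this lower bound: since $A$ and its image $A\cdot c$ live only in degrees divisible by $N$, the inequality $|C_n|\ge|A_{n-|c|}|$ is informative only along $n\in N\ZZ$, so one must compare the single polynomial $P_C$ against a degree-$(k-1)$ polynomial on an infinite arithmetic progression rather than on all of $\NN$. The hypothesis $|c|\in N\ZZ$ is exactly what guarantees this progression meets the support of $A$, validating the comparison.
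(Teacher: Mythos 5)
Your proof is correct, and it shares the paper's overall sandwich strategy: an upper bound on $|C_n|$ from finite generation and a lower bound from the injection $A\xrightarrow{\cdot c}C$, both of order $n^{k-1}$. But the implementation is genuinely different. The paper first \emph{regrades}: it sets $A'_n:=A_{nN}$, $C'_n:=C_{nN}$, notes $\deg P_C=\deg P_{C'}$, and then linearizes, viewing $\CC[C']$ as a graded module over the polynomial ring $\CC[A']$ (now generated in degree $1$); the two bounds are then quoted from module theory --- finite generation gives $\deg P_{C'}\le k-1$ (e.g.\ via \cref{Prp: good HPoly}), and the free submodule $c\,\CC[A']$ gives $\deg P_{C'}\ge k-1$. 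You instead stay entirely at the level of the graded set and count: the explicit formula $|A_{Nm}|=\binom{m+k-1}{k-1}$, the surjection $\coprod_{t\in T}A_{n-|t|}\twoheadrightarrow C_n$ for the upper bound, and evaluation of $P_C$ along the progression $n\in N\ZZ$ for the lower bound, using $|c|\in N\ZZ$ to ensure the progression meets the support of $A\cdot c$ --- exactly the point the paper's regrading trick is designed to sidestep. Your route is more elementary and self-contained (no passage to $\CC$-algebras, no appeal to Hilbert theory of modules); the paper's route is shorter given the machinery it has already built, and the inflation step eliminates the arithmetic-progression bookkeeping that you correctly identify as the delicate point. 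Both arguments are sound.
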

\begin{proof}
	Let $S\subseteq C$ be a finite set of generators. Define a graded monoid $A'$ and graded $A'$-set $C'$ as follows:
	$$A'_n=A_{nN}\;\;\;\;,\;\;\;\;\;\;\;\;C'_n=C_{nN}.$$
	Then for all $n\gg 0$,
	$$|C'_n|=|C_{nN}|=P_C(nN).$$
	Therefore $C'$ admits a Hilbert polynomial $P_{C'}$ satisfying
	$$\deg P_C=\deg P_{C'}.$$
	The $\CC$-algebra $\CC[A']$ is graded, abelian, freely generated by $k$ elements of degree $1$. Consider $\CC[C']$, a graded $\CC[A']$-module. On one hand $\CC[C']$ is finitely-generated, therefore $\deg P_{C'}\le k-1$. On the other hand $\CC[C']$ contains a free sub-module, namely $c\CC[A']$, whereby $\deg P_{C'}\ge k-1$. Altogether,
	$$\deg P_C=\deg P_{C'}=k-1.$$
\end{proof}

\begin{proposition}
	\label{Prp: dim dom}
	Let $Q\in\Qdl^\fin$. Then
	$$\deg P_Q^\dom=|\pi_0(Q)|-1.$$
\end{proposition}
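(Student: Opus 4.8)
The plan is to realize $\A_Q^\dom$ as a finitely generated graded set over a free commutative graded monoid on $|\pi_0(Q)|$ generators and then read off the degree from \cref{Lma: inflation in deg calc}. Concretely, set $N:=\exp_Q$ and let $A$ be the free commutative graded monoid on the set $\pi_0(Q)$, with each generator $e_{[x]}$ placed in degree $N$; thus $A$ is freely generated by $k=|\pi_0(Q)|$ elements of degree $N$, exactly the hypothesis of \cref{Lma: inflation in deg calc}. If I can exhibit $\A_Q^\dom$ as a finitely generated graded $A$-set containing an element $c$ of degree divisible by $N$ with $A\xrightarrow{\cdot c}\A_Q^\dom$ injective, the lemma immediately yields $\deg P_Q^\dom=k-1=|\pi_0(Q)|-1$, since $\A_Q^\dom$ admits the Hilbert polynomial $P_Q^\dom$ by \cref{Thm: auto-gen Hilbert Poly}.

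First I would put the $A$-action on $\A_Q^\dom$. For a component $[x]\in\pi_0(Q)$ I let $e_{[x]}$ act by $a\mapsto x^{N}\cdot a$. This is well defined, i.e.\ independent of the representative $x$, precisely by \cref{Lma: acrobatics}: if $[x]=[y]$ then $y=\psi(x)$ for some $\psi\in\Inn(Q)$, so $y^{N}\cdot a=\psi(x)^{N}\cdot a=x^{N}\cdot a$ for every $a\in\A_Q^\dom$. The generators commute because the $x^{N}$ are central in $\A_Q$ by \cref{Lma: JA Comm}, and the action is graded since $|x^{N}\cdot a|=N+|a|$. Moreover this $A$-action and the $J_Q$-action of \cref{Dfn: auto gen exp} have the same orbits — both are generated by the operators $\cdot\,x^{N}$ — so the finite generation of $\A_Q^\dom$ as a $J_Q$-set from \cref{Lma: A/B fin gen} transfers verbatim to finite generation as an $A$-set.

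The crux is the injectivity of $A\xrightarrow{\cdot c}\A_Q^\dom$, which I would extract from a degree-type invariant rather than from any direct manipulation of braid words. Sending each generator $x\in Q$ to its class $[x]$ defines a graded monoid homomorphism $\pi\colon\A_Q\to\NN[\pi_0(Q)]$ into the free \emph{commutative} monoid on $\pi_0(Q)$: the defining relation $x\cdot y=\qq xy\cdot x$ is sent to $[x]+[y]=[\qq xy]+[x]$, which holds because $[\qq xy]=[y]$ in $\pi_0(Q)=Q/\Inn(Q)$. Now I pick a dominant element, for instance the product of all elements of $Q$ in some order, which is dominant since its letters cannot lie in a proper sub-quandle (\cref{Prp: A_R saturated in A_Q}), and set $c$ to be its $N$-th power; then $c\in\A_Q^\dom$ (again by \cref{Prp: A_R saturated in A_Q}) and $|c|\in N\ZZ$. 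For $a=\prod_{[x]}e_{[x]}^{n_{[x]}}\in A$ one computes $\pi(a\cdot c)=N\sum_{[x]}n_{[x]}[x]+\pi(c)$, so the composite $A\xrightarrow{\cdot c}\A_Q^\dom\xrightarrow{\pi}\NN[\pi_0(Q)]$ is the injective affine map $(n_{[x]})\mapsto N(n_{[x]})+\pi(c)$ on $\NN^{\pi_0(Q)}$. Hence $\cdot\, c$ itself is injective, and \cref{Lma: inflation in deg calc} gives $\deg P_Q^\dom=|\pi_0(Q)|-1$. I expect the main obstacle to be exactly this injectivity step: the temptation is to track letters through the braid relations, whereas the efficient move is to pass to $\NN[\pi_0(Q)]$, which collapses precisely the ambiguity introduced by the quandle action while retaining enough information to separate the powers of the central generators.
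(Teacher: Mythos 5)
Your proof is correct and takes essentially the same route as the paper: the paper fixes a set $\m$ of representatives and works with the concrete submonoid $J_\m=\langle\{x^{\exp_Q}\;|\;x\in\m\}\rangle_\Mon\le\A_Q$, shows it is free commutative via the same projection $\A_Q\twoheadrightarrow\NN[\pi_0(Q)]$ that you call $\pi$, uses \cref{Lma: acrobatics} to show $\A_Q^\dom$ is a finitely generated $J_\m$-set, and applies \cref{Lma: inflation in deg calc} to $c=\prod_{x\in Q}x^{\exp_Q}$, exactly mirroring your steps. Your only deviations are cosmetic --- an abstract free commutative monoid acting on $\A_Q^\dom$ in place of the submonoid $J_\m$, and a fully written-out injectivity argument for $\cdot\,c$ via $\pi$, a step the paper asserts without detail.
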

\begin{proof}
	Consider the quotient map
	$$p\colon Q\to\pi_0(Q)=Q/\Inn(Q).$$
	Let $\m\subseteq Q$ be a set of representatives in $\pi_0(Q)$, synonymous with a section 
	$$\m \colon \pi_0(Q)\to Q$$
	in $\Set$ of the aforementioned quotient. For every $x\in Q$ let $\psi_{\m,x}\in\Inn(Q)$ s.t.
	$$\psi_{\m,x}(x)\in\m.$$
	We define the submonoid $J_\m\le \A_Q$ via
	$$J_\m:=\langle\{x^{\exp_Q}\;|\;x\in\m \}\rangle_\Mon\le J_Q\le \A_Q.$$
	The monoid $J_\m$ is free commutative: By \cref{Lma: JA Comm}, for all $x\in Q$, $x^{\exp_Q}$ commutes with $\A_Q$. Consider the composition
	\begin{equation}\begin{tikzcd}[row sep=tiny]
	\NN[\pi_0(Q)]\ar[r]&J_\m\ar[r,hook,"\subseteq"]&\A_Q\ar[r,two heads]&\NN[\pi_0(Q)]\\
	z\ar[r,mapsto]&\m(z)^{\exp_Q}&x\ar[r,mapsto]&p(x)
	\label{Eqn: expansion}\end{tikzcd}\end{equation}
	The map $\NN\langle Q\rangle\to\NN[\pi_0(Q)]$ is well-defined  on $\A_Q$ since for all $x,y\in Q$,
	$$\qq xy\cdot x=\varphi_x(y)\cdot x=y\cdot x=x\cdot y\in\NN[Q/\Inn(Q)]=\NN[\pi_0(Q)].$$
	Since $\NN[\pi_0(Q)]$ is free commutative, the composition in (\ref{Eqn: expansion}) is injective:
	$$z\longmapsto \m(z)^{\exp_Q}\longmapsto (p\circ\m)(z)^{\exp_Q}=z^{\exp_Q},$$
	therefore $J_\m$ is a commutative monoid generated freely by $\{x^{\exp_Q}\;|\;x\in\m\}$.
	By \cref{Lma: A/B fin gen}, there is some finite $S\subseteq \A_Q^\dom$ s.t.
	$$A_Q^\dom=J_Q\cdot S.$$
	Let $a\in\A_Q^\dom$ be any element. Write
	$$a=\prod_{x\in Q}x^{\exp_Qn_x}\cdot s.$$
	Since the $x^{\exp_Q}$ commute and $s$ is dominant, from \cref{Lma: acrobatics} we have
	$$a=\prod_{x\in Q}x^{\exp_Qn_x}\cdot s= \prod_{x\in Q}\psi_{\m,x}(x)^{\exp_Qn_x}\cdot s\in J_\m\cdot S.$$
	Thus $\A_Q^\dom$ is a finitely-generated $J_\m$-set.
	Next, consider
	$$c:=\prod_{x\in Q}x^{\exp_Q}\in\A_Q^\dom\;\;,\;\;\;\;
	|c|=|Q|\exp_Q.$$
	The map
	$$J_\m\xrightarrow{\cdot c} \A_Q^\dom$$
	Is injective. Also $J_\m$ is a graded, commutative monoid freely generated by $|\m|=\pi_0(Q)$ elements of degree $\exp_Q$. Therefore by \cref{Lma: inflation in deg calc},
	$$\deg P_{\A_Q^\dom}=\deg P_Q^\dom=|\pi_0(Q)|-1$$
\end{proof}

\begin{theorem}
	\label{Thm: dim deg 1}
	Let $Q\in\Qdl^\fin$. Then 
	$$\deg P_Q=\dim_Q-1.$$
\end{theorem}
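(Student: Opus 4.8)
The plan is to deduce the result immediately from the two preceding propositions. By \cref{Prp: dom sum} the Hilbert polynomial splits as a finite sum $P_Q = \sum_{R\le Q} P_R^\dom$ over all sub-quandles of $Q$, and by \cref{Prp: dim dom} each summand satisfies $\deg P_R^\dom = |\pi_0(R)| - 1$. Since $Q$ is finite there are only finitely many sub-quandles, so the largest degree occurring among the summands is attained and equals $\max_{R\le Q}(|\pi_0(R)|-1) = \dim_Q - 1$. What remains is to check that passing to the sum does not lower the degree, i.e. that the top-degree terms of the various $P_R^\dom$ do not cancel.

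This non-cancellation is the one point requiring an argument, and I expect it to be the only genuine obstacle. The idea I would use is positivity of leading coefficients. For nonempty $R\le Q$ we have $|\pi_0(R)| \ge 1$, hence $\deg P_R^\dom \ge 0$, so $P_R^\dom$ is a nonzero polynomial; moreover it agrees with the cardinalities $|\A_{R,n}^\dom| \ge 0$ for all $n \gg 0$, so it is eventually nonnegative. A nonzero integer-valued polynomial taking nonnegative values along an infinite set of positive integers must have strictly positive leading coefficient. Thus each $P_R^\dom$ with $R$ nonempty has positive leading coefficient, while the empty sub-quandle (if it is counted at all) contributes only the zero polynomial, of degree $-1$, and may be ignored.

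With every leading coefficient positive, no cancellation can occur: gathering the sub-quandles $R$ that realize the maximal degree $d := \dim_Q - 1$, their leading coefficients are all positive, so the coefficient of $x^{d}$ in $P_Q$ is a sum of positive numbers and hence nonzero. Therefore $\deg P_Q = d = \dim_Q - 1$, as claimed. The remaining bookkeeping — that a uniform threshold $n \gg 0$ can be chosen for the finitely many sub-quandles simultaneously — is already supplied in the proof of \cref{Prp: dom sum} and needs no repetition.
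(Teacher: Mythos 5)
Your proposal is correct and follows essentially the same route as the paper: decompose $P_Q=\sum_{R\le Q}P_R^\dom$ via \cref{Prp: dom sum}, apply \cref{Prp: dim dom} to each summand, and rule out cancellation by positivity of leading coefficients. The only difference is cosmetic — the paper asserts the positivity of the leading coefficients without elaboration, whereas you justify it (correctly) from the eventual nonnegativity of $|\A_{R,n}^\dom|$.
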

\begin{proof}
	By \cref{Prp: dom sum},
	$$\displaystyle{P_Q=\sum_{R\le Q}P_R^\dom}.$$
	For all $R\le Q$ unless $P_R=0$, the leading coefficient in $P_R^\dom$ is positive. Thus
	$$\deg P_Q=\deg\sum_{R\le Q}P_R^\dom= \max_{R\le Q}\left(\deg P_R^\dom\right).$$
	By \cref{Prp: dim dom}, for all $R\le Q$,
	$$\deg P_R^\dom=|\pi_0(R)|-1.$$
	Therefore
	$$\deg P_Q= \max_{R\le Q}\left(\deg P_R^\dom\right)=\max_{R\le Q}\left(|\pi_0(R)|-1\right)=\dim_Q-1.$$
\end{proof}

\section{Products in $\Qdl$ and Refined Statistics}

\begin{definition}
	\label{Dfn: qdl prod}
	Let $Q,R\in\Qdl$. 
	The set $Q\times R$ has a natural quandle structure 
	$$\qq{(x,y)}{(x',y')}=(\qq x{x'},\qq y{y'}).$$
\end{definition}

\begin{proposition}
\label{Prp: qdl prod}
	The quandle product of \cref{Dfn: qdl prod} is the categorical direct product in $\Qdl$.
\end{proposition}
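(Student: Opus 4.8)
The plan is to verify that $Q\times R$, equipped with the two coordinate projections, satisfies the universal property of the categorical product in $\Qdl$. The first step is to confirm that the componentwise operation of \cref{Dfn: qdl prod} really does define a quandle structure. Each of the axioms (Q1)--(Q3) of \cref{Dfn: quandle} is verified coordinatewise and follows from the corresponding axiom holding separately in $Q$ and in $R$; for example (Q1) gives $\qq{(x,y)}{(x,y)}=(\qq xx,\qq yy)=(x,y)$, (Q2) holds because $\qq{(x,y)}{(-)}$ is the product of the bijections $\qq x{(-)}$ and $\qq y{(-)}$, and (Q3) is obtained by pairing the two instances of self-distributivity.

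Next I would check that the projections $\pi_Q\colon Q\times R\to Q$ and $\pi_R\colon Q\times R\to R$ are morphisms in $\Qdl$. This is immediate from the definition of the operation, since $\pi_Q\bigl(\qq{(x,y)}{(x',y')}\bigr)=\qq x{x'}=\qq{\pi_Q(x,y)}{\pi_Q(x',y')}$, and symmetrically for $\pi_R$.

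For the universal property, I would take an arbitrary $T\in\Qdl$ together with morphisms $f\colon T\to Q$ and $g\colon T\to R$. The only set map $h\colon T\to Q\times R$ satisfying $\pi_Q\circ h=f$ and $\pi_R\circ h=g$ is $h(t)=(f(t),g(t))$, so uniqueness is inherited from the universal property of the product in $\Set$. The one nonformal step is to verify that this $h$ is itself a quandle morphism, and this again reduces to the componentwise structure: $h(\qq st)=(f(\qq st),g(\qq st))=(\qq{f(s)}{f(t)},\qq{g(s)}{g(t)})=\qq{(f(s),g(s))}{(f(t),g(t))}=\qq{h(s)}{h(t)}$. The content of the statement is thus simply that the forgetful functor $\Qdl\to\Set$ creates this product — the underlying set of the categorical product is the Cartesian product and the operation is forced to be componentwise — so there is no genuine obstacle; every step is a direct coordinatewise check against the axioms of \cref{Dfn: quandle}.
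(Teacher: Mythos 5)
Your proof is correct and takes essentially the same route as the paper: both arguments rest on the single observation that a set map into $Q\times R$ is a quandle morphism precisely when its two coordinates are. The paper packages this as a bijection $\Hom_\Qdl(S,Q\times R)\simeq\Hom_\Qdl(S,Q)\times\Hom_\Qdl(S,R)$, whereas you spell out the universal property via the projections (and additionally verify the quandle axioms, which the paper absorbs into \cref{Dfn: qdl prod}); the mathematical content is identical.
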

\begin{proof}
	Let $Q,R,S\in\Qdl$. Then
	$$\Hom_\Qdl(S,Q\times R_{\;(\ref{Dfn: qdl prod})})=\{f\colon S\to Q\times R\;|\;f(\qq xy)=\qq{f(x)}{f(y)}\}\simeq$$
	$$\simeq\{(f_1\colon S\to Q,f_2\colon S\to R)\;|\;f_1(\qq xy)=\qq{f_1(x)}{f_1(y)}\;,\;\; f_2(\qq xy)=\qq{f_2(x)}{f_2(y)}\}=$$
	$$=\Hom_\Qdl(S,Q)\times\Hom_\Qdl(S,R).$$
\end{proof}

\begin{proposition}
	\label{Prp: fib sub}
	Let $f\colon R\to Q$ be a morphism in $\Qdl$. Let $z\in Q$. Then
	$$f^{-1}(z)\le R.$$
\end{proposition}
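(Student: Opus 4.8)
The plan is to verify directly that the subset $f^{-1}(z)\subseteq R$ is closed under the quandle operation, which is precisely the condition for being a sub-quandle. The only ingredient beyond the definition of a quandle morphism will be axiom (Q1), the idempotency $\qq{z}{z}=z$.

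First I would unwind the definitions: an element $x\in R$ lies in $f^{-1}(z)$ exactly when $f(x)=z$. So to show $f^{-1}(z)\le R$ it suffices to take arbitrary $x,y\in f^{-1}(z)$ and check that $\qq xy$ again lands in the fiber over $z$. Applying $f$ and using that it is a morphism in $\Qdl$ gives
$$f(\qq xy)=\qq{f(x)}{f(y)}=\qq zz=z,$$
where the last equality is axiom (Q1). Hence $\qq xy\in f^{-1}(z)$, establishing closure.

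The remaining point worth a word is the degenerate case: if $z$ is not in the image of $f$, then $f^{-1}(z)=\emptyset$, and the empty set vacuously satisfies the closure condition, so it is still a legitimate sub-quandle. With that noted, the conclusion $f^{-1}(z)\le R$ follows.

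There is no genuine obstacle here; the statement is a one-line computation whose whole content is that the idempotency axiom (Q1) forces the ``diagonal value'' $\qq zz$ back to $z$, so that fibers of quandle morphisms are automatically closed under the operation. I would present it as a short direct verification rather than invoking any of the heavier machinery developed earlier in the paper.
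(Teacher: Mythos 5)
Your proof is correct and is essentially identical to the paper's: both verify closure of the fiber directly via $f(\qq xy)=\qq{f(x)}{f(y)}=\qq zz=z$, using only the morphism property and axiom (Q1). Your remark on the empty fiber is a harmless addition, since the paper's definition of sub-quandle requires only closure, which the empty set satisfies vacuously.
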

\begin{proof}
Let $x,y\in f^{-1}(z)$. Then $f(x)=f(y)=z$, and
$$f(\qq xy)=\qq{f(x)}{f(y)}=\qq zz=z\;\Longrightarrow\; \qq xy\in f^{-1}(z).$$
Thus
$$f^{-1}(z)\le R.$$
\end{proof}

\begin{proposition}
	\label{Nota: (phi,r)}
	\label{Lma: (phi,r)}
	\label{Prp: surj Inn}
	Let $f\colon R\to Q$ be a surjective morphism in $\Qdl$. Then $f$ induces a surjective group homomorphism
	$$f_*\colon \Inn(R)\twoheadrightarrow\Inn(Q)$$
	s.t. for all $\psi\in\Inn(R)$,
	$$f\circ\psi=f_*(\psi)\circ f.$$
\end{proposition}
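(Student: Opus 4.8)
The plan is to build the group homomorphism $f_*$ directly from the defining generators of $\Inn(R)$, then check it is well-defined, surjective, and intertwines with $f$ in the required way. The natural candidate is to send the generator $\varphi_x \in \Inn(R)$ (for $x \in R$) to $\varphi_{f(x)} \in \Inn(Q)$. First I would establish the key compatibility at the level of these generators: for every $x \in R$ and every $y \in R$,
\[
f(\varphi_x(y)) = f(\qq xy) = \qq{f(x)}{f(y)} = \varphi_{f(x)}(f(y)),
\]
which says $f \circ \varphi_x = \varphi_{f(x)} \circ f$ as maps $R \to Q$. Since $f$ is surjective, every element of $Q$ is of the form $f(y)$, so this determines $\varphi_{f(x)}$ uniquely on all of $Q$ in terms of $\varphi_x$ and $f$.

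The crucial step, and the one I expect to be the main obstacle, is showing that the assignment $\varphi_x \mapsto \varphi_{f(x)}$ extends to a \emph{well-defined} homomorphism on all of $\Inn(R)$ — i.e. that any relation holding among the $\varphi_x$ in $\Inn(R)$ also holds among the corresponding $\varphi_{f(x)}$ in $\Inn(Q)$. The clean way to handle this is to avoid relations entirely and instead define $f_*$ by the conjugation rule: for $\psi \in \Inn(R)$, I want $f_*(\psi)$ to be the unique element of $\Inn(Q)$ satisfying $f \circ \psi = f_*(\psi) \circ f$. To see such an element exists and is a group element, I would argue that the intertwining property is preserved under composition and inverses. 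Concretely, if $f \circ \psi = \psi' \circ f$ and $f \circ \chi = \chi' \circ f$ with $\psi',\chi' \in \Inn(Q)$, then
\[
f \circ (\psi \circ \chi) = \psi' \circ f \circ \chi = \psi' \circ \chi' \circ f,
\]
so the set of $\psi \in \Inn(R)$ admitting such a $\psi' \in \Inn(Q)$ is closed under multiplication; a symmetric argument using $f \circ \psi^{-1} = (\psi')^{-1} \circ f$ (valid because $f\circ\psi = \psi'\circ f$ can be rearranged once both are bijections and $f$ is surjective) handles inverses. Since this set contains every generator $\varphi_x$ by the computation above, it is all of $\Inn(R)$.

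For uniqueness of $f_*(\psi)$ — needed to make $f_*$ a genuine function — I would use surjectivity of $f$: if $\psi' \circ f = \psi'' \circ f$ then $\psi'$ and $\psi''$ agree on $\im(f) = Q$, hence $\psi' = \psi''$. This uniqueness immediately gives that $f_*$ is a homomorphism, since $f_*(\psi)\circ f_*(\chi)$ and $f_*(\psi\chi)$ both intertwine $\psi\chi$ with $f$. Finally, surjectivity of $f_*$ follows from the generator computation: each generator $\varphi_{f(x)}$ of $\Inn(Q)$ equals $f_*(\varphi_x)$, and since these generate $\Inn(Q)$ and $f_*$ is a homomorphism, $f_*$ hits all of $\Inn(Q)$. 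The identity $f \circ \psi = f_*(\psi) \circ f$ for all $\psi \in \Inn(R)$ is then exactly the defining property built into the construction, completing the proof.
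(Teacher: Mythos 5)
Your proof is correct, but it takes a genuinely different route from the paper's. The paper lifts everything to free groups: it factors $\varphi\colon R\to\Inn(R)$ through the free group $F_R$, extends $f$ to a homomorphism $F_R\to F_Q$, proves the intertwining identity $f\circ\varphi_w=\varphi_{f(w)}\circ f$ for all words $w\in F_R$ by induction on products and inverses, and then uses surjectivity of $f$ to show that $\ker(F_R\to\Inn(R))$ is carried into $\ker(F_Q\to\Inn(Q))$, so that the assignment descends to a homomorphism $f_*\colon\Inn(R)\to\Inn(Q)$. You avoid presentations altogether: you characterize $f_*(\psi)$ as \emph{the unique} element of $\Inn(Q)$ intertwining $\psi$ with $f$ (uniqueness coming from surjectivity of $f$), and you get existence by observing that the set of $\psi\in\Inn(R)$ admitting an intertwiner is closed under composition and inversion and contains every generator $\varphi_x$, hence is all of $\Inn(R)$. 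The underlying computations are identical --- the generator identity $f\circ\varphi_x=\varphi_{f(x)}\circ f$, closure under products and inverses, and surjectivity of $f$ as the crucial input --- but your packaging makes well-definedness and the homomorphism property fall out of the uniqueness clause automatically, whereas the paper must separately verify that relations among the $\varphi_x$ are respected, which is exactly what its kernel-to-kernel step accomplishes. One minor inaccuracy: the rearrangement $f\circ\psi=\psi'\circ f\;\Rightarrow\;f\circ\psi^{-1}=(\psi')^{-1}\circ f$ requires only that $\psi$ and $\psi'$ be bijections, not surjectivity of $f$, so the parenthetical justification in your inverse step is superfluous (though harmless).
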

\begin{proof}

For $Q\in\Qdl$, one can factor the map $\varphi\colon Q\to \Inn(Q)$ through $F_Q$, the free group on $Q$, thus extending the $Q$-action on itself to an $F_Q$-action on $Q$:
	$$\begin{tikzcd}
		Q\ar[r] &F_Q\ar[r,"\varphi"] &\Inn(Q)
	\end{tikzcd}.$$
	Now let $f\colon R\to Q$ be a surjective quandle morphism. Consider the diagram
	$$\begin{tikzcd}
	R\ar[r]\ar[d,"f"']\ar[dr,"(*)",phantom] &F_R\ar[r,"\varphi",two heads]\ar[d,dashed] \ar[dr,"(**)",phantom] &\Inn(R)\ar[d,dashed]\\
	Q\ar[r] &F_Q\ar[r,"\varphi"',two heads] &\Inn(Q)
\end{tikzcd}$$
	The map $R\xrightarrow {\;f\;} Q$ extends to a homomorphism between free groups $F_R\to F_Q$ - also denoted by $f$ - closing up square $(*)$ above. For all $x,y\in R$ we have
	$$(f\circ \varphi_x)(y)=f(\qq xy)=\qq{f(x)}{f(y)}=(\varphi_{f(x)}\circ f)(y).$$
	Thus for all $x\in R$ there is an equality of functions $R\to Q$ -
	\begin{equation}
	\label{square dancing}
	\forall x\in R\;,\;\;f\circ\varphi_x=\varphi_{f(x)} \circ f.
	\end{equation}

	If $w,w'\in F_R$ are words s.t.
	$$f\circ \varphi_w=\varphi_{f(w)}\circ f\;,\;\; f\circ \varphi_{w'}=\varphi_{f(w')}\circ f,$$
	then
	$$f\circ \varphi_{ww'}= f\circ \varphi_{w}\circ\varphi_{w'} =\varphi_{f(w)}\circ f\circ\varphi_{w'}=\varphi_{f(w)}\circ\varphi_{f(w')}\circ f=$$
	$$=\varphi_{f(w)f(w')}\circ f=\varphi_{f(ww')}\circ f$$
	and
	$$\varphi_{f(w^{-1})}\circ f=\varphi_{f(w)^{-1}}\circ f\circ \varphi_w\circ (\varphi_w)^{-1}=(\varphi_{f(w)})^{-1}\circ \varphi_{f(w)}\circ f\circ \varphi_{w^{-1}}=f\circ\varphi_{w^{-1}}.$$
	Thus the equality in (\ref{square dancing}) extends to all $F_R$:
	%$$\forall w\in F_R\;,\;\;f\circ\varphi_w=\varphi_{f(w)} \circ f.$$
	\begin{equation}
	\label{free square dancing}
	\forall w\in F_R\;,\;\;f\circ\varphi_w=\varphi_{f(w)} \circ f.
	\end{equation}
	Suppose $w\in F_R$ is such that $\varphi_w=\Id_R\in\Inn(R)$. Then for all $y\in R$,
	$$\varphi_{f(w)}(f(y))=(\varphi_{f(w)}\circ f)(y)=(f\circ \varphi_w)(y)=f(y).$$
	Recall that $f$ is surjective, thus $\varphi_{f(w)}$ acts trivially on $f(R)=Q$. In other words $f$ maps $\ker(F_R\to \Inn(R))$ to $\ker(F_Q\to \Inn(Q))$, hence there is a group homomorphism
	$$f_*\colon \Inn(R)\to\Inn(Q)$$
	completing the square $(**)$ above.
	%$$\forall w\in F_R\;,\;\;\varphi_w=\Id_R\;\Longrightarrow\;\varphi_{f(w)}=\Id_Q.$$
	Since the maps $F_R\xrightarrow{\;f\;}F_Q\xrightarrow{\;\varphi\;}\Inn(Q)$ are both surjective, so is $f_*$. Furthermore, the action of $F_R$ on $R$ (resp. $F_Q$ on $Q$) factors through $F_R\xrightarrow{\;\varphi\;}\Inn(R)$ (resp. $F_Q\xrightarrow{\;\varphi\;}\Inn(Q)$), therefore the equality in (\ref{free square dancing}) descends to $\Inn(R)$:
	$$\forall \psi\in \Inn(R)\;,\;\;f\circ\psi=f_*(\psi) \circ f.$$
\end{proof}

\begin{corollary}
	Let $f\colon R\to Q$ be a surjective morphism of quandles, and let $x,y\in Q$ be such that $[x]_\sim=[y]_\sim\in\pi_0(Q)$. Then there exists $\psi\in\Inn(R)$ s.t. 
	$$\psi\colon f^{-1}(x)\iso f^{-1}(y)$$
	is an isomorphism of quandles.
\end{corollary}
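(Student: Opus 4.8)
The plan is to combine the two immediately preceding results: the surjectivity of $f_*\colon\Inn(R)\twoheadrightarrow\Inn(Q)$ together with the intertwining identity $f\circ\psi=f_*(\psi)\circ f$ from \Cref{Prp: surj Inn}, and the fact from \Cref{Prp: fib sub} that fibers of a quandle morphism are sub-quandles. The idea is that an inner automorphism of $Q$ carrying $x$ to $y$ can be lifted through $f_*$ to an inner automorphism of $R$ that carries the fiber over $x$ onto the fiber over $y$, and this lift is automatically a quandle automorphism of $R$, so its restriction is the sought isomorphism.

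First I would unwind the hypothesis $[x]_\sim=[y]_\sim$. By \Cref{Dfn: pi0}, $\pi_0(Q)=Q/\Inn(Q)$ is the set of $\Inn(Q)$-orbits, so the assumption says exactly that $x$ and $y$ lie in a common orbit; that is, there exists $\theta\in\Inn(Q)$ with $\theta(x)=y$. Since \Cref{Prp: surj Inn} gives that $f_*\colon\Inn(R)\to\Inn(Q)$ is surjective, I may choose $\psi\in\Inn(R)$ with $f_*(\psi)=\theta$, and I claim this $\psi$ is the desired isomorphism.

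Next I would verify that $\psi$ carries $f^{-1}(x)$ into $f^{-1}(y)$. For any $r\in f^{-1}(x)$, the intertwining identity $f\circ\psi=f_*(\psi)\circ f$ from \Cref{Prp: surj Inn} yields
$$f(\psi(r))=f_*(\psi)(f(r))=\theta(x)=y,$$
so $\psi\bigl(f^{-1}(x)\bigr)\subseteq f^{-1}(y)$. Applying the same computation to $\psi^{-1}\in\Inn(R)$, whose image $f_*(\psi^{-1})=\theta^{-1}$ satisfies $\theta^{-1}(y)=x$, gives $\psi^{-1}\bigl(f^{-1}(y)\bigr)\subseteq f^{-1}(x)$. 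Together these two inclusions show that $\psi$ restricts to a bijection $f^{-1}(x)\to f^{-1}(y)$.

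Finally I would record that this restriction is an isomorphism of quandles. By \Cref{Prp: fib sub} both $f^{-1}(x)$ and $f^{-1}(y)$ are sub-quandles of $R$, and $\psi\in\Inn(R)\le\Aut_\Qdl(R)$ is in particular a quandle automorphism of $R$; hence its restriction to the sub-quandle $f^{-1}(x)$ is a quandle morphism whose image is the sub-quandle $f^{-1}(y)$. A bijective quandle morphism is an isomorphism, so $\psi\colon f^{-1}(x)\iso f^{-1}(y)$ is an isomorphism of quandles, as required. I do not anticipate a genuine obstacle: the only points needing a little care are checking both inclusions (rather than just one) in order to obtain a bijection onto the correct fiber, and observing that restricting an automorphism of $R$ to a sub-quandle automatically yields a quandle morphism.
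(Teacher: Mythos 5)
Your proof is correct and follows essentially the same route as the paper's: lift an inner automorphism $\theta\in\Inn(Q)$ with $\theta(x)=y$ through the surjection $f_*$ of \Cref{Prp: surj Inn}, use the intertwining identity $f\circ\psi=f_*(\psi)\circ f$ (and its inverse form) to get the two inclusions yielding a bijection between the fibers, and conclude via \Cref{Prp: fib sub} that the restriction is a quandle isomorphism. The paper's proof is the same argument, with $\psi_0$ in place of your $\theta$.
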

\begin{proof}
	Since $[x]_\sim=[y]_\sim\in \pi_0(Q)$, let $\psi_0\in\Inn(Q)$ be s.t.
	$$\psi_0(x)=y.$$
	Since $f$ is surjective, by \cref{Prp: surj Inn} we can lift $\psi_0$ to some $\psi\in\Inn(R)$ s.t. 
	$$f\circ \psi=\psi_0\circ f,$$
	which also implies that
	$$\psi_0^{-1}\circ f=f \circ\psi^{-1}.$$
	This $\psi$ maps $f^{-1}(x)$ bijectively onto $f^{-1}(y)$: Let $z\in f^{-1}(x)$. Then
	$$ f(\psi(z))=\psi_0(f(z))=\psi_0(x)=y\;\;\;\Longrightarrow\;\;\; \psi (z)\in f^{-1}(y).$$
	For $z\in f^{-1}(y)$,
	$$f(\psi^{-1}(z))=\psi_0^{-1}(f(z))=\psi_0^{-1}(y)=x\;\;\;\Longrightarrow\;\;\;\psi^{-1}(z)\in f^{-1}(x).$$
	By \cref{Prp: fib sub}, $f^{-1}(x),f^{-1}(y)$ are sub-quandles of $R$, and since $\psi\in\Inn(R)$
	 is a quandle automorphism of $Q$, it induces a quandle isomorphism
	$$\psi\colon f^{-1}(x)\iso f^{-1}(y).$$
\end{proof}

\begin{lemma}
	\label{Lma: pi_0 fibration bound}
	Let $f\colon R\to Q$ be a surjective morphism of finite quandles. Then
	$$|\pi_0(R)|\le\sum_{[x]\in\pi_0(Q)}|\pi_0(f^{-1}(x))|.$$
\end{lemma}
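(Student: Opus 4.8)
The plan is to exploit the functoriality of $\pi_0$ to reduce the inequality to a fiberwise count. Since $f$ is surjective, functoriality of $\pi_0\colon\Qdl\to\Set$ yields a surjection of sets $\pi_0(f)\colon\pi_0(R)\twoheadrightarrow\pi_0(Q)$, so the components of $R$ are partitioned according to the component of $Q$ they land in:
$$|\pi_0(R)|=\sum_{[x]\in\pi_0(Q)}\left|\pi_0(f)^{-1}([x])\right|.$$
It therefore suffices to prove, for each fixed $[x]\in\pi_0(Q)$, the local bound $\left|\pi_0(f)^{-1}([x])\right|\le|\pi_0(f^{-1}(x))|$, where $x$ is any chosen representative of the component.

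First I would reinterpret the left-hand count. Writing $[x]\subseteq Q$ for the component as a subset, I would note that $[x]\le Q$ is a sub-quandle (if $a,b$ lie in one component then $\qq ab=\varphi_a(b)\sim b$), so that $R_{[x]}:=f^{-1}([x])\le R$ is a sub-quandle by the same argument as in \cref{Prp: fib sub}. Using \cref{Prp: surj Inn}, for $\psi\in\Inn(R)$ and $r\in R_{[x]}$ we have $f(\psi(r))=f_*(\psi)(f(r))$, and $f_*(\psi)\in\Inn(Q)$ keeps $f(r)$ inside its component; hence $R_{[x]}$ is $\Inn(R)$-invariant, and $\pi_0(f)^{-1}([x])$ is precisely the set of $\Inn(R)$-orbits contained in $R_{[x]}$.

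The heart of the argument is then to build a \emph{surjection} from the components of the single fiber $f^{-1}(x)$ onto the $\Inn(R)$-orbits of $R_{[x]}$. For well-definedness I would observe that for $z\in f^{-1}(x)$ the automorphism $\varphi_z\in\Inn(R)$ preserves the fiber $f^{-1}(x)$: if $w\in f^{-1}(x)$ then $f(\qq zw)=\qq{f(z)}{f(w)}=\qq xx=x$. Thus $\varphi_z$ restricts to the inner automorphism $\varphi_z$ of the sub-quandle $f^{-1}(x)$, so any two elements in the same $\Inn(f^{-1}(x))$-orbit are automatically in the same $\Inn(R)$-orbit; sending a fiber-component to the $\Inn(R)$-orbit containing it is therefore a well-defined map $\pi_0(f^{-1}(x))\to\pi_0(f)^{-1}([x])$. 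For surjectivity I would invoke the preceding corollary: every $y$ with $[y]=[x]$ has $f^{-1}(y)=\psi(f^{-1}(x))$ for some $\psi\in\Inn(R)$, so every element of $R_{[x]}=\bigcup_{[y]=[x]}f^{-1}(y)$ is $\Inn(R)$-equivalent to an element of $f^{-1}(x)$; hence every $\Inn(R)$-orbit of $R_{[x]}$ meets $f^{-1}(x)$ and lies in the image. This surjection gives the local bound, and summing over $[x]\in\pi_0(Q)$ yields the claim.

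The main obstacle is the construction in the last paragraph: the left and right counts use \emph{different} groups ($\Inn(R)$ versus $\Inn(f^{-1}(x))$), so one must argue carefully that inner automorphisms of $R$ arising from fiber elements restrict to genuine inner automorphisms of the fiber (giving well-definedness) while the corollary supplies enough global inner automorphisms to sweep every orbit of $R_{[x]}$ into the chosen fiber (giving surjectivity). Everything else is bookkeeping with the functoriality of $\pi_0$ and the $\Inn(R)$-invariance furnished by \cref{Prp: surj Inn}.
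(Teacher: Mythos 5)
Your proof is correct and takes essentially the same route as the paper's: both rest on \cref{Prp: surj Inn} and its corollary (inner automorphisms of $R$ lifted from $\Inn(Q)$ sweep every fiber over a component of $Q$ onto the fiber over a chosen representative), combined with functoriality of $\pi_0$ applied to the inclusions $f^{-1}(x)\le R$, yielding a surjection $\coprod_{[x]\in\pi_0(Q)}\pi_0(f^{-1}(x))\twoheadrightarrow\pi_0(R)$. Your only departure is organizational --- you first partition $\pi_0(R)$ along $\pi_0(f)$ and bound each piece, where the paper builds the surjection globally --- and your treatment of well-definedness is, if anything, more careful than the paper's.
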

\begin{proof}
	Let $\m\subseteq Q$ be a set of representatives for $\pi_0(Q)$. For every $x\in Q$, let $\widetilde\psi_x\in\Inn(Q)$ be s.t.
	$$\m_x:=\widetilde\psi_x(x)\in\m.$$
	By \cref{Prp: surj Inn}, we can lift these $\widetilde\psi_x\in\Inn(Q)$ to some $\psi_x\in\Inn(R)$, s.t.
	$$\psi_x\colon f^{-1}(x)\iso f^{-1}(\m_x).$$
	Therefore the map
	$$\coprod_{[x]\in\pi_0(Q)}f^{-1}(x)\to \Inn(Q)$$
	is surjective. Moreover, for every $[x]\in\pi_0(Q)$, the composition
	$$f^{-1}(x)\to R\to Q\to\pi_0(Q)$$
	factors through $f^{-1}(x)\to\pi_0(f^{-1}(x))$. Thus the map
	$$\coprod_{[x]\in\pi_0(Q)}\pi_0(f^{-1}(x))\longrightarrow\pi_0(Q)$$
	is surjective. Hence the desired inequality.
\end{proof}

\begin{proposition}
	\label{Prp: pi0 mul}
	Let $Q,R\in\Qdl^\fin$. Then there is a natural bijection 
	$$\pi_0(Q\times R)\iso\pi_0(Q)\times\pi_0(R).$$
\end{proposition}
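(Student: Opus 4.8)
The plan is to build the natural map from the two projections and then prove it is a bijection by analyzing orbits under $\Inn(Q\times R)$. The projections $p_Q\colon Q\times R\to Q$ and $p_R\colon Q\times R\to R$ are quandle morphisms, so by functoriality of $\pi_0$ they induce maps $\pi_0(Q\times R)\to\pi_0(Q)$ and $\pi_0(Q\times R)\to\pi_0(R)$, which assemble into the candidate
$$\Phi\colon \pi_0(Q\times R)\to\pi_0(Q)\times\pi_0(R),\qquad [(x,y)]\mapsto([x],[y]).$$
Naturality in $Q$ and $R$ is then automatic from functoriality and the universal property of the product. Surjectivity is immediate, since $(x,y)$ maps to $([x],[y])$ and every pair of classes arises this way.

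For injectivity I would use the identification $\pi_0(Q)=Q/\Inn(Q)$, so that $[x]=[x']$ precisely when $x'=\psi(x)$ for some $\psi\in\Inn(Q)$, and likewise in $R$. The goal thus reduces to showing: if $x'=\psi(x)$ and $y'=\chi(y)$ for some $\psi\in\Inn(Q)$ and $\chi\in\Inn(R)$, then $(x,y)$ and $(x',y')$ lie in the same $\Inn(Q\times R)$-orbit.

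The crux is a one-coordinate-at-a-time observation resting on axiom (Q1). For any $a\in Q$, the inner automorphism $\varphi_{(a,y)}\in\Inn(Q\times R)$ satisfies
$$\varphi_{(a,y)}(x,y)=(\qq ax,\qq yy)=(\qq ax,y),$$
since $\qq yy=y$; thus it moves the first coordinate by $\varphi_a$ while fixing the second at $y$. Composing such maps and their inverses realizes an arbitrary element of $\Inn(Q)$ on the first coordinate, while the second coordinate—acted on only by products of $\varphi_y^{\pm1}$, each of which fixes $y$—stays equal to $y$. Hence $(x,y)\sim(\psi(x),y)=(x',y)$. Symmetrically, using $\varphi_{(x',b)}$ and the identity $\varphi_{x'}(x')=x'$, one obtains $(x',y)\sim(x',\chi(y))=(x',y')$. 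Transitivity then gives $(x,y)\sim(x',y')$, so $\Phi$ is injective and therefore bijective.

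I expect the main obstacle to be articulating cleanly that the composite group elements fix one coordinate while acting arbitrarily on the other—that is, that the partial action generated by $\{\varphi_{(a,y)}\}_{a\in Q}$ surjects onto $\Inn(Q)$ on the first factor without disturbing the point $y$ in the second. This is exactly where (Q1) is indispensable, and it is worth isolating as the key step rather than invoking any structural identification of $\Inn(Q\times R)$ with a subgroup of $\Inn(Q)\times\Inn(R)$.
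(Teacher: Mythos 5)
Your proof is correct, but it takes a genuinely different route from the paper's. The paper constructs the same comparison map $[(x,y)]\mapsto([x],[y])$ (via \cref{Prp: surj Inn}, after disposing of the empty case so that both projections are surjective), notes that it is surjective, and then closes the argument by counting: it applies \cref{Lma: pi_0 fibration bound} to the projection $Q\times R\to Q$ to obtain $|\pi_0(Q\times R)|\le|\pi_0(Q)|\cdot|\pi_0(R)|$, so that a surjection between finite sets with this cardinality bound must be a bijection. You instead prove injectivity directly: the observation that $\varphi_{(a,y)}^{\pm1}$ acts as $\varphi_a^{\pm1}$ on the first coordinate while fixing $y$ in the second (by (Q1), since every power of $\varphi_y$ fixes $y$) lets you move $(x,y)$ to $(\psi(x),y)$ within its $\Inn(Q\times R)$-orbit for any $\psi\in\Inn(Q)$, and then symmetrically to $(\psi(x),\chi(y))$; this matches the orbit equivalences defining $\pi_0$ exactly, and the step is sound. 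Your argument buys two things: it needs neither \cref{Prp: surj Inn} nor the fibration bound, and it never uses finiteness, so it establishes the statement for arbitrary quandles (the nonemptiness caveat also evaporates, since functoriality of $\pi_0$ produces the map in all cases). What the paper's route buys is economy within its own development: \cref{Lma: pi_0 fibration bound} is needed anyway for the harder half of \cref{Thm: dim mul}, where one must bound $|\pi_0(D)|$ for an arbitrary sub-quandle $D\le Q\times R$ that need not itself be a product of sub-quandles --- a situation your coordinatewise argument does not address --- so the paper simply reuses that lemma here at the cost of a finiteness hypothesis.
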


\begin{proof}%[\ref{Prp: pi0 mul}]
	If either $Q$ or $R$ is empty, then
	$$\pi_0(Q\times R)=\pi_0(Q)\times\pi_0(R)=\emptyset$$
	and the statement holds. Assume then that $Q,R\neq\emptyset$. Then both projections
	$$Q\times R\to Q\;,\;\;Q\times R\to R$$
	are surjective. By \cref{Prp: surj Inn} we have induced group homomorphisms
	$$\Inn(Q\times R)\to\Inn(Q)\;,\;\;\Inn(Q\times R)\to\Inn(R),$$
	and a well-defined, natural map
	$$\pi_0(Q\times R)=(Q\times R)/\Inn(Q\times R)\to (Q\times R)/\Inn(Q)\times \Inn(R)=\pi_0(Q)\times \pi_0(R).$$
	This map is surjective, as it is induced by taking quotients on $Q\times R\xrightarrow{=} Q\times R$. Next, note that for all $x\in Q$,
	$$\{x\}\times R\to R$$
	is an isomorphism of quandles. We apply \cref{Lma: pi_0 fibration bound} to the projection map $Q\times R\to Q$:
	$$|\pi_0(Q\times R)|\le \sum_{[x]\in\pi_0(Q)}|\pi_0(\{x\}\times R)|=|\pi_0(Q)|\cdot |\pi_0(R)|.$$
	All sets involved are finite because $Q,R\in\Qdl^\fin$, therefore the natural map
	$$\pi_0(Q\times R)\to\pi_0(Q)\times \pi_0(R)$$
	is a bijection.
\end{proof}

\begin{theorem}
	\label{Thm: dim mul}
	Let $Q,R\in\Qdl^\fin$ be finite quandles. Then
	$$\dim_{Q\times R}=\dim_Q\cdot\dim_R.$$
\end{theorem}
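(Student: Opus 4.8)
The plan is to establish the two inequalities $\dim_{Q\times R}\ge\dim_Q\cdot\dim_R$ and $\dim_{Q\times R}\le\dim_Q\cdot\dim_R$ separately. The lower bound is the easy direction and exploits the product formula for $\pi_0$ already in hand. I would choose sub-quandles $S_Q\le Q$ and $S_R\le R$ realizing the maxima, i.e. $|\pi_0(S_Q)|=\dim_Q$ and $|\pi_0(S_R)|=\dim_R$. Then $S_Q\times S_R\le Q\times R$ is a sub-quandle, and by \cref{Prp: pi0 mul} applied to the finite quandles $S_Q,S_R$,
$$|\pi_0(S_Q\times S_R)|=|\pi_0(S_Q)|\cdot|\pi_0(S_R)|=\dim_Q\cdot\dim_R,$$
so $\dim_{Q\times R}\ge\dim_Q\cdot\dim_R$ directly from the definition of the dimension as a maximum over sub-quandles.

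For the upper bound I would take an \emph{arbitrary} sub-quandle $S\le Q\times R$ and bound $|\pi_0(S)|$. Let $Q':=\pi_Q(S)$ be the image of $S$ under the projection to $Q$; by \cref{Prp: im sub} this is a sub-quandle $Q'\le Q$, and the corestricted projection $f\colon S\twoheadrightarrow Q'$ is surjective. The idea is to view $S$ as fibered over $Q'$ and control each fiber. By \cref{Prp: fib sub}, each fiber $f^{-1}(x)$ (for $x\in Q'$) is a sub-quandle of $S$; moreover, since the first coordinate is constant on $f^{-1}(x)$, the second projection maps $f^{-1}(x)$ isomorphically onto a sub-quandle of $R$, the closure of the image under the quandle operation following from axiom (Q1), $\qq xx=x$. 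Hence $|\pi_0(f^{-1}(x))|\le\dim_R$ for every $x$.

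Now I would invoke the fibration inequality \cref{Lma: pi_0 fibration bound} for the surjection $f\colon S\to Q'$:
$$|\pi_0(S)|\le\sum_{[x]\in\pi_0(Q')}|\pi_0(f^{-1}(x))|\le\sum_{[x]\in\pi_0(Q')}\dim_R=|\pi_0(Q')|\cdot\dim_R.$$
Since $Q'\le Q$ we have $|\pi_0(Q')|\le\dim_Q$, whence $|\pi_0(S)|\le\dim_Q\cdot\dim_R$. Taking the maximum over all $S\le Q\times R$ yields $\dim_{Q\times R}\le\dim_Q\cdot\dim_R$, and combined with the lower bound this gives the claimed equality.

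The main obstacle is the upper bound, and within it the essential leverage is \cref{Lma: pi_0 fibration bound}: the inequality $|\pi_0(S)|\le\sum_{[x]}|\pi_0(f^{-1}(x))|$ is precisely what lets a general, possibly ``skew,'' sub-quandle $S$ of the product be decomposed over the base $Q'$ \emph{without} assuming $S$ splits as a product of sub-quandles. I expect the only point requiring careful verification to be that each fiber is genuinely (isomorphic to) a sub-quandle of $R$ rather than merely a subset — which is exactly where axiom (Q1) enters.
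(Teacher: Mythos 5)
Your proposal is correct and follows essentially the same route as the paper: the lower bound via \cref{Prp: pi0 mul} applied to maximizing sub-quandles, and the upper bound by projecting an arbitrary sub-quandle $S\le Q\times R$ onto its image $Q'\le Q$ (\cref{Prp: im sub}), identifying each fiber with a sub-quandle of $R$ (\cref{Prp: fib sub}, using (Q1)), and applying \cref{Lma: pi_0 fibration bound}. No gaps.
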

\begin{proof}
	We first show that $$\dim_{Q\times R}\ge \dim_Q\cdot \dim_R.$$
	Let $Q'\le Q, R'\le R$. Then $Q'\times R'\le Q\times R$, and by \cref{Prp: pi0 mul},
	$$|\pi_0(Q'\times R')|=|\pi_0(Q')|\cdot |\pi_0(R')|.$$
	Therefore
	$$\dim_{Q\times R}=\max_{D\le Q\times R}|\pi_0(D)|\ge \max_{\substack{Q'\le Q\\R'\le R}}|\pi_0(Q'\times R')|=$$
	$$=\max_{Q'\le Q}|\pi_0(Q')| \cdot \max_{R'\le R}|\pi_0(R')| =\dim_Q\cdot \dim_R.$$
Next we show that
$$\dim_{Q\times R}\le \dim_Q\cdot \dim_R.$$
	Let $D\le Q\times R$. Consider the composition
	$$f\colon D\to Q\times R\xrightarrow{proj_Q} Q.$$
	Denote by $Q'=f(Q)$, a sub-quandle of $Q$ from \cref{Prp: im sub}. By definition,
	$$|\pi_0(Q')|\le\dim_Q.$$
	For all $x\in Q'$, denote by
	$$D_x:=f^{-1}(x)=\{x\}\times R_x$$
	for some subset $R_x\subseteq R$. Since the quandle structure on $D$ is induced by $Q\times R$, and by \cref{Prp: fib sub} we know that $D_x\le D$, it follows that $R_x\le R$, and
	 $$|\pi_0(D_x)|=|\pi_0(R_x)|\le\dim_R.$$
	 We invoke \cref{Lma: pi_0 fibration bound} with the surjective $f\colon D\to Q'$ and find that
	 $$|\pi_0(D)|\le \sum_{\pi_0(Q')}|\pi_0(D_x)|\le \dim_Q\cdot \dim_R.$$
	 This holds for all $D\le Q\times R$, therefore
	 $$\dim_{Q\times R}=\max_{D\le Q\times R}|\pi_0(D)|\le \dim_Q\cdot \dim_R.$$
	 We have the desired equality:
	 $$\dim_{Q\times R}=\dim_Q\cdot \dim_R.$$
\end{proof}

\begin{proposition}
	\label{Prp: prod c prod qdl}
	Let $Q,R\in\Qdl^\fin$. Then for all $n\in\NN$ there is an equality of random variables on $\widehat{B_n}$:	$$c_{Q}\cdot c_{R}= c_{Q\times R}$$
\end{proposition}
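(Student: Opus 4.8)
The plan is to establish the identity first on the dense subgroup $B_n\subseteq\widehat{B_n}$, where both sides carry a direct combinatorial meaning, and then upgrade it to all of $\widehat{B_n}$ by a continuity-and-density argument. The crucial algebraic input is that $Q\times R$ is the categorical product in $\Qdl$ (\cref{Prp: qdl prod}). Consequently, for any oriented link $L$ there are natural bijections
$$\Col_{Q\times R}(L)=\Hom_\Qdl(Q_L,Q\times R)\iso\Hom_\Qdl(Q_L,Q)\times\Hom_\Qdl(Q_L,R)=\Col_Q(L)\times\Col_R(L),$$
so that upon taking cardinalities $\col_{Q\times R}(L)=\col_Q(L)\cdot\col_R(L)$.

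Applying this with $L=\bclose\sigma$ for $\sigma\in B_n$ gives
$$c_{Q\times R}(\sigma)=\col_{Q\times R}(\bclose\sigma)=\col_Q(\bclose\sigma)\cdot\col_R(\bclose\sigma)=c_Q(\sigma)\cdot c_R(\sigma)$$
for every $\sigma\in B_n$; that is, the restrictions to $B_n$ of the two functions $c_{Q\times R}$ and $c_Q\cdot c_R$ coincide. Equivalently, one may argue on the level of $B_n$-sets: the identification $(Q\times R)^n\iso Q^n\times R^n$ is $B_n$-equivariant for the Brieskorn action of \cref{Dfn: Brieskorn rep} (the action is componentwise), so the fixed-point sets factor as $((Q\times R)^n)^{\beta(\sigma)}\iso(Q^n)^{\beta(\sigma)}\times(R^n)^{\beta(\sigma)}$, and \cref{Lma: fixed colorings} yields the same conclusion.

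It remains to pass from $B_n$ to $\widehat{B_n}$. By \cref{Prp: c hat} each of $c_Q$, $c_R$ and $c_{Q\times R}$ is locally constant, hence continuous, on $\widehat{B_n}$; therefore so is the product $c_Q\cdot c_R$. Since $B_n$ is dense in its profinite completion $\widehat{B_n}$ and the discrete target $\NN$ is Hausdorff, two continuous functions that agree on $B_n$ agree everywhere. Hence $c_Q\cdot c_R=c_{Q\times R}$ as functions on $\widehat{B_n}$, i.e. as random variables.

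The argument is essentially formal, and the only point requiring care is the final step: one must check that the \emph{extension of the product} coincides with the \emph{product of the extensions}, rather than merely agreeing on $B_n$. This is precisely what continuity of both sides together with the density of $B_n$ in $\widehat{B_n}$ guarantees, so beyond this bookkeeping there is no genuine obstacle.
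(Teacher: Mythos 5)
Your proof is correct and follows essentially the same route as the paper's: both identify $\Col_{Q\times R}(L)\simeq\Col_Q(L)\times\Col_R(L)$ via the categorical product (\cref{Prp: qdl prod}), deduce the pointwise identity on $B_n$, and then conclude by continuity of all three random variables together with density of $B_n$ in $\widehat{B_n}$. Your remark giving the alternative fixed-point formulation via \cref{Lma: fixed colorings} is a fine additional observation but does not change the substance of the argument.
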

\begin{proof}
	By \cref{Prp: qdl prod}, $Q\times R$ is the direct product in $\Qdl$. Therefore for any oriented link $L$,
	$$\Col_{Q\times R}(L)= \Hom_\Qdl(Q_L,Q\times R)\simeq$$
	$$\simeq\Hom_\Qdl(Q_L,Q) \times \Hom_\Qdl(Q_L,R) =\Col_Q(L)\times \Col_R(L).$$
	So for every braid $\sigma\in B_n$, 
	$$\col_{Q\times R}(\bclose\sigma)= \col_{Q}(\bclose\sigma)\cdot \col_{R}(\bclose\sigma).$$
	It follows that the random variables $c_{Q\times R}$ and $c_{Q}\cdot c_{R}$ - both continuous - coincide on $B_n\subseteq\widehat{B_n}$, which is dense. It follows that
	$$ c_{Q\times R} = c_{Q}\cdot  c_{R}$$
	%$c_{Q\times R}(\overline b)= c_{Q}(\overline b)\cdot c_{R}(\overline b)$
\end{proof}

\begin{remark}
	A consequence of \cref{Thm: Qdl HPoly} and \cref{Prp: prod c prod qdl} is that more refined measurements of such $c_{Q,n}$ also coincide with a polynomial for $n\gg 0$. These include higher moments of $c_{Q,n}$, or covariances $\Cov(c_{Q,n},c_{R,n})$. The threshold for $n$ where these measurements coincide with a polynomial is not universal.
\end{remark}

\begin{example}
	A finite nonempty quandle $Q$ is said to be \emph{stable} if the sequence $|\A_{Q,n}|$ is constant for $n\gg 0$, that is if $\deg P_Q=0$. By \cref{Thm: dim deg 1}, $Q\in\Qdl^\fin$ is stable iff $\dim P_Q=1$. By \cref{Thm: dim mul}, the product of stable quandles is again stable, since 
	$$\dim_Q=\dim_R=1\Longrightarrow \dim_{Q\times R}=\dim_Q\cdot\dim_R=1.$$
\end{example}

\section{Disjoint Unions of Quandles}
\begin{proposition}
	Let $Q,R\in\Qdl$ be quandles. The disjoint union $Q\sqcup R$ has a natural quandle structure given by
	$$x,y\in Q\sqcup R\;\;:\;\;\;\qq xy\;\;=\;\;\begin{tabular}{|c||c|c|}
	\hline
	${\qq xy}$&$x\in Q$&$x\in R$\\
	\hline
	\hline
	$y\in Q$&$\qq xy$&$y$\\
	\hline
	$y\in R$&$y$&$\qq xy$\\
	\hline
	\end{tabular}$$
\end{proposition}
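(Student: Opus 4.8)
The plan is to verify the three quandle axioms (Q1)--(Q3) of \cref{Dfn: quandle} for the proposed operation directly. The single observation that organizes the whole argument is that the operation never mixes the two summands: for $w,v$ in the same summand $\qq wv$ is computed inside that summand (and so stays there), while for $w,v$ in different summands $\qq wv=v$. Consequently each operator $\varphi_w\colon Q\sqcup R\to Q\sqcup R$ restricts to a self-map of $Q$ and of $R$ separately; on the summand containing $w$ it acts as the inner operator $\varphi_w$ of that summand, and on the other summand it acts as the identity. I will refer to this as the \emph{block-diagonal} description of $\varphi_w$.

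Given this, (Q1) and (Q2) are quick. For (Q1), the pair $(w,w)$ lies in a single summand, so $\qq ww$ is evaluated there and equals $w$ by (Q1) applied to $Q$ or $R$. For (Q2), the block-diagonal description exhibits $\varphi_w$ as a map that is a bijection on one block (by (Q2) for the relevant summand) and the identity on the other; such a map is a bijection, and its inverse is again block-diagonal of the same shape.

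The substance is (Q3), and the cleanest route is via the equivalent operator identity $\varphi_x\circ\varphi_y\circ\varphi_x^{-1}=\varphi_{\qq xy}$ (this reformulation of (Q3) is exactly the computation in \cref{Prp: Q to Inn(Q) is quandle morphism}, and it is legitimate to check it on operators once (Q2) is in hand). Writing everything in block-diagonal form, I would split into four cases according to the summands of $x$ and $y$. If $x,y$ lie in the same summand, then $\qq xy$ lies there too, the identity restricts on that block to (Q3) for $Q$ (resp.\ $R$), and on the other block all three operators are the identity, so the identity holds trivially. If $x,y$ lie in different summands, then on each block one of $\varphi_x,\varphi_y$ is the identity, so the conjugate $\varphi_x\varphi_y\varphi_x^{-1}$ collapses to $\varphi_y$; since here $\qq xy=y$, the right-hand side $\varphi_{\qq xy}$ is also $\varphi_y$, and the two sides match. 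The only real obstacle is bookkeeping --- in each case one must track which summand $\qq xy$ lands in, and hence how $\varphi_{\qq xy}$ acts on each block --- but the block-diagonal description makes this mechanical, and the four cases exhaust all possibilities for a pair of generators.
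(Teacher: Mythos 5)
Your proof is correct, and it takes a somewhat different route from the paper's. The paper verifies (Q3) by a bare-hands check on elements, splitting into three cases according to which summands the triple occupies: all three in one summand (where (Q3) for $Q$ or $R$ applies); the acting element opposite the pair, where both sides reduce to $\qq xy$; and the pair split between summands, where both sides reduce to $\qq zy$. You instead verify the operator identity $\varphi_x\circ\varphi_y\circ\varphi_x^{-1}=\varphi_{\qq xy}$, which --- as you correctly note --- is equivalent to (Q3) once (Q2) is in hand, and your block-diagonal description of each $\varphi_w$ (inner on its own summand, identity on the other) reduces the bookkeeping to two cases on the pair $(x,y)$ alone, the third variable having been absorbed into equality of functions. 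What your route buys is a smaller case analysis and an explicit statement of the structural fact the paper exploits only implicitly, namely that elements of one summand act trivially on the other; the cost is the preliminary step of justifying the reformulation of (Q3) as a conjugation identity, which forces you to establish (Q2) first --- a logical dependency the paper's direct element-wise check never needs. Both arguments are complete and of essentially the same (routine) difficulty.
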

\begin{proof}
	Quandle axioms (Q1) and (Q2) clearly hold. As for (Q3):
	$$\qq z{(\qq xy)}=\qq{(\qq zx)}{(\qq zy)}.$$
	If $x,y,z\in Q$ or $x,y,z\in R$, this holds from (Q3) in $Q$ and $R$ separately.\\
	If $x,y\in Q$ and $z\in R$, or likewise if $x,y\in R$ and $z\in Q$, then
	$$\qq z{(\qq xy)}=\qq xy=\qq{(\qq zx)}{(\qq zy)}.$$
	If $x\in Q$ and $y\in R$, or likewise if $x\in R$ and $y\in Q$, then
	$$\qq z{(\qq xy)}=\qq zy=\qq{(\qq zx)}{(\qq zy)}.$$
\end{proof}

The following lemma about monoids is fairly straightforward. The proof is omitted.

\begin{lemma}
	\label{lma: stupid monoid lemma}
	Let $A,B\in\Mon$ be monoids given by generators and relations: $$A=\NN\langle S_A\;|\;R_A\rangle\;,\;\; B=\NN\langle S_B\;|\;R_B\rangle.$$
	Then the free product $A\star B$, which is the coproduct in $\Mon$, is given by
	$$A\star B\simeq \NN\langle S_A\sqcup S_B\;|\;R_A\sqcup R_B\rangle$$
	and the product $A\times B$ is given by 
	$$A\times B\simeq \NN\langle S_A\sqcup S_B\;|\;R_A\sqcup R_B\sqcup\{xy=yx\;|\;\forall x\in S_A,y\in S_B\}\;\rangle.$$
	The natural map $A\star B\to A\times B$ in $\Mon$ from coproduct to product, that comes from the fact that $\Mon$ is a pointed category, is seen here by taking further quotients in RHS.
\end{lemma}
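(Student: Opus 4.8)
The plan is to treat the three assertions in decreasing order of formality: the coproduct presentation follows from a universal property, the comparison map of the third assertion is then a quotient map, and the product presentation---the only genuinely non-formal point---requires a normal-form argument. Throughout, write $M\coloneqq\NN\langle S_A\sqcup S_B\mid R_A\sqcup R_B\rangle$ and $N\coloneqq\NN\langle S_A\sqcup S_B\mid R_A\sqcup R_B\sqcup\{xy=yx\}\rangle$, and let $\bar w$ denote the image in the relevant monoid of a word $w$ in the generators.

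First I would verify that $M$ is the coproduct $A\star B$ by checking its universal property. The inclusions of generating sets induce monoid maps $A\to M$ and $B\to M$. Given any monoid $C$ and homomorphisms $f\colon A\to C$, $g\colon B\to C$, the set map $S_A\sqcup S_B\to C$ equal to $f$ on $S_A$ and $g$ on $S_B$ carries every relation of $R_A$ (resp.\ $R_B$) to an identity in $C$, since $f$ (resp.\ $g$) is a homomorphism respecting those relations. Hence it extends to a homomorphism $M\to C$, unique because $S_A\sqcup S_B$ generates $M$. This is exactly the universal property of the coproduct, so $M\simeq A\star B$.

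For the product I would build the comparison map by hand and show it is an isomorphism. Define $\phi\colon N\to A\times B$ on generators by $x\mapsto(x,1_B)$ for $x\in S_A$ and $y\mapsto(1_A,y)$ for $y\in S_B$. Each relation of $R_A$ holds in the first coordinate and is trivial in the second, symmetrically for $R_B$, and $(x,1_B)(1_A,y)=(x,y)=(1_A,y)(x,1_B)$, so the commutation relations are respected and $\phi$ is well-defined. Surjectivity is immediate, since $(\bar u,\bar v)=\phi(\overline{uv})$ for words $u$ in $S_A$ and $v$ in $S_B$. The crux---and the step I expect to be the main obstacle---is injectivity, which is not formal precisely because a presentation computes a colimit rather than a limit. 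Here the commutation relations do the work: using $xy=yx$ repeatedly, every element of $N$ can be brought to separated form $\bar u\,\bar v$ with all $S_A$-letters pushed left and all $S_B$-letters pushed right. If $\phi(\bar u\,\bar v)=\phi(\bar{u'}\,\bar{v'})$, comparing coordinates gives $\bar u=\bar{u'}$ in $A$ and $\bar v=\bar{v'}$ in $B$, i.e.\ $u,u'$ are connected by $R_A$-rewrites and $v,v'$ by $R_B$-rewrites. Since $R_A\sqcup R_B$ lie among the defining relations of $N$, those same rewrites yield $\bar u\,\bar v=\bar{u'}\,\bar{v'}$ in $N$, so $\phi$ is injective and hence an isomorphism $N\simeq A\times B$.

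Finally, the third assertion is immediate: the defining relations of $N$ are those of $M$ together with the commutation relations, so $N$ is the quotient of $M\simeq A\star B$ by $\{xy=yx\}$, and under the identifications above this quotient map sends $\bar x\mapsto(x,1_B)$ and $\bar y\mapsto(1_A,y)$, which is the canonical coproduct-to-product morphism coming from the zero object $\{1\}$ of the pointed category $\Mon$.
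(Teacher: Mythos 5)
Your proof is correct. Note that the paper does not actually prove this lemma: it is stated with the remark that it is ``fairly straightforward'' and the proof is omitted, so there is no argument to compare yours against. What you supply is exactly the standard filling-in of that gap: the coproduct presentation via the universal property, the product presentation via the only non-formal step (well-definedness and surjectivity of $\phi\colon N\to A\times B$, then injectivity by pushing all $S_A$-letters left with the commutation relations to get a separated normal form $\bar u\,\bar v$ and comparing coordinates), and the identification of the canonical map $A\star B\to A\times B$ as the further quotient -- all handled correctly, including the one point that genuinely needs care, namely that equality in a presented monoid means connectedness by elementary rewrites, so coordinatewise equality lifts back to equality in $N$.
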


\begin{proposition}
	\label{Prp: A_Q mul on disjoint union}
	Let $Q,R\in\Qdl$. Then there is a natural isomorphism
	$$\A_Q\times \A_R\iso \A_{Q\sqcup R}$$
	in $\Mongr$.
\end{proposition}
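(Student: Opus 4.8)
The plan is to present both sides by generators and relations and observe that the two presentations agree, invoking \cref{lma: stupid monoid lemma}. Writing $\rho_Q=\{x\cdot y=\qq xy\cdot x\mid x,y\in Q\}$ for the defining relations of $\A_Q$ (and $\rho_R$ likewise), I would first read off the presentation of $\A_{Q\sqcup R}$ from \cref{Dfn: Enveloping Monoid}: it is generated by the set $Q\sqcup R$ subject to one relation $a\cdot b=\qq ab\cdot a$ for each ordered pair $a,b\in Q\sqcup R$. The task then reduces to sorting these relations according to the components in which $a$ and $b$ lie.

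The key point is that the mixed relations collapse to commutation. When $a,b$ both lie in $Q$ (resp.\ both in $R$), the product quandle structure on $Q\sqcup R$ restricts to that of $Q$ (resp.\ $R$), so the relation is exactly one of $\rho_Q$ (resp.\ $\rho_R$). When $a\in Q$ and $b\in R$, the disjoint-union quandle table gives $\qq ab=b$, so the relation $a\cdot b=\qq ab\cdot a$ becomes $a\cdot b=b\cdot a$; the case $a\in R$, $b\in Q$ is symmetric and yields the same commutation relations. Hence
$$\A_{Q\sqcup R}\simeq\NN\langle Q\sqcup R\mid \rho_Q\sqcup\rho_R\sqcup\{a\cdot b=b\cdot a\mid a\in Q,\ b\in R\}\rangle.$$

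This is precisely the presentation that \cref{lma: stupid monoid lemma} assigns to the product $\A_Q\times\A_R$, taking $S_A=Q$, $S_B=R$, $R_A=\rho_Q$ and $R_B=\rho_R$. The resulting isomorphism sends $x\in Q$ to $(x,1)$ and $y\in R$ to $(1,y)$; since every generator has degree $1$ and each defining relation is homogeneous of degree $2$ on both sides, the map is graded, so it is an isomorphism in $\Mongr$, and its description purely through generators makes naturality in $Q$ and $R$ immediate. The only step demanding care — the closest thing to an obstacle — is verifying that I have genuinely listed \emph{all} defining relations of $\A_{Q\sqcup R}$ and introduced no spurious identifications, which is settled once the full multiplication table of the quandle $Q\sqcup R$ is in hand.
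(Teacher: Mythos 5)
Your proposal is correct and follows essentially the same route as the paper's own proof: both write out the presentation of $\A_{Q\sqcup R}$, observe that the within-component relations are those of $\A_Q$ and $\A_R$ while the mixed relations become commutation (since $\qq xy=y$ across components), and then invoke \cref{lma: stupid monoid lemma} together with the degree-$1$ generators to conclude the isomorphism is graded and natural. The only cosmetic difference is that the paper prefaces this with the observation that the coproduct in $\Mongr$ is the free product, which your argument bypasses without loss.
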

\begin{proof}
From the natural embeddings of $Q$ and $R$ into $Q\sqcup R$ we get morphisms
$$\A_Q\to \A_{Q\sqcup R}\;,\;\; \A_R\to \A_{Q\sqcup R}$$
in $\Mongr$.
The coproduct in $\Mongr\simeq\Mon_{/\NN}$ is the coproduct in $\Mon$, which is the free product. Thus we get
$$\A_Q\star\A_R\to\A_{Q\sqcup R}$$
in $\Mongr$.

$$\A_{Q\sqcup R}=\NN\langle Q\sqcup R\;|\;\forall x,y\in Q\sqcup R\;,\;\;x\cdot y=\qq xy\cdot x\rangle=$$
$$= \NN\left\langle Q\sqcup R\;\Bigg|\;
	\begin{matrix}
	\forall x,y\in Q& x\cdot y=\qq xy \cdot x\\
	\forall x,y\in R& x \cdot y=\qq xy \cdot x\\
	\forall x\in Q,y\in R&x \cdot y=y \cdot x\\
	\forall x\in R, y\in Q&x \cdot y=y \cdot x
\end{matrix}\;\right\rangle.$$
By \cref{lma: stupid monoid lemma}, this is naturally isomorphic to
$$\NN\langle Q\;|\;\forall x,y\in Q,\;x\cdot y=\qq xy\cdot x\;\rangle \times \NN\langle R\;|\;\forall x,y\in R,\;x\cdot y=\qq xy\cdot x\;\rangle = \A_Q\times\A_R.$$
This isomorphism preserves generators, all of degree $1$:
$$\A_{Q\sqcup R}\supseteq Q\sqcup R\;\longleftrightarrow\;
\left(Q\times \{1_{\A_R}\}\sqcup \{1_{\A_Q}\}\times R\right) \subseteq \A_Q\times \A_R,$$
thus the isomorphism is one of graded monoids.
\end{proof}

\begin{definition}
	\label{Dfn: genfunc}
	Let $Q\in\Qdl^\fin$. We define the \emph{generating function} for $Q$ as
	$$\genfunc_Q(t)=\sum_{t=0}^\infty |\A_{Q,n}|\cdot t^n\in \ZZ[t].$$
\end{definition}

\begin{proposition}
	\label{Prp: genfunc mul on disjoint union}
	Let $Q,R\in\Qdl^\fin$. Then
	$$\genfunc_{Q\sqcup R}(t)=\genfunc_Q(t)\cdot\genfunc_R(t).$$
\end{proposition}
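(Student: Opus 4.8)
The plan is to reduce the multiplicativity of the generating function directly to the graded monoid isomorphism established in \cref{Prp: A_Q mul on disjoint union}, and then to read off the effect on graded cardinalities. The single substantive input is that $\A_{Q\sqcup R}\iso\A_Q\times\A_R$ is an isomorphism \emph{of graded monoids}, so it restricts to a bijection in each fixed degree.

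First I would unwind the grading on the product $\A_Q\times\A_R$. As a set this is the Cartesian product of underlying sets, and the grading produced in the proof of \cref{Prp: A_Q mul on disjoint union} assigns to a pair $(a,b)$ the degree $|(a,b)|=|a|+|b|$, since the generators coming from $Q$ and from $R$ all lie in degree $1$. Consequently the degree-$n$ component decomposes as a disjoint union
$$(\A_Q\times\A_R)_n=\coprod_{i+j=n}\A_{Q,i}\times\A_{R,j}.$$
Passing to cardinalities via the graded bijection of \cref{Prp: A_Q mul on disjoint union} together with this decomposition gives
$$|\A_{Q\sqcup R,n}|=|(\A_Q\times\A_R)_n|=\sum_{i+j=n}|\A_{Q,i}|\cdot|\A_{R,j}|.$$
Each term is finite because $Q,R\in\Qdl^\fin$ forces every $\A_{Q,i}$ and $\A_{R,j}$ to be finite, and the sum has finitely many summands.

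Finally I would recognize the right-hand side as the Cauchy-product coefficient: the coefficient of $t^n$ in the product $\genfunc_Q(t)\cdot\genfunc_R(t)=\bigl(\sum_i|\A_{Q,i}|t^i\bigr)\bigl(\sum_j|\A_{R,j}|t^j\bigr)$ is exactly $\sum_{i+j=n}|\A_{Q,i}|\,|\A_{R,j}|$. Matching coefficients in every degree then yields $\genfunc_{Q\sqcup R}(t)=\genfunc_Q(t)\cdot\genfunc_R(t)$. There is no genuine obstacle in this argument; the only point requiring care is confirming that the grading on the categorical product $\A_Q\times\A_R$ is the additive one, and this is precisely what the generator-preserving, degree-$1$ isomorphism of \cref{Prp: A_Q mul on disjoint union} supplies.
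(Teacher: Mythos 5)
Your proposal is correct and follows the paper's own argument essentially verbatim: both invoke \cref{Prp: A_Q mul on disjoint union} to identify $\A_{Q\sqcup R}$ with $\A_Q\times\A_R$ as graded monoids, decompose the degree-$n$ component as $\coprod_{i+j=n}\A_{Q,i}\times\A_{R,j}$, and match the resulting cardinalities with the Cauchy-product coefficients of $\genfunc_Q(t)\cdot\genfunc_R(t)$. No discrepancies to note.
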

\begin{proof}
By \cref{Prp: A_Q mul on disjoint union}, $ \A_Q\times\A_R$ and $\A_{Q\sqcup R}$ are isomorphic as graded monoids. For all $n$ we therefore have
	$$A_{Q\sqcup R,n}\simeq
	\left(\A_Q\times\A_R\right)_n\simeq \coprod_{i+j=n}\A_{Q,i}\times\A_{R,j}.$$
	Therefore
	$$\genfunc_Q(t)\cdot \genfunc_R(t)=\left(\sum_i|\A_{Q,i}|t^i\right) \left(\sum_j|\A_{R,j}|t^j\right)=$$
	$$= \sum_n\sum_{i+j=n}|\A_{Q,i}|\cdot |\A_{R,j}|\cdot t^n =\sum_n|\A_{Q\sqcup R,n}|\cdot t^n=\genfunc_{Q\sqcup R}(t).$$
\end{proof}

\begin{proposition}
	\label{Prp: dim via pole ord}
	Let $Q\in\Qdl^\fin$. Then
	$$\eta_Q(t)\in
	%\frac{1}{(1-t)^{\dim_Q}}
	\ZZ[t,(1-t)^{-1}]\;,\;\;\ord_{(1-t)}(\eta_Q)=-\dim_Q. $$
	%, whose only pole is there exists some $P(t)\in\ZZ[t]$ s.t.
	%$$P(1)\neq 0\;\;,\;\;\;\;\eta_Q(t)={(1-t)^{-\dim_Q}}\cdot{P(t)}.$$
\end{proposition}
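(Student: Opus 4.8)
The plan is to read the pole order at $t=1$ directly off the Hilbert polynomial of $\A_Q$. By \Cref{Thm: auto-gen Hilbert Poly} and \Cref{Dfn: dom poly} there is an integer-valued polynomial $P_Q\in\QQ[x]$ with $|\A_{Q,n}|=P_Q(n)$ for all $n\gg 0$, and by \Cref{Thm: dim deg 1} its degree is $\deg P_Q=\dim_Q-1$. So the whole statement reduces to the classical principle that a power series whose coefficients eventually agree with an integer-valued polynomial of degree $d$ is rational, with a single pole — of order $d+1$ — at $t=1$. I will assume $Q$ nonempty (so $P_Q\neq 0$); the empty quandle gives $\eta_Q=1$ and $\dim_Q=0$, matching the claim trivially.

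First I would fix $N$ so large that $|\A_{Q,n}|=P_Q(n)$ for all $n\ge N$, and split
$$\eta_Q(t)=\sum_{n=0}^{N-1}\bigl(|\A_{Q,n}|-P_Q(n)\bigr)t^n+\sum_{n=0}^{\infty}P_Q(n)\,t^n.$$
The finite sum is an element of $\ZZ[t]\subseteq\ZZ[t,(1-t)^{-1}]$ and contributes no pole, so the question concerns only the second series. Writing $P_Q$ in the integer basis of binomial coefficients as $P_Q(x)=\sum_{j=0}^{d}c_j\binom{x}{j}$ with $c_j\in\ZZ$ and $d=\dim_Q-1$, and invoking the standard identity $\sum_{n\ge 0}\binom{n}{j}t^n=\dfrac{t^j}{(1-t)^{j+1}}$, I obtain
$$\sum_{n\ge 0}P_Q(n)\,t^n=\sum_{j=0}^{d}c_j\,\frac{t^j}{(1-t)^{j+1}}=\frac{1}{(1-t)^{d+1}}\sum_{j=0}^{d}c_j\,t^j(1-t)^{d-j}.$$
This already exhibits $\eta_Q\in\ZZ[t,(1-t)^{-1}]$ and shows its only pole is at $t=1$, of order \emph{at most} $d+1=\dim_Q$, i.e.\ $\ord_{(1-t)}(\eta_Q)\ge-\dim_Q$.

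It remains to verify that the pole order is \emph{exactly} $d+1$, that is, that no cancellation lowers it. Setting $g(t)=\sum_{j=0}^{d}c_j\,t^j(1-t)^{d-j}$, every summand with $j<d$ carries a positive power of $(1-t)$ and so vanishes at $t=1$, leaving $g(1)=c_d$. Since $\deg P_Q=d$, the leading binomial coefficient $c_d$ is nonzero, whence $g(1)\neq 0$ and $(1-t)\nmid g$. Therefore $\eta_Q(t)$ has a pole of order precisely $d+1$ at $t=1$, i.e. $\ord_{(1-t)}(\eta_Q)=-(d+1)=-\dim_Q$.

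The only point genuinely requiring care is this last no-cancellation step: rationality in $\ZZ[t,(1-t)^{-1}]$ and the one-sided bound are formal consequences of the Hilbert polynomial's existence, but the \emph{exact} equality hinges on $c_d\neq 0$, which is guaranteed by the identification $\deg P_Q=\dim_Q-1$ from \Cref{Thm: dim deg 1}. (One could instead argue the degree statement by induction via the finite-difference operator of \Cref{Lma: discrete diff}, but the binomial-basis computation above is the most direct route.)
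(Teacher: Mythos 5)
Your proposal is correct and follows essentially the same route as the paper's proof: both split $\eta_Q$ into a finite polynomial correction plus $\sum_n P_Q(n)t^n$, expand $P_Q$ in an integer binomial basis, sum the resulting geometric-type series, and use the nonvanishing top coefficient (guaranteed by $\deg P_Q=\dim_Q-1$) to pin the pole order exactly. The only cosmetic difference is your choice of basis $\binom{x}{j}$, which forces the small extra evaluation $g(1)=c_d$, whereas the paper's basis $\binom{x+k-1}{k-1}$ yields the pure partial-fraction form $\sum_k a_k(1-t)^{-k}$ directly.
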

\begin{proof}
	If $Q=\emptyset$, then $\eta_Q(t)=1$. Otherwise since $P_Q(x)$ is an integer-valued polynomial of degree $\dim_Q-1$, there exist $a_1,\dots a_{\dim_Q}\in\ZZ$ with $a_{\dim_Q}\neq 0$ s.t.
	$$P_Q(x)=\sum_{k=1}^{\dim_Q}a_k\cdot {x+k-1 \choose k-1}.$$
	Thus
	$$\sum_n P_Q(n)t^n=\sum_{k=1}^{\dim_Q}a_k\sum_{n=0}^\infty {n+k-1 \choose k-1}t^n=\sum_{k=1}^{\dim_Q}\frac{a_k}{(1-t)^k}.$$
	Since $|\A_{Q,n}|=P_Q(n)$ for all $n\gg 0$, it follows that
	$$\eta_Q(t)-\sum_{k=1}^{\dim_Q}\frac{a_k}{(1-t)^k}=\eta_Q(t)-\sum_n P_Q(n)t^n=\sum_{n=0}^\infty\left(|\A_{Q,n}|-P_Q(n)\right)t^n\in\ZZ[t].$$
	Thus $\eta_Q(t)\in\ZZ[t,(1-t)^{-1}]$. Since $a_{\dim_Q}\neq 0$, $\ord_{(1-t)}(\eta_Q(t))=-\dim_Q$.
\end{proof}

\begin{remark}
	As an added bonus, one recovers from $\eta_Q(t)$ the threshold  beyond which $|A_{Q,n}|$ agrees with $P_Q(n)$:
	$$\min\{n_0\in\NN\;|\;\forall n > n_0,\;|\A_{Q,n}|=P_Q(n)\}=-\ord_{t=\infty}\eta_Q(t).$$
\end{remark}

\begin{proposition}
	Let $Q,R\in\Qdl^\fin$. Then
	$$\dim_{Q\sqcup R}=\dim_Q+\dim_R.$$
\end{proposition}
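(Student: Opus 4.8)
The plan is to read off all three dimensions from the behaviour of the generating functions near $t=1$, so that the formula becomes an immediate consequence of the two propositions just proved. Recall that \Cref{Prp: genfunc mul on disjoint union} gives the multiplicativity $\eta_{Q\sqcup R}(t)=\eta_Q(t)\cdot\eta_R(t)$, while \Cref{Prp: dim via pole ord} identifies $\dim_Q=-\ord_{(1-t)}\eta_Q(t)$ (and likewise for $R$ and for $Q\sqcup R$), all three series lying in $\ZZ[t,(1-t)^{-1}]$. The whole statement should then fall out of the additivity of $\ord_{(1-t)}$ on products.

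Concretely, I would first record that the substitution $s=1-t$ identifies $\ZZ[t,(1-t)^{-1}]$ with the Laurent polynomial ring $\ZZ[s,s^{-1}]$, under which $\ord_{(1-t)}$ becomes the order of vanishing at $s=0$, i.e.\ the least exponent of $s$ occurring with nonzero coefficient. Since $\ZZ$ is an integral domain, the lowest-order coefficients of a product multiply without cancellation, so $\ord_{(1-t)}(fg)=\ord_{(1-t)}(f)+\ord_{(1-t)}(g)$ for all nonzero $f,g$; that is, $\ord_{(1-t)}$ is a discrete valuation. This additivity is the only real ingredient: although elementary, it is the step where care is needed, since one must know that neither factor vanishes and that their leading $(1-t)$-adic terms cannot cancel. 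Both are guaranteed here, as $\A_{Q,0}=\{1\}$ forces the constant term of $\eta_Q$ to be $1$, so $\eta_Q\neq 0$, and the integral-domain property rules out cancellation.

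I would then simply combine the pieces:
\[
-\dim_{Q\sqcup R}=\ord_{(1-t)}\eta_{Q\sqcup R}=\ord_{(1-t)}\bigl(\eta_Q\cdot\eta_R\bigr)=\ord_{(1-t)}\eta_Q+\ord_{(1-t)}\eta_R=-\dim_Q-\dim_R,
\]
whence $\dim_{Q\sqcup R}=\dim_Q+\dim_R$. Finally I would dispose of the degenerate cases where $Q$ or $R$ is empty: there $\eta_\emptyset(t)=1$ has $\ord_{(1-t)}=0$ and $\dim_\emptyset=0$, so the formula persists. The main obstacle, such as it is, is purely bookkeeping — confirming the valuation is additive and that the empty quandle respects the convention — since all the substantive content was already absorbed into the pole-order description of $\dim_Q$. (One could alternatively argue directly: every sub-quandle $D\le Q\sqcup R$ splits as $D=Q'\sqcup R'$ with $Q'\le Q$, $R'\le R$, and $\pi_0$ is additive on disjoint unions, so maximising $|\pi_0(Q')|+|\pi_0(R')|$ independently yields the same identity; but the generating-function route is the more economical use of the machinery at hand.)
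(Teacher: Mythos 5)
Your proposal is correct and takes essentially the same route as the paper's own proof: both read $\dim$ off as $-\ord_{(1-t)}$ of the generating function via \cref{Prp: dim via pole ord}, apply the multiplicativity $\genfunc_{Q\sqcup R}=\genfunc_Q\cdot\genfunc_R$ of \cref{Prp: genfunc mul on disjoint union}, and conclude by additivity of the order at $t=1$. Your extra care in verifying that $\ord_{(1-t)}$ is a genuine valuation on $\ZZ[t,(1-t)^{-1}]\cong\ZZ[s,s^{-1}]$, and your check of the empty-quandle case, merely make explicit steps the paper leaves implicit.
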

\begin{proof}
	By \cref{Prp: dim via pole ord}, we can recover $\dim_Q$ from $\genfunc_Q$ via:
	$$\dim_Q=-\ord_{(t-1)}(\genfunc_Q(t)).$$
	By \cref{Prp: genfunc mul on disjoint union},
	$$\genfunc_{Q\sqcup R}(t)=\genfunc_Q(t)\cdot\genfunc_R(t).$$
	Thus
	$$\dim_{Q\sqcup R}=-\ord_{(t-1)}(\genfunc_{Q\sqcup R}(t))=
	-\ord_{(t-1)}(\genfunc_Q(t)\cdot\genfunc_R(t))=$$
	$$=-\ord_{(t-1)}(\genfunc_Q(t))-\ord_{(t-1)}(\genfunc_R(t))=\dim_Q+\dim_R.$$
\end{proof}

\begin{example}
	Let $Q\in\Qdl^\fin$. Let $Q_+\in\Qdl$ denote $Q_+:=Q\sqcup *$, with $*:=T_1$.
	Then
	$$\genfunc_{Q_+}(t)=\genfunc_{Q}(t)\cdot \frac1{1-t}\;\Longrightarrow\;\genfunc_Q(t)=(1-t)\cdot\genfunc_{Q_+}(t).$$
	Therefore for $n\gg0$,
	$$P_Q(n)=|\A_{Q,n}|=|\A_{Q_+,n}|-|\A_{Q_+,n-1}|=P_{Q_+}(n)-P_{Q_+}(n-1).$$
	Let $\genfunc_Q^\dom(t)\in\ZZ[[t]]$ denote the series
	$$\genfunc_Q^\dom(t):=\sum_{n=0}^\infty \left|\A_{Q,n}^\dom\right|t^n.$$
	Then
	$$\A_{Q_+}^\dom\simeq\A_Q^\dom\times \A_{T_1}^\dom\;\;\Longrightarrow\;\;\genfunc_{Q_+}^\dom(t)=\genfunc_Q^\dom(t)\cdot \frac{t}{1-t}$$
	Therefore for $n\gg0$,
	$$P_Q^\dom(n)=|\A_{Q,n}^\dom|=|\A_{Q_+,n+1}^\dom|-|\A_{Q_+,n}^\dom|=P_{Q_+}(n+1)-P_{Q_+}(n).$$
\end{example}

\subsection{Twisted Pointed Quandles}
In this section, $Q\in\Qdl^\fin$ is a finite quandle and $\psi\in\Aut_\Qdl(Q)$ is such that for all $x\in Q$,
$$\varphi_{\psi(x)}=\varphi_x.$$
\begin{proposition}
	\label{Prp: quotient by psi quandle}
	Let $Q$ and $\psi$ be as above. Then the quotient $Q/\psi$ as a set inherits the quandle structure on $Q$, s.t.
	$Q\to Q/\psi$ is a morphism in $\Qdl$.
\end{proposition}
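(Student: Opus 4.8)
The plan is to equip $Q/\psi$ with the operation $\qq{[x]}{[y]}:=[\qq xy]$, where $[x]$ denotes the orbit of $x\in Q$ under the (finite, since $Q$ is finite) cyclic group $\langle\psi\rangle\le\Aut_\Qdl(Q)$, and then to check that this is well-defined and satisfies (Q1)--(Q3). Once well-definedness is established, the quotient map $p\colon Q\to Q/\psi$ becomes a quandle morphism for free, since $p(\qq xy)=[\qq xy]=\qq{[x]}{[y]}=\qq{p(x)}{p(y)}$ is exactly the defining formula.

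The main step, and the only place where the hypothesis $\varphi_{\psi(x)}=\varphi_x$ does real work, is well-definedness: I must show $[\qq{x'}{y'}]=[\qq xy]$ whenever $x'\in[x]$ and $y'\in[y]$. First I would iterate the hypothesis to obtain $\varphi_{\psi^i(x)}=\varphi_x$ for every $i$. Next, using that $\psi$ is a quandle automorphism together with the hypothesis, I would show that $\psi$ commutes with every $\varphi_x$:
$$\psi(\qq xy)=\qq{\psi(x)}{\psi(y)}=\qq x{\psi(y)},$$
the first equality because $\psi$ is a morphism and the second because $\varphi_{\psi(x)}=\varphi_x$; iterating gives $\qq x{\psi^j(y)}=\psi^j(\qq xy)$. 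Writing $x'=\psi^i(x)$ and $y'=\psi^j(y)$ and combining the two facts,
$$\qq{\psi^i(x)}{\psi^j(y)}=\varphi_{\psi^i(x)}(\psi^j(y))=\varphi_x(\psi^j(y))=\psi^j(\qq xy),$$
which lies in the orbit $[\qq xy]$, as required.

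With the operation well-defined, the remaining axioms transfer through the surjection $p$. For (Q1), $\qq{[x]}{[x]}=[\qq xx]=[x]$; for (Q3), apply $p$ to the instance of (Q3) holding in $Q$. For (Q2), the relation $\psi\circ\varphi_x=\varphi_x\circ\psi$ shows that the bijection $\varphi_x$ of $Q$ permutes the $\psi$-orbits, hence descends to a bijection of $Q/\psi$, which is precisely (Q2) for the quotient. I expect essentially all the content to sit in the well-definedness computation; the subtlety to watch is that the hypothesis is invoked in two genuinely different guises --- as $\varphi_{\psi^i(x)}=\varphi_x$ to absorb the $\psi$-ambiguity in the left argument, and (combined with $\psi$ being an automorphism) as $\psi\circ\varphi_x=\varphi_x\circ\psi$ to control it in the right argument.
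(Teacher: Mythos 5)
Your proof is correct and follows essentially the same route as the paper: the paper's entire argument consists of the two identities $\qq{\psi(x)}{y}=\qq xy$ and $\qq x{\psi(y)}=\psi(\qq xy)$, which are exactly the two ``guises'' of the hypothesis you isolate (absorbing $\psi$ in the left argument directly, and commuting $\psi$ past $\varphi_x$ for the right argument). You simply spell out the orbit bookkeeping and the verification of (Q1)--(Q3), which the paper leaves implicit.
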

\begin{proof}
	For all $x,y\in Q$,
	$$\qq{\psi(x)}{y}=\qq xy\;\;,\;\;\;\;\textrm{and}\;\;\;\;\qq{x}{\psi(y)}=\psi\left(\qq{\psi^{-1}(x)}{y}\right)=\psi(\qq xy).$$
	Hence the quandle structure on $Q$ is well defined modulo $\psi$.
\end{proof}

\begin{proposition}
	\label{Prp: twisted by psi pointed quandle}
	Let $Q\in\Qdl$ and $\psi\in\Aut_\Qdl(Q)$ be as above. Then
	$$\begin{tabular}{|c||c|c|}
	\hline
	${\qq xy}$&$x=*$&$x \in Q$\\
	\hline
	\hline
	$y=*$&$*$&$*$\\
	\hline
	$y\in Q$&$\psi(y)$&$\qq xy$\\
	\hline
	\end{tabular}$$
	is a quandle structure on the set $Q\sqcup *$. We denote this by $Q\sqcup_\psi *\in\Qdl$. The canonical embedding of $Q$ into $Q\sqcup_\psi *$ is a morphism in $\Qdl$.
\end{proposition}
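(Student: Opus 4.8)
The plan is to verify the three quandle axioms for the operation given by the table and then to observe that the inclusion of $Q$ is a morphism. Axiom (Q1) is immediate: $\qq{*}{*}=*$ by definition, and $\qq xx=x$ for $x\in Q$ since the operation restricts on $Q$ to the original one. For axiom (Q2) I would note that $\varphi_*$ fixes $*$ and acts as $\psi$ on $Q$, while for $x\in Q$ the map $\varphi_x$ fixes $*$ and acts as the original $\varphi_x$ on $Q$. Since $\psi$ and each original $\varphi_x$ are bijections of $Q$ (the latter by (Q2) for $Q$), every $\varphi_a$ on $Q\sqcup_\psi *$ is a bijection.

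The heart of the argument is axiom (Q3), which is precisely the assertion that every left-translation $\varphi_a$ is a quandle endomorphism of $Q\sqcup_\psi *$. I would check this separately for $a=*$ and for $a=x\in Q$, in each case running through the four possibilities for where the two remaining arguments lie ($*$ or $Q$). For $\varphi_*$ (acting as $\psi$ on $Q$ and fixing $*$), all four cases reduce either to the definition of the operation or to the fact that $\psi$ is already a quandle automorphism of $Q$; no extra hypothesis is needed there.

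The single case that forces the standing hypothesis is $\varphi_x$ applied to $\qq{*}{y}$ for $x,y\in Q$: here one compares $\varphi_x(\psi(y))$ with $\psi(\varphi_x(y))$, so one needs the commutation $\varphi_x\circ\psi=\psi\circ\varphi_x$. This is exactly where $\varphi_{\psi(x)}=\varphi_x$ enters. Reading the automorphism identity $\psi(\qq xy)=\qq{\psi(x)}{\psi(y)}$ as an equality of maps in $y$ gives $\psi\circ\varphi_x=\varphi_{\psi(x)}\circ\psi$, and substituting the hypothesis $\varphi_{\psi(x)}=\varphi_x$ yields $\psi\circ\varphi_x=\varphi_x\circ\psi$. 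With this identity in hand the remaining cases for $\varphi_x$ are routine: those landing in $Q$ follow from (Q3) for $Q$, and those involving $*$ follow from the definitions.

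Finally, the inclusion $\iota\colon Q\hookrightarrow Q\sqcup_\psi *$ is a morphism essentially by construction, since for $x,y\in Q$ the product $\qq xy$ computed in $Q\sqcup_\psi *$ agrees with the product in $Q$ by the bottom-right entry of the table. I expect the main obstacle to be purely organizational: keeping the case analysis for (Q3) complete and correctly isolating the one case whose validity is equivalent to the commutation $\psi\circ\varphi_x=\varphi_x\circ\psi$. Everything else is formal.
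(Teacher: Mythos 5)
Your proposal is correct and takes essentially the same route as the paper: (Q1) and (Q2) checked directly, then a case analysis for (Q3) in which the hypothesis $\varphi_{\psi(x)}=\varphi_x$ enters in exactly one case, namely $\qq x{(\qq *z)}$ with $x,z\in Q$. Your extracted commutation identity $\varphi_x\circ\psi=\psi\circ\varphi_x$ is precisely the paper's inline computation $\qq x{\psi(z)}=\qq{\psi(x)}{\psi(z)}=\psi(\qq xz)$, so the two arguments coincide up to organization.
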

\begin{proof}
	Axioms (Q1) and (Q2) are easily verified. As for (Q3): let $x,y,z\in Q\sqcup_\psi *$. Then
	$$z=*\;\;\;\Longrightarrow\;\;\; \qq x{(\qq y*)}=*=\qq{(\qq xy)}{(\qq x*)}.$$
	$$x=*\neq y,z\;\;\;\Longrightarrow\;\;\; \qq *{(\qq yz)}=\psi(\qq yz)=\qq{\psi(y)}{\psi(z)}=\qq{(\qq *y)}{(\qq *z)}.$$
	$$y=*\neq x,z\;\;\;\Longrightarrow\;\;\; \qq x{(\qq *z)}=\qq x{\psi(z)}=\qq{\psi(x)}{\psi(z)}=\psi(\qq xz)=\qq *{(\qq xz)}=\qq{(\qq x*)}{(\qq xz)}.$$
		$$x=y=*\neq z\;\;\;\Longrightarrow\;\;\; \qq *{(\qq *z)}=  \qq{(\qq **)}{(\qq *z)}.$$
\end{proof}

\begin{proposition}
	Let $Q$ and $\psi$ be as above.
	Then the sub-quandles of $Q\sqcup_\psi *$ are the sub-quandles $R\le Q$ and $R\sqcup_\psi *$, where $R\le Q$ satisfies $\psi(R)=R$.
\end{proposition}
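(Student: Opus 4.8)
The plan is to classify the sub-quandles $S \le Q \sqcup_\psi *$ by splitting on whether or not they contain the adjoined point $*$, reading off the closure condition directly from the multiplication table in \cref{Prp: twisted by psi pointed quandle}. This is essentially bookkeeping, so the work is in being careful to exhaust the table and to identify the exact condition on the $Q$-part in each case.

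First I would treat the case $* \notin S$, so that $S \subseteq Q$. For $x,y \in S$ the product $\qq xy$ is given by the bottom-right entry of the table, which is simply the operation inherited from $Q$ and lands in $Q$. Hence $S$ is closed in $Q \sqcup_\psi *$ exactly when it is closed in $Q$, i.e.\ $S \le Q$; this yields precisely the sub-quandles $R \le Q$ (including $R = \emptyset$). Next I would treat the case $* \in S$, setting $R := S \setminus \{*\} = S \cap Q$ and checking each of the four product types. The products $\qq **=*$ and $\qq x*=*$ always land in $S$, a product $\qq xy$ with $x,y \in R$ is the $Q$-operation and so forces $R \le Q$, and a product $\qq *y$ with $y \in R$ equals $\psi(y) \in Q$, forcing $\psi(R) \subseteq R$. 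Thus $S = R \sqcup_\psi *$ with $R \le Q$ and $\psi(R) \subseteq R$. Conversely, the same four computations show that for any $R \le Q$ with $\psi(R) \subseteq R$, the set $R \cup \{*\}$ is closed, so $R \sqcup_\psi *$ is a sub-quandle.

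There is no serious obstacle here, since the argument reduces to inspecting finitely many table entries. The one point needing a moment's care is the passage from $\psi(R) \subseteq R$ to the stated condition $\psi(R) = R$: this uses that $Q \in \Qdl^\fin$ together with $\psi \in \Aut_\Qdl(Q)$ being a bijection, so that an injective self-map of the finite set $Q$ carrying $R$ into $R$ must fix $R$ setwise. I would also explicitly confirm that the four ordered combinations $(\qq **,\ \qq x*,\ \qq *y,\ \qq xy)$ exhaust the table, to be sure that the two families listed are exactly the sub-quandles of $Q \sqcup_\psi *$ and that nothing has been overlooked.
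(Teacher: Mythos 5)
Your proof is correct. The paper states this proposition with no proof at all, so there is nothing to compare against: your case split on whether $*$ lies in the sub-quandle, reading the closure conditions off the multiplication table of \cref{Prp: twisted by psi pointed quandle}, is exactly the routine verification the authors leave implicit. You were also right to isolate the one genuinely non-trivial point: since the paper defines a sub-quandle as a subset merely closed under the operation, the table only forces $\psi(R)\subseteq R$, and the upgrade to $\psi(R)=R$ really does require the standing assumption $Q\in\Qdl^\fin$ from the start of the subsection (an injective self-map of a finite set carrying $R$ into $R$ is onto $R$); indeed, for infinite $Q$ the statement as phrased fails, e.g.\ $Q=\Triv(\ZZ)$, $\psi(n)=n+1$, $R=\NN$ gives a closed subset $R\sqcup_\psi *$ with $\psi(R)\subsetneq R$.
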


\begin{proposition}
	Let $Q\in\Qdl^\fin$ and  let $\psi\in\Aut_\Qdl(Q)$ be as above.
	Then
	$$\genfunc_{Q\sqcup_\psi *}(t)=\genfunc_Q(t)+t\cdot\genfunc_{(Q/\psi)_+}(t),$$
	$$P_{Q\sqcup_\psi *}(x)=P_Q(x)+P_{(Q/\psi)_+}(x-1),$$
	$$P_{Q\sqcup_\psi *}^\dom(x)=P_{(Q/\psi)_+}^\dom(x)$$
\end{proposition}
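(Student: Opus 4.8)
The plan is to analyze $\A_{Q\sqcup_\psi *}$ by grading its elements according to how many copies of $*$ they involve. First I would record the quandle morphism $q\colon Q\sqcup_\psi *\to (Q/\psi)_+$ given by $q(x)=[x]$ for $x\in Q$ and $q(*)=*$; one checks directly from the two multiplication tables that it respects $\qq{}{}$ (the twist by $\psi$ on the source matches the trivial action of $*$ on the untwisted target since $[\psi(y)]=[y]$), and hence induces a graded monoid map $\A_q\colon \A_{Q\sqcup_\psi *}\to \A_{(Q/\psi)_+}$. Next I would observe that the number of $*$'s in a word is preserved by every braid generator (a four-case check on $\sigma_i$ applied to the patterns of $Q$/$*$ entries), so the subsets $\A^{[k]}\subseteq\A_{Q\sqcup_\psi*}$ of words with exactly $k$ copies of $*$ give a graded-set splitting $\A_{Q\sqcup_\psi*}=\coprod_{k\ge0}\A^{[k]}$, with $\A^{[0]}=\A_Q$, and $\A_q$ carries $\A^{[k]}$ into the $k$-star part of $\A_{(Q/\psi)_+}\cong\A_{Q/\psi}\times\NN$ (a copy of $\A_{Q/\psi}$ shifted in degree by $k$).

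The core claim is that for every $k\ge 1$ the map $\A_q$ restricts to a degree-preserving bijection from $\A^{[k]}$ onto the $k$-star part of $\A_{(Q/\psi)_+}$. From the defining relations I would first derive, for all $w\in\A_Q$, the identities $w*=*w$ (since $\qq w*=*$) and $*w=\A_\psi(w)\,*$ (since $\varphi_*=\psi$), hence $z*=\psi(z)\,*$ for $z\in Q$ and, pushing all stars to the right, that every element of $\A^{[k]}$ has the form $u*^k$ with $u\in\A_Q$, satisfying $u*^k=\A_\psi(u)*^k$. Then $\A_q(u*^k)$ records exactly $\A_p(u)\in\A_{Q/\psi}$, where $p\colon Q\to Q/\psi$. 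Surjectivity is immediate by lifting, and since $u*^k=(u*)*^{k-1}$ it suffices for injectivity to treat $k=1$, i.e. to prove
$$u*=v*\iff \A_p(u)=\A_p(v).$$
The forward direction is $\A_q$ again; the reverse is the heart of the matter.

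The main obstacle is this reverse implication, and it is exactly where the hypothesis $\varphi_{\psi(z)}=\varphi_z$ is used. I would argue that $\A_p(u)=\A_p(v)$ means the images in $(Q/\psi)^m$ are braid-equivalent; lifting a witnessing braid (which is an equality inside $\A_Q$) shows that $u$ and $v$ differ only by replacing entries by $\psi$-translates, so the relation is generated by single-entry twists $azb\mapsto a\psi(z)b$ with $a,b\in\A_Q$, $z\in Q$. It then suffices to show each twist is invisible after appending $*$, namely $(azb)*=(a\psi(z)b)*$. For this I move $z$ to the right end inside $\A_Q$, giving $azb=a\,\varphi_z(b)\,z$, apply $z*=\psi(z)*$ to replace the last letter, and finally move $\psi(z)$ back to the left past $\varphi_z(b)$, where the conjugation cancels precisely because $\varphi_{\psi(z)}=\varphi_z$:
$$\varphi_z(b)\,\psi(z)=\psi(z)\,\varphi_{\psi(z)}^{-1}\big(\varphi_z(b)\big)=\psi(z)\,b.$$
This yields $(azb)*=(a\psi(z)b)*$ and establishes the bijection on $k$-star parts for all $k\ge1$.

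Summing graded cardinalities then gives $\genfunc_{Q\sqcup_\psi*}(t)=\genfunc_Q(t)+\sum_{k\ge1}t^k\genfunc_{Q/\psi}(t)=\genfunc_Q(t)+t\,\genfunc_{(Q/\psi)_+}(t)$, using $\genfunc_{(Q/\psi)_+}=\genfunc_{Q/\psi}/(1-t)$ from the pointing example; comparing coefficients of $t^n$ for $n\gg0$ converts this into $P_{Q\sqcup_\psi*}(x)=P_Q(x)+P_{(Q/\psi)_+}(x-1)$. For the dominant formula I would note that a dominant element must involve $*$, hence lies in some $\A^{[k]}$ with $k\ge1$, and that $\A_q$ respects dominance there: the inclusion-preserving bijection $R\mapsto p(R)$ between $\psi$-stable sub-quandles $R\le Q$ and sub-quandles of $Q/\psi$ matches the proper sub-quandles of the form $R\sqcup_\psi*$ with those of the form $\bar R\sqcup *$, so $u*^k$ is dominant iff the $Q$-support of $u$ is all of $Q$, iff the $(Q/\psi)$-support of $\A_p(u)$ is all of $Q/\psi$, iff $\A_q(u*^k)$ is dominant. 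Thus $\A_q$ restricts to a degree-preserving bijection $\A_{Q\sqcup_\psi*}^\dom\iso\A_{(Q/\psi)_+}^\dom$, giving $P_{Q\sqcup_\psi*}^\dom(x)=P_{(Q/\psi)_+}^\dom(x)$ with no shift.
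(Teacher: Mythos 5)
Your proposal is correct in its core mechanics and follows the same skeleton as the paper's proof: both rest on the morphism $q\colon Q\sqcup_\psi *\to (Q/\psi)_+$ (the paper builds it as the quotient of $Q\sqcup_\psi *$ by $\psi$ extended to fix $*$), on the relation $x\cdot *=\psi(x)\cdot *$, and on transferring graded counts through $\A_q$ using $\genfunc_{Q/\psi}(t)=(1-t)\genfunc_{(Q/\psi)_+}(t)$. Where you genuinely add something is the injectivity step: the paper only asserts ``this map is a bijection'' after exhibiting $x\cdot *=\psi(x)\cdot *$, whereas you prove it — normal form $u*^k$ (star count being braid-invariant), reduction to $k=1$, lifting a witnessing braid so that representatives differ by entrywise $\psi$-powers, and the computation $(azb)*=(a\varphi_z(b)z)*=(a\varphi_z(b)\psi(z))*=(a\psi(z)b)*$, which is exactly where the hypothesis $\varphi_{\psi(z)}=\varphi_z$ enters (and which legitimately extends from $Q$ to all of $\A_{Q\sqcup_\psi *}$ since the two automorphisms agree on generators). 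Your $k$-graded decomposition is a finer version of the paper's single split into $\A_Q$ and $\{a\;:\;*|a\}$, and it yields the generating-function identity the same way. For the dominant statement the routes diverge: the paper runs a complete induction on $|Q|$, while you argue once and for all through the correspondence $R\mapsto p(R)$ between $\psi$-stable sub-quandles of $Q$ and sub-quandles of $Q/\psi$; your (corrected, see below) version is arguably cleaner.

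There is, however, one false link in your dominance chain: ``$u*^k$ is dominant iff the $Q$-support of $u$ is all of $Q$.'' Dominance is not about the support exhausting the quandle, but about the support not lying in any proper sub-quandle. Concretely, take $Q=T_2=\{a,a'\}$ with $\psi$ the swap, so that $Q\sqcup_\psi *=J$; then $a\cdot *\in\A_J^\dom$, since the only proper sub-quandle of $J$ containing $*$ is $\{*\}$, yet the support $\{a\}$ of $u=a$ is not all of $Q$. (The same objection applies to ``the $(Q/\psi)$-support of $\A_p(u)$ is all of $Q/\psi$.'') The repair is available from your own setup: $u*^k$ is dominant iff $u\notin\A_R$ for every proper $\psi$-stable $R<Q$, because the sub-quandles of $Q\sqcup_\psi *$ containing $*$ are exactly the $R\sqcup_\psi *$ with $\psi(R)=R$, and membership in $\A_S$ can be tested on any single word representative by \cref{Prp: A_Q Braided Form}; likewise $\A_q(u*^k)$ is dominant iff $\A_p(u)\notin\A_S$ for every proper $S<Q/\psi$. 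Since $u\in\A_{p^{-1}(S)}\iff\A_p(u)\in\A_S$, and $R\mapsto p(R)$, $S\mapsto p^{-1}(S)$ are mutually inverse and preserve properness, the two conditions agree. With that substitution your bijection $\A_{Q\sqcup_\psi *}^\dom\iso\A_{(Q/\psi)_+}^\dom$, and hence $P^\dom_{Q\sqcup_\psi *}=P^\dom_{(Q/\psi)_+}$, goes through as claimed.
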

\begin{proof}
	First extend the $\psi$-action on $Q$ to $Q\sqcup_\psi*$ s.t. $\psi(*)=*$. Then $\psi$ satisfies the above conditions acting on $Q\sqcup_\psi *$ because $\varphi_{\psi(*)}=\varphi_*$ and for all $x\in Q$, $\varphi_{\psi(x)}(*)=*=\varphi_{x}(*)$. By \cref{Prp: quotient by psi quandle} there is a surjective morphism of quandles
	$$f\colon Q\sqcup_\psi *\twoheadrightarrow (Q\sqcup_\psi *)/\psi\simeq (Q/\psi)\sqcup *=(Q/\psi)_+.$$
	Hence there are commutative squares in $\Qdl$ and $\Mongr$ respectively:
	$$\begin{tikzcd}
	Q\sqcup_\psi *\ar[r,two heads]&(Q/\psi)_+\\
	Q\ar[r,two heads]\ar[u,hook']&Q/\psi\ar[u,hook']
	\end{tikzcd}\;\Longrightarrow\;
	\begin{tikzcd}
	\A_{Q\sqcup_\psi*} \ar[r,two heads]&\A_{(Q/\psi)_+}\\
	\A_{Q}\ar[r, two heads]\ar[u,hook']&\A_{Q/\psi}\ar[u,hook']
	\end{tikzcd}.$$
	For all $a\in \A_{Q\sqcup_\psi*}$, $*|a$ iff $*|\A_f(a)$ in $\A_{(Q/\psi)_+}$. Therefore $\A_f$ maps
	$$\begin{tikzcd}
	\A_{Q\sqcup_\psi*}
	\setminus \A_{Q}\ar[r,two heads, "\A_f"']& \A_{(Q/\psi)_+}
	\setminus \A_{Q/\psi}\\
	\{a\in \A_{Q\sqcup_\psi*}\;|\;*|a\}\ar[u,symbol={=}] &
	\{a\in \A_{(Q/\psi)_+}\;|\;*|a\}\ar[u,symbol={=}] 
\end{tikzcd}$$
	surjectively. But since in $\A_{Q\sqcup_\psi *}$ we have for all $x\in Q$,
	$$x\cdot *=\qq x*\cdot x=*\cdot x=\qq *x\cdot *=\psi(x)\cdot *,$$
	this map is a bijection:
	$$\A_f\colon\A_{Q\sqcup_\psi*}\setminus \A_{Q}
	\iso \A_{(Q/\psi)_+}\setminus \A_{Q/\psi}.$$
	Therefore
	$$\genfunc_{Q\sqcup_\psi*}(t)-\genfunc_Q(t)=\genfunc_{(Q/\psi)_+}(t)-\genfunc_{Q/\psi}(t).$$
	But $\genfunc_{Q/\psi}(t)=(1-t)\genfunc_{(Q/\psi)_+}(t)$, therefore
	$$\genfunc_{Q\sqcup_\psi*}(t)=\genfunc_Q(t)+\genfunc_{(Q/\psi)_+}(t)-\genfunc_{Q/\psi}(t)= \genfunc_Q(t)+t\cdot \genfunc_{(Q/\psi)_+}(t).$$
	For $n\gg 0$, 
	$$P_{Q\sqcup_\psi *}(n)=|\A_{Q\sqcup_\psi*,n}|=|\A_{Q,n}|+|\A_{(Q/\psi)_+,n-1}|=P_Q(n)+P_{(Q/\psi)_+}(n-1)$$
	therefore
	$$P_{Q\sqcup_\psi *}(x)=P_Q(x)+P_{(Q/\psi)_+}(x-1).$$
	The sub-quandles of $Q\sqcup_\psi*$ are $\{R\le Q\}\cup\{R\sqcup_\psi*\;|\;R\le Q,\psi(R)=R\}$. Therefore
	$$\A_{Q\sqcup_\psi*}^\dom=\A_{Q\sqcup_\psi*}\setminus \A_Q\setminus\bigcup_{\substack{R< Q\\\psi(R)=R}}\A_{R\sqcup_\psi*}^\dom.$$
	Using $\A_f$ and complete induction on $|Q|$, we have
	$$\A_{Q\sqcup_\psi*}^\dom\simeq \A_{(Q/\psi)_+}\setminus\A_{Q/\psi}\setminus\bigcup_{\substack{R< Q\\\psi(R)=R}}\A_{(R/\psi)_+}^\dom=$$
	$$=\A_{(Q/\psi)_+}\setminus\A_{Q/\psi}\setminus \bigcup_{R<Q/\psi}\A_{R_+}^\dom =\A_{(Q/\psi)_+}^\dom.$$	For $n\gg 0$,
	$$P_{Q\sqcup_\psi*}^\dom(n)=|\A_{Q\sqcup_\psi*,n}|=|\A_{Q/\psi,n}|=P_{(Q/\psi)_+}(n),$$
	therefore
	$$P_{Q\sqcup_\psi*}^\dom(x)=P_{(Q/\psi)_+}^\dom(x).$$
\end{proof}

\section{Examples and Computations}
\subsection{Trivial Quandles}
For $Q=T_0=\emptyset$, $\A_\emptyset=\{1\}$. Since $\emptyset$ has no proper sub-quandles, $\A_\emptyset^\dom=\{1\}$ as well. therefore
$$\genfunc_{T_0}(t)=\genfunc_{T_0}^\dom(t)=1.$$
For $\alpha\ge 1$, $T_\alpha=(T_{\alpha-1})_+$, therefore
$$\genfunc_{T_\alpha}(t)=\genfunc_{T_{\alpha-1}}(t)\cdot \frac1{1-t}\;\;,\;\;\;\;
\genfunc_{T_\alpha}^\dom(t)=\genfunc_{T_{\alpha-1}}^\dom(t)\cdot \frac t{1-t}.$$
By induction on $\alpha$ one obtains
$$\genfunc_{T_\alpha}(t)=\frac1{(1-t)^\alpha}\;\;,\;\;\;\;\genfunc_{T_{\alpha}}^\dom(t)= \frac{t^\alpha}{(1-t)^\alpha},$$
hence
$$P_{T_\alpha}(x)=\textstyle{{x+\alpha-1\choose\alpha-1}}\;\;,\;\;\;\; P_{T_\alpha}(x)=\textstyle{{x-1\choose\alpha-1}}.$$

\subsection{Twisted Pointed Quandles}
Let $Q=T_\alpha$, and let $\psi\colon T_\alpha\to T_\alpha$ be any permutation. Then $\psi\in\Aut_\Qdl(T_\alpha)$ and $\varphi_{\psi(x)}=\varphi_x=\Id$ for all $x\in T_\alpha$. Thus $T_\alpha\sqcup_\psi *$ is a quandle. Let $\beta:=1+|T_\alpha/\psi|$, such that
$(T_\alpha/\psi)_+=T_\beta$. Then $T_\alpha\sqcup_\psi *$ will satisfy
$$\genfunc_{T_\alpha\sqcup_\psi *}(t)=\genfunc_{T_\alpha}(t)+t\cdot \genfunc_{T_\beta}(t)=\frac1{(1-t)^\alpha}+\frac t{(1-t)^\beta}$$
$$P_{T_\alpha\sqcup_\psi *}(x)=P_{T_\alpha}(x)+P_{T_\beta}(x-1)={\textstyle{x+\alpha-1 \choose \alpha-1}}+ {\textstyle{x+\beta-2 \choose \beta-1}} $$
$$P_{T_\alpha\sqcup_\psi *}^\dom(x)=P_{T_\beta}^\dom(x)= {\textstyle{x-1 \choose \beta-1}} $$

\subsubsection{$J$}
Let $Q=T_2=\{a,a'\}$ and let $\psi\in S_2$ be the nontrivial permutation.
Then $J:=T_2\sqcup_\psi \{b\}\in\Qdl$ has structure given by
$$J=\{a,a',b\}\;\;,\;\;\;\;\begin{tabular}{|cc||c|c|c|}
\hline
\multicolumn{2}{|c||}{\multirow{2}{*}{$\qq xy$}}&\multicolumn{3}{|c|}{$x$}\\
\cline{3-5}
&&$a$&$a'$&$b$\\
\hline
\hline
\multirow{3}{*}{$y$}&\multicolumn{1}{|c||}{$a$}&
$a$&$a$&$a'$\\
\cline{2-5}
& \multicolumn{1}{|c||}{$a'$}&
$a'$&$a'$&$a$\\
\cline{2-5}
& \multicolumn{1}{|c||}{$b$}&
$b$&$b$&$b$\\
\hline
\end{tabular}\;\;.$$

This $J$ was introduced in {\cite[\textsection6]{ARTICLE:Joyce1982}} as an example of a quandle that doesn't embed into $\Conj(G)$ for any group $G$.

$$\genfunc_J(t)= \frac1{(1-t)^2}+\frac t{(1-t)^2} =\frac{1+t}{(1-t)^2},$$
$$P_J(t)= {\textstyle{x+1 \choose 1}}+ {\textstyle{x \choose 1}}=2x+1,$$
$$P_J^\dom(x)=P_{(T_2/\psi)_+}^\dom(x)= {\textstyle{x-1 \choose 1}}=x-1.$$

\subsubsection{$J_+$}
Let $Q=T_3=\{a,a',c\}$ and let $\psi\colon Q\to Q$ permute $a\leftrightarrow a'$, fixing $c$. Then $J'=T_3\sqcup_\psi \{b\}$ has structure given by

$$J_+:=T_3\sqcup_{\psi}*\simeq J\sqcup \{c\}=\{a,a',b,c\}\;\;,\;\;\;\;\begin{tabular}{|cc||c|c|c|c|}
\hline
\multicolumn{2}{|c||}{\multirow{2}{*}{$\qq xy$}}&\multicolumn{4}{|c|}{$x$}\\
\cline{3-6}
&&$a$&$a'$&$b$&$c$\\
\hline
\hline
\multirow{4}{*}{$y$}&\multicolumn{1}{|c||}{$a$}&
$a$&$a$&$a'$&$a$\\
\cline{2-6}
& \multicolumn{1}{|c||}{$a'$}&
$a'$&$a'$&$a$&$a'$\\
\cline{2-6}
& \multicolumn{1}{|c||}{$b$}&
$b$&$b$&$b$&$b$\\
\cline{2-6}
& \multicolumn{1}{|c||}{$c$}&
$c$&$c$&$c$&$c$\\
\hline
\end{tabular}\;\;.$$

$$\genfunc_{J_+}(t)= \frac1{(1-t)^3}+\frac t{(1-t)^3} =\frac{1+t}{(1-t)^3},$$
$$P_{J_+}(t)= {\textstyle{x+2 \choose 2}}+ {\textstyle{x+1 \choose 2}} =(x+1)^2,$$
$$P_{J_+}^\dom(x)= {\textstyle{x-1 \choose 2}}=\frac{x^2-3x+2}2.$$

\subsubsection{$C_3$}
Let $Q=T_3=\{a,a',a''\}$ and let $\psi\colon Q\to Q$ be a length-$3$ cycle. Then $C_3=\{a,a',a''\}\sqcup_\psi \{d\}$ has structure given by
$$C_3:=T_3\sqcup_\psi \{d\}=\{a,a',a'',d\}\;\;,\;\;\;\;\begin{tabular}{|cc||c|c|c|c|}
\hline
\multicolumn{2}{|c||}{\multirow{2}{*}{$\qq xy$}}&\multicolumn{4}{|c|}{$x$}\\
\cline{3-6}
&&$a$&$a'$&$a''$&$d$\\
\hline
\hline
\multirow{4}{*}{$y$}&\multicolumn{1}{|c||}{$a$}&
$a$&$a$&$a$&$a'$\\
\cline{2-6}
& \multicolumn{1}{|c||}{$a'$}&
$a'$&$a'$&$a'$&$a''$\\
\cline{2-6}
& \multicolumn{1}{|c||}{$a''$}&
$a''$&$a''$&$a''$&$a$\\
\cline{2-6}
& \multicolumn{1}{|c||}{$d$}&
$d$&$d$&$d$&$d$\\
\hline
\end{tabular}\;\;.$$

$$\genfunc_{C_3}(t)= \frac1{(1-t)^3}+\frac t{(1-t)^2} =\frac{1+t-t^2}{(1-t)^3},$$
$$P_{C_3}(t)= {\textstyle{x+2 \choose 2}}+ {\textstyle{x \choose 1}} =\frac{x^2+5x+2}2,$$
$$P_{C_3}^\dom(x)= {\textstyle{x-1 \choose 1}}=x-1.$$

\subsubsection{$J'$}
Let $Q=J=\{a,a',b\}$ and let $\psi=\varphi_b\in\Aut_\Qdl(J)$. This choice of $Q$ and $\psi\in\Aut_\Qdl(Q)$ satisfy $\varphi_{\psi(x)}=\varphi_x$ for all $x\in Q$, therefore $J\sqcup_\psi \{b'\}\in\Qdl$ with:
$$J':=J\sqcup_\psi\{b'\}=\{a,a',b,b'\}\;\;,\;\;\;\;\begin{tabular}{|cc||c|c|c|c|}
\hline
\multicolumn{2}{|c||}{\multirow{2}{*}{$\qq xy$}}&\multicolumn{4}{|c|}{$x$}\\
\cline{3-6}
&&$a$&$a'$&$b$&$b'$\\
\hline
\hline
\multirow{4}{*}{$y$}&\multicolumn{1}{|c||}{$a$}&
$a$&$a$&$a'$&$a'$\\
\cline{2-6}
& \multicolumn{1}{|c||}{$a'$}&
$a'$&$a'$&$a$&$a$\\
\cline{2-6}
& \multicolumn{1}{|c||}{$b$}&
$b$&$b$&$b$&$b$\\
\cline{2-6}
& \multicolumn{1}{|c||}{$b$}&
$b'$&$b'$&$b'$&$b'$\\
\hline
\end{tabular}\;\;.$$
Here $(J/\psi)_+\simeq T_3$, therefore
$$\genfunc_{J'}(t)=\genfunc_J(t)+t\cdot\genfunc_{T_3}(t)=\frac{1+t}{(1-t)^2}+\frac{t}{(1-t)^3}=\frac{1+t-t^2}{(1-t)^3},$$
	$$P_{J'}(x)=P_J(x)+P_{T_3}(x-1)=(2x+1)+{\textstyle{x+1\choose 2}}=\frac{x^2+5x+2}2,$$
	$$P_{J'}^\dom(x)=P_{T_3}^\dom(x)= {\textstyle{x-1\choose 2}} =\frac{x^2-3x+2}2.$$

%%%%%%$$=\frac1{(1-t)^\alpha}+\frac t{(1-t)^\beta}$$
%%%%%%$$={\textstyle{x+\alpha-1 \choose \alpha-1}}+ {\textstyle{x+\beta-2 \choose \beta-1}} $$
%%%%%%$$= {\textstyle{x-1 \choose \beta-1}} $$

\subsection{Dihedral Quandles}
There is a functor from abelian groups to quandles -
$$\Dhd{}\colon\Ab\to\Qdl\;\;,\;\;\;\;M\mapsto \Dhd M,$$
where $D_M$ has underlying set $M$, and quandle structure
$$\qq xy=2x-y.$$
A morphism $f\colon M\to M'$ in $\Ab$ yields a morphism $\Dhd f\colon \Dhd M\to\Dhd{M'}$ in $\Qdl$:
$$\forall x\in M\;\;:\;\;\;\;f(\qq xy)=f(2x-y)=2f(x)-f(y)=\qq{f(x)}{f(y)}.$$
For clarity in computations we distinguish between elements of $M\in\Ab$ and $\Dhd M\in\Qdl$ thusly:
$$x\in M\;,\;\;[x]\in \Dhd M.$$
For every $M\in\Ab$ there is a natural function
$$\T_M\colon \A_{\Dhd M}\to M\;\;,\;\;\;\;\T_M([x_0]\cdots[x_n])=\sum_{i=0}^n (-1)^i\cdot x_i.$$
This is the unique function $\A_{\Dhd M}\to M$ satisfying
$$\forall x\in M\;,\;\; \T_M([x])=x\;,$$
$$\forall \;a,b\in\A_{\Dhd M}\;,\;\;\T_M(a\cdot b)=\T_M(a)+ (-1)^{|a|}\T_M(b).$$
For $M=\ZZ/\ell\ZZ\in\Ab$ these $\Dhd M$ are called \emph{dihedral quandles}, denoted by
$$\Dhd \ell:=\Dhd{\ZZ/\ell\ZZ}.$$
For all $M\in\Ab$, the quandle $\Dhd M$ is naturally isomorphic to the sub-quandle of the dihedral group $M\rtimes \{\pm1\}$ consisting of all reflections, via
$$\Dhd M\to M\rtimes\{\pm1\}\;\;,\;\;\;\;m\mapsto (m,1).$$
%Therefore these might have been more appropriately named \emph{hedral} quandles.
The aim here is to compute $P_{D_\ell}(x), P_{D_\ell}^\dom(x)\in\QQ[x]$ and $\genfunc_{D_\ell}(t)\in\ZZ(t)$ for all $\ell\in\NN$. We need some preliminary statements:

\begin{proposition}
	\label{Prp: classify subquandles of dihedral qdl}
	Let $\ell\in\ZZ$ and let $M=\ZZ/\ell\ZZ\in\Ab$. Then the sub-quandles $R$ of $\Dhd \ell$ are precisely $R=\emptyset$ and the cosets of subgroups of $\ZZ/\ell\ZZ$:
	$$\{R\le \Dhd \ell\}=\{\emptyset\}\cup\{c+H\;\;|\;\;\;\;H\le \ZZ/\ell\ZZ\;,\;\;c\in \left(\ZZ/\ell\ZZ\right)/H\}.$$
	For each $c+H\le \Dhd M$, the map
	$$H\to c+ H\;\;,\;\;\;\;h\mapsto c+h$$
	is an isomorphism of quandles.
\end{proposition}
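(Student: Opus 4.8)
The plan is to prove both inclusions of the asserted equality of sets and then the isomorphism statement, using translations of $\Dhd M$ as the organizing tool. The first thing I would record is that for every $t\in M$ the translation $y\mapsto y+t$ is a quandle automorphism of $\Dhd M$, since $\qq{(x+t)}{(y+t)}=2(x+t)-(y+t)=(2x-y)+t=\qq xy+t$. This single observation does double duty: it will supply the final isomorphism, and it lets me normalize an arbitrary sub-quandle to one containing $0$.

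For the easy inclusion I would check directly that every coset is a sub-quandle: if $H\le\ZZ/\ell\ZZ$ and $x=c+h_1,\,y=c+h_2\in c+H$, then $\qq xy=c+(2h_1-h_2)\in c+H$ because $2h_1-h_2\in H$; and the empty set is vacuously a sub-quandle. Conversely, given a nonempty sub-quandle $R$, I would fix $c\in R$ and set $H:=R-c$. A short computation shows $0\in H$ and that $H$ is again closed under $\qq xy=2x-y$, while the translation automorphism gives $R=c+H$. Thus the whole statement reduces to the single claim: \emph{every $2x-y$-closed subset $H\subseteq\ZZ/\ell\ZZ$ with $0\in H$ is a subgroup}.

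To prove this I would extract the two consequences of closure that are actually available. Taking $x=0$ gives $-y=2\cdot 0-y\in H$, so $H=-H$; replacing $y$ by $-y$ then gives $2a+y\in H$ for all $a,y\in H$, i.e. translation by $2a$ maps $H$ into itself. Since $-a\in H$ as well, translation by $2a$ is a bijection of $H$, so $H$ is invariant under the subgroup $K:=\langle\,2a\mid a\in H\,\rangle=2\langle H\rangle$. Hence $H$ is a union of $K$-cosets and, as $0\in H$, we have $K\subseteq H\subseteq\langle H\rangle$. Here the cyclicity of $\ZZ/\ell\ZZ$ is decisive: $\langle H\rangle$ is cyclic, so $[\langle H\rangle:K]=[\langle H\rangle:2\langle H\rangle]=\gcd(2,|\langle H\rangle|)\le 2$. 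A union of $K$-cosets lying between $K$ and $\langle H\rangle$ with index at most $2$ must be either $K$ or $\langle H\rangle$, both of which are subgroups; therefore $H$ is a subgroup.

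The main obstacle is exactly this last claim, and the subtle point worth flagging is that closure under $2x-y$ by itself yields only closure under negation and under ``doubling'', not under addition. What rescues the argument is the index computation, which relies essentially on $M$ being cyclic: for a general abelian $M$ the quotient $\langle H\rangle/2\langle H\rangle$ can be a large $2$-torsion group, on which the induced operation is $\qq xy=-y=y$ and hence trivial, so that arbitrary subsets through $0$ would be closed and the conclusion would fail. Finally, the isomorphism $H\iso c+H$, $h\mapsto c+h$, is immediate from the translation automorphism restricted to the sub-quandle $H=0+H$.
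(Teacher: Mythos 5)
Your proof is correct, but it takes a genuinely different route from the paper's. The paper runs a minimal-gap argument: for each $[x]\in R$ it sets $m_x=\min\{n>0\;|\;[x+n]\in R\}$, chooses $[x]$ minimizing $m_x$, and uses the quandle operation itself, $\qq{[x+m_x]}{[x]}=[(x+m_x)+m_x]\in R$, to propagate the gap and conclude $R=x+\langle m_x\rangle$; there cyclicity enters through the ordering of integer representatives, and the argument is constructive in that it directly produces the generator $m_x$ of $H$. You instead normalize by a translation so that $0\in H:=R-c$, extract from closure only its two usable consequences ($H=-H$ and invariance of $H$ under translation by $2a$ for $a\in H$), and sandwich $2\langle H\rangle\subseteq H\subseteq\langle H\rangle$, finishing with the index bound $[\langle H\rangle:2\langle H\rangle]\le 2$, which is where cyclicity is used. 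Your version is a bit longer but isolates the role of cyclicity cleanly, works verbatim in the degenerate case $\ell=0$ (i.e.\ $\ZZ$), and your closing observation --- that closure under $\qq xy=2x-y$ gives only negation and doubling, and that for general $M\in\Ab$ the induced operation on $\langle H\rangle/2\langle H\rangle$ is trivial so the classification fails --- is exactly the paper's own remark following the proposition, that $\Dhd{(\ZZ/2\ZZ)^k}$ is trivial and hence every subset is a sub-quandle. The easy direction (cosets are sub-quandles) and the final isomorphism are handled identically in both proofs, via the translation automorphisms $y\mapsto y+c$ of $\Dhd{\ell}$.
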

\begin{proof}
	Let $\emptyset\neq R\le D_\ell$. For every $[x]\in R$, let 
	$$m_x:=m_{[x]}:=\min\{n\in\NN\;|\;[x+n]\in R\}.$$
	Consider $[x]\in R$ with minimal $m_{x}$. Then on one hand $$[x+m_{x}]\in R \;\;\;\textrm{ and }\;\;\;m_{x+m_x}\ge m_x.$$
	On the other hand
	$$\qq{[x+m_x]}{[x]}=[2(x+m_x)-x]=[(x+m_x)+m_x]\in R.$$
	Therefore
	$$m_{x+m_x}=m_x.$$
	It follows that $[x+m_x\NN]\subseteq R$. Having chosen $x$ with minimal $m_x$, we find that
	$$R=[x+m_x\NN]=x+m_x\ZZ/\ell\ZZ$$
	is a coset of the subgroup $m_x\ZZ/\ell\ZZ \le \ZZ/\ell\ZZ$. All cosets of a subgroup $H\le \ZZ/\ell\ZZ$ are isomorphic sub-quandles:
	Let $c\in\ZZ/\ell\ZZ$. The map $x\mapsto x+c$ is in $\Aut_\Qdl(\Dhd \ell)$:
	$$\forall x,y\in\ZZ/\ell\ZZ\;,\;\;\qq{(x+c)}{(y+c)}=2(x+c)-(y+c)=(2x-y)+c=\qq xy+c.$$
	Moreover the map is a bijection, therefore an automorphism of $\Dhd \ell$ in $\Qdl$. $\ZZ/\ell\ZZ$ acts transitively on cosets of $H$, mapping one isomorphically onto another.
\end{proof}
\begin{remark}
	The statement of \cref{Prp: classify subquandles of dihedral qdl} is false for arbitrary $M\in\Ab^\fin$ - for all $k$, $\Dhd{(\ZZ/2\ZZ)^k}$ is trivial, so \emph{any} subset is a sub-quandle. If $M$ is odd however, this does hold and the proof is much simpler.
\end{remark}

\begin{proposition}
\label{Prp: Bn dihedral split}
	Let $\ell,\ell'\in\NN$ be coprime. Then for all $n\in\NN$,
	$$|\A_{\Dhd{\ell\ell'},n}|= |\A_{\Dhd{\ell},n}|\cdot |\A_{\Dhd{\ell'},n}|.$$
\end{proposition}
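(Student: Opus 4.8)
The plan is to reduce the claim to a multiplicative identity for orbit counts and then exploit the fact that the Brieskorn $B_n$-action on a dihedral quandle is \emph{linear}. First I would record the Chinese Remainder Theorem at the level of quandles: since $\gcd(\ell,\ell')=1$, the ring isomorphism $\ZZ/\ell\ell'\ZZ\iso\ZZ/\ell\ZZ\times\ZZ/\ell'\ZZ$ is in particular an isomorphism of abelian groups, and because the operation $\qq xy=2x-y$ is applied coordinatewise, the functor $\Dhd{}$ carries it to a quandle isomorphism $\Dhd{\ell\ell'}\iso\Dhd\ell\times\Dhd{\ell'}$ (the product of \cref{Dfn: qdl prod}). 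Taking $n$-fold powers functorially in $B_n$-sets (\cref{Lma: Q^n is B_n equivar}) yields a $B_n$-equivariant isomorphism $\Dhd{\ell\ell'}^n\cong\Dhd\ell^n\times\Dhd{\ell'}^n$ for the diagonal action, so by \cref{Prp: A_Q Braided Form} it suffices to prove
$$|(\Dhd\ell^n\times\Dhd{\ell'}^n)/B_n|=|\Dhd\ell^n/B_n|\cdot|\Dhd{\ell'}^n/B_n|.$$

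Next I would use that the underlying set of $\Dhd\ell$ is $\ZZ/\ell\ZZ$ and that the action of $\sigma_i$, namely $(\dots,x_i,x_{i+1},\dots)\mapsto(\dots,2x_i-x_{i+1},x_i,\dots)$, is $\ZZ/\ell\ZZ$-linear and homogeneous. Thus $\beta_{\Dhd\ell,n}$ is a representation $\rho_\ell\colon B_n\to GL_n(\ZZ/\ell\ZZ)$ (the reduced Burau representation at $t=-1$ mod $\ell$), with finite image $G_\ell:=\rho_\ell(B_n)$, and the $B_n$-orbits on $\Dhd\ell^n$ coincide with the $G_\ell$-orbits. The key structural observation is that each generator is unipotent in a strong sense: $\rho_\ell(\sigma_i)=I+N_i$, where $N_i$ is supported on the $(i,i{+}1)$-block $\left(\begin{smallmatrix}1&-1\\1&-1\end{smallmatrix}\right)$ and satisfies $N_i^2=0$. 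Hence $\rho_\ell(\sigma_i)^{\ell}=I+\ell N_i=I$ over $\ZZ/\ell\ZZ$, so that $\sigma_i^{\ell}\in N_\ell:=\ker\rho_\ell$ for every $i$, and likewise $\sigma_i^{\ell'}\in N_{\ell'}:=\ker\rho_{\ell'}$.

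From here the endgame is a clean normal-subgroup argument, which I expect to be the crux. Both $N_\ell$ and $N_{\ell'}$ are normal in $B_n$, so their product $N_\ell N_{\ell'}$ is again a (normal) subgroup. Choosing $a,b\in\ZZ$ with $a\ell+b\ell'=1$ gives $\sigma_i=(\sigma_i^{\ell})^{a}(\sigma_i^{\ell'})^{b}\in N_\ell N_{\ell'}$ for all $i$; since the $\sigma_i$ generate $B_n$, we conclude $N_\ell N_{\ell'}=B_n$. This forces the diagonal map $B_n\to G_\ell\times G_{\ell'}$ to be surjective, so the $B_n$-action on $\Dhd\ell^n\times\Dhd{\ell'}^n$ has the same orbits as the full product group $G_\ell\times G_{\ell'}$. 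Finally I would apply Burnside's lemma to $G_\ell\times G_{\ell'}$: a pair $(g,g')$ fixes $(x,y)$ iff $g$ fixes $x$ and $g'$ fixes $y$, so the fixed-point counts factor and
$$|(\Dhd\ell^n\times\Dhd{\ell'}^n)/(G_\ell\times G_{\ell'})|=\Big(\tfrac{1}{|G_\ell|}\sum_{g}|(\Dhd\ell^n)^{g}|\Big)\Big(\tfrac{1}{|G_{\ell'}|}\sum_{g'}|(\Dhd{\ell'}^n)^{g'}|\Big)=|\Dhd\ell^n/G_\ell|\cdot|\Dhd{\ell'}^n/G_{\ell'}|,$$
which is exactly the desired identity once one recalls that $G_\ell$-orbits equal $B_n$-orbits. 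The only genuinely delicate point is the surjectivity onto the \emph{full} product $G_\ell\times G_{\ell'}$ (equivalently $N_\ell N_{\ell'}=B_n$); everything hinges on the coprimality of $\ell,\ell'$ entering through Bézout, together with the nilpotence $N_i^2=0$ that bounds the order of each generator by $\ell$ (resp.\ $\ell'$).
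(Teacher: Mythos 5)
Your proposal is correct, and while it rests on the same underlying mechanism as the paper's proof, the packaging is genuinely different, so a comparison is worthwhile. Both arguments share the skeleton: the CRT isomorphism $\Dhd{\ell\ell'}\iso\Dhd\ell\times\Dhd{\ell'}$, the fact that each generator $\sigma_i$ acts on $\Dhd\ell^n$ with order dividing $\ell$, and a B\'ezout/CRT step that decouples the diagonal $B_n$-action on the two factors before invoking \cref{Prp: A_Q Braided Form}. The paper obtains the order statement by defining $h_Q:=[B_2:\ker(B_2\to\Aut_\Set(Q^2))]$ and computing $h_{\Dhd\ell}=\ell$ from $\sigma_1(x+c,x)=(x+2c,x+c)$; you obtain it from linearity, writing $\rho_\ell(\sigma_i)=I+N_i$ with $N_i^2=0$, hence $\rho_\ell(\sigma_i)^\ell=I+\ell N_i=I$ --- the same computation in matrix clothing. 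For the decoupling, the paper works constructively and in greater generality: for \emph{any} finite quandles $Q,R$ with $\gcd(h_Q,h_R)=1$ it chooses $k\equiv 1,\ m\equiv 0 \pmod{h_Q}$ and $k\equiv 0,\ m\equiv 1\pmod{h_R}$, so that $\sigma_i^k$ acts as $\sigma_i$ on the $Q$-coordinates and trivially on the $R$-coordinates, and $\sigma_i^m$ the reverse; since these elements generate $\beta_Q(B_n)\times\{1\}$ and $\{1\}\times\beta_R(B_n)$, they establish exactly the surjectivity onto the product of image groups that you identify as the crux. You reach that surjectivity abstractly instead, via $N_\ell N_{\ell'}=B_n$ (product of normal kernels plus B\'ezout); this is the group-theoretic CRT and is correct, though since you flag it as the delicate point it deserves its one-line verification: given $(g,g')$, lift $g$ to $x$ and $g'$ to $y$, write $x^{-1}y=uv$ with $u\in N_\ell$, $v\in N_{\ell'}$, and check that $z=xu=yv^{-1}$ hits $(g,g')$. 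Two minor remarks: the concluding appeal to Burnside is unnecessary, since once the acting group is the full product $G_\ell\times G_{\ell'}$, its orbits on a product set under the componentwise action are literally products of orbits; and the representation you use is the \emph{unreduced} Burau representation at $t=-1$, not the reduced one. The trade-off between the two proofs: the paper's intermediate lemma applies to any pair of finite quandles with coprime $h$-invariants and uses no linear algebra, whereas your unipotence trick is special to the linear (dihedral) case --- but where it applies it is slick, and it makes the connection with the Burau representation explicit.
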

\begin{proof}
	The isomorphism $\ZZ/\ell\ell'\ZZ\iso \ZZ\ell\ZZ\times\ZZ/\ell'\ZZ$ in $\Ab$ carries over to a morphism $f\colon \Dhd{\ell\ell'}\to\Dhd\ell\times\Dhd{\ell'}$ in $\Qdl$. Since $f$ is a bijection, $f$ is an isomorphism.
	For any $Q\in\Qdl^\fin$, let $h_Q\in\NN$ denote
	$$h_Q:=[B_2:\ker(B_2\xrightarrow{\beta}\Aut_\Set(Q^2))].$$
	In the case of $Q=\Dhd\ell$, $\sigma_1(x+c,x)= (x+2c,x+c)$,
	therefore $h_{\Dhd\ell}=\ell$. We show that if $Q,R\in \Qdl^\fin$ are s.t. $h_Q$ and $h_R$ are coprime, then for all $n\in\NN$,
	$$\A_{Q\times R,n}\simeq \A_{Q,n}\times \A_{R,n}.$$
	Since $\ell=h_{\Dhd\ell}$ and $\ell'=h_{\Dhd{\ell'}}$ are coprime, this will finish the proof.
	For $n=0$ both sides are singletons. For $n=1$ both sides are $Q\times R$. Let $n\ge 2$. Because $h_Q,h_R$ are coprime, there are $m,k\in\NN$ s.t.
	$$\begin{matrix}
		m\equiv 0 \mod h_Q\;,&k\equiv 1 \mod h_Q\\
		m\equiv 1 \mod h_R\;,&k\equiv 0 \mod h_R
	\end{matrix}$$
	Then for all $\sigma_i\in B_n$ and all $z_i=(x_i,y_i),\;z_{i+1}=(x_{i+1},y_{i+1})\in Q\times R$,
	$$\begin{array}{c|c}
		\sigma_i^k(\dots x_i,x_{i+1}\dots)=\sigma _i (\dots x_i,x_{i+1}\dots) &
		\sigma _i ^m(\dots x_i,x_{i+1}\dots)= (\dots x_i,x_{i+1}\dots)\\
		\sigma _i ^k(\dots y_i,y_{i+1}\dots)=(\dots y_i,y_{i+1}\dots) &
		\sigma _i ^m(\dots y_i,y_{i+1}\dots)=\sigma _i(\dots y_i,y_{i+1}\dots)
	\end{array}$$
It follows that in the $B_n$-action on $(Q\times R)^n$, we can force an action on $Q^n$ and $R^n$ separately. Therefore for all $n\ge 2$ (and $n=0,1$ from before)
$$\A_{Q\times R,n}\simeq (Q\times R)^n/B_n\simeq Q^n/B_n\times R^n/B_n\simeq\A_{Q,n}\times\A_{R,n}.$$
\end{proof}

In light of, all computations of $|\A_{\Dhd\ell,n}|$ are reduced to their $p$-primary components. We therefore assume in the following computations that
$$\ell=p^r\;\;,\;\;\;\;\textrm{with}\;\;\;p\;\;\textrm{prime}\;,\;\;r>0\;.$$
Furthermore, when headed into computations we distinguish between $x\in\ZZ/\ell\ZZ$ and $[x]\in D_{\ell}$ - inside the brackets we perform standard arithemtic in $\ZZ/\ell\ZZ$.

\begin{lemma}
	\label{Lma: dom iff two diff unit}
	Let $Q=D_\ell$ and let $x,y\in\ZZ/\ell\ZZ$. Then
	$$[y]\cdot[x]\in\A_{\Dhd\ell}^\dom\;\iff\;y-x\in(\ZZ/\ell\ZZ)^\times.$$
	More generally, $[x_1]\cdots [x_n]\in\A_{\Dhd\ell}^\dom$ iff there exist some $1\le i,j\le n$ s.t.
	$$x_j-x_i\in \left(\ZZ/\ell\ZZ\right)^\times.$$
\end{lemma}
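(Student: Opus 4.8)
The plan is to translate dominance of a word into a statement about the sub-quandle generated by its letters, and then read off the arithmetic condition from the classification of sub-quandles in \cref{Prp: classify subquandles of dihedral qdl}. First I would fix the given factorization $a=[x_1]\cdots[x_n]$. By \cref{Dfn: A_Q dom}, $a\in\A_{\Dhd\ell}^\dom$ precisely when $a\notin\A_R$ for every proper sub-quandle $R<\Dhd\ell$. Applying \cref{Prp: A_R saturated in A_Q} repeatedly to the factorization $a=[x_1]\cdot([x_2]\cdots[x_n])$ shows that $a\in\A_R$ holds iff each degree-$1$ factor $[x_i]$ lies in $\A_R$, i.e. iff each $x_i$ lies in $R$; the reverse implication is immediate since $\A_R$ is the submonoid generated by $R$. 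Consequently $a$ is non-dominant iff all the $[x_i]$ are simultaneously contained in some proper sub-quandle $R<\Dhd\ell$, or equivalently $a\in\A_{\Dhd\ell}^\dom$ iff the sub-quandle $\langle[x_1],\dots,[x_n]\rangle_\Qdl$ generated by the letters is all of $\Dhd\ell$.

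Next I would identify this generated sub-quandle. By \cref{Prp: classify subquandles of dihedral qdl}, every nonempty sub-quandle of $\Dhd\ell$ is a coset $c+H$ of a subgroup $H\le\ZZ/\ell\ZZ$. Such a coset contains all the $[x_i]$ iff $x_i-x_j\in H$ for all $i,j$, so the smallest sub-quandle containing the letters is $x_1+H_0$, where $H_0=\langle x_i-x_j\mid 1\le i,j\le n\rangle_\Grp$ is the subgroup generated by the pairwise differences. Thus $a$ is dominant iff $x_1+H_0=\Dhd\ell$, i.e. iff $H_0=\ZZ/\ell\ZZ$. This reduces the claim to showing that, under the standing hypothesis $\ell=p^r$, the pairwise differences generate all of $\ZZ/\ell\ZZ$ iff at least one of them is a unit, with the case $n=2$ being the special case $H_0=\langle y-x\rangle_\Grp$.

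Finally I would settle this last equivalence using the prime-power hypothesis: the subgroups of $\ZZ/p^r\ZZ$ form a chain $\ZZ/p^r\ZZ\supseteq p\ZZ/p^r\ZZ\supseteq\cdots$, so a subgroup generated by a set of elements is the whole group exactly when one of those generators has $p$-adic valuation $0$, that is, is coprime to $p$ and hence a unit in $\ZZ/p^r\ZZ$. Applying this to the generators $x_i-x_j$ of $H_0$ yields $H_0=\ZZ/\ell\ZZ$ iff some $x_j-x_i\in\Zlx$, which is exactly the asserted equivalence. The one genuinely delicate point is precisely this last step: the implication ``the differences generate everything'' $\Rightarrow$ ``some single difference is a unit'' is special to prime powers and fails for general $\ell$ (for example $\{0,6,10,15\}\subseteq\ZZ/30\ZZ$ has pairwise differences generating all of $\ZZ/30\ZZ$, yet no single difference is coprime to $30$), so the reduction to $\ell=p^r$ recorded just before the lemma is doing essential work.
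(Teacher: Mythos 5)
Your proof is correct, and there is nothing in the paper to compare it against: the paper states this lemma with no proof at all, so your argument supplies a missing one. Each step checks out. Dominance of $[x_1]\cdots[x_n]$ reduces, via \cref{Prp: A_R saturated in A_Q} (together with the easy converse, since $\A_R$ is the submonoid of $\A_{\Dhd\ell}$ generated by $R$), to the assertion that no proper sub-quandle contains all the letters, i.e.\ that $\langle x_1,\dots,x_n\rangle_\Qdl=\Dhd\ell$; by \cref{Prp: classify subquandles of dihedral qdl} that generated sub-quandle is the coset $x_1+H_0$ with $H_0=\langle\, x_i-x_j\,\rangle_\Grp$, so dominance is equivalent to $H_0=\ZZ/\ell\ZZ$; and since the subgroups of $\ZZ/p^r\ZZ$ form a chain, $H_0$ is the whole group iff one of its listed generators has $p$-adic valuation zero, i.e.\ iff some $x_j-x_i\in\Zlx$. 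Your closing remark is the most valuable part of the write-up: the last equivalence genuinely requires the standing hypothesis $\ell=p^r$ recorded just before the lemma (justified by the reduction of \cref{Prp: Bn dihedral split}), and your counterexample is right --- in $\ZZ/30\ZZ$ the letters $0,6,10,15$ have pairwise differences $6,10,15,4,9,5$, none of which is a unit, yet $\gcd(6,10,15)=1$, so they generate everything and the word $[0]\cdot[6]\cdot[10]\cdot[15]\in\A_{\Dhd{30}}$ is dominant. Thus the ``more generally'' clause is false for $\ell$ with two or more prime factors, while (as your argument also shows) the two-letter clause holds for every $\ell$, since $\langle y-x\rangle=\ZZ/\ell\ZZ$ iff $y-x$ is a unit.
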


\begin{lemma}
	Let $\ell=p^r$, $r\ge 1$ and $Q=\Dhd\ell\in\Qdl$. Let $x,y\in\ZZ/\ell\ZZ$.
	\begin{enumerate}
		\item For all $k\in\ZZ$,
		$$[x+y]\cdot [x] = [x+(k+1)y]\cdot [x+ky].$$
		\label{Lma: translator}
		\item If $y-x\in(\ZZ/\ell\ZZ)^\times$ then for all $k\in\ZZ$
		$$[x]\cdot[y] = [x+k]\cdot [y+k].$$
		\label{Lma: universal translator}
		\item If $x\in(\ZZ/\ell\ZZ)^\times$, then for all $k\in\ZZ$,
		$$[x+pk]\cdot [x]\cdot[0]=[1+pk]\cdot[1]\cdot[0].$$
		\label{Lma: down to one}
	\end{enumerate}
\end{lemma}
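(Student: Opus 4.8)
The plan is to handle the three parts in order, using throughout the single defining relation of $\A_{\Dhd\ell}$ from \cref{Dfn: Enveloping Monoid}, which for the dihedral quandle reads $[a]\cdot[b]=\qq{[a]}{[b]}\cdot[a]=[2a-b]\cdot[a]$, and treating parts (1) and (2) as the tools that drive part (3).

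For part (1) I would feed the defining relation into itself: applying $[a]\cdot[b]=[2a-b]\cdot[a]$ with $a=x+(k+1)y$ and $b=x+ky$ gives $2a-b=x+(k+2)y$, hence $[x+(k+1)y]\cdot[x+ky]=[x+(k+2)y]\cdot[x+(k+1)y]$. Thus $g(k):=[x+(k+1)y]\cdot[x+ky]$ satisfies $g(k)=g(k+1)$ for every $k\in\ZZ$ (the relation is an equality, so this runs in both directions), so $g$ is constant and equals $g(0)=[x+y]\cdot[x]$. For part (2) I would apply part (1) with $X:=y$ and $Y:=x-y$, so $X+Y=x$; this yields $[x]\cdot[y]=[y+(m+1)(x-y)]\cdot[y+m(x-y)]$, and writing $k=m(x-y)$ the two factors become $[x+k]$ and $[y+k]$. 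Since $y-x$ (hence $x-y$) lies in $\Zlx$, the set $\{m(x-y):m\in\ZZ\}$ is all of $\ZZ/\ell\ZZ$, so $[x]\cdot[y]=[x+k]\cdot[y+k]$ holds for every $k$.

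Part (3) is the substantive one, and the strategy is to prove something stronger: that $[x+pk]\cdot[x]\cdot[0]$ is unchanged when the unit base $x$ is replaced by any other unit $x'$ (then specialize $x'=1$). Fix $x'\in\Zlx$ and set $t=x'-x$. First, rewrite the tail $[x]\cdot[0]$ via part (1) with $X=0,\,Y=x,\,j=1$ as $[2x]\cdot[x]$, so the word becomes $[x+pk]\cdot[2x]\cdot[x]$; the leading pair now has difference $2x-(x+pk)=x-pk$, a unit since $x\in\Zlx$ and $p\mid pk$. Second, translate that leading pair by $t$ using part (2): $[x+pk+t]\cdot[2x+t]\cdot[x]$. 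Third, apply part (1) to the trailing pair $[2x+t]\cdot[x]$ in the form $[X+Y]\cdot[X]$ with $X=x$ and $Y=x'$; as $x'$ is a unit the entry $x+ix'$ runs over all of $\ZZ/\ell\ZZ$, so I may pick $i$ with $x+ix'=0$, which forces the new middle letter to be $x'$. The word becomes $[x+pk+t]\cdot[x']\cdot[0]=[x'+pk]\cdot[x']\cdot[0]$, as wanted.

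The only delicate points — and where I expect the (modest) main obstacle to sit — are the two unit hypotheses that license the translations: that $x-pk\in\Zlx$ (so part (2) applies in the second move) and that $x'=x+t\in\Zlx$ (so $x+ix'=0$ is solvable in the third move). Both hold automatically from $x,x'\in\Zlx$ and $p\mid pk$, which is exactly where the primality/prime-power hypothesis $\ell=p^r$ enters. I would also double-check the bookkeeping of which adjacent pair each of parts (1),(2) acts on, since in a length-three word the middle letter is shared between the two pairs; running the recipe on a small case (it reproduces, e.g., $[3]\cdot[3]\cdot[0]=[3]\cdot[6]\cdot[3]=[1]\cdot[4]\cdot[3]=[1]\cdot[1]\cdot[0]$ in $\Dhd 8$) confirms that the three moves compose correctly.
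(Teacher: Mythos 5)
Your proposal is correct and takes essentially the same route as the paper: parts (1) and (2) coincide with the paper's argument (iterate the defining relation $[a]\cdot[b]=[2a-b]\cdot[a]$, then use invertibility of $y-x$ to realize every shift $k$ as $m(y-x)$), and part (3) is the same three-move rewriting of adjacent pairs in the length-three word using parts (1) and (2), just with the moves in a different order and normalizing to a general unit $x'$ instead of directly to $1$ (the paper translates the tail $[x]\cdot[0]$ by $1+pk$ via part (2), shifts the leading pair by $1-x$, then slides the tail back down to $[1]\cdot[0]$). Two minor observations: the paper actually leaves part (1) without proof, so your two-line argument for it is a welcome addition, and your variant is the one that genuinely invokes $\ell=p^r$ (to get $x-pk\in\Zlx$), whereas the paper's sequence of moves only ever translates pairs whose difference is $x$ or $1$.
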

\begin{proof}
	\begin{enumerate}
		\item 
		\item Since $y-x\in(\ZZ/\ell\ZZ)^\times$, there exists $m\in\ZZ$ s.t. $m\cdot (y-x)=k\mod \ell$. Therefore
	$$[y]\cdot [x]=[x+(y-x)]\cdot [x]=[x+(m+1)(y-x)]\cdot[x+m(y-x)]= $$
	$$=[y+m(y-x)]\cdot[x+m(y-x)]=[y+k]\cdot [x+k].$$
		\item By \cref{Lma: universal translator},
	$$[x+pc]\cdot[x] \cdot[0]=[x+pc] \cdot[x+1+pc] \cdot[1+pc]=$$
	$$=[1+pc] \cdot[2+pc] \cdot[1+pc]=[1+pc] \cdot[1] \cdot[0].$$
	\end{enumerate}
\end{proof}

\begin{lemma}
	\label{Lma: Dihedral dom div [0]}
	Let $Q=\Dhd\ell$, let $n\ge 2$ and let $a\in\A_{\Dhd\ell,n}^\dom$. Then $[0]$ divides $a$.
\end{lemma}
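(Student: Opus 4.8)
The plan is to reduce the statement to exhibiting the generator $[0]$ as a factor somewhere in a word representing $a$, and then to invoke the monoid-with-conjugation structure to upgrade ``is a factor'' to ``divides''. Recall from \cref{Prp: AQ is auto gen} that $\A_{\Dhd\ell}$ is a monoid with conjugation, so if $a=P\cdot[0]\cdot R$ for some $P,R\in\A_{\Dhd\ell}$, then the relation $[0]\cdot R=\varphi_{[0]}(R)\cdot[0]$ gives $a=\bigl(P\cdot\varphi_{[0]}(R)\bigr)\cdot[0]$, so $[0]$ divides $a$ from the right, hence (by \cref{Lma: MonConj ambi div}) $[0]\mid a$. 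Thus it suffices to rewrite $a$, using only equalities that hold in $\A_{\Dhd\ell}$, into a word in which $[0]$ occurs.

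Next I would locate a convenient adjacent pair. Write $a=[x_1]\cdots[x_n]$ with $x_i\in\ZZ/\ell\ZZ$. Since $a\in\A_{\Dhd\ell,n}^\dom$ and $n\ge2$, \cref{Lma: dom iff two diff unit} provides indices with $x_j-x_i\in(\ZZ/\ell\ZZ)^\times$. I would choose such a pair with $|i-j|$ minimal, say $i<j$, and claim $j=i+1$. The key arithmetic input is that $\ell=p^r$ makes $\ZZ/\ell\ZZ$ local, with non-units exactly the multiples of $p$: if $x_{i+1}-x_i$ were a non-unit, then $x_j-x_{i+1}=(x_j-x_i)-(x_{i+1}-x_i)$ would be a unit minus a non-unit, hence again a unit, contradicting minimality. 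Therefore $x_{i+1}-x_i$ is itself a unit, i.e.\ there is an adjacent unit-differing pair.

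Finally I would apply the translation identity: by \cref{Lma: universal translator} with $k=-x_i$, the adjacent factor satisfies $[x_i]\cdot[x_{i+1}]=[0]\cdot[x_{i+1}-x_i]$ in $\A_{\Dhd\ell}$. Substituting this equality into the word for $a$ yields
$$a=[x_1]\cdots[x_{i-1}]\cdot[0]\cdot[x_{i+1}-x_i]\cdot[x_{i+2}]\cdots[x_n],$$
which displays $[0]$ as a factor, so by the first paragraph $[0]\mid a$. The only step requiring genuine care is the minimality argument producing an \emph{adjacent} unit-differing pair; everything else is a direct application of the cited identities. I expect no serious obstacle, the reduction to the prime-power case (already in force in this subsection) being precisely what makes the ``unit minus non-unit is a unit'' step valid.
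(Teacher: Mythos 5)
Your proof is correct, and it reaches the conclusion through the same two endpoint ingredients as the paper --- \cref{Lma: dom iff two diff unit} to produce a pair of letters with unit difference, and \cref{Lma: universal translator} to turn such a pair into a factor $[0]$ --- but the middle step is genuinely different. The paper never needs the pair to be adjacent: since $\A_{\Dhd\ell}$ is a monoid with conjugation, \cref{Lma: MonConj ambi div} and \cref{Lma: MonConj multidiv} let it pull the two letters $[x_i],[x_j]$ from wherever they sit to the right end of the word, writing $a=a'\cdot[x_i]\cdot[x_j]=a'\cdot[x_i-x_j]\cdot[0]$ in one stroke. You instead keep the word intact and manufacture an \emph{adjacent} unit-differing pair by a minimality argument, which is exactly where you invoke that $\ZZ/p^r\ZZ$ is local (``unit minus non-unit is a unit''); after that, only the raw conjugation relation $[0]\cdot R=\varphi_{[0]}(R)\cdot[0]$ is needed to move $[0]$ to the right. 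What each route buys: yours is more elementary, replacing the divisibility calculus of \cref{Lma: MonConj multidiv} by explicit word manipulation, but it consumes the prime-power hypothesis in a second place where the paper does not; the paper's proof is shorter because the general monoid-with-conjugation lemmas absorb all the positional bookkeeping, so non-adjacency costs nothing. Your adjacency claim (choose $i<j$ with $x_j-x_i$ a unit and $j-i$ minimal; then the pair at $(i,i+1)$ works) is airtight for the reason you give, so both arguments are sound.
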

\begin{proof}
	Write $a=[x_1]\cdots [x_n]\in\A_{\Dhd\ell,n}^\dom$. By \cref{Lma: dom iff two diff unit}, there exist $1\le i<j\le n$ s.t. $x_i-x_j\in\Zlx$.
	By \cref{Lma: MonConj multidiv} and \cref{Lma: universal translator}, there is $a'\in\A_{\Dhd\ell}$ s.t.
	$$a=a'\cdot[x_i]\cdot[x_j]=a'\cdot[x_i-x_j]\cdot[0].$$
	Therefore $[0]$ divides $a$.
\end{proof}

\begin{corollary}
	\label{Cor: Dihedral dom n=2 case}
	Let $\ell=p^r$, $r\ge 0$, and let $Q=\Dhd\ell$. Then
	$$|\A_{\Dhd{\ell},2}^\dom|=|(\ZZ/\ell\ZZ)^\times|=p^r-p^{r-1}.$$
	%$$|\A_{\Dhd{p^r},2}|=(r+1)p^r-rp^{r-1}.$$
\end{corollary}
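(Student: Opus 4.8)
The plan is to exhibit an explicit bijection between $\Zlx$ and $\A_{\Dhd\ell,2}^\dom$. Recall from \cref{Prp: A_Q Braided Form} that $\A_{\Dhd\ell,2}\simeq\Dhd\ell^{\,2}/B_2$, so that a degree-$2$ monomial $[x_1]\cdot[x_2]$ is precisely the class of the pair $([x_1],[x_2])$ under the $B_2$-action, where the generator $\sigma_1$ acts by $([x],[y])\mapsto([2x-y],[x])$ (see \cref{Dfn: Brieskorn rep}). By \cref{Lma: dom iff two diff unit}, such a monomial lies in $\A_{\Dhd\ell,2}^\dom$ exactly when $x_1-x_2\in\Zlx$. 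So the task reduces to counting the $B_2$-orbits of dominant pairs.

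First I would put every dominant degree-$2$ element into a normal form. Given a dominant $[x_1]\cdot[x_2]$, set $v:=x_1-x_2\in\Zlx$; applying \cref{Lma: universal translator} with $k=-x_2$ yields
$$[x_1]\cdot[x_2]=[x_1-x_2]\cdot[0]=[v]\cdot[0].$$
This is consistent with \cref{Lma: Dihedral dom div [0]}, which already guarantees divisibility by $[0]$. Hence the assignment $v\mapsto[v]\cdot[0]$ defines a surjection $\Zlx\twoheadrightarrow\A_{\Dhd\ell,2}^\dom$, and it only remains to prove it is injective.

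The crux is this injectivity, which is where the $\langle\sigma_1\rangle$-action must be analysed head-on. Iterating $\sigma_1$ on the pair $([v],[0])$, and tracking that the difference of the two coordinates is preserved, gives the orbit $\{([(n+1)v],[nv])\mid n\in\ZZ\}$. Thus $[v]\cdot[0]=[v']\cdot[0]$ forces $([v'],[0])=([(n+1)v],[nv])$ for some $n\in\ZZ$. The second coordinate gives $nv\equiv0\pmod\ell$, and since $v\in\Zlx$ this forces $n\equiv0\pmod\ell$; the first coordinate then yields $v'\equiv(n+1)v\equiv v$. Therefore distinct units give distinct monomials, the map is a bijection, and
$$|\A_{\Dhd\ell,2}^\dom|=|\Zlx|=p^r-p^{r-1}$$
for $r\ge1$ (the case $r=0$ being the one-element count directly).

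The main obstacle is precisely this injectivity step. The reductions supplied by the preceding lemmas collapse all dominant degree-$2$ elements onto the single family $[v]\cdot[0]$, but ruling out coincidences among them requires the exact description of the cyclic $B_2$-orbit rather than any formal divisibility manipulation; it is exactly the unit hypothesis on $v$ that makes the orbit meet the locus ``second coordinate $=0$'' only at $n\equiv0\pmod\ell$, and hence prevents $[v]\cdot[0]$ and $[v']\cdot[0]$ from being identified for $v\neq v'$.
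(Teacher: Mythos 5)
Your proof is correct, and its overall skeleton matches the paper's: use \cref{Lma: dom iff two diff unit} as the dominance criterion, reduce every dominant degree-$2$ element to the normal form $[v]\cdot[0]$ with $v\in\Zlx$, and count. Where you genuinely diverge is the uniqueness step, which you rightly identify as the crux. The paper gets injectivity in one line from the invariant $\T_{\ZZ/\ell\ZZ}\colon\A_{\Dhd\ell}\to\ZZ/\ell\ZZ$ (the alternating sum): $\T_{\ZZ/\ell\ZZ}([c]\cdot[0])=c$ recovers $c$ from the monomial. (It also obtains the normal form from \cref{Lma: Dihedral dom div [0]} rather than from \cref{Lma: universal translator}; both work.) You instead use the identification $\A_{\Dhd\ell,2}\simeq(\Dhd\ell)^2/B_2$ of \cref{Prp: A_Q Braided Form} and compute the full $\langle\sigma_1\rangle$-orbit $\{([(n+1)v],[nv])\mid n\in\ZZ\}$, checking that it contains only one pair whose second coordinate is $[0]$ precisely because $v$ is a unit. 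Your route is more self-contained: the paper's $\T_M$ is merely asserted to be well-defined on $\A_{\Dhd M}$, and verifying that assertion amounts exactly to your observation that $\sigma_1$ preserves the difference of the two coordinates, so your argument makes explicit what the paper leaves implicit. What the invariant buys in exchange is reusability: $\T_{\ZZ/\ell\ZZ}$ settles such uniqueness questions in every degree at once, and the paper leans on it again in \cref{Prp: Dihedral odd dom comp} and \cref{Prp: Dihedral even dom comp}, where explicit orbit analysis would be unwieldy. A final small point in your favor: your remark that the closed formula $p^r-p^{r-1}$ requires $r\ge 1$, with the $r=0$ case counted directly, is more careful than the statement of the corollary itself.
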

\begin{proof}
	By \cref{Lma: dom iff two diff unit}, for all $c\in\ZZ/\ell\ZZ$,
	$$[c]\cdot[0]\in\A_{\Dhd\ell}^\dom.\;\iff\;c\in(\ZZ/\ell\ZZ)^\times.$$
	By \cref{Lma: Dihedral dom div [0]}, every $a\in\A_{\Dhd\ell,2}^\dom$ is of the form $a=[c]\cdot[0]$, for some $c\in\ZZ/\ell\ZZ$ uniquely determined by $a$ via $c=c-0=\T_{\ZZ/\ell\ZZ}(a)$. Thus
	$$|\A_{\Dhd\ell,2}^\dom|=|\{[c]\cdot[0]\;|\;c\in(\ZZ/\ell\ZZ)^\times\}|=| (\ZZ/\ell\ZZ)^\times |=p^r-p^{r-1}.$$
\end{proof}

\begin{proposition}
	\label{Prp: Dihedral odd dom comp}
	Let $\ell=p^r$ be odd, $r\ge 1$, and let $Q=\Dhd\ell$. Then for all $n\ge 3$
	$$|\A_{\Dhd,n}^\dom|=\ell.$$
\end{proposition}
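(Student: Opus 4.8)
The plan is to prove the stronger statement that for $n \ge 3$ the invariant $\T = \T_{\ZZ/\ell\ZZ}\colon \A_{\Dhd\ell}\to\ZZ/\ell\ZZ$ restricts to a \emph{bijection}
$$\T\colon \A_{\Dhd\ell,n}^\dom \iso \ZZ/\ell\ZZ,$$
from which $|\A_{\Dhd\ell,n}^\dom| = |\ZZ/\ell\ZZ| = \ell$ is immediate. Since $\T$ is already a well-defined function on $\A_{\Dhd\ell} \simeq \coprod_m \Dhd\ell^{\,m}/B_m$ (\cref{Prp: A_Q Braided Form}), this is genuinely a statement about the degree-$n$ dominant part, and splitting it into surjectivity and injectivity organizes the whole argument. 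I expect injectivity to carry essentially all of the difficulty.

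For surjectivity I would exhibit explicit representatives. For $c \in \ZZ/\ell\ZZ$ set
$$w_c := [c+1]\cdot[1]\cdot[0]^{\,n-2} \in \A_{\Dhd\ell,n}.$$
Its entries $[1]$ and $[0]$ differ by the unit $1$, so $w_c \in \A_{\Dhd\ell,n}^\dom$ by \cref{Lma: dom iff two diff unit}, and the defining functional equation of $\T$ gives $\T(w_c) = (c+1) - 1 = c$. Hence $\T$ is onto, the $w_c$ are pairwise distinct as they have distinct $\T$-values, and already $|\A_{\Dhd\ell,n}^\dom| \ge \ell$.

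The core is injectivity, which I would establish by showing that every $a \in \A_{\Dhd\ell,n}^\dom$ equals the representative $w_{\T(a)}$. Writing $a = [x_1]\cdots[x_n]$, \cref{Lma: dom iff two diff unit} yields indices with $x_j - x_i \in \Zlx$, and \cref{Lma: Dihedral dom div [0]} guarantees $[0]\mid a$. Using ambidextrous divisibility and multiplicativity of divisibility (\cref{Lma: MonConj ambi div}, \cref{Lma: MonConj multidiv}) together with the translation identities \cref{Lma: translator} and \cref{Lma: universal translator}, I would first slide a unit-difference pair into the first two slots and translate it so that the second coordinate becomes $[0]$ and the first becomes a unit; \cref{Lma: down to one} then normalizes that unit to $[1]$ in the presence of the trailing $[0]$. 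The remaining $n-3$ coordinates are absorbed one at a time: each can be rewritten to $[0]$ by the same translators once a $[0]$ is available to its right, never disturbing dominance and leaving $\T$ unchanged. Collapsing in this way produces a word of the shape $[\ast]\cdot[1]\cdot[0]^{\,n-2}$, and since $\T$ is preserved throughout, the first coordinate is forced to equal $\T(a)+1$, i.e. $a = w_{\T(a)}$. Oddness of $\ell$ is essential here: $2 \in \Zlx$ is used both in the operation $\qq xy = 2x - y$ and inside the translation lemmas.

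The main obstacle is precisely this normal-form reduction: organizing the braid/conjugation rewrites so that \emph{every} dominant word — including those whose non-unit entries lie in various cosets of $p\ZZ/\ell\ZZ$ — can be driven to a single $\T$-labelled representative. The base case $n=3$, where \cref{Lma: down to one} does most of the work, and the absorption of the extra coordinates for $n>3$ both require care to ensure no rewrite breaks dominance or alters $\T$; checking this bookkeeping is the heart of the proof. It also explains the jump from $|\A_{\Dhd\ell,2}^\dom| = p^r - p^{r-1}$ in \cref{Cor: Dihedral dom n=2 case}, where only unit first-coordinates survive, to the full $\ell$ once a third slot is available.
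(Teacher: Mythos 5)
Your plan is the paper's own strategy in thin disguise: the paper also proves that every element of $\A_{\Dhd\ell,n}^\dom$ has a unique normal form read off from $\T$ --- its two cases $[z]\cdot[0]^{n-1}$ (for $z\in\Zlx$) and $[1+z]\cdot[1]\cdot[0]^{n-2}$ (for $z\in p\ZZ/\ell\ZZ$) are exactly your $w_{\T(a)}$, since $[z]\cdot[0]=[z+1]\cdot[1]$ by \cref{Lma: universal translator} --- and your surjectivity half is correct and easy. The genuine gap is in the reduction to normal form, where you assert that the remaining coordinates ``can be rewritten to $[0]$ by the same translators once a $[0]$ is available to its right.'' That assertion is the entire content of the proposition, and its natural (greedy, zero-preserving) reading is false. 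Take $\ell=9$ and $a=[1][4][7][0]$, which is dominant ($1-0$ is a unit) with $\T(a)=4$, so its normal form is $w_4=[5][1][0][0]$. By \cref{Prp: A_Q Braided Form}, every identity in $\A_{\Dhd 9}$ is a composite of slides of an adjacent pair by a multiple of its difference (\cref{Lma: translator}; both \cref{Lma: universal translator} and the defining relation $[x][y]=[\qq xy][x]$ are special cases), and each such slide descends to a slide in $(\ZZ/3\ZZ)^4$. A finite check shows that the words in $(\ZZ/3\ZZ)^4$ reachable from $(1,1,1,0)$ by slides \emph{without ever losing a zero entry} are exactly $(1,1,1,0)$, $(1,1,0,2)$, $(1,0,2,2)$, $(0,2,2,2)$; the reduction $(2,1,0,0)$ of $w_4$ is not among them. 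Hence every rewrite of $a$ into its normal form must pass through a word with \emph{no} zero entry at all, so a scheme that absorbs coordinates into a zero kept intact on the right can never succeed, in whatever order the steps are attempted.

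The missing move is precisely the paper's key maneuver, and it is one that temporarily sacrifices the zero. When $a=b\cdot[0]$ is dominant but $b=[x][y][z]$ is not, the entries $x,y,z$ are units, pairwise congruent mod $p$ (exactly the stuck configuration above), and applying \cref{Lma: down to one} twice gives $[y][z][0]=[y-z+1][1][0]=[-x+y-z][-x][0]$. The whole point of passing to $-x$ is that $x-(-x)=2x$ is a unit --- the only place where oddness of $\ell$ is used --- so that $[x]\cdot[-x+y-z]\cdot[-x]$ is dominant by \cref{Lma: dom iff two diff unit}, hence divisible by $[0]$ by \cref{Lma: Dihedral dom div [0]}; this produces the second zero and, inductively, $[0]^{n-2}\mid a$. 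In your sketch, \cref{Lma: down to one} and oddness appear only in normalizing the leading unit to $[1]$, whereas they are in fact the engine of the absorption itself. Finally, the ``bookkeeping'' you defer is vacuous: dominance and $\T$ are properties of the element of $\A_{\Dhd\ell}$, not of a chosen word (each $R^n\subseteq Q^n$ is $B_n$-stable, and $\T$ is well defined on $\A_{\Dhd\ell}$), so no legal rewrite can disturb either. The real difficulty is exhibiting the rewrite sequence, which is the one step your proposal does not supply.
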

\begin{proof}
	First we show that $[0]^{n-2}|a$ for every $a\in\A_{\Dhd\ell}^\dom$ of degree $n=|a|\ge 3$. By \cref{Lma: Dihedral dom div [0]}, $[0]\;|\;a$. To assert this claim it suffices to show that 
	$$b\cdot [0]\in\A_{\Dhd\ell}^\dom\;,\;\;|b|\ge 3\;\;\Longrightarrow\;\;[0]^2\;|\;b\cdot[0]$$
	- in fact it is enough to show this for $b=[x]\cdot[y]\cdot [z]$ of $|b|=3$. If $b\in\A_{\Dhd\ell}^\dom$, then $[0]\;|\;b$ by \cref{Lma: Dihedral dom div [0]} and we are finished.
	Otherwise we must have $p|(y-z)$. Also note that $x,y,z\in(\ZZ/\ell\ZZ)^\times$ because $b\cdot[0]\in\A_{\Dhd\ell}^\dom$. By \cref{Lma: down to one} it follows that
	$$[y]\cdot [z]\cdot [0]=[y-z+1]\cdot[1]\cdot[0]=[-x+y-z]\cdot[-x]\cdot [0].$$
	Therefore
	$$b\cdot [0]= [x]\cdot [y]\cdot [z]\cdot [0] = [x]\cdot [-x+y-z]\cdot[-x]\cdot [0].$$
	Let $b':=[x]\cdot [-x+y-z]\cdot[-x]$. Since $\ell$ is odd, $x-(-x)=2x\in(\ZZ/\ell\ZZ)^\times$. By \cref{Lma: dom iff two diff unit} $b'\in\A_{\Dhd\ell}^\dom$. By \cref{Lma: Dihedral dom div [0]}, $[0]\;|\;b'$. Therefore
	$$[0]^2\;|\;b\cdot [0].$$
	Now let $n\ge 3$. Then for every $a\in\A_{\Dhd\ell,n}^\dom$ there exist $x,y\in\ZZ/\ell\ZZ$ s.t.
	$$a=[x]\cdot[y]\cdot[0]^{n-2}.$$
	If $[x]\cdot[y]\in\A_{\Dhd\ell}^\dom$ then 
	$$a=[z]\cdot [0]^{n-1}.$$
	Since $a\in\A_{\Dhd\ell}^\dom$ and $n-2\ge 1$, this $z$ must reside in $(\ZZ/\ell\ZZ)^\times$ by \cref{Lma: dom iff two diff unit}. If $[x]\cdot[y]\notin\A_{\Dhd\ell}^\dom$, then $p|(y-x)$ - yet $a\in\A_{\Dhd\ell}^\dom$, therefore $x,y\in(\ZZ/\ell\ZZ)^\times$. By \cref{Lma: down to one},
	$$a=[x]\cdot[y]\cdot[0]^{n-2}=[1+z]\cdot[1]\cdot[0]^{n-2}$$
	for some $z\in p\ZZ/\ell\ZZ$. In both cases $z=x-y=\T_{\ZZ/\ell\ZZ}(a)$ is uniquely determined by $a$. Thus
	$$|\A_{\Dhd\ell,n}^\dom|=|\{[z]\cdot [0]^{n-1}\;|\;z\in(\ZZ/\ell\ZZ)^\times\}\cup\{[1+z]\cdot[1]\cdot[0]^{n-2}\;|\;z\in p\ZZ/\ell\ZZ\}|=$$
	$$=|\ZZ/\ell\ZZ|=\ell.$$
\end{proof}

\begin{proposition}
	\label{Prp: Dihedral even dom comp}
	Let $\ell=2^r$ with $r\ge 1$, and let $n\ge 3$. Then
	$$\left|\A_{\Dhd\ell,n}^\dom\right|=2^{r-1}(n-1).$$
\end{proposition}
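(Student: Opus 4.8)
The plan is to classify the $B_n$-orbits in $\Dhd\ell^{\,n,\dom}$ by two invariants and to exhibit an explicit normal form realizing each admissible pair of invariant values. Set $\ell=2^r$ and recall from \cref{Prp: A_Q Braided Form} that $\A_{\Dhd\ell,n}^\dom\simeq\Dhd\ell^{\,n,\dom}/B_n$. The first invariant is $b(a):=\#\{i:x_i\in\Zlx\}$, the number of odd entries of a word $a=[x_1]\cdots[x_n]$; since $\sigma_i$ sends the pair $(x_i,x_{i+1})$ to $(2x_i-x_{i+1},x_i)$ and $2x_i-x_{i+1}\equiv x_{i+1}\pmod 2$, the action merely permutes the parities, so $b$ is constant on orbits. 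The second invariant is $\T=\T_{\ZZ/\ell\ZZ}(a)=\sum_i(-1)^{i-1}x_i\in\ZZ/\ell\ZZ$, already known to descend to $\A_{\Dhd\ell}$. By \cref{Lma: dom iff two diff unit} dominance is equivalent to the existence of an odd difference, i.e.\ to $1\le b\le n-1$, and reducing modulo $2$ gives $\T\equiv b\pmod 2$. Thus every orbit determines a pair $(b,v)$ with $1\le b\le n-1$, $v\in\ZZ/\ell\ZZ$ and $v\equiv b\pmod 2$, and there are exactly $(n-1)\cdot 2^{r-1}$ such pairs.

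Next I would introduce the candidate normal forms
$$\mathcal N=\bigl\{\,[2m+1]\cdot[1]^{\,b-1}\cdot[0]^{\,n-b}\ :\ 0\le m<2^{r-1},\ 1\le b\le n-1\,\bigr\}\subseteq\A_{\Dhd\ell,n}^\dom,$$
and compute their invariants: each such word has exactly $b$ odd entries, and a direct computation of the alternating sum gives $\T=2m+1$ or $\T=2m$ according to the parity of $b-1$, so that $m\mapsto\T$ is a bijection onto the residues congruent to $b$ modulo $2$. Hence $a\mapsto(b(a),\T(a))$ restricts to a bijection from $\mathcal N$ onto the admissible pairs; in particular $|\mathcal N|=(n-1)\cdot 2^{r-1}$ and the two invariants separate the elements of $\mathcal N$.

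The crux is the reduction lemma: every $a\in\A_{\Dhd\ell,n}^\dom$ equals some element of $\mathcal N$. Granting this, the map $a\mapsto(b(a),\T(a))$ is constant on orbits, injective on $\mathcal N$, and surjective onto the admissible pairs, hence a bijection from $\A_{\Dhd\ell,n}^\dom$ onto the admissible pairs, giving $|\A_{\Dhd\ell,n}^\dom|=(n-1)\cdot 2^{r-1}$. I would prove the reduction by induction on $n$, the base case $n=2$ being \cref{Cor: Dihedral dom n=2 case}. When $b\le n-2$ there are at least two even entries; using \cref{Lma: universal translator} to bring an even entry next to an odd one and translate it to $[0]$, then \cref{Lma: Dihedral dom div [0]} and \cref{Lma: MonConj multidiv} to split this $[0]$ off, one writes $a=a'\cdot[0]$ with $a'\in\A_{\Dhd\ell,n-1}^\dom$ of the same $b$, applies the inductive hypothesis to $a'$, and reappends $[0]$. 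The remaining case $b=n-1$, where all but one entry is odd, is the genuinely new phenomenon: here \cref{Lma: down to one} (which is exactly the $p=2$ instance) is applied repeatedly to standardize the odd entries, turning the odd block into $[2m+1]\cdot[1]^{\,b-1}$, while \cref{Lma: translator} slides the block into position.

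The main obstacle is precisely this reduction in the case $b=n-1$. In the odd case (\cref{Prp: Dihedral odd dom comp}) one collapses the odd part to a single generator using that $2x\in\Zlx$, which forces $|\A_{\Dhd\ell,n}^\dom|$ to stabilize at $\ell$; for $\ell=2^r$ that step is unavailable, and the surviving tail $[1]^{\,b-1}$ is exactly what makes the count grow linearly in $n$. I therefore expect the bookkeeping that guarantees the odd block reduces to the stated form and no further, together with the verification that splitting off a $[0]$ when $b\le n-2$ preserves dominance, to be the delicate points; everything else is the routine invariant computation and counting sketched above.
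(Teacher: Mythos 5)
Your proposal is correct and follows essentially the same route as the paper's proof: the same normal form $[u]\cdot[1]^{\,b-1}\cdot[0]^{\,n-b}$ with $u$ odd, the same two separating invariants (your parity count $b$ is exactly the paper's reduction to $\A_{\Dhd2}$, paired with $\T_{\ZZ/\ell\ZZ}$), and the same reduction mechanism via \cref{Lma: Dihedral dom div [0]} and \cref{Lma: down to one}. The only difference is organizational: you induct on $n$ with a case split on $b$, whereas the paper factors out the maximal power of $[0]$ in one step and then collapses the all-odd block.
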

\begin{proof}
	First we show that for all $a\in\A_{\Dhd\ell,n}^\dom$, there exist unique $0<j<n$ and $u\in\Zlx$ s.t.
	$$a=[u]\cdot[1]^{n-j-1}\cdot[0]^{j}.$$
	Let $j\in\NN$ be the largest s.t. $[0]\;|\;a$. Since $[0]^n\notin\A_{\Dhd\ell}^\dom$, By \cref{Lma: Dihedral dom div [0]}, $0<j<n$.
	Write
	$$a=a_1\cdot [0]^j.$$
	From the maximality of $j$ and \cref{Prp: classify subquandles of dihedral qdl}, $a_1\in\A_{1+2\ZZ/\ell\ZZ}$. Also $j\ge 1$ because $a$ is dominant.
	For all $i$, write $[1]^i\cdot [0]=[0]\cdot u_i$. By \cref{Lma: down to one}, for all $c,d\in\ZZ/\ell\ZZ$,
	$$[1+2c]\cdot[1+2d]\cdot [1]^i\cdot[0]= [1+2c]\cdot[1+2d]\cdot[0]\cdot u_i =$$
	$$=[1+2c-2d]\cdot[1]\cdot[0]\cdot u_i= [1+2c-2d]\cdot[1]^{i+1}\cdot[0].$$
	Therefore we can write
	$$a = a_1\cdot [0]^{j}=[u]\cdot [1]^{n-j-1}\cdot [0]^j.$$
	Note that $u\in\Zlx$, as $[u]$ divides $a_1\in\A_{1+2\ZZ/\ell\ZZ}$. The presentation of $a\in\A_{\Dhd\ell}^\dom$ as $a=[u]\cdot[1]^{n-j-1}\cdot [0]^j$ is unique: suppose
	$$[u]\cdot[1]^{n-j-1}\cdot [0]^j = [v]\cdot[1]^{n-i-1}\cdot [0]^i\;\;,\;\;\;\;u,v\in\Zlx.$$
	The equality persists in $\A_{\Dhd2}$ in the form $[1]^{n-j}\cdot [0]^{j}=[1]^{n-i}\cdot[0]^i$. Because $\Dhd2$ is a trivial quandle, it follows that $j=i$. The value of $u\in\Zlx$ is then determined by $\T_{\ZZ/\ell\ZZ}(a)$ and $j$. The surjective morphism $\A_{\Dhd\ell}\to\A_{\Dhd2}$ also shows that all values of $0<j<n$ and $u\in\Zlx$ are manifest. Thus
	$$|\A_{\Dhd\ell,n}^\dom|=|\Zlx|\cdot |\{0<j<n\}|=2^{r-1}(n-1).$$
\end{proof}

\begin{proposition}
	\label{Prp: Dihedral total - n ge 3}
	Let $\ell=p^r$, $r\ge 1$ and let $Q=\Dhd\ell$. Then for all $n\ge 3$,
	$$\left|\A_{\Dhd\ell,n}\right|=\begin{cases}
		(r+1)p^r&p\neq 2\\
		2^r+2^{r-1}r(n-1)&p=2
	\end{cases}.$$
\end{proposition}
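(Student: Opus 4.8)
The plan is to reduce the total count to a weighted sum of dominant counts over all sub-quandles, using the partition of \cref{Lma: dom decomp A_Q}:
$$|\A_{\Dhd\ell, n}| = \sum_{R \le \Dhd\ell} |\A_{R, n}^\dom|.$$
The first step is to enumerate the sub-quandles via \cref{Prp: classify subquandles of dihedral qdl}. Since $\ell = p^r$, the cyclic group $\ZZ/p^r\ZZ$ has, for each $0 \le s \le r$, a \emph{unique} subgroup $H_s$ of order $p^s$, whose index is $p^{r-s}$. Hence the nonempty sub-quandles of $\Dhd\ell$ are exactly the $p^{r-s}$ cosets of $H_s$, ranging over $0 \le s \le r$, and by \cref{Prp: classify subquandles of dihedral qdl} each such coset is isomorphic as a quandle to $\Dhd{p^s}$. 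The empty sub-quandle contributes $|\A_{\emptyset, n}^\dom| = 0$ for $n \ge 1$ (as $\A_\emptyset = \{1\}$ is concentrated in degree $0$), so it may be discarded.

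Next I would reorganize the sum by the order $p^s$ of the subgroup. Because isomorphic quandles have isomorphic enveloping monoids, all $p^{r-s}$ cosets of $H_s$ carry the same dominant count, giving
$$|\A_{\Dhd\ell, n}| = \sum_{s=0}^{r} p^{r-s} \cdot |\A_{\Dhd{p^s}, n}^\dom|.$$
It then remains to substitute the dominant counts already established. The $s = 0$ term is the one-point quandle $\Dhd{1} = T_1$, for which $|\A_{T_1, n}^\dom| = 1$ for every $n \ge 1$. For $s \ge 1$ and $n \ge 3$ I would invoke \cref{Prp: Dihedral odd dom comp} in the odd case, yielding $|\A_{\Dhd{p^s}, n}^\dom| = p^s$, and \cref{Prp: Dihedral even dom comp} when $p = 2$, yielding $|\A_{\Dhd{2^s}, n}^\dom| = 2^{s-1}(n-1)$. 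The hypothesis $n \ge 3$ is precisely what lets these two propositions apply to every dihedral sub-quandle $\Dhd{p^s}$ with $s \ge 1$.

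The last step is to evaluate the two resulting geometric sums. In the odd case the weights collapse, since $p^{r-s}\cdot p^s = p^r$ for each $s$, giving
$$p^r \cdot 1 + \sum_{s=1}^{r} p^{r-s} \cdot p^s = p^r + r\, p^r = (r+1) p^r,$$
and in the case $p = 2$ the weights again collapse, since $2^{r-s}\cdot 2^{s-1} = 2^{r-1}$, giving
$$2^r \cdot 1 + \sum_{s=1}^{r} 2^{r-s} \cdot 2^{s-1}(n-1) = 2^r + r\, 2^{r-1}(n-1),$$
matching the two cases of the claimed formula. I expect no serious obstacle here, as the dominant counts do all the real work; the only genuine care point is the bookkeeping of the sub-quandle lattice. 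One must check that the cosets of $H_s$ for distinct $s$, and distinct cosets for a fixed $s$, are genuinely distinct sub-quandles — which holds because cosets of different subgroups have different cardinalities $p^s$, while cosets of a fixed subgroup are disjoint — and one must treat the degenerate $s = 0$ term by hand, since the dihedral dominant-count propositions assume $r \ge 1$ whereas $\Dhd{1}$ is merely a point.
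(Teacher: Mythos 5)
Your proposal is correct and follows essentially the same route as the paper: the same decomposition $|\A_{\Dhd\ell,n}|=\sum_{R\le \Dhd\ell}|\A_{R,n}^\dom|$ from \cref{Lma: dom decomp A_Q}, the same coset classification from \cref{Prp: classify subquandles of dihedral qdl}, and the same substitution of the dominant counts from \cref{Prp: Dihedral odd dom comp} and \cref{Prp: Dihedral even dom comp}. Your only additions are bookkeeping points the paper leaves implicit (the empty sub-quandle contributing $0$, and the $s=0$ term $|\A_{T_1,n}^\dom|=1$), which are correct.
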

\begin{proof}
	By \cref{Lma: dom decomp A_Q} and \cref{Prp: classify subquandles of dihedral qdl},
	$$\left|\A_{\Dhd\ell,n}\right|=\sum_{R\le \Dhd\ell}\left|\A_{R,n}^\dom\right|= \sum_{k=0}^rp^{r-k}\left|\A_{\Dhd{p^k},n}^\dom\right|= p^r+ \sum_{k=1}^rp^{r-k}\left|\A_{\Dhd{p^k},n}^\dom\right|.$$
	If $p\neq 2$, by \cref{Prp: Dihedral odd dom comp},
	$$\left|\A_{\Dhd\ell,n}\right| =p^r+\sum_{k=1}^rp^{k-r}\cdot p^k=p^r+rp^r=(r+1)p^r.$$
	If $p=2$, then by \cref{Prp: Dihedral even dom comp},
	$$\left|\A_{\Dhd\ell,n}\right| =2^r+\sum_{k=1}^r2^{k-r}\cdot (2^{k-1}(n-1))=2^r+r2^{r-1}(n-1).$$
\end{proof}

\begin{proposition}
	Let $\ell=p^r$, $r\ge 1$ and let $Q=\Dhd\ell$. Then
	
	$$\begin{array}{|c|c|c|l|}
	\hline
    &   P_{\Dhd\ell}(x)& P_{\Dhd\ell}^\dom(x)&\genfunc_{\Dhd\ell}(t)\\
    \hline
    \hline
    p\neq 2 & (r+1)p^r &p^r& (1-t)^{-1}\left({1+(p^r-1)t+r(p^r-p^{r-1})t^2+rp^{r-1}t^3}\right)\\
    \hline
    p=2& (2^r-r2^{r-1})+r2^{r-1}x&2^{r-1}(x-1)&(1-t)^{-2}\left(1+(2^r-2)t+(r2^{r-1}-2^r+1)t^2\right)\\
    \hline
\end{array}$$
\end{proposition}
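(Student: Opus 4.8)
The plan is to read off the two polynomials immediately from the stable level-wise counts, and then to build each generating function by supplementing those counts with the finitely many low-degree terms that the polynomial does not yet control. For $P_{\Dhd\ell}$ and $P_{\Dhd\ell}^\dom$ I would simply invoke \cref{Prp: Dihedral total - n ge 3}, \cref{Prp: Dihedral odd dom comp} and \cref{Prp: Dihedral even dom comp}: in the odd case $|\A_{\Dhd\ell,n}| = (r+1)p^r$ and $|\A_{\Dhd\ell,n}^\dom| = p^r$ are eventually constant, so by \cref{Dfn: dom poly} these constants are the two Hilbert polynomials; in the even case $|\A_{\Dhd\ell,n}| = 2^r + 2^{r-1}r(n-1)$ and $|\A_{\Dhd\ell,n}^\dom| = 2^{r-1}(n-1)$ hold for $n \gg 0$, and rewriting the former as $(2^r - r2^{r-1}) + r2^{r-1}n$ gives exactly the displayed polynomials.

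The real work is the generating function $\genfunc_{\Dhd\ell}(t) = \sum_{n\ge 0}|\A_{\Dhd\ell,n}|\,t^n$, which also needs the honest small-degree coefficients. Here $|\A_{\Dhd\ell,0}| = 1$ and $|\A_{\Dhd\ell,1}| = |\Dhd\ell| = p^r$. For $n=2$ I would use the partition $|\A_{\Dhd\ell,2}| = \sum_{R\le \Dhd\ell}|\A_{R,2}^\dom|$ from \cref{Lma: dom decomp A_Q} together with the classification in \cref{Prp: classify subquandles of dihedral qdl}: the sub-quandles isomorphic to $\Dhd{p^k}$ are exactly the $p^{r-k}$ cosets of the order-$p^k$ subgroup, so by \cref{Cor: Dihedral dom n=2 case} the terms with $k\ge 1$ each contribute $p^{r-k}(p^k - p^{k-1})$ while the $k=0$ (singleton) term contributes $p^r$ in total, summing to $|\A_{\Dhd\ell,2}| = p^r + r(p^r - p^{r-1})$.

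With the head $(n\le 2)$ and the stable tail both explicit, I would assemble $\genfunc_{\Dhd\ell}$ in each case. For odd $p$ the tail is $\sum_{n\ge 3}(r+1)p^r\,t^n = (r+1)p^r\, t^3/(1-t)$; adding the degrees $0,1,2$, clearing $(1-t)$, and collecting coefficients should yield the numerator $1 + (p^r-1)t + r(p^r - p^{r-1})t^2 + rp^{r-1}t^3$. For $p=2$ I would first note that the stable formula $2^r + 2^{r-1}r(n-1)$ already reproduces the values at $n=1$ (giving $2^r$) and $n=2$ (giving $2^r + r2^{r-1}$), so only $n=0$ is anomalous; summing $\sum_{n\ge 1}(2^r + 2^{r-1}r(n-1))t^n$ with the standard identities $\sum_{n\ge1}t^n = t/(1-t)$ and $\sum_{n\ge1}(n-1)t^n = t^2/(1-t)^2$ and restoring the constant $1$ produces the quotient over $(1-t)^2$ with numerator $1 + (2^r-2)t + (r2^{r-1}-2^r+1)t^2$.

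No step is conceptually hard; the one point demanding care is the bookkeeping of where the stable formula takes over — degree $n\ge 3$ for odd $p$ but effectively $n\ge 1$ for $p=2$ — so that the genuinely anomalous coefficient (the $t^2$ term for odd $p$, the constant term for $p=2$) is isolated correctly rather than silently folded into the geometric tail. Getting these boundary degrees right is what makes the final numerators come out exactly as stated.
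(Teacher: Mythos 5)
Your proposal is correct and follows essentially the same route as the paper's own proof: read off $P_{\Dhd\ell}$ and $P_{\Dhd\ell}^\dom$ from \cref{Prp: Dihedral odd dom comp}, \cref{Prp: Dihedral even dom comp} and \cref{Prp: Dihedral total - n ge 3}, compute the one missing coefficient $|\A_{\Dhd\ell,2}| = p^r + r(p^r - p^{r-1})$ via the dominant decomposition of \cref{Lma: dom decomp A_Q} over the coset sub-quandles of \cref{Prp: classify subquandles of dihedral qdl} together with \cref{Cor: Dihedral dom n=2 case}, and then assemble each $\genfunc_{\Dhd\ell}(t)$ from the low-degree head plus the stable tail, with exactly the paper's observation that for $p=2$ the stable formula is already valid at $n=1,2$. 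As a minor bonus, your assembly yields the sign $+rp^{r-1}t^3$ in the odd-case numerator, which agrees with the proposition as stated, whereas the final display in the paper's own proof carries a sign typo ($-rp^{r-1}t^3$).
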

\begin{proof}
	The computations of $P_{\Dhd\ell}(x), P_{\Dhd\ell}^\dom(x)$ for $p$ either even or odd, are direct results of propositions \ref{Prp: Dihedral odd dom comp}, \ref{Prp: Dihedral even dom comp} and \ref{Prp: Dihedral total - n ge 3}. For $\genfunc_{\Dhd\ell}(t)$, all that remains to compute is $|\A_{\Dhd\ell,2}|$. By \cref{Cor: Dihedral dom n=2 case}, $|\A_{\Dhd\ell,2}^\dom|=p^r-p^{r-1}$. Therefore
	$$|\A_{\Dhd\ell,2}|=\sum_{k=0}^r p^{r-k}|\A_{\Dhd{p^k},2}^\dom|=p^r+ \sum_{k=1}^r p^{r-k}(p^k-p^{k-1})= $$
	$$= p^r+ r(p^r-p^{r-1})= (r+1)p^r-rp^{r-1}.$$
	For $p\neq 2$ we get
	$$\genfunc_{\Dhd\ell}(t)=1+p^rt-rp^{r-1}t^2+\frac{(r+1)p^r\cdot t^2}{1-t}=$$
%%%	$$=\frac{1+(p^r-1)\cdot t+(-rp^{r-1}-p^r+(r+1)p^r)\cdot t^2-rp^{r-1}\cdot t^3}{1-t}$$
	$$=\frac{1+(p^r-1)\cdot t+r(p^r-p^{r-1})\cdot t^2-rp^{r-1}\cdot t^3}{1-t}.$$
	For $p=2$, note that
	$$n=1\;:\;\;|\A_{\Dhd\ell,1}|=\ell=P_{\Dhd\ell}(1)$$
	$$n=2\;:\;\;|\A_{\Dhd\ell,2}|=(r+1)2^r-r2^{r-1}=2^r+r2^{r-1}=P_{\Dhd\ell}(2),$$
	Therefore
	$$\genfunc_{\Dhd\ell}(t)=1+\sum_{n=1}^\infty\left((2^r-r2^{r-1})+r2^{r-1}n\right)t^n=1+\frac{(2^r-r2^{r-1})\cdot t}{1-t}+\frac{r2^{r-1}\cdot t}{(1-t)^2}=$$
	$$=\frac{1+(2^r-2)\cdot t+(r2^{r-1}-2^r+1)\cdot t^2}{(1-t)^2}.$$
\end{proof}

In particular, we obtain $P_Q(x),P_Q^\dom(x)\in\QQ[x]$ and $\genfunc_Q(t)\in\ZZ[t,(1-t)^{-1}]$ pertaining to the following small examples:

\subsubsection{$Q=\Dhd3$}
For $\ell=3$, $\Dhd3$ satisfies
$$P_{\Dhd3}(x)=6\;,\;\;P_{\Dhd3}^\dom(x)=3\;,\;\;\eta_{\Dhd3}(t)=\frac{1+2t+2t^2+t^3}{1-t}.$$

\subsubsection{$Q=\Dhd4$}
For $\ell=4=2^2$, $\Dhd4$ satisfies
$$P_{\Dhd4}(x)=4x\;,\;\;P_{\Dhd4}^\dom(x)=2x-2\;,\;\;\eta_{\Dhd4}(x)=\frac{1+2t+t^2}{(1-t)^2}.$$

\subsubsection{$Q=\Dhd{3,+}$}
$$\eta_{\Dhd{3,+}}(t)=\eta_{\Dhd3}(t)\cdot\eta_{T_1}(t)=\frac{1+2t+2t^2+t^3}{(1-t)^2}= t+4+\frac{6}{(1-t)^2}-9\frac{1}{1-t}=$$
	$$=t+4+\sum_n\left(6{\textstyle{n+1\choose1}}-9 {\textstyle{n+0\choose0}}\right)t^n=t+4+\sum_n(6n-3)t^n$$
Therefore
%$$P_{\Dhd{3,+}}(x)=6{\textstyle{x+1\choose1}}-9 {\textstyle{x+0\choose0}} =6x-3.$$
$$P_{\Dhd{3,+}}(x) =6x-3.$$
The proper sub-quandles $R$ of $D_3$ are $\emptyset$ and three singletons. Therefore
%$$\genfunc_{Q_+}^\dom(t) = \genfunc_{Q_+}(t) - \genfunc_Q(t)-\sum_{R<Q}\genfunc_{R_+}^\dom(t)=
%\genfunc_{\Dhd{3,+}}(t)-\genfunc_{\Dhd3}(t)-3\genfunc_{T_2}^\dom(t)-\genfunc_{T_1}^\dom(t)=$$
$$P_{Q_+}^\dom(x) = P_{Q_+}(x) - P_Q(x)-\sum_{R<Q}P_{R_+}^\dom(x)=$$
$$=P_{\Dhd{3,+}}(x)-P_{\Dhd3}(x)-3P_{T_2}^\dom(x)-P_{T_1}^\dom(x)=
%$$$$=
(6x-3)-6-3(x-1)-1=3x-7.$$

\subsection{Small Quandles}
We list all $Q\in\Qdl$, up to isomorphism, satsifying $|Q|\le 4$ - along with $P_Q(x),P_Q^\dom(x)\in\QQ[x]$ and $\genfunc_Q(t)\in\ZZ[t,(1-t)^{-1}]$.

$$\begin{array}{|c|c|c|c|l|}
	\hline
    |Q|& Q&  P_Q(x)& P_Q^\dom(x)&\genfunc_Q(t)\\
    \hline
    \hline
    0&\emptyset & 0 &0& 1\\
    \hline
    1&T_1&1&1&(1-t)^{-1}\\
    \hline
    2&T_2&x+1&x-1& (1-t)^{-2}\\
    \hline
    &T_3&\frac12(x^2+3x+2)& \frac12(x^2-3x+2)& (1-t)^{-3}\\
    3&J&2x+1&x-1&(1-t)^{-2} (1+t)\\
    &\Dhd3&6&3& (1-t)^{-1}(1+2t+2t^2+t^3)\\
    \hline
    &T_4& \frac16(x^3+6x^2+11x+6)& \frac16(x^3-6x^2+11x-6)&(1-t)^{-4}\\
    &J_+&x^2+2x+1& \frac12(x^2-3x+2)&(1-t)^{-3} (1+t)\\
    4&\Dhd{3,+}&6x-3&3x-7&(1-t)^{-2}(1+2t+2t^2+t^3)\\
    &J’&\frac12(x^2+5x+2)& \frac12(x^2-3x+2)&(1-t)^{-3} (1+t-t^2)\\
    &D_4&4x&2x-2&(1-t)^{-2} (1+2t+t^2)\\
    &C_3&\frac12(x^2+5x+2)&x-1&(1-t)^3(1+t-t^2)\\
    \hline
\end{array}$$

%\cite{ARTICLE:Joyce1982}\\
%\cite{ARTICLE:Brieskorn1986}\\
%\cite{ARTICLE:FennRourke1992}\\
%\cite{ARTICLE:KaMats2005}\\
%\cite{WEBSITE:ElVenWest2015}\\
%\cite{ARTICLE:FennRourke1992}

\bibliographystyle{alpha}
\bibliography{Ariel_Davis_bibliography.bib}

\end{document}